\documentclass[11pt]{amsart}

\usepackage[margin=1in]{geometry}
\usepackage{setspace}
\onehalfspacing
\usepackage{amsmath}
\usepackage{amsxtra}
\usepackage{amscd}
\usepackage{amsthm}
\usepackage{amssymb}
\usepackage[normalem]{ulem}
\usepackage{comment}
\usepackage[dvipsnames,x11names]{xcolor}
\usepackage{tikz}
\usetikzlibrary{matrix,arrows,decorations.pathmorphing,automata, matrix, positioning, calc, shapes.multipart,intersections}
\usepackage{graphicx}
\usepackage{hyperref}
\usepackage[shortlabels]{enumitem}
\usepackage{float}
\usepackage{ytableau}
\usepackage{subcaption}

\usepackage{mathrsfs}

\definecolor{LightGray}{gray}{0.94}
\definecolor{MediumGray}{gray}{0.6}

\setcounter{tocdepth}{1}

\numberwithin{equation}{section}

\theoremstyle{definition}
\newtheorem{theorem}[equation]{Theorem}
\newtheorem{lemma}[equation]{Lemma}
\newtheorem{proposition}[equation]{Proposition}
\newtheorem{corollary}[equation]{Corollary}

\newtheorem{definition}[equation]{Definition}

\newtheorem{example}[equation]{Example}

\newtheorem{convention}[equation]{Convention}
\newtheorem{remark}[equation]{Remark}

\newtheorem{rmk}[equation]{Remark}
\newtheorem{defn}[equation]{Definition}
\newtheorem{eg}[equation]{Example}

\newtheorem{notn}[equation]{Notation}
\newtheorem{thm}[equation]{Theorem}
\newtheorem{prop}[equation]{Proposition}

\newtheorem*{thmA}{Theorem A}
\newtheorem*{thmB}{Theorem B}
\newtheorem*{thmC}{Theorem C}

\newtheorem*{acknowledgements}{Acknowledgements}

\newcommand{\emp}{\emptyset}

\newcommand{\sle}{\widehat{\mathfrak{sl}}_e}
\newcommand{\w}{{\underline{w}}}
\newcommand{\x}{{\underline{x}}}
\newcommand{\y}{{\underline{y}}}
\newcommand{\ctau}{{{\color{cyan}\beta}}}
\newcommand{\csigma}{{{\color{magenta}{\alpha }}}}
\newcommand{\crho}{{{\color{darkgreen}{\gamma}}}}

\newcommand{\mydots}{\dots}

\newcommand{\cH}{\mathcal{H}}
\newcommand{\cI}{\mathcal{I}}

\newcommand{\FF}{\mathsf{F}}

\newcommand{\Ext}{\operatorname{Ext}}

\newcommand{\algebra}{\mathscr{H}_\aatchpair (n,\bs )}

\newcommand{\Res}{\operatorname{Res}}

\newcommand{\la}{\lambda}
\newcommand{\bs}{\mathbf{s}}

\newcommand{\bla}{{\boldsymbol{\lambda}}}
\newcommand{\bmu}{\boldsymbol{\mu}}
\newcommand{\bnu}{\boldsymbol{\nu}}
\newcommand{\Bo}{\mathsf{B}}
\newcommand{\co}{\mathsf{co}}
\newcommand{\Cali}{\mathsf{Cali}}

\newcommand{\C}{\mathbb{C}}
\newcommand{\reflectpath}{\SSTP_\al^\flat}
\newcommand\REMOVETHESE[2]{{{{\mathsf{M}}_{#1}^{#2}	}}}
\newcommand\ADDTHIS[2]{{{{\mathsf{P}}_{#1}^{#2}}}}

 \colorlet{darkgreen}{green!50!black}
\usepackage[foot]{amsaddr}
\usepackage{mathtools}
\newcommand{\conj}[1]{\overline{#1}}
\newcommand{\R}{\mathbb{R}}
 \newcommand{\uni}{\mathfrak{U}}
\DeclareMathOperator{\aff}{aff}
\newcommand{\cal}{\mathfrak{C}}
\newcommand{\Z}{\mathbb{Z}}

\DeclareMathOperator{\content}{ct}

\newcommand{\ZZ}{{\mathbb{Z}}}

\newcommand{\Std}{{\rm Std}}

\newcommand{\Shape}{\operatorname{Shape}} 
\newcommand{\Path}{{\rm Path}}

\newcommand{\sts}{\mathsf{S}}  
\newcommand{\stt}{\mathsf{T}}  
\newcommand{\res}{\mathrm{res}}

\newcommand{\SSTS}{\mathsf{S}}  
\newcommand{\SSTT}{\mathsf{T}}  
\newcommand{\SSCC}{\mathsf{C}}  
\newcommand{\SSTP}{\mathsf{P}}  
\newcommand{\SSTU}{\mathsf{U}}  
\newcommand{\SSTQ}{\mathsf{Q}}  
\newcommand{\SSTR}{\mathsf{R}}  
 \newcommand{\al}{{{ \color{magenta}\alpha} }}
 \newcommand{\bet}{{{\color{cyan}\beta}}}

\definecolor{darkspringgreen}{rgb}{0.09, 0.45, 0.27}
  \newcommand{\gam}{{{{\color{darkgreen}\gamma}}}}

\newcommand{\great}{>}
\newcommand{\less}{<}

    \newcommand{\aatch}{h}
      \newcommand{\aatchpair}{{\underline{h}}}
      \newcommand{\enn}{{h}}

\newcommand{\Alc}{\text{\bf Alc}}

\usepackage{cleveref,scalefnt}

\crefname{defn}{Definition}{Definitions}
\crefname{definition}{Definition}{Definitions}
\crefname{thm}{Theorem}{Theorems}
\crefname{prop}{Proposition}{Propositions}
\crefname{lem}{Lemma}{Lemmas}
\crefname{cor}{Corollary}{Corollaries}
\crefname{conj}{Conjecture}{Conjectures}
\crefname{section}{Section}{Sections}
\crefname{subsection}{Subsection}{Subsections}
\crefname{eg}{Example}{Examples}
\crefname{figure}{Figure}{Figures}
\crefname{rem}{Remark}{Remarks}
\crefname{rmk}{Remark}{Remarks}
\crefname{equation}{equation}{equation}

\Crefname{defn}{Definition}{Definitions}
\Crefname{definition}{Definition}{Definitions}
\Crefname{thm}{Theorem}{Theorems}
\Crefname{prop}{Proposition}{Propositions}
\Crefname{lem}{Lemma}{Lemmas}
\Crefname{cor}{Corollary}{Corollaries}
\Crefname{conj}{Conjecture}{Conjectures}
\Crefname{section}{Section}{Sections}
\Crefname{subsection}{Subsection}{Subsections}
\Crefname{eg}{Example}{Examples}
\Crefname{figure}{Figure}{Figures}
\Crefname{rem}{Remark}{Remarks}
\Crefname{rmk}{Remark}{Remarks}

\newcommand{\braid}{B^{ea}}
\newcommand{\AHA}{\mathcal{H}^{\mathsf{aff}}}
\newcommand{\JM}{\mathbb{A}}
\newcommand{\cyc}{\mathcal{H}}
\newcommand{\fh}{\mathfrak{h}}
\newcommand{\RR}{\mathsf{R}}
\newcommand{\qq}{\mathsf{q}}
\newcommand{\ba}{\mathbf{a}}
\newcommand{\bb}{\mathbf{b}}
\newcommand{\bm}{\mathbf{m}}
\newcommand{\tab}{\SSTT}
\newcommand{\column}{\SSCC}
\newcommand{\RCA}{\mathbb{H}}
\newcommand{\KZ}{\mathsf{KZ}}

  \captionsetup{width=.9\linewidth}

\begin{document}

\author{Chris Bowman}
    \address{Department of Mathematics, 
University of York, Heslington, York, YO10 5DD, UK}
\email{Chris.Bowman-Scargill@york.ac.uk}

\author{Emily Norton}
\email{emily.norton@uca.fr}
\address{Universit\'e Clermont Auvergne, CNRS, LMBP, F-63000 Clermont-Ferrand, France.}
\author{Jos\'e Simental}
\address{Max Planck Institute for Mathematics, Vivatsgasse 7, 53111, Bonn, Germany}
\email{jose@mpim-bonn.mpg.de}

\title[Unitary representations of cyclotomic Hecke algebras]{Unitary representations of  \\   cyclotomic Hecke algebras at roots of unity: \\combinatorial classification and BGG resolutions}

\maketitle

\begin{abstract}
We relate the classes of unitary and calibrated  representations of cyclotomic Hecke algebras and, in particular, we show that for the most important deformation parameters these two classes coincide. 
  We classify these representations in terms of both multipartition combinatorics and as the points in the fundamental alcove under the action of an affine Weyl group. Finally, we cohomologically construct these  modules via BGG resolutions.  
 \end{abstract}

  \section*{Introduction}

Unitary representations play a fundamental role in the representation theory of real, complex, and $p$-adic reductive groups  \cite{aldvreal, MR1845669, MR2037715, barbasch2015star}. 
 Unitary representations are often the most important representations appearing  ``in nature"
 via  
  quantum mechanics \cite{Wig39} and 
  harmonic analysis \cite{Mac92}. 
Furthermore, they tend to admit nice structural and homological properties, such as explicit eigenbases and resolutions by Verma modules.  


In this paper, we study unitary representations of a family of infinite discrete groups: the affine braid groups. These are groups $\braid_{n}$ of $n$ braids on the cylinder, see \cite{affinebraid}, and project onto the usual Artin braid groups by \lq\lq flattening\rq\rq\, the cylinder. Of course, the representation theory of the group $\braid_{n}$ is extremely complicated and the problem would be intractable without imposing certain conditions on our representations. The condition we impose is that our representations factor through an \emph{affine Hecke quotient} of the group algebra $\C\braid$, that is, the following skein-like relation is satisfied
\begin{equation}\label{eqn:hecke relation}\tag{$\dagger$}
(T_{i} - q)(T_{i} + 1) = 0
\end{equation}
  for some $q \in \C^{\times}$ and every $i = 1, \dots, n-1$, where $T_{i}$ is the overcrossing of the $i$-th and $(i+1)$-st strands. The algebra $\AHA_{q}(n) := \C\braid_{n}/(T_{i} - q)(T_{i} + 1) $  
 is known as the \emph{affine Hecke algebra}. Besides being interesting in and of itself, the algebra $\AHA_{q}(n)$ appears in the theory of knot invariants, categorification, and the representation theory of $p$-adic reductive groups. Let us now discuss our methods and results in more detail.

 \subsection*{First main result}
  First, we relate the unitary representations of $\AHA_{q}(n)$ to the class of \emph{calibrated} representations. These are a well-studied   class of representations of $\AHA_{q}(n)$ that are defined by the condition that the Jucys-Murphy subalgebra $\JM \subseteq \AHA_{q}(n)$ acts semisimply, see \cite{MR1976700,MR1988991,MR2266877,MR1383482}. Calibrated representations exhibit many of the properties that make unitary representations interesting (for example, by definition they come equipped with an $\JM$-eigenbasis, which is unique if the representation is simple) and it easily follows that, in fact, every unitary representation is calibrated. The converse is of course not true, but it turns out that if we restrict to certain representations (in a sense, the most complicated ones) then the story changes. To be more precise, every irreducible representation of $\AHA_{q}(n)$ factors through a \emph{cyclotomic quotient}, $\cyc_{q, Q_1, \dots, Q_{\ell}}(n)$ which depends on several parameters $q, Q_1, \dots, Q_{\ell}$. The representation theory of $\cyc_{q, Q_1, \dots, Q_{\ell}}(n)$ is most interesting when we specialise the parameter $q$ to be a root of unity and  the parameters $Q_i=q^{s_i} $ for $1\leq i \leq \ell$.  

\begin{thmA}\label{thm:main1}
Let $e > 1$ and $s_1, \dots, s_{\ell}$ be integers. Let $q = \exp(2\pi\sqrt{-1}/e)$ and $Q_{i} = q^{s_{i}}$. A representation of the algebra $\cyc_{q, Q_1, \dots, Q_{\ell}}$ is unitary if and only if it is calibrated.
\end{thmA}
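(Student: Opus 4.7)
The easy direction is essentially definitional: the Jucys--Murphy generators of $\JM \subseteq \cyc_{q,Q_1,\dots,Q_\ell}$ are self-adjoint commuting operators with respect to the unitary form, hence simultaneously diagonalisable on any finite-dimensional $*$-representation, which is precisely the calibrated condition. My plan for the converse is to exhibit an explicit positive definite contravariant form on every calibrated simple module by means of the seminormal form.

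Every simple calibrated $\cyc$-module $D^{\bla}$ carries a basis $\{v_\stt\}$ indexed by standard tableaux of a multipartition $\bla$ satisfying the combinatorial ``separation'' condition on residues required for the rational expressions appearing in the seminormal action of $T_i$ to have no poles; on this basis the $\JM$-invariant contravariant form is diagonal, with Gram coefficient $\langle v_\stt, v_\stt\rangle$ equal to an explicit product of factors of quantum integer type, together with ``boundary'' factors depending on the cyclotomic parameters $Q_i = q^{s_i}$. The plan is then to (i) write down this product in closed form from the seminormal theory; (ii) compare consecutive tableaux $\stt$ and $s_i \cdot \stt$, showing that the ratio of Gram coefficients is, up to a manifestly positive factor, of the shape $(1 - q^{a-b})(1-q^{b-a})$, where $a,b$ are the exchanged contents --- an expression that is strictly positive at $q = \exp(2\pi\sqrt{-1}/e)$ precisely when $a \not\equiv b \pmod{e}$, which is what the calibrated condition enforces along any admissible swap; and (iii) propagate positivity inductively along the graph of tableaux of shape $\bla$ starting from an ``anchor'' tableau whose Gram coefficient can be computed directly.

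The main obstacle I foresee lies in step (iii) combined with the boundary contributions coming from the cyclotomic relation $\prod_{i}(X_1 - Q_i)=0$. The inductive ratio in (ii) is controlled in a transparent way by the calibrated condition, but establishing positivity of the base case requires relating the combinatorial definition of a calibrated multipartition (a condition on all residues of $\bla$) to the positivity of a cyclotomic product indexed by the charges $s_1,\dots,s_\ell$. Once this is dealt with, the argument should simultaneously furnish the fundamental-alcove description promised in the abstract: the calibrated locus is precisely the locus on which no affine root hyperplane of $\sle$ separates the contents of $\bla$, and the positivity inequalities for the contravariant form cut out exactly the same alcove.
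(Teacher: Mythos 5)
Your overall route matches the paper's: establish that unitary implies calibrated via $\dagger$-invariance of the Jucys--Murphy subalgebra, then analyse the invariant Hermitian form on Ram's seminormal basis (Theorem~\ref{thm:ram}) and propagate positivity along admissible transpositions. The genuine gap is in step~(ii): the ratio of Gram coefficients is \emph{not} of the shape $(1-q^{a-b})(1-q^{b-a})$ up to a manifestly positive factor. Working out the invariance equation $\langle T_i w_\bb, w_{s_i\bb}\rangle = \langle w_\bb, T_i^{-1} w_{s_i\bb}\rangle$ on the seminormal basis gives, with $x = b_i/b_{i+1}$,
\[
\frac{A_{s_i\bb}}{A_\bb} \;=\; \frac{q-x}{1-qx} \;=\; \frac{2\bigl(\Re(q)-\Re(x)\bigr)}{|1-qx|^2},
\]
whose sign is governed by $\Re(q)-\Re(x)$, not by $1-\Re(x)$. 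Your proposed form $(1-q^{a-b})(1-q^{b-a}) = 2\bigl(1-\Re(q^{a-b})\bigr)$ is non-negative for \emph{any} $q$ on the unit circle, so the argument as written would prove the false statement that every calibrated module with weight coordinates in $S^1$ is unitary for \emph{any} such $q$. This is explicitly wrong: as noted in the remark following the proposition in Appendix~\ref{sec:level1}, if $e$ is odd and $q = \exp((e-1)\pi\sqrt{-1}/e)$ then $\Re(q^k) \geq \Re(q)$ for all $k$, so the only unitary $H_q(n)$-modules are one-dimensional, even though many higher-dimensional calibrated modules exist. The correct positivity criterion is $\Re(b_i/b_{i+1}) \leq \Re(q)$ (Theorem~\ref{thm:A}); it is the particular choice $q = \exp(2\pi\sqrt{-1}/e)$ that makes $\Re(q)$ maximal among $\Re(q^k)$ with $k \not\equiv 0 \pmod e$ and hence makes the criterion automatic, which is exactly the content of Lemma~\ref{lemma:easy} applied in Lemma~\ref{lemma:unitary=calibrated}.

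Two smaller remarks. In the easy direction, the $X_i$ are unitary rather than self-adjoint with respect to the form (since $X_i^\dagger = X_i^{-1}$); they are still normal, hence diagonalisable, so your conclusion survives, but the paper sidesteps even this by arguing directly that $\JM$-submodules have orthogonal complements. Finally, the ``base case'' obstacle you foresee in step~(iii) disappears if you invoke the existence and uniqueness, up to $\R^\times$-scalar, of a non-degenerate invariant Hermitian form on any irreducible representation (\cite[Proposition~4.5.4]{MR4054879}, as the paper does): one fixes the anchor Gram coefficient to be $1$ and propagates via the ratios, and no ``cyclotomic product'' at the base needs to be evaluated.
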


\subsection*{Second main result}
Next, we use Theorem A to combinatorially classify the unitary representations.     Given an integer $e > 1$ and $\bs = (s_1, \dots, s_{\ell})\in \mathbb{Z}^\ell$ a {\em charge}, we denote the algebra $\cyc_{q, Q_1, \dots, Q_{\ell}}(n)$ as above  simply by $\cyc_{\bs}(n)$. 
The definition of the algebra $\cyc_{\bs}(n)$ depends only on the reduction modulo $e$ of the charge  $\bs$, so we can assume  that 
$s_1\leq s_2\leq\ldots \leq s_\ell< s_1+e$.  
 Our choice of  charge allows us to provide a particularly simple  classification of unitary modules in terms of multipartition combinatorics:

 \begin{thmB}
 Let $q = \exp(2\pi\sqrt{-1}/e)$ and fix a charge $\bs=(s_1,\dots s_\ell)$ such that $s_1\leq s_2\leq\ldots \leq s_\ell< s_1+e$.
 The simple $\mathcal{H}_q^{\rm aff}(n)$-module $D_\bs(\bla)$ is unitary if and only if    the following equivalent conditions hold:
 \begin{itemize}[leftmargin=*]
\item $\bla$ is cylindric, its  right border strip  has period at most $e$  and its reading word is increasing; 
\item $\bla\in \mathcal{F}_\aatchpair  $ the fundamental alcove under an  $\bs$-shifted action of an    affine Weyl group of type $A$;
 \item $D_\bs(\bla)$ is calibrated. 
 \end{itemize}
 Precise definitions for the terminology in the  first two conditions 
 are given in \cref{subsec:rightbord,fundamental-alcove}. \linebreak\vspace{-0.5cm} 
 \end{thmB}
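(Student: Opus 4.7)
By Theorem~A the unitary and calibrated classes already coincide for the parameters in question, so it suffices to establish two further equivalences: (i) $D_\bs(\bla)$ is calibrated if and only if $\bla$ is cylindric with right border strip of period at most $e$ and increasing reading word, and (ii) this combinatorial condition matches membership of $\bla$ in $\mathcal{F}_\aatchpair$ under the $\bs$-shifted action of the affine Weyl group.

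For (i), I would invoke the classical criterion: $D_\bs(\bla)$ is calibrated if and only if the Jucys--Murphy subalgebra $\JM \subseteq \cyc_\bs(n)$ acts on it with distinct weights, equivalently, every standard tableau $\stt$ of shape $\bla$ produces a residue sequence $(\res_1(\stt), \dots, \res_n(\stt))$ compatible with Ram's intertwiner formulas without forcing a Jordan block. In the forward direction, I would argue contrapositively: if the right border strip has period greater than $e$ or if $\bla$ fails to be cylindric, one can exhibit a pair of addable/removable nodes of the same residue sitting in a forbidden configuration, which via swapping $k$ and $k+1$ in a suitable $\stt$ collides two $\JM$-eigenvalues and obstructs calibration. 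Conversely, assuming the combinatorial conditions, the residues along the border are distinct mod $e$ within the relevant windows, so Ram's eigenvector construction applies and $\JM$ acts semisimply on $D_\bs(\bla)$.

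For (ii), I would translate the description of $\mathcal{F}_\aatchpair$ from \cref{fundamental-alcove} into explicit inequalities on $\bla$. Using the identification of multipartitions with weights in a level-$\ell$ Fock space, each affine reflection of $\widehat{\mathfrak{S}}_\enn$ corresponds to either a swap of neighbouring components or a cyclic shift of a row, and the walls of $\mathcal{F}_\aatchpair$ correspond bijectively to the extremal cases of ``cylindric'' and ``right border strip period $= e$''. The increasing reading word condition then pins down a unique representative within the alcove orbit, matching the single alcove.

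The main obstacle is the forward direction of (i). While the reverse direction follows a standard pattern via Ram's eigenbasis, showing that calibration \emph{forces} a global cylindric shape (and not merely local residue non-collision) requires tracking how residues propagate along the entire right border strip and ruling out hidden Jordan blocks coming from possibly distant addable/removable pairs; this is where the $e$-period bound enters crucially, and where I expect the bulk of the technical work to lie.
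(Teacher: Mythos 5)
Your high-level decomposition matches the paper's: Theorem~A disposes of ``unitary $\Leftrightarrow$ calibrated,'' and the remaining work is to identify the calibrated simples with $\Cali^\bs(\ell)$ (step (i)) and then with the fundamental alcove (step (ii)). Step (ii) is essentially what the paper does — the Proposition in Section~\ref{sec:catBGG} translates ``increasing reading word'' into $\bla\in\mathbb{E}^{<}(\varepsilon_i-\varepsilon_{i+1},e)$ and ``period at most $e$'' into $\bla\in\mathbb{E}^{<}(\varepsilon_1-\varepsilon_h,-e)$, with $\bs$-admissibility of $h(\bla)$ (\cref{admiss}) taking care of the finite walls, so your sketch there is on the right track.

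The genuine gap is in step (i). You propose to argue directly from Ram's definition — that $\JM$ acts semisimply — by exhibiting an eigenvalue collision / Jordan block whenever the cylindric or period conditions fail, and by running Ram's eigenbasis construction in the converse direction. But you never supply a mechanism for locating the offending pair, and you concede as much (``this is where I expect the bulk of the technical work to lie''). The paper does not use this route at all: the engine behind Theorem~\ref{thm:cali=cali} is Grojnowski's crystal-theoretic characterization (\cref{thm:nostuttering}), which says a simple $D_\bs(\bla)$ is calibrated if and only if \emph{every} way of building $\bla$ by good $\tilde f_i$-moves from $\varnothing$ avoids two consecutive equal residues, together with the FLOTW classification (\cref{thm:FLOTW}). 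The proof then becomes an induction on $n$: one first shows crystal operators $\tilde e_i$ preserve $\Cali^\bs(\ell)$ (\cref{lemma:slecaliweak}, \cref{lemma:slecali}), and for the hard direction one shows that if $\bmu=\tilde f_i\bla$ breaks Definition~\ref{def:cali}(1)(a) or (1)(b), then a carefully chosen sequence of $\tilde e_j$'s (removing horizontal or vertical strips from the border ribbon, with several subcases depending on whether $w-1\in\Bo^\bs(\bmu)$, etc.) exposes two consecutive $\tilde e_i$-moves, producing a stuttering path and hence non-calibration. This is a concrete, finite case analysis in the crystal, not a $\JM$-spectrum argument. Without Grojnowski's criterion, the forward direction of your (i) has no foothold: Ram's classification is by equivalence classes of calibrated weights in $(\C^\times)^n$, and bridging that to the $\ell$-partition labeling of cyclotomic simples at a root of unity is exactly what the crystal formalism accomplishes. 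Your proposal leaves that bridge unbuilt.
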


Theorem B gives the first classification of calibrated representations for the algebra $\cyc_{
 \bs}(n)$ in terms of Young diagrams of multipartitions. Note that other combinatorial classifications in terms of weights and skew-Young diagrams are given in  \cite{MR1988991,MR1976700}.  
Theorem~B can be seen as the analogue of   \cite[Theorem 4.1]{J04} for calibrated representations
--- both results provide the first closed-form description of  the given family of irreducible modules in terms of  multipartitions.  

 \subsection*{Third main result}
The combinatorial description of the unitary representations in Theorem~B leads to our third main result, a multiplicity-free character formula for these representations  and their cohomological construction by way of BGG resolutions.

Over $\mathbb C[q,Q_1,Q_2,\dots, Q_\ell]$, the algebra $\cyc_{q, Q_1, \dots, Q_{\ell}}(n)$ is semisimple and the simple  Specht  modules $S(\bmu)$ are indexed by the set of $\ell$-multipartitions $\bmu$ of $n$.  
For $e > 1$ and    $\bs\in \mathbb Z^\ell$ one can place a corresponding  integral lattice on each Specht module $S(\bmu)$, obtaining a family of (non-semisimple) $\cyc_{\bs}(n)$-modules 
$S_\bs(\bmu)$ by specialisation of the parameters $q = \exp(2\pi\sqrt{-1}/e)$ and $Q_{i} = q^{s_{i}}$.    Our choice of $\bs \in \ZZ^\ell$ allows us to construct the unitary simple $D_\bs(\bla)$ as the head of the Specht module $S_\bs(\bla)$ for $\bla$ as in Theorem~B.     
 A {\em BGG resolution} of $D_\bs(\bla)$ is a resolution of $D_\bs(\bla)$ by a complex whose terms are direct sums of Specht modules $S_\bs(\bmu)$.

Given a unitary simple module $D_\bs(\bla)$, we consider the set of multipartitions $\bmu$ dominating $\bla$ and having the same residue multiset as $\bla$ (with respect to the charge $\bs$). For such $\bmu$ we write $\bmu\trianglerighteq\bla$, see Section
 \ref{sec:combinatorics}. The affine symmetric group $\widehat{\mathfrak{S}}_h$, where $h$ is the number of rows of $\bla$, acts naturally on this set of multipartitions, endowing it with the structure of a graded poset in which $\bla$ is the unique element of length $0$. 
We then construct a BGG resolution of $D_\bs(\bla)$ as follows:   
 
\begin{thmC}
Associated to each unitary simple module, $D_\bs(\bla)$, we have a 
  complex 
  $   C_\bullet (\bla)=   \bigoplus_{
\begin{subarray}c 
\bmu \trianglerighteq \bla  
\end{subarray}
}S_\bs(\bmu)\langle \ell(\bmu)\rangle
  $
with differential given by an alternating  sum over all ``simple reflection homomorphisms".  
This complex 
 is exact except in degree zero, where    $H_0(C_\bullet( \bla ))= D_\bs(\bla).$    
The underlying  graded character is as follows, 
\begin{equation}\label{p-KL-co}\tag{$\dagger\dagger$}
[D_\bs(\bla)] = 
\sum_{
\begin{subarray}c 
\bmu \trianglerighteq  \bla  
\end{subarray}
}  
 (-t)^{\ell(\bmu)}[S_\bs(\bmu)\langle \ell(\bmu)\rangle]
\end{equation}
 Moreover, the module $D_\bs(\bla)$ admits a characteristic-free  basis $\{c_\sts\otimes_\ZZ \Bbbk \mid \sts\in \Path_{ {\aatchpair}}^\mathcal{F}(\la) \}$  where $ \Path_{ {\aatchpair}}^\mathcal{F}(\la)\subseteq \Path_{ {\aatchpair}}  (\la)$ is the subset of paths which never leave the fundamental alcove $\mathcal{F}_\aatchpair$.  
 
\end{thmC}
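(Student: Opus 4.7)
The plan is to build the complex from ``simple reflection" Specht homomorphisms, verify that $d^2=0$ via braid-style relations in $\widehat{\mathfrak{S}}_\aatch$, and then establish exactness via a reflection-cancellation argument on paths, which simultaneously yields both the character formula and the path basis.

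First, for each $\bmu \trianglerighteq \bla$ and each simple reflection $s_i \in \widehat{\mathfrak{S}}_\aatch$ with $\ell(s_i\bmu) = \ell(\bmu) - 1$, I would produce an explicit Specht homomorphism $\varphi_{\bmu,i} : S_\bs(\bmu) \to S_\bs(s_i\bmu)\langle -1 \rangle$ by writing down a KLR/cellular ``box-moving" element between cellular bases. Such a map exists and is unique up to scalar because $\bmu$ and $s_i\bmu$ differ by a single simple reflection in the residue-preserving orbit described in Theorem~B. Setting $d = \sum (-1)^{\tau(\bmu,i)} \varphi_{\bmu,i}$ with signs chosen as in the classical BGG construction, the equation $d^{2}=0$ reduces to an anticommutation check on each rank-two Bruhat face. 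These faces correspond to the braid relations $(s_i s_j)^{m_{ij}} = 1$ with $m_{ij} \in \{2,3\}$ in the affine symmetric group, and since the composition of two Specht maps between distinct cell modules is unique up to scalar, the two directions around each face agree up to a universal sign absorbed into $\tau$.

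For exactness, the key idea is to compute the graded Euler character of $C_\bullet(\bla)$ on the level of paths. Each $[S_\bs(\bmu)]$ is a sum over $\Path_\aatchpair(\bmu)$, and using the $\widehat{\mathfrak{S}}_\aatch$-action from Theorem~B one reindexes $\bigsqcup_{\bmu \trianglerighteq \bla} \Path_\aatchpair(\bmu)$ as $\Path_\aatchpair(\bla)$ tagged by the sequence of walls crossed. A path that leaves $\mathcal{F}_\aatchpair$ then cancels against its reflection at the first wall it crosses, so the signed sum collapses onto the contribution of $\Path^\mathcal{F}_\aatchpair(\bla)$, which gives \eqref{p-KL-co} at once. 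To upgrade from characters to an actual long exact sequence, I would exploit the calibrated/unitary structure from Theorem~A: each $S_\bs(\bmu)$ decomposes into $\JM$-eigenspaces indexed by its paths, and one checks that the differentials restrict block-by-block to elementary chain contractions mirroring the combinatorial cancellation.

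With exactness in hand, the graded character formula is the Euler characteristic, and the path basis is identified as follows. The cellular basis of $S_\bs(\bla)$ is indexed by $\Path_\aatchpair(\bla)$, and the image of $d_1: C_1(\bla) \to S_\bs(\bla)$ is spanned, by the cancellation analysis above, precisely by cellular vectors indexed by paths that leave $\mathcal F_\aatchpair$; quotienting yields the stated basis $\{c_\sts \otimes_\ZZ \Bbbk : \sts \in \Path^\mathcal{F}_\aatchpair(\bla)\}$, and integrality over $\ZZ$ supplies the characteristic-free statement. The principal obstacle is the exactness step: while the character-level cancellation is clean, organising the $\JM$-eigenspace decompositions of all Specht modules in the complex into a single chain-level contraction compatible with both the alcove-reflection cancellation and the affine Weyl group action will require careful bookkeeping, presumably by induction on $\ell(\bmu)$ and reduction to rank-two subcomplexes.
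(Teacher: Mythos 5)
The overall outline you sketch---a direct construction of the differential, $d^{2}=0$ via the classical sign trick on rank-two faces, and a path-reflection cancellation giving the character formula---is very different from the paper's actual route, which does not attempt to prove the resolution from scratch. The paper instead invokes the isomorphism (from \cite{cell4us2}) between the quasi-hereditary quotient ${\sf f}^+_{n,\bs}\algebra{\sf f}^+_{n,\bs}$ and the diagrammatic Bott--Samelson algebra $\mathscr{S}_\aatchpair(n,\bs)$, cites the already-proved BGG resolution of $L(1_{\mathfrak{S}^\aatchpair})$ for antispherical Hecke categories (Theorem B of \cite{withemily}), and then uses the translation principle \cite[Proposition 7.4]{cell4us2} to transport that resolution to an arbitrary $\bla \in \mathcal{F}_\aatchpair(n)$. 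The characteristic-free basis is then obtained by an independent inductive argument via the branching rule and restriction, not by reading off the kernel/cokernel of $d_1$.

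The serious gap in your proposal is the exactness step, and you rightly flag it as "the principal obstacle." The specific mechanism you propose---decomposing each $S_\bs(\bmu)$ into $\JM$-eigenspaces indexed by paths and building a chain contraction block-by-block---does not go through, because the Specht modules $S_\bs(\bmu)$ for $\bmu \neq \bla$ are \emph{not} calibrated: the $\JM$-subalgebra acts on them by generalized (not genuine) eigenspaces and the cellular basis elements $\psi_\SSTS$ are eigenvectors only for the KLR idempotents $e_{\underline i}$, not for the $y_k$'s (equivalently, not for $X_k$). Hence there is no $\JM$-eigenbasis of $S_\bs(\bmu)$ to contract along, and the Euler-characteristic cancellation on paths cannot be lifted to a homotopy in the naive way. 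Similarly, the claim that the image of $d_1$ is "precisely the cellular vectors indexed by paths leaving $\mathcal{F}_\aatchpair$" is not how the maps behave: a Carter--Payne morphism between Specht modules does not act as a coordinate projection on cellular bases, so identifying its image with a subset of basis vectors requires a genuinely new argument, not just the character computation. The paper avoids all of this by outsourcing exactness to the Soergel-categorical side, where an inductive proof via light leaves and the antispherical module is available.
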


In the case of the unitary representations of the Hecke algebra of the symmetric group, the resolutions of Theorem~C were the subject of  the authors' previous work \cite{bns}.  Theorem~C vastly generalises this work to all unitary representations of  all cyclotomic Hecke algebras. 

Aspects of this story should be very familiar to the experts: we have an algebraic object
 (in this case a 
 cyclotomic or affine Hecke algebra)
  for which  
 there exists a ``nice family" of irreducible representations (in this case, the unitary representations) which can be combinatorially classified and constructed via explicit bases and BGG resolutions.  
 Analogous stories exist for ladder representations of  $p$-adic groups \cite{BC15b}, 
   finite dimensional representations of  (Kac--Moody) Lie algebras \cite{bgg,BGG2}, 
and homogeneous representations of  antispherical Hecke categories \cite{withemily}.  

We remark that while the definition of a unitary module depends crucially on the ground field being $\mathbb C$, the condition to be calibrated makes sense for arbitrary fields.    In this manner, Theorems~B and C both admit characteristic-free generalisations which are proven in this paper.

\medskip
\noindent{\bf Regular $p$-Kazhdan--Lusztig theory. } The coefficients in \cref{p-KL-co} are equal to ``regular" (or ``non-singular") inverse ($p$-)Kazhdan--Lusztig polynomials.  
In fact, the proof of Theorem B involves passing from the cyclotomic Hecke algebras to the setting of Elias--Williamson's diagrammatic category for ``regular" Soergel-bimodules.  
For the general linear group, ${\rm GL}_h$, the 
``regular" Soergel-bimodules
control the representation theory of the principal block (the block containing the trivial representation $\Bbbk=\Delta(k^h)$ for $n=kh$) if and only if $p> h$.   
On the other side of Schur--Weyl duality this means that ``regular" Soergel-bimodules
control the representation theory of the Serre subcategory of $\Bbbk\mathfrak{S}_n$-${\rm mod} $ corresponding to the poset $\{\la \mid \la\trianglerighteq (k^h) \text{ for }n=kh\}$ and we remark that the simple $\Bbbk\mathfrak{S}_n$-module labelled by the partition $ (k^h)$ is   calibrated providing  $p>h$.  
 
 For higher levels $\ell>1$, one can ask ``to what extent is the cyclotomic Hecke algebra controlled by regular  ($p$-)Kazhdan--Lusztig theory?".  Of course, the Schur--Weyl duality with the general linear group no longer exists. 
 However, one can speak of calibrated representations of the cyclotomic Hecke algebra.  
 In fact, the largest Serre subcategory of (a block of) the cyclotomic Hecke algebra controlled by 
  {\em regular}  $p$-Kazhdan--Lusztig  theory is given by the poset $\{\bmu \mid \bmu \trianglerighteq   \bla\}$ where $D_\bs(\bla  )$ is the minimal calibrated simple module in the block (under the order $ \trianglerighteq $).  
 Thus the Serre quotients carved out by calibrated representations of cyclotomic Hecke algebras   play  the same role as that  of  principal blocks  of algebraic groups for $p>h$. 
 

   
  

  
\medskip
\noindent{\bf Structure of the paper. } 
Section \ref{sec:combinatorics} introduces the combinatorics that will play an important role in this paper. Then we study unitary representations in Section \ref{sec:unitary} where we prove Theorem  A, see Theorem \ref{thm:A}.  Sections \ref{sec:multicali set} and \ref{sec:multicali} are devoted to the proof of Theorem B, which involves intricate combinatorial constructions. In Section \ref{newsec3} we recall previous work of the first author together with A. Cox and A. Hazi \cite{cell4us2} that will allow us to prove Theorem~C. We do this in Section \ref{sec:catBGG}. In this section we also discuss the consequences of our work in the representation theory of rational Cherednik algebras. Finally, in Appendix \ref{sec:level1} we use our techniques to give a complete classification of unitary representations of the Hecke algebra of the symmetric group. While this has mostly appeared in the literature, see \cite{stoica2009unitary}, we believe it gives a good feeling for the usage of calibrated representations in this setting, and corrects an oversight of \cite{stoica2009unitary}. 

\section{Combinatorics}\label{sec:combinatorics}


\subsection{Charges, multipartitions and tableaux} Fix $e\in\mathbb{Z}_{\geq 2}$ throughout this paper.  

\begin{definition}\label{def:cylindricalcharge} We call an $\ell$-tuple of integers $\bs=(s_1,s_2,\ldots, s_\ell)\in\mathbb{Z}^\ell$ an {\em $\ell$-charge} or simply a {\em charge}. Given $e\in\mathbb{Z}_{\geq 2}$, we say that $\bs$ is  {\em cylindrical}  if $s_1\leq s_2\leq\ldots \leq s_\ell< s_1+e$.    
\end{definition}

We define a {\em composition} $\lambda$ of $n$ to be a  finite sequence  of non-negative integers  $ (\lambda_1,\lambda_2, \ldots)$   and whose sum $|\lambda| = \lambda_1+\lambda_2 + \dots$ equals $n$.   We say that $\la$ is a partition if, in addition, this sequence is weakly decreasing.  We let $\la^t$ denote the {\em transpose partition}.  
An    {\em $\ell $-multicomposition} (respectively {\em $\ell$-multipartition} or simply {\em $\ell$-partition})  $\bla=(\lambda^1,\la^2,\ldots,\lambda^\ell)$ of $n$ is an $\ell $-tuple of   compositions (respectively   partitions) such that $|\bla| := |\lambda^1|+|\la^2|+\ldots+ |\lambda^\ell|=n$. 
We will denote the set of $\ell $-multicompositions (respectively $\ell$-partitions) of $n$ by
$\mathcal{C}_{\ell}(n)$ (respectively by $\mathscr{P}_{\ell}(n)$).   
Given  $\bla=(\lambda^1,\lambda^2,\ldots ,\lambda^\ell) \in \mathscr{P}_{\ell}(n)$, the {\em Young diagram} of $\bla$    is defined to be the set of {\em boxes} (or {\em nodes}), 
\[
\{(r,c,m) \mid  1\leq  c\leq \lambda^m_r,\;1
\leq m \leq \ell \}.
\]

We do not distinguish between the multipartition and its Young diagram. We draw the Young diagram of a partition by letting $c$ increase from left to right and $r$ increase from top to bottom. We refer to a box $(r,c,m)$ as being in the $r$th row and $c$th column of the $m$th component of $\bla$.  
We draw the Young diagram of a multipartition by placing the Young diagrams of $\la^1,\ldots,\la^\ell$ side by side from left to right as $m$ runs from $1$ to $\ell$. Finally, a tableau $\SSTT$ on a multipartition $\bla$ is a bijection from the set of boxes of $\bla$ to $\{1, 2, \dots, |\bla|\}$. The tableau $\SSTT$ is called \emph{standard} if it is increasing along the rows and columns of each component. We let $\Std(\bla)$ denote the set of all standard tableaux on $\bla$. If $\SSTT \in \Std(\bla)$ is a standard tableau, then $\Shape(\stt{\downarrow}_{\{1,\dots ,k\}})$ is the multipartition whose Young diagram consists of all the boxes with labels $\leq k$. Finally, we denote by $\varnothing$ the empty multipartition. 

\begin{example}
Let $\ell=3$ and $\bla=(\la^1,\la^2,\la^3)=((2,1),(4,2,1),(5))$. We draw the Young diagram of $\bla$ as follows:
\ytableausetup{mathmode,boxsize=1.2em}
\[ \ydiagram{2,1}\qquad\ydiagram{4,2,1}\qquad\ydiagram{5}\]
\end{example}

\begin{definition} A {\em charged $\ell$-partition} is the data of an $\ell$-partition $\bla$ together with an $\ell$-charge $\bs\in \ZZ^\ell$.  
 Given a box $b=(r,c,m)\in\bla$,  
we define its {\em charged content} to be  $\co^\bs(b)  = s_m+  c - r$ and we define its {\em residue} to be 
$\mathsf{res}^\bs(b)  :=  \co^\bs(b) \pmod e$.  
We refer to a box of residue  $i\in \ZZ/e\ZZ$ as an {\em $i$-box} (or {\em $i$-node}). 
\end{definition}
 Note that the residue of a box in $\bla$ depends on the choice of the charge $\bs\in\mathbb Z^\ell$. 
  For a tableau $\SSTT$ on $\bla$, we let ${{\rm res} (\SSTT)}$ denote  the {\em residue sequence} consisting  of  $\res (\stt^{-1}(k))$ for $k=1,\dots, n$ in order.

\begin{eg}
Let $\ell=3$ and $\bla=(\la^1,\la^2,\la^3)=((2,1),(4,2,1),(5))$. Let $\bs=(-1,2,0)\in\mathbb{Z}^3$. The Young diagram of $\bla$ with its boxes labeled by their charged contents looks as follows:
\[ \ytableausetup{mathmode,boxsize=1.25em}\scalefont{0.9}
\begin{ytableau}
\hbox{--}1&0\\
\hbox{--}2\\
\end{ytableau}
\qquad
\begin{ytableau}
2&3&4&5\\
1&2\\
0\\
\end{ytableau}
\qquad
\begin{ytableau}
0&1&2&3&4\\
\end{ytableau}
\]
Now take $e=4$, then labeling the boxes of the Young diagram of $\bla$ their residues we have:
\[ \ytableausetup{mathmode,boxsize=1.25em}\scalefont{0.9}
\begin{ytableau}
3&0\\
2\\
\end{ytableau}
\qquad
\begin{ytableau}
2&3&0&1\\
1&2\\
0\\
\end{ytableau}
\qquad
\begin{ytableau}
0&1&2&3&0\\
\end{ytableau}
\]
 and we have that 
$ \res(\SSTT_{\bla})= (0,2,1,0,3,2,1,3,2,0,2,0,3,1,0) \in (
 \mathbb Z / e  \mathbb Z)^{15},$ where $\SSTT_{\bla}$ is the reverse column-reading tableau on $\bla$, see Example \ref{ex:revertable} below.  
\end{eg}

\begin{definition} 
Given $\bs\in\mathbb{Z}^\ell$ and two $i$-boxes $(r,c,m), (r',c',m')\in\bla$ for some $i\in \ZZ/e\ZZ$,  
  we write $(r,c,m)\rhd_\bs (r',c',m')$ if 
  $\co^\bs(r,c,m) > \co^\bs(r',c',m')$ or 
    $\co^\bs(r,c,m) = \co^\bs(r',c',m')$ and $m<m'$.  
\color{black}
 For $\bla,\bmu\in \mathscr{P}_{ \ell} (n)$, we write   $ \bmu \trianglelefteq  \bla $  if there is a residue preserving  bijective   map ${\sf A}:[\bla] \to [\bmu]$ 
 such that either $  {\sf A}(r,c,m)\vartriangleleft_{\bs} (r,c,m) $  or $  {\sf A}(r,c,m)=(r,c,m) $ for all $(r,c,m)\in \bla$.  
 When $\bs\in\mathbb{Z}^\ell$ is cylindrical, we often write $\rhd$ instead of $\rhd_\bs$.  
 \end{definition}

\begin{definition}Given a partition $\la=(\la_1,\dots,\la_h)$ such that $\la_h>0$,  
we set $h(\la)$ to be the {\em height} of the partition, that is $h(\la)=h$.  
Given a multipartition $\bla=(\la^1,\la^2,\dots, \la^\ell)$, we set $h(\bla)=(h_1,\dots ,h_\ell)$ to be the $\ell$-composition formed   of the   heights of the component partitions.  
Given $\bla=(\la^1,\la^2,\dots, \la^\ell)$, we   define 
the {\em height} of the multipartition to be the integer $h_{\bla}=h_1+\dots +h_\ell$.  
\end{definition}

\begin{defn} 
Fix ${\bs } \in \ZZ^\ell$. We say that    a composition   $\underline{ h }= ( h _1,\dots, h _\ell) $ of $h$ is {\em ${\bs }$-admissible} if   $ h _m\leq   s_{m}-s_{m-1 } $ for $1<  m \leq \ell$ and  $ h _1\leq   e+s_1-s_{\ell}  $ and with at least one of these inequalities being strict.
 We refer to any value $1\leq m\leq \ell$ for which the inequality is strict as a {\em step change}.  
For an  $\bs$-admissible $\underline{ h }\in \mathbb N^\ell$, we let  
$\mathscr{P}_\aatchpair (n)$ denote the set of all  $\bla\in \mathscr{P}_\ell (n)$ such that $h(\bla)=\underline{ h }$.  
\end{defn}

  \begin{defn}\label{revertable} 
  Let $\aatchpair\in\mathbb{N}^\ell$ be $\bs$-admissible and let $1\leq m \leq \ell$. 
   Given $\bla \in \mathscr{P}_{\aatchpair}(n)$, 
we define the {\em reverse column reading tableau}, $\SSTT_{(m,\bla)}$,
 to be the standard tableau obtained by
  filling the first column of the $(m-1)$th component
  then  the first column of the $(m-2)$th component
  and so on until finally filling 
   the first column of the $m$th component
   and then repeating this procedure on the second columns, the third columns, etc.  
   \end{defn}

\begin{eg}\label{ex:revertable}  Definition \ref{revertable} is best illustrated via an example.
Let $\bs =(0,3,4		)$ with $e=7$.
We have that  $\la =((2,1),(4,2,1),(5))\in \mathscr{P}_{(2,3,1)}(15)$ 
and that $h_1=2<7+0-4=e+s_1-s_\ell$ and so $m=1$ is  a  step change (one can check that 
this step change is unique).  The reverse reading  column reading tableau 
$\SSTT_{1,\bla}$ is given by filling the first columns of the $3$rd component, $2$nd component, $1$st component, and then the 2nd columns of these components\dots hence obtaining 
   \[ 
\SSTT_{(1,\bla)}=\ytableausetup{mathmode,boxsize=1.2em}\scalefont{0.9}
\left(\begin{minipage}{1.15cm}\begin{ytableau}
5&10\\
6\\
\end{ytableau}\end{minipage}\;\;,
\quad
\begin{minipage}{2.05cm}
\begin{ytableau}
2&8&12&14\\
3&9\\
4\\
\end{ytableau}\end{minipage}\;\;,
\quad
\begin{minipage}{2.5cm}
\begin{ytableau}
1&7&11&13&15\\
\end{ytableau}\end{minipage}\right).
\]
 \end{eg}

 Given $\bla \in \mathscr{P}_\aatchpair(n)$, we let ${\rm Add}_\aatchpair(\bla)$ (respectively 
${\rm Rem}_\aatchpair(\bla)$) denote the set of all  addable (respectively removable) boxes of the Young diagram so that the resulting Young diagram is the Young diagram  of a multipartition 
belonging to $\mathscr{P}_\aatchpair(n+1)$ (respectively $\mathscr{P}_\aatchpair(n-1)$).  
   We let ${\rm Add}^i_\aatchpair(\bla)$ (respectively 
${\rm Rem}^i_\aatchpair(\bla)$) denote the subset of nodes of residue equal to $i\in \ZZ/e\ZZ$.  
 Dropping  the subscript $\aatchpair$ we obtain the usual sets of addable and removable $i$-nodes of a multipartition.
 
  \begin{definition}
   Given $1\leq k\leq n$ and a standard tableau $\stt$ on a multipartition $\bla$, we let ${\mathcal A} _\stt(k)$, 
(respectively ${\mathcal R} _\stt(k)$)  denote the set of   all addable $\res (\stt^{-1}(k))$-boxes (respectively all
removable   $\res (\stt^{-1}(k))$-boxes)  of the multipartition  $\Shape(\stt{\downarrow}_{\{1,\dots ,k\}})$ which
   are more dominant  than  $\stt^{-1}(k)$.
        We define the   degree of $\stt\in \Std(\bla)$ for $\bla \in  \mathscr{P} (n)$  as follows,
$$
\deg_{\bs }  (\stt) = \sum_{k=1}^n \left(	|{\mathcal A} _\stt(k)|-|{\mathcal R} _\stt(k)|	\right).
$$
\end{definition}

\begin{rmk} 
Let $\aatchpair \in \mathbb N^\ell$  be ${\bs }$-admissible and $1\leq m \leq \ell$ be a step change.   
We have  that $\deg_{\bs } (\SSTT_{(m,\bla)})=0$  for $\bla \in \mathscr{P}_{ \aatchpair}(n)$.  
\end{rmk}

\subsection{The $\sle$-crystal}\label{sec:crystal}

Fix $e\geq 2$ and a charge $\bs \in\mathbb{Z}^\ell$. 
The $\sle$-crystal is a simply directed graph on the set of vertices consisting of all $\ell$-partitions. Its arrows are given by a rule for adding at most one box of each residue $i\in\mathbb{Z}/e\mathbb{Z}$ to a given $\ell$-partition. 
%
%
%
We define the {\em crystal operators} $\tilde{e}_i$ and $\tilde{f}_i$, which remove and add (respectively) at most one box of residue $i$.
\begin{definition}
{{\cite[Theorem 2.8]{FLOTW99}  }} \label{def:crystalops} 
Fix $i\in\mathbb{Z}/e\mathbb{Z}$, $\bs\in\mathbb{Z}^\ell$, and an $\ell$-partition $\bla$.
\begin{itemize} [leftmargin=*]
\item Form the {\em $i$-word} of $\bla$ by listing all addable and removable $i$-boxes of $\bla$ in increasing order from left to right (according to $\rhd_\bs$) and then replacing each addable box in the list by the symbol $+$ and each removable box by the symbol $-$. 
\item Next, find the
 {\em reduced $i$-word} of $\bla$ by recursively canceling all adjacent pairs $(-+)$ in the $i$-word of $\bla$. The reduced $i$-word is of the form $(+)^a(-)^b$ for some $a,b\in\mathbb{Z}_{\geq 0}$.
 \item Define $\tilde{f}_i$ as the operator that adds the addable $i$-box to $\bla$ corresponding to the rightmost $+$ in the reduced $i$-word of $\bla$. If the reduced $i$-word of $\bla$ contains no $+$ then we declare that $\tilde{f}_i\bla=0$. Likewise, define $\tilde{e}_i$ as the operator that removes the removable $i$-box from $\bla $ corresponding to the leftmost $-$ in the reduced $i$-word of $\bla$. If the reduced $i$-word of $\bla$ contains no $-$ then we declare that $\tilde{e}_i\bla=0$.
 \end{itemize}
 The directed graph with vertices all $\ell$-partitions and arrows $\bla\stackrel{i}{\rightarrow}\bmu$ if and only if $\bmu=\tilde{f}_i\bla$, $i\in\mathbb{Z}/e\mathbb{Z}$, is called the {\em $\widehat{\mathfrak{sl}}_e$-crystal}. We have $\tilde{f}_i\bmu=\bla$ if and only if $\tilde{e}_i\bla=\bmu$.
 \end{definition}
 
  \begin{definition}

The $i$-box that is added by $\tilde{f}_{i}$, if it exists, is called a \emph{good} addable $i$-box of $\bla$. Similarly, the box that is removed by $\tilde{e}_{i}$ is called a \emph{good} removable $i$-box of $\bla$.
  \end{definition}

\begin{remark}
Given $i\in\mathbb{Z}/e\mathbb{Z}$, if $\bla$ has only one removable $i$-box $b$ and no addable $i$-box of charged content greater than or equal to $\co^\bs(b)$, then $\tilde{e}_i(\bla)=\bla\setminus\{b\}$. We will use this observation without further mention.
\end{remark}

The $\sle$-crystal is in general disconnected and we will be interested in its connected component containing the empty multipartition $\varnothing$. The vertices in this connected component of the  $\widehat{\mathfrak{sl}}_e$-crystal do not, in general, admit closed formulas   (one must instead search for a sequence of good nodes by repeated use of Definition \ref{def:crystalops}).  However, for our choice of cylindric charge $\bs \in \ZZ^\ell$ (as in Definition \ref{def:cylindricalcharge}) we have the following combinatorial description from \cite{FLOTW99}.  

\begin{definition}\label{def:FLOTW}
  An $\ell$-partition $\bla$ with charge $\bs\in\mathbb{Z}^\ell$ is {\em FLOTW} if the following conditions hold:
\begin{enumerate}[leftmargin=*]
\item the charge $\bs$ is cylindrical,
\item
The multipartition $\bla$ is {\em cylindrical}, that is: \begin{enumerate}
\item $\la^j_k\geq \la^{j+1}_{k+s_{j+1}-s_j}$ for all $j\in\{1,\ldots, \ell-1\}$ and for all $k\geq 1$,
\item $\la^\ell_k\geq \la^1_{k+e+s_1-s_\ell}$ for all $k\geq 1$,
\end{enumerate}
\item for all $\alpha>0$, the residues modulo $e$ of the rightmost boxes of the rows of size $\alpha$ do not cover $\{0,\ldots,e-1\}$.
\end{enumerate} 
\end{definition}

\section{Unitary representations}\label{sec:unitary}
\subsection{The affine Hecke algebra} Let us recall that the (extended) affine braid group, $\braid_{n}$ has generators $T_{1}, \dots, T_{n-1}, x_{1}, \dots, x_{n}$ subject to the following relations:
\begin{itemize}
\item[(a)] $T_{i}T_{i+1}T_{i} = T_{i+1}T_{i}T_{i+1}$ and  $T_{i}T_{j} = T_{j}T_{i}$ if $|i - j| > 1$.
\item[(b)] $x_{i}x_{j} = x_{j}x_{i}$ for every $i, j = 1, \dots, n$. 
\item[(c)] $T_{i}x_{i}T_{i} = x_{i+1}$, $T_{i}x_{j} = x_{j}T_{i}$ if $j \neq i, i+1$. 
\end{itemize}

We may think of $\braid_{n}$ as the braid group on the cylinder. The element $T_{i}$ correspond to the usual braid generator that   crosses two adjacent strands, and the element $x_{i}$ corresponds to looping the $i$-th strand around the cylinder so that it comes back to the $i$-th position, see Figure \ref{fig:braids} below.

\begin{figure}[h!]
    \centering
     $$
x_i=
\begin{minipage}{7cm}
 \begin{tikzpicture}[scale=0.9]

 \draw(0,0) rectangle (7.5,2);

\draw[thick](0,1)--(2.9,1) to [out=0,in=90] (3.5,0);
\draw[thick](0,1)--(2.9,1) to [out=0,in=90] (3.5,0);

 \foreach \i in {0.5,1,1.5,
 3.5,4,4.5,3,
 6,7,6.5}
{
 
 \fill(\i,0) circle (1.5pt);
\fill(\i,2) circle (1.5pt);
 }

 \foreach \i in {0.5,1,1.5,3}
{\fill[white] (\i,1) circle (2.5pt);}

 \foreach \i in {0.5,1,1.5,
 4,4.5,3,
 6,7,6.5}
{
\draw[thick](\i,0)--(\i,2);
}

 \fill[white](1.85,-0.2) rectangle (2.65,0.2); 
 \draw[densely dotted] (1.85,0) -- (2.65,0); 
 \fill[white](1.85,2-0.2) rectangle (2.65,2.2); 
 \draw[densely dotted] (1.85,2) -- (2.65,2);

 \fill[white](1.85+3,-0.2) rectangle (2.65+3,0.2); 
 \draw[densely dotted] (1.85+3,0) -- (2.65+3,0); 
 \fill[white](1.85+3,2-0.2) rectangle (2.65+3,2.2); 
 \draw[densely dotted] (1.85+3,2) -- (2.65+3,2);

  \foreach \i in {3.5,4,4.5,6,6.5,7}
{\fill[white] (\i,1) circle (2.5pt);}

 \draw[thick](3.5,2) to [out=-90,in=180]  (4.1,1) -- (7.5,1); 

 \draw[very thick,white,densely dotted]   (1.85,1) -- (2.55,1); 

 \draw[very thick,white,densely dotted]   (5.55,1) -- (4.95,1);

  \end{tikzpicture}
 \end{minipage}  \qquad 
 T_i=
\begin{minipage}{8cm}
 \begin{tikzpicture}[scale=0.95=]

 \draw(0,0) rectangle (7.5,2); 
 \foreach \i in {0.5,1,1.5,
 3.5,4,4.5,3,
 6,7,6.5}
{
 
 \fill(\i,0) circle (1.5pt);
\fill(\i,2) circle (1.5pt);
 }

 \foreach \i in {0.5,1,1.5,
 4.5,3,
 6,7,6.5}
{
\draw[thick](\i,0)--(\i,2);
}

 \draw[thick] (4,0) --(3.5,2);
 
\draw[thick] (3.5,0) --(4,2)coordinate [pos=0.25] (hello)
coordinate [pos=0.75] (hello2) ; 

\draw[line width=2.5,white] (hello)--(hello2);
\draw[thick] (3.5,0) --(4,2);

 \fill[white](1.85,-0.2) rectangle (2.65,0.2); 
 \draw[densely dotted] (1.85,0) -- (2.65,0); 
 \fill[white](1.85,2-0.2) rectangle (2.65,2.2); 
 \draw[densely dotted] (1.85,2) -- (2.65,2);

 \fill[white](1.85+3,-0.2) rectangle (2.65+3,0.2); 
 \draw[densely dotted] (1.85+3,0) -- (2.65+3,0); 
 \fill[white](1.85+3,2-0.2) rectangle (2.65+3,2.2); 
 \draw[densely dotted] (1.85+3,2) -- (2.65+3,2);

  \end{tikzpicture}
 \end{minipage}\quad 
$$

    \caption{The braid generators $x_{i}$ and $T_{i}$. We remark that these are braids on a cylinder, so the left and right-sides of the rectangles are to be identified.}
    \label{fig:braids}
\end{figure}
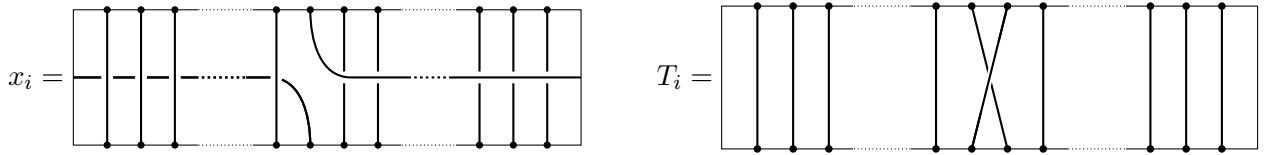
The affine Hecke algebra is the quotient of the group algebra of $\braid_{n}$ by a skein-type relation: 

\begin{definition}
Let $\RR$ be a domain and $q \in \RR^{\times}$, $q \neq \pm 1$. The (extended) affine Hecke algebra $\AHA_{q}(n, \RR)$ is the quotient of the group algebra $\RR\braid_{n}$ by the relations
$$
(T_{i} + 1)(T_{i} - q) = 0
$$
  for $i = 1, \dots, n-1$. When the domain $\RR$ is clear from the outset, we will simply denote the affine Hecke algebra by $\AHA_{q}(n)$. 
\end{definition}

\begin{remark}
 It is customary to define the elements $X_{i} := q^{1-i}x_{i}$, so that in $\AHA_{q}(n)$ we have the relation $T_{i}X_{i}T_{i} = qX_{i+1}$. 
\end{remark}
\begin{remark}
The finite Hecke algebra $H_{q}(n)$ can be realized as a subalgebra of $\AHA_{q}(n)$ generated by $T_{1}, \dots, T_{n-1}$. It can also be realized as the quotient $\AHA_{q}(n)/(X_1 - 1)$. This is akin to the finite braid group $B_{n}$ being both a subgroup and a quotient group of $\braid_{n}$. 
\end{remark}
\begin{remark}
The elements $X_{1}, \dots, X_{n}$ are known as the \emph{Jucys-Murphy} elements of $\AHA_{q}(n)$. We will call the algebra $\JM := \RR[X_{1}^{\pm 1}, \dots, X_{n}^{\pm 1}] \subseteq \AHA_{q}(n)$ the \emph{Jucys-Murphy} subalgebra. It is isomorphic to the algebra of Laurent polynomials in $n$ variables and we have a  vector space decomposition $\AHA_{q}(n) = H_{q}(n) \otimes_{\RR} \JM$. 
\end{remark}

When $\RR$ is a domain of characteristic zero it is known, see e.g. \cite[Proposition 7.1.14]{MR2838836}, that the center of $\AHA_{q}(n)$ is $Z(\AHA_{q}(n)) = \JM^{S_{n}}$, the algebra of symmetric Laurent polynomials in the Jucys-Murphy elements. Thus, $\AHA_{q}(n)$ is finite over its center and, when $\RR$ is a field $\FF$ of characteristic zero, every irreducible representation of $\AHA_{q}(n)$ is finite-dimensional. If moreover $\FF$ is algebraically closed it follows, looking at the eigenvalues of $X_{1}$, that every irreducible representation of $\AHA_{q}(n)$ factors through an algebra of the form
$$\cyc_{q, Q_1, \dots, Q_{\ell}}(n) := \frac{\AHA_{q}(n)}{\left(\prod_{i}^{\ell}(X_{1} - Q_{i})\right)}
$$
This is known as the \emph{cyclotomic Hecke algebra}, or the \emph{Ariki-Koike algebra}. It is a finite-dimensional $\FF$-algebra, of dimension precisely $n!\ell^{n}$, \cite{ak94}. 

\subsection{Unitary representations} For this subsection, we let $\RR = \C$, the  complex field. We make the convention that a Hermitian form on a finite-dimensional complex vector space is linear on the first variable and conjugate-linear on the second. Recall that we have defined the affine Hecke algebra $\AHA_{q}(n)$ as a quotient of the group algebra $\C\braid_{n}$, so the following notion makes sense.

\begin{definition}\label{def:unitary}
We say that a finite-dimensional $\AHA_{q}(n)$-representation is unitary if it admits a positive-definite $\braid_{n}$-invariant Hermitian form. 
\end{definition}

Let us remark that not all affine Hecke algebras admit nontrivial unitary representations, where by non-trivial we mean that at least one $T_{i}$ does not act by $-1$. Indeed, the parameter $q$ plays an essential role in here.

\begin{lemma}\label{lem: q unit}
Assume that $\AHA_{q}(n)$ admits a nontrivial unitary representation. Then, $q \in \C^{\times}$ lies in the unit circle. 
\end{lemma}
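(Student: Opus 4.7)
The plan is to exploit the basic principle that in a unitary representation every group element acts by a unitary operator, whose spectrum therefore lies on the unit circle. Applied to $T_i \in \braid_n$, this will force any non-$(-1)$ eigenvalue to have modulus one; but the Hecke relation restricts the spectrum to $\{-1,q\}$, so $q$ itself must lie on the unit circle as soon as some $T_i$ fails to act as $-1$.

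More concretely, let $(V,\langle -,-\rangle)$ be a nontrivial unitary $\AHA_q(n)$-module. Since $\langle -,-\rangle$ is $\braid_n$-invariant, each generator $T_i \in \braid_n$ acts on $V$ as an isometry of a finite-dimensional Hermitian inner product space, and since $T_i$ is invertible in $\braid_n$, its image on $V$ is a unitary operator. In particular, every eigenvalue of $T_i$ has absolute value $1$.

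Next, the defining relation $(T_i+1)(T_i-q)=0$ in $\AHA_q(n)$ implies that the minimal polynomial of the operator $T_i$ on $V$ divides $(x+1)(x-q)$, so every eigenvalue of $T_i$ lies in $\{-1,q\}$. By the nontriviality hypothesis, there exists some index $i$ for which $T_i \neq -\operatorname{Id}_V$; then $T_i+1$ is not the zero operator on $V$, so we may pick $v \in V$ with $(T_i+1)v \neq 0$. The relation forces $(T_i-q)(T_i+1)v = 0$, so $(T_i+1)v$ is a nonzero $q$-eigenvector of $T_i$. Combining with the previous paragraph gives $|q|=1$, as required.

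The argument is essentially a one-liner once the definitions are unpacked; the only subtle point is verifying that a $\braid_n$-invariant positive-definite Hermitian form really does make each $T_i$ act unitarily (as opposed to just symmetrically), which follows because $T_i$ is an element of the group $\braid_n$, not merely of the quotient algebra, so both $T_i$ and $T_i^{-1}$ preserve the form. I do not anticipate any genuine obstacle here — the main content of the lemma is to record that the unit-circle hypothesis on $q$ is a \emph{necessary} condition before one begins analysing which $q$ on the unit circle actually admit unitary modules.
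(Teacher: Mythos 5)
Your proof is correct and follows essentially the same route as the paper's: both produce a $q$-eigenvector for some $T_i$ and then invoke unitarity of the $\braid_n$-action to conclude $|q|=1$ (the paper's Remark 2.6 even records the general principle you lead with, that eigenvalues of a unitary operator lie on the unit circle). Your use of $(T_i+1)v$ to exhibit the $q$-eigenvector is a slightly more explicit justification of the paper's bare assertion that such an eigenvector "exists because $M$ is nontrivial," but the substance is the same.
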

\begin{proof}
Let $M$ be a nontrivial unitary $\AHA_{q}(n)$-representation. Let $m \in M$ be an eigenvector for $T_{1}$ with eigenvalue $q$, which we know exists because $M$ is nontrivial. Then
$$
0 \neq q\langle m, m \rangle = \langle T_{1}m, m \rangle = \langle m, T_{1}^{-1}m\rangle = \overline{q^{-1}}\langle m, m\rangle
$$
 so that $q = \overline{q^{-1}}$. Thus, $q$ is in the unit circle.
\end{proof}

\begin{remark}\label{rmk: unit circle}
We remark that Lemma \ref{lem: q unit} follows from the following more general result. Let $G$ be any group and let $M$ be a finite-dimensional unitary representation of $G$. Then, every eigenvalue of $g$ on $M$ lies in the unit circle, for any $g \in G$. Note that this statement is obvious when $G$ is a finite group as in this case every eigenvalue of $g$ is, in fact, a root of unity. The proof in the general case is just as that of Lemma \ref{lem: q unit}.
\end{remark}

Since every cyclotomic Hecke algebra $\cyc_{q, Q_1, \dots, Q_{\ell}}(n)$ is a quotient of the affine Hecke algebra it makes sense to speak about unitary representations of $\cyc_{q, Q_1, \dots, Q_{\ell}}(n)$. Just as in Lemma \ref{lem: q unit}, we have that if $\cyc_{q, Q_1, \dots, Q_{\ell}}(n)$ admits a unitary representation then all complex numbers $q, Q_1, \dots, Q_{\ell} \in \C^{\times}$ must lie in the unit circle. 

Let us remark that the cyclotomic Hecke algebra $\cyc_{q, Q_1, \dots, Q_{\ell}}(n)$ is a quotient of the group algebra of a different braid group: the braid group $B(\ell, 1, n)$, which is defined as follows. Let $G(\ell, 1, n) := S_{n} \ltimes (\Z/\ell\Z)^{n}$ be the cyclotomic group. If we think of this group as the group of $n \times n$ permutation matrices whose nonzero entries are $\ell$-roots of unity, we get an action of $G(\ell, 1, n)$ on $\fh := \C^{n}$. Let $\fh^{reg}$ be the locus where this action is free, it can be shown that this is the complement of a hyperplane arrangement in $\C^n$. Then,
$$
B(\ell, 1, n) := \pi_{1}(\fh^{reg}/G(\ell, 1, n)).
$$

For example, when $\ell = 1$, the group $B(1, 1, n)$ is the usual Artin braid group on $n$ strands. It is clear from the definitions, see e.g. \cite{MR1637497}, that a $\cyc_{q, Q_1, \dots, Q_{\ell}}(n)$-representation is unitary if and only if it admits a positive-definite $B(\ell, 1, n)$-invariant Hermitian form. On the other hand, Rouquier has shown using the representation theory of rational Cherednik algebras, \cite[Proposition 4.5.4]{MR4054879}, that every irreducible $\cyc_{q, Q_1, \dots, Q_{\ell}}(n)$-representation admits a (unique up to $\mathbb{R}^{\times}$-scalars) non-degenerate $B(\ell, 1, n)$-invariant Hermitian form. Thus, the question of unitaricity is that of \emph{positive-definiteness} of this form. 

\subsection{Unitary representations via $*$-products} In \cite{barbasch2015star}, Barbasch and Ciubotaru define a class of representations that are closely related to the unitary representations we study in this paper. The goal of this section is to explore this relation. In order to do so, throughout this subsection $\RR = \C[\qq, \qq^{-1}]$ and $\AHA_{\qq}(n) := \AHA_{\qq}(n, \RR)$. 

\begin{definition}
A $*$-product on $\AHA_{\qq}(n)$ is a conjugate-linear, involutive antiautomorphism of $\AHA_{\qq}(n)$. Given a $*$-product on $\AHA_{\qq}(n)$, a representation $M$ of $\AHA_{\qq}(n)$ is called $*$-unitary if it admits a positive-definite Hermitian form which is $\AHA_{\qq}(n)$-invariant in the sense that
$$
\langle am, m'\rangle = \langle m, a^{*}m'\rangle
$$
for every $m, m' \in M$, $a \in \AHA_{\qq}(n)$. 
\end{definition}

In \cite{barbasch2015star}, the authors define  $*$-products on $\AHA_{\qq}(n)$ and study the notion of unitary representations for these $*$-products. These are the first two $*$-products of the following proposition.

\begin{proposition}\label{prop:star products}
The following formulas define $*$-products on $\AHA_{\qq}(n)$.
\begin{enumerate}
\item $T_{i}^{\bullet} = T_{i}$, $X_{i}^{\bullet} = X_{i}$, $\qq^{\bullet} = \qq$.
\item $T_{i}^{\star} = T_{i}$, $X_{i}^{\star} = T_{w_{0}}X^{-1}_{n-i+1}T_{w_{0}}^{-1}$,$\qq^{\star} = \qq$, where $T_{w_{0}} = T_{1}T_{2}\cdots T_{n-1}T_{1}T_{2}\cdots T_{n-2}\cdots T_{1}T_{2}T_{1}$.  
\item $T_{i}^{\dagger} = T_{i}^{-1}$, $X_{i}^{\dagger} = X_{i}^{-1}$, $\qq^{\dagger} = \qq^{-1}$. 
\end{enumerate}
\end{proposition}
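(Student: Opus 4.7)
The plan is to verify, for each of the three candidate maps, the three defining properties of a $*$-product: conjugate-linearity, involutivity on generators, and the antiautomorphism property. Conjugate-linearity is immediate from the prescribed action on $\qq$, and involutivity is a direct inspection on the finite list of generators once we know that the map is well-defined. Thus the real content is to show that each formula respects the defining relations of $\AHA_{\qq}(n)$ after reversing the order of multiplication, i.e.\ that the assignment extends to an $\RR$-linear antiautomorphism.

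Case $\bullet$ is essentially automatic. Every defining relation of $\AHA_{\qq}(n)$ is either of the form ``$ab=ba$'' (commutation of $T$'s far apart, commutation of $X$'s, and commutation of $T_i$ with $X_j$ for $j\neq i,i+1$), or it is palindromic in the generators involved (the braid relation $T_iT_{i+1}T_i=T_{i+1}T_iT_{i+1}$ and the crossing relation $T_iX_iT_i=\qq X_{i+1}$). The Hecke relation $(T_i+1)(T_i-\qq)=0$ becomes $(T_i-\qq)(T_i+1)=0$, which is the same ideal. So $\bullet$ extends to an involutive antiautomorphism.

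Case $\dagger$ reduces, for each relation, to applying ${}^{-1}$ to the original relation. For the braid and commutation relations this is immediate. For the Hecke relation, applying $\dagger$ produces $(T_i^{-1}-\qq^{-1})(T_i^{-1}+1)=0$, which I would verify by multiplying by $\qq T_i^2$ on the left and reducing $T_i^2$ via the original Hecke relation $T_i^2=(\qq-1)T_i+\qq$. For the crossing relation $T_iX_iT_i=\qq X_{i+1}$, the $\dagger$-image $T_i^{-1}X_i^{-1}T_i^{-1}=\qq^{-1}X_{i+1}^{-1}$ is obtained just by inverting both sides of the original relation. Involutivity is then a one-line check on generators.

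Case $\star$ is the main obstacle and the only computation of substance. The key input is that $T_{w_0}$ implements on the parabolic generators the Dynkin involution of type $A_{n-1}$: explicitly $T_{w_0}T_iT_{w_0}^{-1}=T_{n-i}$, which follows from $w_0s_iw_0^{-1}=s_{n-i}$ in $S_n$ together with the independence of $T_{w_0}$ from the choice of reduced expression. With this in hand, each relation of $\AHA_{\qq}(n)$ involving $X$'s becomes, under $\star$, the ``same'' relation conjugated by $T_{w_0}$ with indices reflected $i\mapsto n-i+1$ and $X_j$ replaced by $X_j^{-1}$. Concretely, the commutation $T_iX_j=X_jT_i$ for $j\neq i,i+1$ becomes, after moving $T_{w_0}$ past itself, the commutation of $T_{n-i}$ with $X_{n-j+1}^{-1}$, which holds because $n-j+1\neq n-i,n-i+1$. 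The crossing relation $T_iX_iT_i=\qq X_{i+1}$, after conjugating by $T_{w_0}^{-1}$ on the $\star$-image, reduces to $T_{n-i}X_{n-i+1}^{-1}T_{n-i}=\qq X_{n-i}^{-1}$, which in turn is equivalent (by inversion) to the original crossing relation with index $n-i$. For involutivity on $X_i$, I would observe that $(T_{w_0})^\star=T_{w_0}$: since $w_0$ is an involution, the reverse of any reduced expression for $w_0$ is again a reduced expression for $w_0$, so the antiautomorphism $\star$ (which fixes each $T_i$) necessarily fixes $T_{w_0}$. Then $(X_i)^{\star\star}=T_{w_0}^{-1}(X_{n-i+1}^{-1})^\star T_{w_0}=X_i$ after one more application of the defining formula. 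This completes the three verifications.
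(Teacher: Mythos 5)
Your proof is correct. The one substantive difference from the paper is in scope rather than logic: for $\bullet$ and $\star$ the paper simply cites \cite[Definition 2.3.1]{barbasch2015star} and gives no argument, reserving the actual verification for $\dagger$ alone, whereas you prove all three cases from scratch. Your argument for $\dagger$ is essentially identical to the paper's (the paper observes directly that $(T_i+1)(T_i-\qq)=0$ is equivalent to $(T_i^{-1}+1)(T_i^{-1}-\qq^{-1})=0$, and checks $(T_iX_iT_i)^\dagger = (T_iX_iT_i)^{-1} = \qq^{-1}X_{i+1}^{-1} = (\qq X_{i+1})^\dagger$, i.e.\ the same inversion trick you describe). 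Your treatment of $\bullet$ via the palindromic/commutation dichotomy of the relations is a clean way to package what is implicit in the reference. Your treatment of $\star$ is the real added value: the reduction to $T_{w_0}T_iT_{w_0}^{-1}=T_{n-i}$, the conjugation-by-$T_{w_0}$ computation for the crossing relation landing on $T_{n-i}X_{n-i+1}^{-1}T_{n-i}=\qq X_{n-i}^{-1}$, and the involutivity check via $T_{w_0}^\star=T_{w_0}$ (because $w_0$ is an involution and $T_{w_0}$ is independent of reduced expression) are all correct and give a self-contained proof that the paper itself does not supply. This makes your write-up more complete than the paper's, at the cost of being longer; the paper's choice to cite Barbasch--Ciubotaru is the more economical option if one is comfortable relying on that reference.
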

\begin{proof}
For $\bullet$ and $\star$, see \cite[Definition 2.3.1]{barbasch2015star}.. For $\dagger$, note that the relation $(T_{i}+1)(T_{i}-\qq) = 0$ is equivalent to $(T_{i}^{-1} + 1)(T_{i}^{-1} - \qq^{-1}) = 0$, so this is preserved under $\dagger$.  It is also clear that $\dagger$ preserves the relations among the $X_{i}$, as well as the braid relations among the $T_{i}$. Finally, we have
$$
(T_{i}X_{i}T_{i})^{\dagger} = T_{i}^{\dagger}X_{i}^{\dagger}T_{i}^{\dagger} = T_{i}^{-1}X_{i}^{-1}T_{i}^{-1} = (T_{i}X_{i}T_{i})^{-1} = \qq^{-1}X_{i+1}^{-1} = (\qq X_{i+1})^{\dagger}
$$
  which finishes the proof.
\end{proof}

\begin{rmk}
In \cite[Definition 2.4.1]{barbasch2015star}, the authors term an $*$-product \emph{admissible} if $\qq^{*} = \qq$ and $T_{i}^{*} = T_{i}$ for $i = 1, \dots, n-1$. So the $*$-products $\bullet$ and $\star$ of Proposition \ref{prop:star products} are admissible, while $\dagger$ is not. It is conjectured, \cite[Conjecture 2.4.2]{barbasch2015star} that $\bullet$ and $\star$ are essentially the only admissible $*$-products on $\AHA_{\qq}(n)$. 
\end{rmk}

It is clear that a representation $M$ of $\AHA_{\qq}(n)$ is unitary in the sense of Definition \ref{def:unitary} if and only if it is $\dagger$-unitary in the sense of \cite{barbasch2015star}. Representations that are $\star$-unitary and $\bullet$-unitary have been studied in \emph{loc. cit.}, where they are shown to be related to unitary representations of general linear groups over $p$-adic fields. Note that both $\bullet$ and $\star$ commute with the action of $\qq$, while $\dagger$ does not. Nevertheless, the $*$-products $\bullet$ and $\dagger$ are closely related, as witnessed by the following result.

\begin{proposition}
The map $\iota$ defined on generators by $\iota(T_{i}) = T_{i}^{-1}, \iota(X_{i}) = X_{i}^{-1}$ and $\iota(\qq) = \qq^{-1}$ extends to a $\C$-linear involutive algebra automorphism of $\AHA_{\qq}(n)$. Moreover, $\iota \circ \dagger = \bullet = \dagger \circ \iota$. 
\end{proposition}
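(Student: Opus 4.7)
The plan is to verify that $\iota$ preserves each defining relation of $\AHA_{\qq}(n)$, conclude it is a well-defined $\C$-algebra homomorphism, and then check the remaining claims on generators since all maps involved are (anti)automorphisms.

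First, I would observe that the relations in (a) and (b) of the presentation of the affine braid group are purely multiplicative identities in the generators $T_{i}$ and $x_{i}$, so applying $\iota$ and inverting both sides shows they are preserved. For the mixed relation $T_{i}x_{i}T_{i} = x_{i+1}$ (equivalently $T_{i}X_{i}T_{i} = \qq X_{i+1}$), taking the inverse of both sides yields $T_{i}^{-1}X_{i}^{-1}T_{i}^{-1} = \qq^{-1}X_{i+1}^{-1}$, which is exactly the image under $\iota$. The commutations $T_{i}x_{j} = x_{j}T_{i}$ for $j \ne i,i+1$ are handled the same way. The key point is the Hecke quadratic relation: one checks directly, using $T_{i}^{-1} = \qq^{-1}T_{i} - 1 + \qq^{-1}$, that
\[
(T_{i}^{-1}+1)(T_{i}^{-1} - \qq^{-1}) = \qq^{-1}(T_{i}+1)(T_{i} - \qq) = 0,
\]
so the image of the Hecke relation in $\AHA_{\qq}(n)$ is automatically zero. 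Hence $\iota$ extends to a $\C$-algebra endomorphism. Involutivity is immediate from $\iota^{2}(T_{i}) = T_{i}$, $\iota^{2}(X_{i}) = X_{i}$ and $\iota^{2}(\qq) = \qq$, which also shows that $\iota$ is an automorphism.

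For the identities $\iota \circ \dagger = \bullet = \dagger \circ \iota$, note that $\iota$ is a $\C$-linear algebra \emph{automorphism} while $\dagger$ is a conjugate-linear \emph{anti}automorphism; thus both $\iota \circ \dagger$ and $\dagger \circ \iota$ are conjugate-linear antiautomorphisms, as is $\bullet$. Since $\AHA_{\qq}(n)$ is generated by $T_{1}, \dots, T_{n-1}, X_{1}^{\pm 1}, \dots, X_{n}^{\pm 1}$ together with the central element $\qq^{\pm 1}$, it suffices to check equality on these generators. A direct computation yields $\iota(\dagger(T_{i})) = \iota(T_{i}^{-1}) = T_{i} = T_{i}^{\bullet}$, and analogously $\iota(\dagger(X_{i})) = X_{i} = X_{i}^{\bullet}$ and $\iota(\dagger(\qq)) = \qq = \qq^{\bullet}$; the identical computation works for $\dagger \circ \iota$.

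I do not anticipate a genuine obstacle here; the only nontrivial verification is that the Hecke relation is preserved under $\qq \mapsto \qq^{-1}$, $T_{i} \mapsto T_{i}^{-1}$, and this is handled by the elementary algebraic identity displayed above. The rest of the argument consists of straightforward bookkeeping on generators.
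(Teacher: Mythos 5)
Your proof is correct and takes essentially the same route as the paper: verify on generators that $\iota$ preserves the defining relations (the only nontrivial one being the Hecke quadratic), then observe that $\iota\circ\dagger$, $\dagger\circ\iota$, and $\bullet$ are all conjugate-linear antiautomorphisms and hence agree once they agree on generators. One small arithmetic slip: since $T_i^{-1}+1=\qq^{-1}(T_i+1)$ and $T_i^{-1}-\qq^{-1}=\qq^{-1}(T_i-\qq)$, the scalar in your displayed identity should be $\qq^{-2}$ rather than $\qq^{-1}$; this does not affect the conclusion that the expression vanishes.
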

\begin{remark}
From the formulas, it may seem that $\iota$ and $\dagger$ coincide. Note, however, that $\iota$ is an automorphism while $\dagger$ is an \emph{anti}-automorphism. Also, $\iota$ is $\C$-linear while $\dagger$ is conjugate-linear. 
\end{remark}
\begin{proof}
That $\iota$ defines a $\C$-linear involutive algebra automorphism is easy to check from the relations on $\AHA_{\qq}(n)$. Since $\iota$ is $\C$-linear while $\dagger$ is conjugate linear, both compositions $\iota \circ \dagger$ and $\dagger \circ \iota$ are conjugate linear. Also, since $\iota$ is an automorphism while $\dagger$ is an antiautomorphism, both $\iota \circ \dagger$ and $\dagger \circ \iota$ are antiautomorphisms. Thus, it suffices to check the equality $\iota \circ \dagger = \bullet = \dagger \circ \iota$ on generators $T_{i}, X_{i}$ and $\qq$.
$$
\iota(T_{i}^{\dagger}) = \iota(T_{i}^{-1}) = T_{i} = T_{i}^{\bullet}, \iota(X_{i}^{\dagger}) = \iota(X_{i}^{-1}) = X_{i} = X_{i}^{\bullet}, \iota(\qq^{\dagger}) = \iota(\qq^{-1}) = \qq = \qq^{\bullet}
$$
$$
\iota(T_{i})^{\dagger} = (T_{i}^{-1})^{\dagger} = T_{i} = T_{i}^{\bullet}, \iota(X_i)^\dagger = (X_i^{-1})^{\dagger} = X_i = X_i^{\bullet},
\iota(\qq)^{\dagger} = (\qq^{-1})^{\dagger} = \qq = \qq^{\bullet}
$$
 and the result follows. 
\end{proof}

We remark, however, that we do not obtain an equivalence between categories of $\dagger$-unitary and $\bullet$-unitary representations. To do this, we must find an automorphism $\varphi$ of $\AHA_{\qq}(n)$ satisfying $\varphi \circ \dagger \circ \varphi^{-1} = \bullet$. It is unlikely that such an automorphism exists, at least in the algebraic setting. Roughly speaking, if an irreducible representation is $\dagger$-unitary then we must have that $\qq$ acts by a complex number of norm $1$ while, if a representation is $\bullet$-unitary then $\qq$ must act by a \emph{real} number. It is possible that one may obtain equivalences between $\dagger$-unitary and $\bullet$-unitary representations by working with representations of the formal affine Hecke algebra, \cite{formalaffine} so that we can take exponential and logarithms of the variable $\hbar$ (that should be thought of as $\log(\qq)$), but we do not do it here.

\subsection{Unitary representations are calibrated} We go back to the setting of $\RR = \C$. Moreover, throughout this section we let $q \in \C^{\times}$ be in the unit circle. Let us recall the following important notion from the representation theory of affine Hecke algebras, \cite{MR1976700}.

\begin{definition}
A $\AHA_{q}(n)$-representation $M$ is called \emph{calibrated} if the Jucys-Murphy subalgebra $\JM$ acts semisimply on $M$.
\end{definition}

\begin{lemma}
Let $M$ be a unitary and finite-dimensional $\AHA_q(n)$-module. Then $M$ is semisimple and calibrated.
\end{lemma}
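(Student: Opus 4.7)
The plan is to deduce both semisimplicity and calibratedness directly from the existence of a positive-definite $\braid_n$-invariant Hermitian form on $M$, using only standard linear algebra in finite dimensions.

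For semisimplicity, I would run the usual orthogonal-complement argument. Because the Hecke algebra action factors through $\braid_n$, a subspace $N \subseteq M$ is $\AHA_q(n)$-stable if and only if it is $\braid_n$-stable. Given such an $N$, the form $\langle -,- \rangle$ being $\braid_n$-invariant (in the sense $\langle gm,gm' \rangle = \langle m,m' \rangle$ for $g \in \braid_n$) means that $N^\perp$ is also $\braid_n$-stable, and positive-definiteness gives $M = N \oplus N^\perp$. Iterating, $M$ decomposes as a direct sum of simples, so $M$ is semisimple.

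For calibratedness, I need the commutative Jucys--Murphy subalgebra $\JM = \C[X_1^{\pm 1},\dots,X_n^{\pm 1}]$ to act semisimply, i.e.\ the pairwise-commuting operators $X_1,\dots,X_n$ must be simultaneously diagonalisable. Since $X_i = q^{1-i}x_i$ is a nonzero scalar multiple of $x_i \in \braid_n$, it suffices to show each $x_i$ acts diagonalisably. But $\braid_n$-invariance of the form says precisely that every element of $\braid_n$, and in particular $x_i$, acts on the finite-dimensional Hermitian inner product space $M$ as an isometry; any isometry of a finite-dimensional Hermitian space is a unitary operator, and the finite-dimensional spectral theorem then gives an orthonormal eigenbasis for $x_i$. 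Since the $x_i$ pairwise commute and are each unitary, they are simultaneously diagonalisable, hence so are the $X_i$, and $\JM$ acts semisimply on $M$.

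The argument is essentially immediate once one observes that the form's invariance promotes every braid-group element to a unitary operator; there is no serious obstacle, and crucially we do not even need to invoke Lemma~\ref{lem: q unit} for this lemma, since the diagonalisability of $X_i$ follows from that of $x_i$ regardless of the modulus of $q$.
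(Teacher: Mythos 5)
Your proof is correct, and the semisimplicity part coincides with the paper's orthogonal-complement argument. For calibratedness, however, you take a slightly different route: the paper simply reruns the orthogonal-complement argument for $\JM$-submodules, noting that $\dagger$ sends $X_i \mapsto X_i^{-1}$ and so preserves $\JM$ (hence $N^\perp$ is $\JM$-stable whenever $N$ is), whereas you observe directly that each $x_i$ acts as an isometry of the positive-definite Hermitian space, hence is a unitary operator, and then invoke the spectral theorem together with simultaneous diagonalisability of commuting unitaries. Both arguments are elementary and equivalent in substance --- your version makes the eigenbasis explicit and foregrounds why $\JM$-semisimplicity holds (commuting normal operators), while the paper's version is more uniform in reusing the exact same splitting argument twice. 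Your closing remark that Lemma \ref{lem: q unit} is not needed here is also accurate and matches the structure of the paper.
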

\begin{proof}
Let $N \subseteq M$ be an $\AHA_q$-submodule of $M$. It is immediate to see that the Hermitian orthogonal $N^{\perp}$ is a complement to $N$ in $M$. Thus, every submodule of $M$ splits. The proof of semisimplicity for the $\JM$-action is the same, after observing that $\dagger$ preserves the Jucys-Murphy subalgebra.
\end{proof}

For a $\AHA_{q}(n)$-module and a vector $\ba = (a_1, \dots, a_n) \in (\C^{\times})^n$, we define the $\ba$-weight space to be
$$
M_{\ba} := \{m \in M \mid X_{i}m = a_{i}m \; \text{for every} \; i = 1, \dots, n\}.
$$

Thus, every unitary $\AHA_q(n)$-module is the direct sum of its weight spaces. 

\begin{lemma}\label{lemma:orthogonality}
Let $M$ be unitary, with invariant Hermitian form $\langle \cdot, \cdot\rangle$. Then,

\begin{enumerate}
\item $M_{\ba} = 0$ unless $|a_{1}| = \cdots = |a_{n}| = 1$.
\item $M_{\ba}$ is orthogonal to $M_{\bb}$ whenever $\bb \neq \ba$. 
\end{enumerate}
\end{lemma}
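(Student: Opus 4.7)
The plan is to mimic the computation in the proof of \cref{lem: q unit}, transferring the $\braid_n$-invariance of the form into an adjunction identity on the Jucys–Murphy elements $X_i$, and then to evaluate on weight vectors. Concretely, $\braid_n$-invariance of $\langle \cdot, \cdot \rangle$ means that for $g \in \braid_n$ one has $\langle g m, m'\rangle = \langle m, g^{-1} m'\rangle$; applied to the generator $x_i$ this gives $\langle x_i m, m'\rangle = \langle m, x_i^{-1} m'\rangle$. Because $X_i = q^{1-i} x_i$ and $|q|=1$ by \cref{lem: q unit}, the scalar $q^{1-i} \overline{q^{1-i}} = 1$ cancels, leaving the clean adjunction
\[
\langle X_i m, m'\rangle \;=\; \langle m, X_i^{-1} m'\rangle \qquad \text{for all } m,m' \in M,\; i = 1,\dots,n.
\]
This identity (the $X_i$-version of the $\dagger$-adjointness) is the only tool we need.

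For (1), take $0 \neq m \in M_\ba$. Applying the displayed identity with $m' = m$ and using $X_i m = a_i m$, $X_i^{-1} m = a_i^{-1} m$, we get $a_i \langle m,m\rangle = \overline{a_i^{-1}}\,\langle m,m\rangle$. Since the form is positive-definite, $\langle m,m\rangle > 0$, so $a_i = 1/\overline{a_i}$, i.e.\ $|a_i| = 1$. For (2), let $m \in M_\ba$, $m' \in M_\bb$ with $\ba \neq \bb$, and pick $i$ with $a_i \neq b_i$. The same identity yields
\[
a_i \langle m,m'\rangle \;=\; \langle X_i m, m'\rangle \;=\; \langle m, X_i^{-1} m'\rangle \;=\; \overline{b_i^{-1}}\,\langle m,m'\rangle.
\]
By (1) we may assume $|b_i| = 1$ (otherwise $M_\bb = 0$ and there is nothing to prove), so $\overline{b_i^{-1}} = b_i$. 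Thus $(a_i - b_i)\langle m,m'\rangle = 0$, forcing $\langle m, m'\rangle = 0$.

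There is no real obstacle here: the one small point to be careful about is that invariance is given at the level of the braid-group generators $x_i$, not the rescaled Jucys–Murphy elements $X_i$, so one must invoke \cref{lem: q unit} to absorb the rescaling factor $q^{1-i}$. Once the adjunction for $X_i$ is in hand, both statements follow by the standard Hermitian-form argument. In writing this up, I would present the adjunction for $X_i$ as a preliminary observation (perhaps as a remark immediately before the lemma) and then dispatch (1) and (2) in a few lines each.
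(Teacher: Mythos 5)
Your proposal is correct and follows essentially the same route as the paper: the adjunction $\langle X_i m, m'\rangle = \langle m, X_i^{-1}m'\rangle$ applied to weight vectors, using $|a_i|=|b_i|=1$ to conclude. The only cosmetic difference is that the paper disposes of (1) by citing Remark~\ref{rmk: unit circle} rather than repeating the eigenvector computation, and relies on the standing assumption that $q$ lies on the unit circle in this subsection rather than re-invoking Lemma~\ref{lem: q unit} to absorb the scalar $q^{1-i}$.
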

\begin{proof}
Statement (1) is clear, see e.g. Remark \ref{rmk: unit circle}. To prove (2), assume that both $M_{\ba}$ and $M_{\bb}$ are both nonzero, let $m_1 \in M_{\ba}$, $m_2 \in M_{\bb}$ and $i$ such that $a_{i} \neq b_{i}$. Then,
$$
a_i\langle m_1, m_2\rangle = \langle X_{i}m_1, m_2\rangle = \langle m_1, X_{i}^{-1}m_2\rangle = \overline{b_{i}^{-1}}\langle m_1, m_2\rangle.
$$
Since $a_{i} \neq b_{i}$ and, by (1), $|a_i| = |b_i| = 1$, we obtain $\overline{a_i} \neq b_{i}^{-1}$. Thus, we have $\langle m_1, m_2\rangle = 0$, as required. 
\end{proof}

\subsection{When are calibrated representations unitary?} The purpose of this section is to obtain necessary and sufficient conditions for a calibrated representation to be unitary. We have seen one condition a calibrated representation must satisfy in order to be unitary: all the weights appearing in it must have values in the unit circle. This necessary condition is, however, not sufficient. To obtain a complete answer we must first recall the classification of irreducible calibrated representations in terms of the weights that appear in them. This is from \cite{MR1976700} and we follow \emph{loc. cit} closely. 

\begin{definition}[\cite{MR1976700}]
We say that a weight $\ba = (a_{1}, \dots, a_{n}) \in (\C^{\times})^{n}$ is  \emph{calibrated} if the following condition holds: 
\begin{center}
for every $i < j$ such that $a_{i} = a_{j}$, $qa_{i}, q^{-1}a_{i} \in \{a_{i+1}, \dots, a_{j-1}\}$.
\end{center}
Since we are assuming that $q \neq \pm 1$ note, in particular, that if $\ba$ is a calibrated weight, then $a_{i} \neq a_{i+1}$, $a_{i} \neq a_{i+2}$ for every $i$. We denote by $\cal^{\aff} \subseteq (\C^{\times})^{n}$ the set of all calibrated weights. 
\end{definition}

For a calibrated weight $\ba \in \cal^{\aff}$, we say that $s_{i} = (i, i+1) \in S_{n}$ is an \emph{admissible transposition} if $a_{i+1} \neq q^{\pm1}a_{i}$. Note that, if $s_{i}$ is an admissible transposition, then $s_{i}\ba \in \cal^{\aff}$.

\begin{definition}
Define the equivalence relation $\sim$ on $\cal^{\aff}$ by saying that $\ba \sim \bb$ if $\ba$ can be reached from $\bb$ by applying a sequence of admissible transpositions.
\end{definition} 

The following result is due to Ram, \cite{MR1976700}. It can be thought of as asserting the existence of a Young seminormal form for calibrated representations.

\begin{theorem}\label{thm:ram}
Let $[\ba] \in \cal^{\aff}/\sim$ be an equivalence class. Then, there exists an irreducible calibrated module $M_{[\ba]}$ whose weights are precisely $[\ba]$. More precisely, $M$ has a basis $w_{\bb}$, $\bb \in [\ba]$, and the action of $\AHA_{q}(n)$ is given as follows:
$$
X_{i}w_{\bb} = b_{i}w_{\bb}, \hspace{1cm} T_{i}w_{\bb} = \frac{b_{i+1}(q-1)}{b_{i+1} - b_{i}}w_{\bb} + (1 - \delta_{qb_{i},  b_{i+1}})(1 - \delta_{q^{-1}b_{i}, b_{i+1}})\left[-q + \frac{b_{i+1}(q-1)}{b_{i+1}-b_{i}}\right]w_{s_{i}\bb}
$$
 Where we define $w_{s_{i}\bb} = 0$ if $s_{i}$ is not an admissible transposition for $\bb$. Moreover, every irreducible calibrated module is of the form $M_{[\ba]}$ for a unique equivalence class $[\ba] \in \cal^{\aff}/\sim$. 
\end{theorem}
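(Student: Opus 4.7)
The plan is to construct $M_{[\ba]}$ directly with the prescribed basis and action, verify the defining relations of $\AHA_{q}(n)$, and then show via intertwiners that every irreducible calibrated module arises this way. For the existence half, I would first check that the formulas are well-defined: the calibrated condition guarantees $b_{i} \neq b_{i+1}$, so the denominator $b_{i+1}-b_{i}$ does not vanish; and the Kronecker prefactor $(1-\delta_{qb_{i},b_{i+1}})(1-\delta_{q^{-1}b_{i},b_{i+1}})$ ensures the off-diagonal term is only present when $s_{i}$ is an admissible transposition for $\bb$, in which case $s_{i}\bb$ lies in $[\ba]$ and $w_{s_{i}\bb}$ is already a basis vector.

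Next I would verify the defining relations of $\AHA_{q}(n)$ one at a time. The relations $X_{i}X_{j}=X_{j}X_{i}$, $T_{i}X_{j}=X_{j}T_{i}$ for $j\neq i,i+1$, and $T_{i}T_{j}=T_{j}T_{i}$ for $|i-j|>1$ are immediate from the definitions. The quadratic relation $(T_{i}+1)(T_{i}-q)=0$ and the cross relation $T_{i}X_{i}T_{i}=qX_{i+1}$ each reduce to an explicit $2\times 2$ matrix identity on the span of $w_{\bb}$ and $w_{s_{i}\bb}$, which one checks by direct computation, treating the admissible and non-admissible cases separately. The braid relation $T_{i}T_{i+1}T_{i}=T_{i+1}T_{i}T_{i+1}$ is the main technical step: it takes place on the (at most six-dimensional) space spanned by $w_{\bc}$ with $\bc$ ranging over the $\langle s_{i},s_{i+1}\rangle$-orbit of $\bb$, and one must enumerate the possible admissibility patterns allowed by the calibrated condition on the triple $(b_{i},b_{i+1},b_{i+2})$. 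Irreducibility of $M_{[\ba]}$ then follows because each weight space is one-dimensional and the admissible $T_{i}$'s connect $w_{\bb}$ to a nonzero multiple of $w_{s_{i}\bb}$; thus $\AHA_{q}(n)w_{\bb}$ reaches every basis vector, by definition of the equivalence $\sim$.

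For the classification half, let $M$ be an irreducible calibrated module. Since $\JM$ acts semisimply we have $M=\bigoplus_{\bb}M_{\bb}$. Pick a weight $\bb$ occurring in $M$ and a nonzero $w\in M_{\bb}$. The key tool is the intertwiner
\begin{equation*}
\tau_{i} \;=\; (X_{i+1}-X_{i})T_{i} - (q-1)X_{i+1},
\end{equation*}
which one checks satisfies $\tau_{i}X_{j}=X_{s_{i}(j)}\tau_{i}$, so $\tau_{i}$ maps $M_{\bb}$ into $M_{s_{i}\bb}$. Using the quadratic relation I would show that (i) if $b_{i}=b_{i+1}$ then $T_{i}$ acts on $w$ by either $q$ or $-1$, and combining this with the local structure of the module forces $qb_{i},q^{-1}b_{i}$ to appear between any two equal coordinates of $\bb$ — that is, $\bb$ is calibrated; (ii) whenever $s_{i}$ is admissible for $\bb$, the vector $\tau_{i}w$ is nonzero and lies in $M_{s_{i}\bb}$; and (iii) the scalars can be normalized so that the action matches the formulas of the theorem. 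Irreducibility of $M$ then gives $M\cong M_{[\bb]}$, and uniqueness of $[\bb]$ follows because distinct equivalence classes give modules with disjoint weight sets.

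The main obstacle is verifying that every weight of an irreducible calibrated module actually satisfies the calibrated condition: this is the step that genuinely uses the quadratic relation in combination with the intertwiner identities and controls which local configurations can appear. Once this combinatorial rigidity is in place, both the braid-relation check for existence and the matching of the prescribed normal form for classification become bookkeeping on small orbits.
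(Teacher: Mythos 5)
The paper does not prove this theorem: it is quoted from Ram~\cite{MR1976700} (``The following result is due to Ram...''), so there is no in-paper argument to compare against. Your proposal is a faithful reconstruction of the standard line of argument in Ram's original paper (and in the closely parallel work of Kleshchev~\cite{MR1383482} and Ruff~\cite{MR2266877} on completely splittable representations): build the module directly via a seminormal form and verify the $\AHA_{q}(n)$ relations, with the braid relation as the nontrivial check on small $\langle s_i,s_{i+1}\rangle$-orbits; then for classification use the intertwiners $\tau_i$ to move between weight spaces. The intertwiner you write, $\tau_i=(X_{i+1}-X_i)T_i-(q-1)X_{i+1}$, does indeed satisfy $\tau_iX_j=X_{s_i(j)}\tau_i$ (one checks $\tau_iX_i=X_{i+1}\tau_i$ using $T_iX_i=X_{i+1}T_i-(q-1)X_{i+1}$), so that part is correct.

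One imprecision worth flagging in your step (i) of the classification half. You write that ``if $b_i=b_{i+1}$ then $T_i$ acts on $w$ by either $q$ or $-1$,'' but that is not immediate: the quadratic relation only forces $T_iw$ to be an eigenvector with eigenvalue $q$ or $-1$ if $T_iw$ is already proportional to $w$, which one must argue. The actual shape of the argument is: if $b_i=b_{i+1}$ then the span of $w$ and $T_iw$ is $X$-stable, and on it $X_i$ acts by an upper-triangular matrix with equal diagonal entries $b_i$ and nonzero superdiagonal entry $-(q-1)b_i$; this is not semisimple unless $T_iw\in\C w$, and in that case the relation $T_iX_iT_i=qX_{i+1}$ forces the scalar $\alpha$ with $T_iw=\alpha w$ to satisfy $\alpha^2=q$, contradicting $\alpha\in\{q,-1\}$. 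Hence $b_i\neq b_{i+1}$ in any calibrated module, and iterating with intertwiners gives the full calibrated-weight condition. Aside from this rephrasing, the sketch is sound and the two heavy steps you defer --- the braid relation check and the combinatorial rigidity --- are exactly the places where Ram's proof does the real work.
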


We will characterize the calibrated weights $\ba$ for which $M_{[\ba]}$ is indeed unitary. Since every unitary representation is semisimple, this would give us all the unitary representations of $\AHA_{q}(n)$. First, we have seen in Lemma \ref{lemma:orthogonality} that a necessary condition is that $\ba \in (S^1)^{n}$, where $S^1 \subseteq \C$ is the unit circle. Note that Theorem \ref{thm:ram} tells us that  every weight space in $M_{[\ba]}$ is $1$-dimensional and, as we have seen in Lemma \ref{lemma:orthogonality}, different weight spaces must be orthogonal under an invariant, positive-definite Hermitian form. 

So let $\langle \cdot, \cdot \rangle$ be an invariant, non-degenerate Hermitian form on $M_{[\ba]}$ and assume that $\ba \in (S^1)^n$. By \cite[Proposition 4.5.4]{MR4054879}, such a form exists. By our discussion in the previous paragraph, there exist numbers $A_{\bb} \in \R$, $\bb \in [\ba]$, such that 
\begin{equation}\label{eqn:form}
\langle w_{\bb}, w_{\bb'}\rangle = A_{\bb}\delta_{\bb, \bb'}
\end{equation}
  and our job is to find conditions on $[\ba]$ guaranteeing that all numbers $A_{\bb}$ can be chosen to have the same sign. We separate in several cases, and we make heavy use of the Young seminormal form of Theorem \ref{thm:ram}. 

\medskip
\noindent {\it Case 1. $\bb' \neq \bb, s_{i}\bb$.} In this case, we have $\langle T_{i}w_{\bb}, w_{\bb'}\rangle = \langle w_{\bb}, T_{i}^{-1}w_{\bb'}\rangle = 0$.

\medskip
\noindent 
{\it Case 2. $\bb' = \bb$. } In which case $$\langle T_{i}w_{\bb}, w_{\bb}\rangle = A_{\bb}\displaystyle{\tfrac{b_{i+1}(q-1)}{b_{i+1} - b_{i}}}.$$ On the other hand, since $T_{i}^{-1} = q^{-1}(T_{i} + 1-q)$ then, using the fact that $\conj{q} = q^{-1}$, $\conj{b_{i+1}} = b_{i+1}^{-1}$ and $\conj{b_{i}} = b_{i}^{-1}$ we have that 
\begin{align*}
\langle w_{\bb}, T_{i}^{-1}w_{bb}\rangle
& = q\langle w_{\bb}, (T_{i} + 1-q)w_{\bb}\rangle\\
 &= qA_{\bb}\conj{\left[\tfrac{b_{i+1}(q-1)}{b_{i+1} - b_{i}} + (1-q)\right]}
 \\
 &  = qA_{\bb}\conj{\left[\tfrac{b_{i}(q-1)}{b_{i+1} - b_{i}}\right]} 
 \\&= 
   A_{\bb}\tfrac{b_{i+1}(1-q)}{b_{i} - b_{i+1}} 
   \\&=  \langle T_{i}w_{\bb}, w_{\bb}\rangle.  
  \end{align*}

\medskip
\noindent 
{\it Case 3. $\bb' = s_{i}\bb$.} Since in this case $s_{i}\bb$ is defined, we have that $b_{i+1} \neq q^{\pm1}b_{i}$, so 
$$
\langle T_{i}w_{\bb}, w_{s_{i}\bb}\rangle = A_{s_{i}\bb}\left(-q + \tfrac{b_{i+1}(q-1)}{b_{i+1}-b_{i}}\right) = A_{s_{i}\bb} \left(\tfrac{qb_{i} - b_{i+1}}{b_{i+1} - b_{i}} \right).
$$
On the other hand,
\begin{align*}
\langle w_{\bb}, T_{i}^{-1}w_{s_{i}\bb}\rangle & = q\langle w_{\bb}, T_{i}w_{s_{i}\bb}\rangle \\
& = qA_{\bb}\left[\conj{-q + \left(\tfrac{b_{i}(q - 1)}{b_{i} - b_{i+1}}\right)}\right]  \\
& = qA_{\bb}\left[\tfrac{\conj{q}\conj{b_{i+1}} - \conj{b_{i}}}{\conj{b_{i}} - \conj{b_{i+1}}}\right] \\
& = A_{\bb}\left[\tfrac{\conj{b_{i+1}} - q\conj{b_{i}}}{\conj{b_{i}} - \conj{b_{i+1}}} \right] \\
& =  A_{\bb}\left( \tfrac{b_{i} - qb_{i+1}}{b_{i+1} - b_{i}}\right)
\end{align*}
 And we see that, if the form $\langle \cdot, \cdot \rangle$ is to be $\AHA_{q}(n)$-invariant, we must have
\begin{equation}
\frac{A_{s_{i}\bb}}{A_{\bb}} = \frac{b_{i} - qb_{i+1}}{qb_{i} - qb_{i+1}} = \frac{q - (b_{i}/b_{i+1})}{1 - q(b_{i}/b_{i+1})} 
\end{equation}
and, if the form $\langle \cdot, \cdot \rangle$ is to be positive definite, we must have $A_{s_{i}\bb}/A_{\bb} > 0$ (note that manifestly $A_{s_{i}\bb}/A_{\bb} \in \R$, as needed). Set $x = b_{i}/b_{i+1}$. Now,
$$
\frac{q - x}{1 - qx} = \frac{(q-x)(1 - \conj{qx})}{|1 - qx|^2} = 2\frac{\Re(q) - \Re(x)}{|1 - qx|^2}
$$
so $A_{s_{i}\bb}/A_{\bb} > 0$ if and only if $\Re(b_{i}/b_{i+1}) < \Re(q)$, where $\Re(z)$ denotes the real part of   $z\in \mathbb C$.  

So, assume that every $\bb \in [\ba]$ satisfies that $\Re(b_{i+1}/b_{i}) < \Re(q)$ for every pair $b_{i}, b_{i+1}$ with $b_{i}/b_{i+1} \neq 1, q^{\pm1}$. Start with any $\bb \in [\ba]$. Any other $\bb' \in [\ba]$ is reachable from $\bb$ by a sequence of admissible transpositions. We declare $\langle w_{\bb}, w_{\bb}\rangle = 1$. By \cite[Proposition 4.5.4]{MR4054879}, there is an invariant Hermitian form on $M_{[\ba]}$ satisfying this, and we can see that we have:
$$
\langle w_{s_{i_{t}}\dots s_{i_{1}}\bb}, w_{s_{i_{t}}\dots s_{i_{1}}\bb}\rangle = \frac{q(s_{i_{t-1}}\dots s_{i_{1}}\bb)_{i_{t} + 1} - (s_{i_{t-1}}\dots s_{i_{1}}\bb)_{i_{t}}}{(s_{i_{t-1}}\dots s_{i_{1}}\bb)_{i_{t} + 1} - q(s_{i_{t-1}}\dots s_{i_{1}}\bb)_{i_{t}}}\langle w_{s_{i_{t-1}}\dots s_{i_{1}}\bb}, w_{s_{i_{t-1}}\dots s_{i_{1}}\bb}\rangle
$$
We have arrived at the following result. 

\begin{theorem}\label{thm:A}
An irreducible $\AHA_{q}(n)$-module $M$ is unitary if and only if there exists a calibrated weight $\ba = (a_1, \dots, a_n) \in \cal^{\aff}$ satisfying the following conditions.
\begin{enumerate}
\item $\ba \in (S^{1})^{n}$, i.e., $|a_{i}| = 1$ for every $i = 1, \dots, n$. 
\item For every $\bb \in [\ba]$ and every $i, i+1$, $\Re(b_{i}/b_{i+1}) \leq \Re(q)$.
\item $M = M_{[\ba]}$.
\end{enumerate}
\end{theorem}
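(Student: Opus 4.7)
The plan is to analyze both directions of the equivalence using the Young seminormal form of Theorem \ref{thm:ram} together with the orthogonality and norm constraints from Lemma \ref{lemma:orthogonality}. Since every unitary module is semisimple and calibrated (as shown above), Ram's classification forces any irreducible unitary module to take the form $M_{[\ba]}$ for some class $[\ba] \in \cal^{\aff}/\sim$. Condition (1) is then immediate from Lemma \ref{lemma:orthogonality}(1), and condition (3) is just the identification $M \cong M_{[\ba]}$. The real content of the theorem is thus the interpretation of condition (2) as positive-definiteness of an invariant form on $M_{[\ba]}$.

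For the form itself, I would exploit the $1$-dimensionality of weight spaces in $M_{[\ba]}$: combined with Lemma \ref{lemma:orthogonality}(2), any invariant Hermitian form is diagonal in the basis $\{w_\bb\}_{\bb \in [\ba]}$, determined by real numbers $A_\bb = \langle w_\bb, w_\bb\rangle$. The invariance constraints $\langle T_iw_\bb, w_{\bb'}\rangle = \langle w_\bb, T_i^{-1}w_{\bb'}\rangle$ split into three cases exactly as in the analysis preceding the theorem: the cases $\bb' \neq \bb, s_i\bb$ and $\bb' = \bb$ hold automatically using $\conj{q} = q^{-1}$ and $|b_i| = 1$, while $\bb' = s_i\bb$ with $s_i$ admissible at $\bb$ yields
$$
\frac{A_{s_i\bb}}{A_\bb} \; = \; \frac{q - b_i/b_{i+1}}{1 - q\,b_i/b_{i+1}} \; = \; \frac{2\bigl(\Re(q) - \Re(b_i/b_{i+1})\bigr)}{|1 - q\,b_i/b_{i+1}|^2},
$$
a real number whose sign matches that of $\Re(q) - \Re(b_i/b_{i+1})$. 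So the invariance and positive-definiteness of the form across every admissible transposition are jointly equivalent to condition (2), using that when $s_i$ is not admissible one has $b_i/b_{i+1} = q^{\pm1}$ and the inequality is automatic as an equality.

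To complete sufficiency, I would normalize $A_\ba = 1$ and inductively define $A_\bb$ for $\bb \in [\ba]$ by propagating the ratio formula along any chain of admissible transpositions from $\ba$ to $\bb$; condition (2) ensures every factor is positive, so all $A_\bb > 0$. The main obstacle is the well-definedness of this propagation --- two different chains from $\ba$ to $\bb$ must yield equal values of $A_\bb$ --- and the clean way around this is to invoke \cite[Proposition 4.5.4]{MR4054879}, which guarantees the existence of a non-degenerate invariant Hermitian form on $M_{[\ba]}$, unique up to a real scalar. Any such form must satisfy the propagation rule just derived, so its values are automatically consistent along any chain. Necessity then comes for free: a positive-definite invariant form exhibits all $A_\bb > 0$, whence every ratio across an admissible transposition is positive, forcing (2). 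Without Rouquier's existence result one would have to verify directly the cocycle identities arising from longer braid-type cycles in the calibrated graph, which would be substantially more involved.
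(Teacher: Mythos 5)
Your proof follows essentially the same route as the paper's: reduce to the calibrated case via Ram's classification, diagonalize the invariant form using orthogonality of weight spaces, compute the ratio $A_{s_i\bb}/A_\bb$ from the seminormal-form action of $T_i$, observe that its sign is governed by $\Re(q) - \Re(b_i/b_{i+1})$, and handle well-definedness of the propagated norms by appealing to \cite[Proposition 4.5.4]{MR4054879}. This matches the paper's argument in structure and in all essential details, including the observation that non-admissible $s_i$ contribute the boundary case of the weak inequality in condition (2).
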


The following easy lemma will be useful in the future.

\begin{lemma}\label{lemma:easy}
Let $\ba = (a_{1}, \dots, a_{n})  \in \cal^{\aff}$ be a calibrated weight such that $|a_{i}| = 1$ for every $i$ and $\Re(a_{i}/a_{j}) \leq \Re(q)$ whenever $a_{i} \neq a_{j}$. Then $M_{[\ba]}$ is unitary.
\end{lemma}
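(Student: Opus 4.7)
My plan is to invoke Theorem~\ref{thm:A} directly: the lemma's hypotheses will give conditions (1) and (3) of that theorem essentially for free, so the only real work will be verifying condition (2), namely that $\Re(b_i/b_{i+1}) \leq \Re(q)$ for every $\bb \in [\ba]$ and every index $i$.

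The crucial observation I will use is that admissible transpositions permute the entries of a weight without altering their values. Consequently, any $\bb \in [\ba]$ is obtained from $\ba$ by applying some permutation $\sigma \in S_n$, that is, $\bb = (a_{\sigma(1)}, \ldots, a_{\sigma(n)})$. Since $\bb$ is itself calibrated (the equivalence class $[\ba]$ lies inside $\cal^{\aff}$) and we are assuming $q \neq \pm 1$, the remark just after the definition of a calibrated weight gives $b_i \neq b_{i+1}$, and hence $a_{\sigma(i)} \neq a_{\sigma(i+1)}$. I can then invoke the lemma's hypothesis on this pair of distinct entries of $\ba$ to conclude
$$\Re(b_i/b_{i+1}) \;=\; \Re\!\bigl(a_{\sigma(i)}/a_{\sigma(i+1)}\bigr) \;\leq\; \Re(q).$$
Combined with $|a_i|=1$ (which passes to all of $[\ba]$) and the choice $M = M_{[\ba]}$, all three conditions of Theorem~\ref{thm:A} are satisfied, so $M_{[\ba]}$ is unitary.

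I do not anticipate any serious obstacle: the lemma is essentially a direct corollary of Theorem~\ref{thm:A}. The one subtle point worth flagging is that the lemma's hypothesis $\Re(a_i/a_j)\leq\Re(q)$ is stated for \emph{all} pairs of distinct entries of $\ba$, not merely adjacent ones, and this extra strength is exactly what is needed to transfer the bound from $\ba$ to an arbitrary reordering $\bb$ in its equivalence class. Without it, the rearrangement coming from admissible transpositions could create adjacent pairs $(b_i,b_{i+1})$ whose ratio was not originally a ratio of adjacent entries in $\ba$, and the argument would collapse.
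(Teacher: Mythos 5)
Your proposal is correct and takes essentially the same route as the paper, which simply notes that condition (2) of Theorem~\ref{thm:A} holds because every $\bb\in[\ba]$ is a permutation of $\ba$. You have usefully made explicit the two points the paper leaves implicit: that calibratedness of $\bb$ forces adjacent entries to be distinct, and that the hypothesis on \emph{all} (not just adjacent) distinct pairs of $\ba$ is what makes the bound survive arbitrary reorderings.
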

\begin{proof}
We need to check that the class $[\ba]$ satisfies (2) of Theorem \ref{thm:A}. Since every weight in $[\ba]$ can be obtained by applying transpositions to $\ba$ this is immediate.
\end{proof}

We now  focus on representations that factor through a fixed cyclotomic quotient of $\AHA_q(n)$.

\subsection{Unitary modules for the cyclotomic Hecke algebra} Now we seek to classify unitary representations of the cyclotomic Hecke algebra:
$$
\cyc_{q, Q_1, \dots, Q_{\ell}}(n) = \frac{\cH_{q}^{\aff}(n)}{(\prod_{i = 1}^{\ell}X_{1} - Q_{i})}.
$$
We may and will assume that all complex numbers $q, Q_1, \dots, Q_{\ell}$ live in the unit circle. Similarly to the case of the finite Hecke algebra $H_{q}(n)$, we can see that if $M$ is a calibrated representation of $\cyc_{q, Q_1, \dots, Q_{\ell}}(n)$ and $\ba$ is a weight of $M$, then for every $i = 1, \dots, n$ there exist $j \in 1, \dots, \ell$ and $m_{i} \in \Z$ such that $a_{i} = Q_{j}q^{m_{i}}$. This implies the following result, which is Theorem A from the introduction. 

\begin{lemma}\label{lemma:unitary=calibrated}
Assume that $q = \exp(2\pi\sqrt{-1}/e)$ for some $e \geq 2$ and that there exist $s_{1}, \dots, s_{\ell} \in \Z$ such that $Q_{i} = q^{s_{i}}$ for every $i = 1, \dots, \ell$. Then, a $\cyc_{q, Q_1, \dots, Q_{\ell}}(n)$-representation is unitary if and only if it is calibrated.
\end{lemma}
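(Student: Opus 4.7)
The forward direction is already in hand: by the earlier lemma (the one immediately after Definition~\ref{def:unitary} of calibrated), every finite-dimensional unitary $\AHA_q(n)$-module is semisimple and calibrated, and these properties descend to any quotient, hence in particular to $\cyc_{q,Q_1,\dots,Q_\ell}(n)$. So all the work is in the converse: assuming $q=\exp(2\pi\sqrt{-1}/e)$ and $Q_i=q^{s_i}$, every calibrated $\cyc_{q,Q_1,\dots,Q_\ell}(n)$-module is unitary.

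The plan is to verify the hypotheses of Lemma~\ref{lemma:easy} for every weight $\ba\in\cal^{\aff}$ arising in a representation that factors through the cyclotomic quotient. First I would recall the standard fact that on such a representation the eigenvalues of each $X_i$ lie in the set $\{Q_j q^m : 1\le j\le \ell,\ m\in\ZZ\}$; indeed, the cyclotomic relation $\prod_{i=1}^\ell(X_1-Q_i)=0$ constrains the eigenvalues of $X_1$, and the affine Hecke relation $T_iX_iT_i=qX_{i+1}$ together with the quadratic relation $(T_i-q)(T_i+1)=0$ shows that the spectrum of $X_{i+1}$ on any irreducible submodule is obtained from that of $X_i$ by multiplication by powers of $q$. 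Under the hypothesis $Q_i=q^{s_i}$, each entry $a_i$ of a weight $\ba$ is therefore of the form $q^{s_j+m_i}$ for some $j,m_i$, and in particular $|a_i|=1$ since $q$ is a root of unity. This gives condition~(1) of Theorem~\ref{thm:A} for free.

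Next I would verify condition~(2). Because every entry $a_i$ is an $e$-th root of unity, for any two indices $i,i+1$ we have $a_i/a_{i+1}=q^k$ for some integer $k$, and the condition $a_i\ne a_{i+1}$ translates to $k\not\equiv 0\pmod e$. The key trigonometric observation is that on the set $\{2\pi k/e : k\in\ZZ,\ k\not\equiv 0\pmod e\}$ the cosine function attains its maximum exactly at $k\equiv\pm 1\pmod e$, with value $\cos(2\pi/e)=\Re(q)$; for any other non-trivial $k$ the inequality $\Re(q^k)<\Re(q)$ is strict. Thus whenever $a_i\ne a_{i+1}$ we get $\Re(a_i/a_{i+1})\le \Re(q)$, as required. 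Applying Lemma~\ref{lemma:easy}, the simple calibrated module $M_{[\ba]}$ is unitary.

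Finally, since every calibrated $\cyc_{q,Q_1,\dots,Q_\ell}(n)$-module is semisimple (the Jucys-Murphy subalgebra acts semisimply by definition, and the algebra is finite over its center with a Wedderburn-type decomposition along weight classes), it decomposes as a direct sum of irreducible calibrated modules, each of which is unitary by the argument above, and orthogonal direct sums of unitary modules are unitary. The only step that requires genuine care is the preliminary fact that weights of cyclotomic calibrated modules lie in $q^\ZZ$; everything else is a short trigonometric check combined with the classification results already available from Theorem~\ref{thm:A} and Lemma~\ref{lemma:easy}.
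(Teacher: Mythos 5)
Your proof follows essentially the same route as the paper: observe that each coordinate of a weight is a power of $q$ (so the weights lie on the unit circle), invoke the elementary fact that $\Re(q^{k})\leq\Re(q)$ whenever $q^{k}\neq 1$, and conclude via Lemma~\ref{lemma:easy}. You have made the argument a bit more explicit than the paper (tracing the Jucys--Murphy eigenvalues through the Dipper--Mathas-type reduction, writing out the cosine inequality, and inserting a final semisimplicity step to pass from irreducible to general calibrated modules), but the substance and the key lemma invoked are identical. One small slip worth noting: you verify condition (2) only for adjacent indices $i,i{+}1$ but then cite Lemma~\ref{lemma:easy}, which as stated requires $\Re(a_{i}/a_{j})\le\Re(q)$ for all pairs $i\neq j$ with $a_{i}\neq a_{j}$; this costs nothing here since your trigonometric argument is index-independent, but you should state it that way for the invocation to be literal.
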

\begin{proof}
Under the assumptions of the lemma, if $\ba$ is a weight of a calibrated representation then every coordinate of $\ba$ is a power of $q$. Since $\Re(q^{i}) \leq \Re(q)$ unless $q^{i} = 1$, the result follows from Lemma \ref{lemma:easy}.
\end{proof}

The parameters $q, Q_1, \dots, Q_{\ell}$ of the form appearing in Lemma \ref{lemma:unitary=calibrated} are the most interesting ones from a representation-theoretic point of view. They are those for which the representation theory of the algebra $\cyc_{q, Q_1, \dots, Q_{\ell}}(n)$ cannot be broken-up in smaller pieces. More precisely, let us define an equivalence relation on the set of parameters $Q_1, \dots, Q_{\ell}$ by declaring $Q_{i} \sim Q_{j}$ if $Q_{i}/Q_{j}$ is an integral power of $q$. Let $\mathcal{E}_{1}, \dots, \mathcal{E}_{k}$ be the different equivalence classes. Thanks to work of Dipper and Mathas, see e.g. \cite[Theorem 13.30]{MR1911030} we have a category equivalence
\begin{equation}\label{eq:dipper-mathas dec}
\cyc_{q, Q_{1}, \dots, Q_{\ell}}(n)\operatorname{-mod} \cong \bigoplus_{n_1 + \dots + n_{k} = n} \bigotimes_{i = 1}^{k}\cyc_{q, \mathcal{E}_{i}}(n_{i})\operatorname{-mod}
\end{equation}
  where $\cyc_{q, \mathcal{E}_{i}} = \cyc_{q, Q_{a_{1}}, \dots, Q_{a_{j}}}$ if $\mathcal{E}_{i} = \{Q_{a_{1}}, \dots, Q_{a_{j}}\}$. Since we are interested in irreducible, unitary representations, it is enough to restrict to a single direct summand on the right-hand side of \eqref{eq:dipper-mathas dec}. 

It follows from the crystal-theoretic characterization of calibrated representations, see Theorem \ref{thm:nostuttering} below, that a representation $\bigotimes_{i = 1}^{k} M_{i}$ of $\bigotimes_{i = 1}^{k}\cyc_{q, \mathcal{E}_{i}}(n_{i})$ is calibrated if and only if each tensor factor $M_{i}$ is a calibrated representation of $\cyc_{q, \mathcal{E}_{i}}(n_{i})$. Similarly, if $\bigotimes_{i = 1}^{k} M_{i}$ is unitary, then $M_{i}$ is a unitary representation of $\cyc_{q, \mathcal{E}_{i}}(n_{i})$ for every $i$. The converse, however, is not true. For example, in the most generic case when each equivalence class $\mathcal{E}_{i}$ consists of a single element, a representation 
$$
\bigotimes_{i = 1}^{k}D(\lambda_{i}) \in \bigotimes_{i = 1}^{k}\cyc_{q, Q_{i}}(n_{i})\operatorname{-mod}
$$  is unitary if and only if $D(\lambda_{i})$ is a unitary representation of $\cyc_{q, Q_{i}}(n_{i}) \cong H_{q}(n_{i})$ and, for $i \neq j$, $\Re(Q_{i}Q_{j}^{-1}q^{a-b}) \leq \Re(q)$, where $a$ ranges over the contents of all boxes in the partition $\lambda_{i}$, and $b$ ranges over the contents of all boxes in the partition $\lambda_{j}$. 

\begin{remark}\label{rmk:pages}
More generally, in the notation of \cite{MR1988991}, the partition of $\{Q_1, \dots, Q_{\ell}\}$ into different equivalence classes induces a partition on a weight of a calibrated representation into \emph{pages}. Different pages do not interact, so a representation $\bigotimes M_{i}$ of $\cyc_{q, Q_1, \dots, Q_{\ell}}(n)$ is unitary if and only if each $M_{i}$ is a unitary for every $i$ and $\Re(a_i/a_j) \leq \Re(q)$, whenever $a_{i}, a_{j}$ are components of the weight $\ba$ corresponding to different pages.  
\end{remark}

Since checking the condition on Remark \ref{rmk:pages} quickly becomes unwieldy when there are many pages, we will restrict our attention to cyclotomic Hecke algebras of the form stated in Lemma \ref{lemma:unitary=calibrated}. We remark, however, that thanks to the decomposition \eqref{eq:dipper-mathas dec} many of the results we prove for unitary representations, including the construction of BGG resolutions, can be extended to the general setting. For a fixed parameter $q, Q_1, \dots, Q_\ell$ of the cyclotomic Hecke algebra we will find all the $\ell$-partitions labeling unitary representations. We define this set in Section \ref{sec:multicali set} and prove that it indeed labels the unitary representations in Section \ref{sec:multicali}.

\section{The set of multipartitions $\Cali^\bs(\ell)$}\label{sec:multicali set}

\subsection{The right border multiset of an $\ell$-partition with respect to an $\ell$-charge}\label{subsec:rightbord}

We define the {\em right border multiset} $\Bo^\bs(\bla)$ of $\bla$ with respect to the charge $\bs$ to be the collection of integers $\co^\bs(b)$ for each $b$ the last box of a row of $\bla$, with multiplicities. As in writing partitions, we will record right border multisets using exponential notation, i.e., if $\bla$ has $m$ rows whose last boxes $b$ satisfy $\co^\bs(b)=a$ then we will write $\Bo^\bs(\bla)=\{\ldots,a^m,\ldots\}$.

\begin{example}\label{exl:ellpartition} Let $\ell=3$, $\bla=((2^2),(2),(3,2))$, and $\bs=(0,1,4)$. 
 Then $\Bo^\bs(\bla)=\{6,4,2,1,0\}$ (see \cref{extrafig} for a visualisation). In this example $\Bo^\bs(\bla)$ is multiplicity-free. 

\end{example}

If $\ell=1$ so that $\bla$ is a single partition $\la$ and $s\in\mathbb{Z}$, then $\Bo^s(\la)$ is always multiplicity-free. However, for $\ell$-partitions with $\ell>1$ this need not be the case as the next example shows.

\begin{example}\label{exl:bordmult} Let $\ell=3$, $\bla=((2^2),(2^2,1),(3,2,1))$, and $\bs=(0,1,4)$. Then $\Bo^\bs(\bla)=\{6,4,2^2,1^2,0,\hbox{-}1\}$. 

\end{example}

\begin{figure}[ht!]
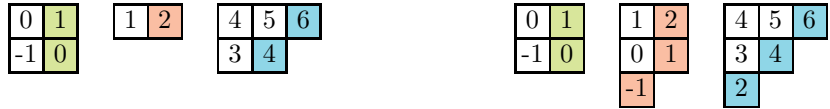

\begin{center}
\ytableausetup{mathmode,boxsize=1.25em}\scalefont{0.9}
\begin{ytableau}
0&*(SpringGreen!65)1\\
\hbox{-}1&*(SpringGreen!65)0\\
\end{ytableau} 
\quad
\begin{ytableau}
1&*(Melon!65)2\\
\end{ytableau} 
\quad
\begin{ytableau}
4&5&*(SkyBlue!65)6\\
3&*(SkyBlue!65)4\\
\end{ytableau} 
\qquad\quad\qquad\qquad\begin{ytableau}
0&*(SpringGreen!65)1\\
\hbox{-}1&*(SpringGreen!65)0\\
\end{ytableau} 
\quad
\begin{ytableau}
1&*(Melon!65)2\\
0&*(Melon!65)1\\
*(Melon!65)\hbox{-}1\\
\end{ytableau} 
\quad
\begin{ytableau}
4&5&*(SkyBlue!65)6\\
3&*(SkyBlue!65)4\\
*(SkyBlue!65)2\\
\end{ytableau} 
\end{center}
\caption{The $3$-partitions from Examples \ref{exl:ellpartition} and \ref{exl:bordmult} together with their contents. We have highlighted the boxes at the end of each row in the Young diagram of $\bla$.  }
\label{extrafig}
\end{figure}

We define the {\em reading word} of $\Bo^\bs(\bla)$ to be the sequence of integers given by listing the elements of $\Bo^{s_1}(\la^1)$ in increasing order, then the elements of $\Bo^{s_2}(\la^2)$ in increasing order, and so on. Thus the reading word is a preferred order of listing the elements of $\Bo^\bs(\bla)$. Let $w=(a_1,a_2,\ldots,a_h)$ be the reading word of $\Bo^\bs(\bla)$ (here, $h$ is the number of rows of $\bla$). We say that $w$ is {\em increasing} if $a_1<a_2<\ldots<a_h$.  In Example \ref{exl:ellpartition}, the reading word is $(0,1,2,4,6)$ and is increasing. In Example \ref{exl:bordmult}, the reading word is $(0,1,-1,1,2,2,4,6)$ and is not increasing.

\subsection{The set $\Cali^\bs(\ell)$}\label{subsec:calimultipart} 
We now define a set of multipartitions that, as we will see below in Theorem \ref{thm:cali=cali}, provides a combinatorial description of the calibrated irreducible representations of an appropriate cyclotomic Hecke algebra.  
\begin{definition}\label{def:cali} Fix $e\geq 2$. Let $\bs\in\mathbb{Z}^\ell$ be a cylindrical charge. Define $\Cali^{\bs}(\ell)$ to be the set of all $\ell$-partitions $\bla$ satisfying the following conditions:
\begin{enumerate}[leftmargin=*]
\item \begin{enumerate}
\item $\Bo^\bs(\bla)\subset[z,z+e-1]$ for some $z\in\mathbb{Z}$ (in which case we say that it has  period at most $e$);
\item The reading word of $\Bo^\bs(\bla)$ is increasing;
\end{enumerate}
\item $\bla$ is cylindrical.
\end{enumerate}
\end{definition}
\noindent It is immediate that $\bla\in\Cali^\bs(\ell)$ implies that $\bla$ is FLOTW, see Definition \ref{def:FLOTW}. 

\smallskip

In the case that $\bla$ satisfies Definition \ref{def:cali}(1), it is easy to check whether $\bla$ is cylindrical. Given $\bla=(\la^1,\la^2,\ldots,\la^\ell)$, if $\la^j\neq\emptyset$ then define $b^j_{\mathrm{min}}$ to be the box of smallest content in $\la^j$. That is, $b^j_{\mathrm{min}}$ is the leftmost box of the bottom row of $\la^j$. For each $j=1,\ldots,\ell$ such that $\la^j\neq\emptyset$, let $h_j$ be the number of nonzero rows of $\la^j$. For $\la^j\neq \emptyset$, we have $\co^\bs(b^j_{\mathrm{min}})=-h_j+1+s_j$.

\begin{lemma}\label{lemma:cylindrical}
Suppose $\bs\in\mathbb{Z}^\ell$ is cylindrical and $\bla$ satisfies Definition \ref{def:cali}(1). Then $\bla$ is cylindrical, and thus $\bla\in\Cali^\bs(\ell)$, if and only if the following two conditions hold:
\begin{enumerate}
\item For each $j=2,\ldots, \ell$, if $\la^j\neq \emptyset$ then we have $s_{j-1}<\mathsf{co}^\bs(b^j_{\mathrm{min}})$.
\item Let $j\in\{1,2,\ldots,\ell\}$ be minimal such that $\la^j\neq \emptyset$. Then $s_\ell<\co^\bs(b^j_{\mathrm{min}})+e$.
\end{enumerate}
 \end{lemma}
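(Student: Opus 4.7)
The plan is to translate conditions (1) and (2) into height inequalities and then verify cylindricity by showing that the FLOTW cylindric inequalities collapse to trivial ones on the right-hand side. Since $\co^\bs(b^j_{\mathrm{min}}) = 1 - h_j + s_j$, condition (1) is equivalent to $h_j \leq s_j - s_{j-1}$ for all $j \geq 2$ with $\la^j \neq \emptyset$, while condition (2) is equivalent to $h_{j_\mathrm{min}} \leq e + s_{j_\mathrm{min}} - s_\ell$, where $j_\mathrm{min}$ is the smallest index with $\la^{j_\mathrm{min}} \neq \emptyset$.

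For the $(\Leftarrow)$ direction I would assume conditions (1) and (2) and verify the FLOTW inequalities of \Cref{def:FLOTW} directly. For inequality (a), either $\la^{j+1} = \emptyset$ (and the right-hand side vanishes) or condition (1) gives $h_{j+1} \leq s_{j+1} - s_j$, which forces $\la^{j+1}_{k+s_{j+1}-s_j} = 0$ for every $k \geq 1$. The argument for (b) is identical, using condition (2) when $\la^1 \neq \emptyset$. Notice that this direction does not actually require \Cref{def:cali}(1).

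For the converse, I would assume $\bla$ is cylindrical and satisfies \Cref{def:cali}(1). To prove condition (1), suppose for contradiction that $\la^j \neq \emptyset$ for some $j \geq 2$ and $h_j > s_j - s_{j-1}$; cylindric (a) with $k = 1$ then gives $\la^{j-1}_1 \geq \la^j_{1+s_j-s_{j-1}} \geq \la^j_{h_j} \geq 1$, so $\la^{j-1}$ is also nonempty. The increasing reading word hypothesis requires the maximum of $\Bo^{s_{j-1}}(\la^{j-1})$, namely $\la^{j-1}_1 - 1 + s_{j-1}$, to be strictly less than the minimum of $\Bo^{s_j}(\la^j)$, namely $\la^j_{h_j} - h_j + s_j$; combining this with $\la^{j-1}_1 \geq \la^j_{h_j}$ yields $h_j \leq s_j - s_{j-1}$, a contradiction.

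For condition (2) I would split on $j_\mathrm{min}$. If $j_\mathrm{min} = 1$ and $\la^\ell = \emptyset$, then cylindric (b) with $k = 1$ immediately gives $h_1 \leq e + s_1 - s_\ell$. If $j_\mathrm{min} = 1$ and $\la^\ell \neq \emptyset$, I would mimic the argument for (1), using the period-at-most-$e$ condition in place of the reading word condition: assuming $h_1 > e + s_1 - s_\ell$ forces $\la^\ell_1 \geq \la^1_{h_1}$ via cylindric (b), and the period bound $\la^\ell_1 - 1 + s_\ell - (\la^1_{h_1} - h_1 + s_1) \leq e-1$ then yields a contradiction. Finally, if $j_\mathrm{min} > 1$, cylindric (a) applied across the empty $\la^{j_\mathrm{min} - 1}$ gives $h_{j_\mathrm{min}} \leq s_{j_\mathrm{min}} - s_{j_\mathrm{min} - 1}$, and the cylindricity $s_\ell < s_1 + e \leq s_{j_\mathrm{min} - 1} + e$ of $\bs$ upgrades this to $h_{j_\mathrm{min}} \leq e + s_{j_\mathrm{min}} - s_\ell$. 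The only step that requires real care is this second subcase of the converse, where the period condition plays a role structurally analogous to the reading word condition; everything else is a routine manipulation of row-height bounds.
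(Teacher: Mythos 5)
Your proof is correct and takes essentially the same approach as the paper: translate conditions (1) and (2) into the row-height bounds $h_j \leq s_j - s_{j-1}$ and $h_{j_{\min}} \leq e + s_{j_{\min}} - s_\ell$, then pair the cylindric inequalities with the increasing reading word (for (1)) and the period-at-most-$e$ bound (for (2)) to obtain contradictions in the converse. The only notable cosmetic differences are that you take $k=1$ in the cylindric inequality where the paper chooses $k = h_{j+1}-(s_{j+1}-s_j)$, and in the $j_{\min}>1$ subcase of (2) you invoke cylindric (a) directly rather than the just-established condition (1); your side remark that the $(\Leftarrow)$ direction never uses Definition~\ref{def:cali}(1) is accurate and a small clarification not made explicit in the paper.
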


\begin{proof} First we show that any $\bla$ satisfying (1) and (2) is cylindrical. 
We have that (1) holds if and only if $-s_j>-\co^\bs(b^{j+1}_{\mathrm{min}})=h_{j+1}-1-s_{j+1}$, if and only if $s_{j+1}-s_j>h_{j+1}-1$. Thus (1) holds if and only if for all $k\geq 1$ we have $\la^{j+1}_{k+s_{j+1}-s_j}=0$, which is \ref{def:FLOTW}(2)(a). If $\la^1=\emptyset$ then it automatically holds that $\la_k^\ell\geq\la^1_{k+e+s_1-s_\ell}$ for all $k\geq 1$, i.e. \ref{def:FLOTW}(2)(b) holds. So assume $\la^1\neq \emptyset$. Then (2) holds if and only if $s_\ell<-h_1+1+s_1+e$ if and only if $h_1<e+s_1-s_\ell$, implying that $\la^1_{k+e+s_1-s_\ell}=0$ for all $k\geq 1$. Thus \ref{def:FLOTW}(2)(b) holds.

\smallskip

For the converse, suppose that $\bla $ is cylindrical. Suppose $j\in\{1,\ldots,\ell-1\}$ such that $\la^{j+1}\neq\emptyset$. To verify condition (1), we need to show that $h_{j+1}\leq s_{j+1}-s_j$. Suppose not, then there exists $k\geq 1$ such that $h_{j+1}=s_{j+1}-s_j+k$. Definition \ref{def:FLOTW}(2)(a) tells us that $$\la_{h_{j+1}}^{j+1}=\la_{s_{j+1}-s_j+k}\leq \la_k^j\leq \la_1^j.$$ Since $\Bo^\bs(\bla)$ is increasing, the charged content of the rightmost box of the bottom row of $\la^{j+1}$ is greater than the charged content of the rightmost box of the top row of $\la^j$. This yields the inequality: $$\la^j_1+s_j-1<\la^{j+1}_{h_{j+1}}+s_{j+1}-h_{j+1}=\la^{j+1}_{h_{j+1}}+s_{j+1}-(s_{j+1}-s_j+k)\leq \la^{j+1}_{h_{j+1}}+s_j-1$$
giving $\la_1^j<\la_{h_{j+1}}^{j+1}$, a contradiction.

\smallskip 

Next, we verify condition (2) by a similar argument. If $\la^1=\emptyset$ then using that $\bs$ is cylindrical together with condition (1) that was just proved, we have $s_\ell<s_1+e\leq s_{j-1}+e<\co^\bs(b^j_{\mathrm{min}})+e$, where $j\geq 2$ is minimal such that $\la^j\neq \emptyset$. So we can assume $j=1$ i.e. $\la^1\neq \emptyset$.
We want to show that $s_\ell<\co^\bs(b^1_{\mathrm{min}})$. We have $\co^\bs(b^1_{\mathrm{min}})=-h_1+1+s_1+e$, so we need to show that $h_1\leq s_1-s_\ell+e$. Suppose not, then we can write $h_1=s_1-s_\ell+e+k$ for some $k\geq 1$. By the assumption that $\bla$ is FLOTW, we have $\la^\ell_1\geq \la^1_{1+e+s_1-s_\ell}\geq \la^1_{h_1}>0$. Thus $\la^\ell\neq \emptyset$. On the other hand, by the assumption that $\bla$ satisfies Definition \ref{def:cali}(1), it holds that the largest element of $\Bo^\bs(\bla)$ is less than the smallest element of $\Bo^\bs(\bla)$ plus $e$. Thus
$\la^\ell_1+s_\ell-1<\la^1_{h_1}-h_1+s_1+e$. But $\la^1_{h_1}-h_1+s_1+e=\la^1_{h_1}-(s_1-s_\ell+e+k)+s_1+e=\la^1_{h_1}+s_\ell-k\leq \la^1_{h_1}+s_\ell-1$. So we get $\la^\ell_1<\la^1_{h_1}$, a contradiction with $\la^\ell_1\geq \la^1_{h_1}$ that followed above from $\bla$ being FLOTW. This concludes the proof.
\end{proof}

\begin{corollary}\label{admiss}
  If 
$\bla\in\Cali^\bs(\ell)$ then  $h(\bla)$ is $\bs$-admissible.  
\end{corollary}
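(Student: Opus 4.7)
The plan is to derive the corollary almost directly from \cref{lemma:cylindrical}, handling the strictness clause in the definition of $\bs$-admissibility with a short case analysis. First, since $\bla\in\Cali^\bs(\ell)$ entails that $\bla$ is cylindrical and satisfies Definition~\ref{def:cali}(1), I can invoke \cref{lemma:cylindrical}. Using the identity $\co^\bs(b^m_{\min}) = s_m - h_m + 1$, the two conclusions of that lemma translate directly into the weak $\bs$-admissibility inequalities: for each $m>1$ with $\la^m \neq \varnothing$, the bound $s_{m-1} < s_m - h_m + 1$ rearranges to $h_m \leq s_m - s_{m-1}$; and for the minimal $j$ with $\la^j \neq \varnothing$, the bound $s_\ell < s_j - h_j + 1 + e$ rearranges to $h_j \leq e + s_j - s_\ell$, which in particular gives the required bound on $h_1$ in the case $j=1$.

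Next, I deal with the empty components using cylindricity of $\bs$. If $\la^m = \varnothing$ for some $m>1$, then $h_m = 0 \leq s_m - s_{m-1}$ because $s_m \geq s_{m-1}$. If $\la^1 = \varnothing$, so that the minimal nonempty index $j$ exceeds $1$, then $h_1 = 0$, and the bound $h_1 \leq e + s_1 - s_\ell$ is then immediate from $s_\ell < s_1 + e$; in fact this inequality is already strict, which in particular settles the strictness clause in this case.

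For the strictness clause in general, the trivial case $\bla = \varnothing$ gives $h_1 = 0 < e + s_1 - s_\ell$ at once. For nontrivial $\bla$, one obtains a strict inequality in some coordinate whenever an empty $\la^m$ ($m>1$) coexists with $s_m > s_{m-1}$, or $\la^1 = \varnothing$ with $s_\ell < s_1 + e - 1$. The remaining possibility is that every coordinate inequality saturates to an equality, in which case summation forces $\sum_m h_m = e$; the increasing-reading-word condition (forcing distinctness of $\Bo^\bs(\bla)$) combined with the period-$e$ bound would then pin $\Bo^\bs(\bla)$ to the full interval $\{z,z+1,\ldots,z+e-1\}$. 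Extracting a strict inequality from this extremal ``saturated'' configuration is the main obstacle of the proof: the weak inequalities themselves are essentially an immediate repackaging of \cref{lemma:cylindrical}, but handling this boundary case requires a delicate combinatorial argument using the fine interaction between the reading word, the period bound, and cylindricity (and, implicitly, the FLOTW restriction of Definition~\ref{def:FLOTW}(3) that is invoked as a remark just after Definition~\ref{def:cali}).
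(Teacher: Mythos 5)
Your derivation of the weak inequalities from \cref{lemma:cylindrical}, together with the treatment of empty components via cylindricity of $\bs$, is correct and matches what the paper's one-line proof is implicitly doing. You are also right to flag the strictness clause as the crux: summing all $\ell$ inequalities gives $\sum_m h_m \leq e$, so they can all saturate only when $h_\bla = e$. But your proposal leaves the saturated case unresolved, and the FLOTW~(3) route you hint at would not close it. Under a literal reading of Definition~\ref{def:cali}(1)(a), the interval $[z,z+e-1]$ contains $e$ integers, so the distinctness forced by (1)(b) only gives $h_\bla \leq e$. In fact $\bla = (2,2,2)$ with $\ell=1$, $\bs=(0)$, $e=3$ satisfies every condition of Definition~\ref{def:cali} as written, yet $h_\bla = 3 = e$ and $h(\bla)$ is not $\bs$-admissible; moreover this $\bla$ has $|\Bo^\bs(\bla)| = 3 = e$, so by \cref{lemma:calibrated} it is not calibrated, and it is not FLOTW (the residues at the ends of its three rows of size $2$ are $\{0,1,2\}$), which shows that the ``immediate'' remark after Definition~\ref{def:cali} also fails at face value. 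Appealing to FLOTW~(3) therefore just relocates the gap rather than filling it.

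The resolution is not a delicate combinatorial argument but a clarification of the period bound: Definition~\ref{def:cali}(1)(a) should be read as $\Bo^\bs(\bla)\subset[z,z+e-2]$ (period strictly less than $e$), consistently with Ruff's criterion $|\Bo^\bs(\la)| < e$ recorded in \cref{lemma:calibrated}. Under that reading $h_\bla\leq e-1<e$ at once, so not all $\ell$ inequalities can saturate, and your weak-inequality derivation already finishes the proof.
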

\begin{proof} 
Conditions $(1)$ and $(2)$ of \ref{lemma:cylindrical} together with the condition that $\Bo^\bs(\bla)\subset[z,z+e-1]$ implies that   $h(\bla)$ is $\bs$-admissible.  
\end{proof}

\begin{example}\label{exl:cali}
Let $\ell=3$ and $\bla=((2^2),(2),(3,2))$ and $\bs=(0,1,4) $ as in Example \ref{exl:ellpartition}. Suppose $e\geq 7$. Then $\bs=(0,1,4)$ is FLOTW.
We have $\Bo^\bs(\bla)=\{0,1,2,4,6\}\subset[0,e-1]$ and the reading word is $(0,1,2,4,6)$, which is increasing. Thus Definition \ref{def:cali}(1) is satisfied. 
To check that $\bla$ is cylindrical we apply Lemma \ref{lemma:cylindrical}. 
We have $s_1=0<1=\co^\bs(b^2_{\mathrm{min}})$ and $s_2=1<3=\co^\bs(b^3_{\mathrm{min}})$, verifying Condition (1) of Lemma \ref{lemma:cylindrical}. Also, $s_3=4<6\leq -1+e=\co^\bs(b^1_{\mathrm{min}})+e$, verifying Condition (2) of Lemma \ref{lemma:cylindrical}. Therefore, $\bla$ is cylindrical, and $\bla\in\Cali^\bs(3)$.
\end{example}

\subsection{Multiplicity-free right border sets and semi-infinite Young diagrams}\label{subsec:multfreebord}

Let $\mathcal{I}\subset \mathbb{Z}$ be a finite subset of the integers. Write $\mathcal{I}=\{i_h<i_{h-1}<\ldots<i_2<i_1\}$. We define the {\em semi-infinite Young diagram with right border set $\mathcal{I}$} to be the set of boxes
$b=(x,y)$, $1\leq x\leq h$ and $y \leq i_{x}+x$, and we denote it by $\widetilde{Y}(\mathcal{I})$. We will think of $x$ as the row and $y$ as the column, and will draw the rows as descending as in our convention for ordinary Young diagrams. The {\em content} of a box $b=(x,y)\in\widetilde{Y}(\mathcal{I})$ is $\co^{\cI}(b):=y-x$.

\begin{example}\label{exl:sinfYdiagram}
Suppose $\mathcal{I}=\{0,1,2,4,6\} $, this determines   $\widetilde{Y}(\mathcal{I})$ as in the leftmost diagram in Figure \ref{lessroom1}.  
Truncating at the column whose topmost box has content $0$, we obtain the Young diagram of $\la=(7,6,5^3)$ with charge $0$  as depicted 
in the rightmost diagram in Figure \ref{lessroom1}.

\begin{figure}[ht!]
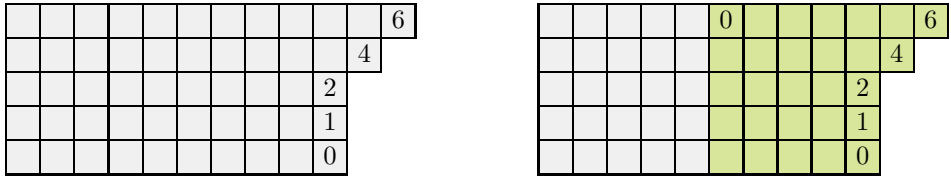
\begin{center}
\ytableausetup{mathmode,boxsize=1.25em}\scalefont{0.9} \begin{ytableau}
\none\ldots&*(LightGray)&*(LightGray)&*(LightGray)&*(LightGray)&*(LightGray)&*(LightGray)&*(LightGray)&*(LightGray)&*(LightGray)&*(LightGray)&*(LightGray)&*(LightGray)6\\
\none\ldots&*(LightGray)&*(LightGray)&*(LightGray)&*(LightGray)&*(LightGray)&*(LightGray)&*(LightGray)&*(LightGray)&*(LightGray)&*(LightGray)&*(LightGray)4\\
\none\ldots&*(LightGray)&*(LightGray)&*(LightGray)&*(LightGray)&*(LightGray)&*(LightGray)&*(LightGray)&*(LightGray)&*(LightGray)&*(LightGray)2\\
\none\ldots&*(LightGray)&*(LightGray)&*(LightGray)&*(LightGray)&*(LightGray)&*(LightGray)&*(LightGray)&*(LightGray)&*(LightGray)&*(LightGray)1\\
\none\ldots&*(LightGray)&*(LightGray)&*(LightGray)&*(LightGray)&*(LightGray)&*(LightGray)&*(LightGray)&*(LightGray)&*(LightGray)&*(LightGray)0\\
\end{ytableau}
\qquad
\quad
\begin{ytableau}
\none\ldots&*(LightGray)&*(LightGray)&*(LightGray)&*(LightGray)&*(LightGray)&*(SpringGreen!65)0&*(SpringGreen!65)&*(SpringGreen!65)&*(SpringGreen!65)&*(SpringGreen!65)&*(SpringGreen!65)&*(SpringGreen!65)6\\
\none\ldots&*(LightGray)&*(LightGray)&*(LightGray)&*(LightGray)&*(LightGray)&*(SpringGreen!65)&*(SpringGreen!65)&*(SpringGreen!65)&*(SpringGreen!65)&*(SpringGreen!65)&*(SpringGreen!65)4\\
\none\ldots&*(LightGray)&*(LightGray)&*(LightGray)&*(LightGray)&*(LightGray)&*(SpringGreen!65)&*(SpringGreen!65)&*(SpringGreen!65)&*(SpringGreen!65)&*(SpringGreen!65)2\\
\none\ldots&*(LightGray)&*(LightGray)&*(LightGray)&*(LightGray)&*(LightGray)&*(SpringGreen!65)&*(SpringGreen!65)&*(SpringGreen!65)&*(SpringGreen!65)&*(SpringGreen!65)1\\
\none\ldots&*(LightGray)&*(LightGray)&*(LightGray)&*(LightGray)&*(LightGray)&*(SpringGreen!65)&*(SpringGreen!65)&*(SpringGreen!65)&*(SpringGreen!65)&*(SpringGreen!65)0\\
\end{ytableau}
\end{center}

\caption{The diagram $\widetilde{Y}(\mathcal{I})$ as in Example \ref{exl:sinfYdiagram} 
and its 
truncation at the column whose topmost box has content $0$ to  obtain the Young diagram of $\la=(7,6,5^3)$.  }
\label{lessroom1}
\end{figure}

 \end{example}

Fix $\cI=\{i_1,\ldots,i_h\}\subset\mathbb{Z}$ and some $e\in\mathbb{N}$ such that $e>i_1-i_h$ and $e>h$. In Example \ref{exl:sinfYdiagram}, we may take any $e\geq 7$. For any $\ell\geq 1$,
we will construct a particular set of charged $\ell$-partitions $\bla$ with cylindrical charge $\bs$ and such that $\Bo^{\bs}(\bla)=\cI$. First, we choose $s_1\in\mathbb{Z}$ such that $s_1\leq i_h$. Then, we choose a box $b_1=(x_1,y_1)\in\widetilde{Y}(\cI)$ such that $\co^{\cI}(b)=s_1$. Thus $b_1$ lies on or to the left of the northwest-southeast diagonal ending at the bottom right box of the diagram $\widetilde{Y}(\cI)$. The box $b_1$ will be the top left corner of $\la^1$. We then take $\la^1$ to be the partition consisting of all boxes in $\widetilde{Y}(\cI)$ below and to the right of $b_1$. That is:
$$\la^1=\{b=(x,y)\in\widetilde{Y}(\cI)\mid x\geq x_1,\;y\geq y_1\}.$$
We then mark the boxes in $\widetilde{Y}(\cI)$ of content $\alpha$, where $\alpha=\co^{s_1}(b^1_{\mathrm{min}})$. Here, $b^1_{\mathrm{min}}$ is the bottom-leftmost box of $\la^1$, its box of smallest content. A choice of $s_1$ and $b_1$ is illustrated schematically in \cref{figA} below, with $i_h$ and $\alpha$ also indicated.

\begin{figure}[ht!]
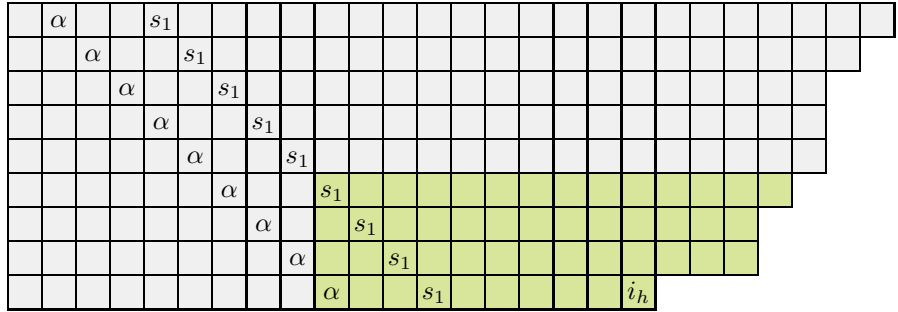
\begin{center}
\ytableausetup{mathmode,boxsize=1.25em}\scalefont{0.9} \begin{ytableau}
\none\ldots&*(LightGray)&*(LightGray)\alpha&*(LightGray)&*(LightGray)&*(LightGray)s_1&*(LightGray)&*(LightGray)&*(LightGray)&*(LightGray)&*(LightGray)&*(LightGray)&*(LightGray)&*(LightGray)&*(LightGray)&*(LightGray)&*(LightGray)&*(LightGray)&*(LightGray)&*(LightGray)&*(LightGray)&*(LightGray)&*(LightGray)&*(LightGray)&*(LightGray)&*(LightGray)&*(LightGray)\\
\none\ldots&*(LightGray)&*(LightGray)&*(LightGray)\alpha&*(LightGray)&*(LightGray)&*(LightGray)s_1&*(LightGray)&*(LightGray)&*(LightGray)&*(LightGray)&*(LightGray)&*(LightGray)&*(LightGray)&*(LightGray)&*(LightGray)&*(LightGray)&*(LightGray)&*(LightGray)&*(LightGray)&*(LightGray)&*(LightGray)&*(LightGray)&*(LightGray)&*(LightGray)&*(LightGray)\\
\none\ldots&*(LightGray)&*(LightGray)&*(LightGray)&*(LightGray)\alpha&*(LightGray)&*(LightGray)&*(LightGray)s_1&*(LightGray)&*(LightGray)&*(LightGray)&*(LightGray)&*(LightGray)&*(LightGray)&*(LightGray)&*(LightGray)&*(LightGray)&*(LightGray)&*(LightGray)&*(LightGray)&*(LightGray)&*(LightGray)&*(LightGray)&*(LightGray)&*(LightGray)\\
\none\ldots&*(LightGray)&*(LightGray)&*(LightGray)&*(LightGray)&*(LightGray)\alpha&*(LightGray)&*(LightGray)&*(LightGray)s_1&*(LightGray)&*(LightGray)&*(LightGray)&*(LightGray)&*(LightGray)&*(LightGray)&*(LightGray)&*(LightGray)&*(LightGray)&*(LightGray)&*(LightGray)&*(LightGray)&*(LightGray)&*(LightGray)&*(LightGray)&*(LightGray)\\
\none\ldots&*(LightGray)&*(LightGray)&*(LightGray)&*(LightGray)&*(LightGray)&*(LightGray)\alpha&*(LightGray)&*(LightGray)&*(LightGray)s_1&*(LightGray)&*(LightGray)&*(LightGray)&*(LightGray)&*(LightGray)&*(LightGray)&*(LightGray)&*(LightGray)&*(LightGray)&*(LightGray)&*(LightGray)&*(LightGray)&*(LightGray)&*(LightGray)&*(LightGray)\\
\none\ldots&*(LightGray)&*(LightGray)&*(LightGray)&*(LightGray)&*(LightGray)&*(LightGray)&*(LightGray)\alpha&*(LightGray)&*(LightGray)&*(SpringGreen!65)s_1&*(SpringGreen!65)&*(SpringGreen!65)&*(SpringGreen!65)&*(SpringGreen!65)&*(SpringGreen!65)&*(SpringGreen!65)&*(SpringGreen!65)&*(SpringGreen!65)&*(SpringGreen!65)&*(SpringGreen!65)&*(SpringGreen!65)&*(SpringGreen!65)&*(SpringGreen!65)\\
\none\ldots&*(LightGray)&*(LightGray)&*(LightGray)&*(LightGray)&*(LightGray)&*(LightGray)&*(LightGray)&*(LightGray)\alpha&*(LightGray)&*(SpringGreen!65)&*(SpringGreen!65)s_1&*(SpringGreen!65)&*(SpringGreen!65)&*(SpringGreen!65)&*(SpringGreen!65)&*(SpringGreen!65)&*(SpringGreen!65)&*(SpringGreen!65)&*(SpringGreen!65)&*(SpringGreen!65)&*(SpringGreen!65)&*(SpringGreen!65)\\
\none\ldots&*(LightGray)&*(LightGray)&*(LightGray)&*(LightGray)&*(LightGray)&*(LightGray)&*(LightGray)&*(LightGray)&*(LightGray)\alpha&*(SpringGreen!65)&*(SpringGreen!65)&*(SpringGreen!65)s_1&*(SpringGreen!65)&*(SpringGreen!65)&*(SpringGreen!65)&*(SpringGreen!65)&*(SpringGreen!65)&*(SpringGreen!65)&*(SpringGreen!65)&*(SpringGreen!65)&*(SpringGreen!65)&*(SpringGreen!65)\\
\none\ldots&*(LightGray)&*(LightGray)&*(LightGray)&*(LightGray)&*(LightGray)&*(LightGray)&*(LightGray)&*(LightGray)&*(LightGray)&*(SpringGreen!65)\alpha&*(SpringGreen!65)&*(SpringGreen!65)&*(SpringGreen!65)s_1&*(SpringGreen!65)&*(SpringGreen!65)&*(SpringGreen!65)&*(SpringGreen!65)&*(SpringGreen!65)&*(SpringGreen!65)i_h\\
\end{ytableau}
\end{center}
\caption{Illustration of the first step of producing a calibrated multipartition from a semi-infinite Young diagram}
\label{figA}
\end{figure}

If $b_1$ is in the top row of $\widetilde{Y}(\cI)$, then we stop here: we have constructed a charged $1$-partition with right border $\cI$ as above. Otherwise, we go to the next step. We choose some $s_2$ such that $s_1<s_2<\alpha+e$.  Observe that since $\widetilde{Y}(\cI)$ has less than $e$ rows, this is always possible. On the diagonal of boxes with content $s_2$, we pick a box $b_2=(x_2,y_2)$ such that $\co(b_2)=s_2$, $x_2<x_1$, and $y_2\geq y_1$. If $s_2=\alpha+e-1$ then we require that $x_2=1$, that is, that $b_2$ is in the top row. We then take the partition $\la^2$ to consist of those boxes of $\widetilde{Y}(\cI)$ lying below and right of $b_2$ and strictly above $\la^1$:
$$\la^2=\{b=(x,y)\in\widetilde{Y}(\cI)\mid x_2\leq x<x_1,\;y\geq y_2\}.$$

\smallskip

If $x_2\neq 1$, that is, if $b_2$ is not in the top row of the diagram, then we continue the process with Step 3 choosing $s_3$ and $\la^3$... At Step $i$ of this process, which occurs if $b_{i-1}$ is not in the top row of $\widetilde{Y}(\cI)$, we choose $s_i$ such that $s_{i-1}<s_i<\alpha+e$, and we choose $b_i=(x_i,y_i)\in\widetilde{Y}(\cI)$ satisfying $\co(b_i)=s_i$, $x_i<x_{i-1}$, and $y_i\geq y_{i-1}$. We require that $x_i=1$ if $s_i=\alpha+e-1$. We then define $\la^i$ as
$$\la^i=\{b=(x,y)\in\widetilde{Y}(\cI)\mid x_i\leq x<x_{i-1},\;y\geq y_i\}.$$
The process terminates after $\ell\leq h$ steps since $\widetilde{Y}(\cI)$ has $h$ rows and each step chooses a box $b_i$ in a row above the row containing $b_{i-1}$. We take $\bla=(\la^1,\la^2,\ldots,\la^\ell)$ and $\bs=(s_1,s_2,\ldots,s_\ell)$. Then $\bs$ satisfies $s_1<s_2<\ldots<s_\ell<\alpha+e \leq s_1+e$ so is cylindric, and each $\la^j$ is non-empty, $1\leq j\leq \ell$.

The result of constructing $({\color{SpringGreen}\la^1},{\color{Melon}\la^2},{\color{Thistle}\la^3},\ldots,{\color{SkyBlue}\la^\ell}),({\color{SpringGreen}s_1},{\color{Melon}s_2},{\color{Thistle}s_3},\ldots,{\color{SkyBlue}s_\ell})$ for $\ell=4$ from the semi-infinite Young diagram of some $\cI=\{i_1,\ldots,i_h\}\subset \mathbb{Z}$ is depicted schematically in \cref{another1} below. The partitions must be stacked in such a way that they make a staircase shape. 

\begin{figure}[ht!]
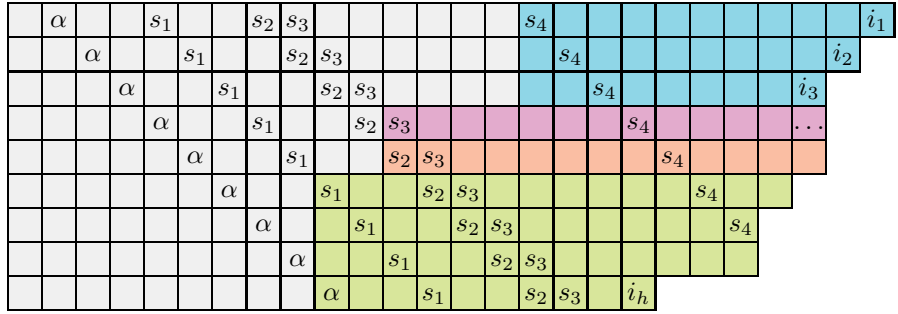

\begin{center}\ytableausetup{mathmode,boxsize=1.25em}\scalefont{0.9}
\begin{ytableau}
\none\ldots&*(LightGray)&*(LightGray)\alpha&*(LightGray)&*(LightGray)&*(LightGray)s_1&*(LightGray)&*(LightGray)&*(LightGray)s_2&*(LightGray)s_3&*(LightGray)&*(LightGray)&*(LightGray)&*(LightGray)&*(LightGray)&*(LightGray)&*(SkyBlue!65)s_4&*(SkyBlue!65)&*(SkyBlue!65)&*(SkyBlue!65)&*(SkyBlue!65)&*(SkyBlue!65)&*(SkyBlue!65)&*(SkyBlue!65)&*(SkyBlue!65)&*(SkyBlue!65)&*(SkyBlue!65)i_1\\
\none\ldots&*(LightGray)&*(LightGray)&*(LightGray)\alpha&*(LightGray)&*(LightGray)&*(LightGray)s_1&*(LightGray)&*(LightGray)&*(LightGray)s_2&*(LightGray)s_3&*(LightGray)&*(LightGray)&*(LightGray)&*(LightGray)&*(LightGray)&*(SkyBlue!65)&*(SkyBlue!65)s_4&*(SkyBlue!65)&*(SkyBlue!65)&*(SkyBlue!65)&*(SkyBlue!65)&*(SkyBlue!65)&*(SkyBlue!65)&*(SkyBlue!65)&*(SkyBlue!65)i_2\\
\none\ldots&*(LightGray)&*(LightGray)&*(LightGray)&*(LightGray)\alpha&*(LightGray)&*(LightGray)&*(LightGray)s_1&*(LightGray)&*(LightGray)&*(LightGray)s_2&*(LightGray)s_3&*(LightGray)&*(LightGray)&*(LightGray)&*(LightGray)&*(SkyBlue!65)&*(SkyBlue!65)&*(SkyBlue!65)s_4&*(SkyBlue!65)&*(SkyBlue!65)&*(SkyBlue!65)&*(SkyBlue!65)&*(SkyBlue!65)&*(SkyBlue!65)i_3\\
\none\ldots&*(LightGray)&*(LightGray)&*(LightGray)&*(LightGray)&*(LightGray)\alpha&*(LightGray)&*(LightGray)&*(LightGray)s_1&*(LightGray)&*(LightGray)&*(LightGray)s_2&*(Thistle!65)s_3&*(Thistle!65)&*(Thistle!65)&*(Thistle!65)&*(Thistle!65)&*(Thistle!65)&*(Thistle!65)&*(Thistle!65)s_4&*(Thistle!65)&*(Thistle!65)&*(Thistle!65)&*(Thistle!65)&*(Thistle!65)\ldots\\
\none\ldots&*(LightGray)&*(LightGray)&*(LightGray)&*(LightGray)&*(LightGray)&*(LightGray)\alpha&*(LightGray)&*(LightGray)&*(LightGray)s_1&*(LightGray)&*(LightGray)&*(Melon!65)s_2&*(Melon!65)s_3&*(Melon!65)&*(Melon!65)&*(Melon!65)&*(Melon!65)&*(Melon!65)&*(Melon!65)&*(Melon!65)s_4&*(Melon!65)&*(Melon!65)&*(Melon!65)&*(Melon!65)\\
\none\ldots&*(LightGray)&*(LightGray)&*(LightGray)&*(LightGray)&*(LightGray)&*(LightGray)&*(LightGray)\alpha&*(LightGray)&*(LightGray)&*(SpringGreen!65)s_1&*(SpringGreen!65)&*(SpringGreen!65)&*(SpringGreen!65)s_2&*(SpringGreen!65)s_3&*(SpringGreen!65)&*(SpringGreen!65)&*(SpringGreen!65)&*(SpringGreen!65)&*(SpringGreen!65)&*(SpringGreen!65)&*(SpringGreen!65)s_4&*(SpringGreen!65)&*(SpringGreen!65)\\
\none\ldots&*(LightGray)&*(LightGray)&*(LightGray)&*(LightGray)&*(LightGray)&*(LightGray)&*(LightGray)&*(LightGray)\alpha&*(LightGray)&*(SpringGreen!65)&*(SpringGreen!65)s_1&*(SpringGreen!65)&*(SpringGreen!65)&*(SpringGreen!65)s_2&*(SpringGreen!65)s_3&*(SpringGreen!65)&*(SpringGreen!65)&*(SpringGreen!65)&*(SpringGreen!65)&*(SpringGreen!65)&*(SpringGreen!65)&*(SpringGreen!65)s_4\\
\none\ldots&*(LightGray)&*(LightGray)&*(LightGray)&*(LightGray)&*(LightGray)&*(LightGray)&*(LightGray)&*(LightGray)&*(LightGray)\alpha&*(SpringGreen!65)&*(SpringGreen!65)&*(SpringGreen!65)s_1&*(SpringGreen!65)&*(SpringGreen!65)&*(SpringGreen!65)s_2&*(SpringGreen!65)s_3&*(SpringGreen!65)&*(SpringGreen!65)&*(SpringGreen!65)&*(SpringGreen!65)&*(SpringGreen!65)&*(SpringGreen!65)\\
\none\ldots&*(LightGray)&*(LightGray)&*(LightGray)&*(LightGray)&*(LightGray)&*(LightGray)&*(LightGray)&*(LightGray)&*(LightGray)&*(SpringGreen!65)\alpha&*(SpringGreen!65)&*(SpringGreen!65)&*(SpringGreen!65)s_1&*(SpringGreen!65)&*(SpringGreen!65)&*(SpringGreen!65)s_2&*(SpringGreen!65)s_3&*(SpringGreen!65)&*(SpringGreen!65)i_h\\
\end{ytableau}
\end{center}

\caption{Constructing a charged $4$-partition in $\Cali^\bs(4)$ with a given border set $\{i_1,i_2,i_3,\ldots,i_h\}$}.
\label{another1}
\end{figure}

\smallskip

For $\bla\in\Cali^\bs(\ell)$, removing all empty components $\lambda^j=\emptyset$ yields a charged $m$-partition obtained from $\widetilde{Y}(\cI)$ by the procedure of Section \ref{subsec:multfreebord} with $\cI=\Bo^\bs(\bla)$ and where $m=\ell-\#\{\la^j=\emptyset\}$. Conversely, for a given $\cI\subset\mathbb{Z}$ such that $\cI\subset[z,z+e-1]$ for some $z$ and $|\cI|<e$, we can start from  some $\bla\in\Cali^\bs(\ell)$ produced from $\widetilde{Y}(\cI)$, then insert empty components to obtain some $\bmu\in\Cali^{\bs'}(\ell')$, $\ell'>\ell$, with cylindrical charge $\bs'\in\mathbb{Z}^{\ell'}$ such that  $\{s_1,\ldots,s_\ell\}\subset\{s_1',\ldots,s_{\ell'}'\}$. The components of the charge for the empty components must be chosen so that the $\ell'$-partition remains cylindrical, i.e. by respecting the conditions in Lemma \ref{lemma:cylindrical}.

\begin{example}\label{exl:Young diagram} Let $\ell=3$, $\bs=(0,1,4)$, and $\bla=((2^2),(2),(3,2))$ as in  Example \ref{exl:ellpartition}. Suppose $e\geq 7$. Then $\bla\in\Cali^\bs(3)$, as was shown in Example \ref{exl:cali}.
We can construct $\bla$ with charge $\bs$ from $\widetilde{Y}(\{0,1,2,4,6\})$ as depicted in \cref{ppppsss}.  
 Then by inserting empty components following the conditions of Lemma \ref{lemma:cylindrical} on their charges, we can, for example, obtain a $6$-partition $\bmu\in\Cali^{\bs'}(6)$ having three non-empty components by taking $\bs'=(-2,0,1,2,2,4)$ and $\bmu=(\emptyset, (2^2),(2),\emptyset,\emptyset,(3,2))$.
 
 \begin{figure}[ht!]
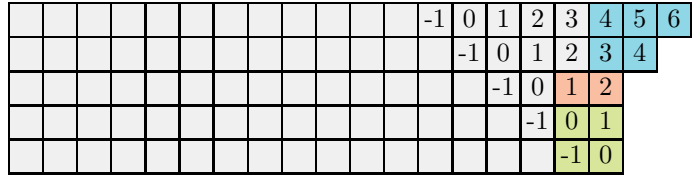
 
\begin{center}\ytableausetup{mathmode,boxsize=1.25em}\scalefont{0.9}
\begin{ytableau}
\none\ldots&*(LightGray)&*(LightGray)&*(LightGray)&*(LightGray)&*(LightGray)&*(LightGray)&*(LightGray)&*(LightGray)&*(LightGray)&*(LightGray)&*(LightGray)&*(LightGray)&*(LightGray)\hbox{-}1&*(LightGray)0&*(LightGray)1&*(LightGray)2&*(LightGray)3&*(SkyBlue!65)4&*(SkyBlue!65)5&*(SkyBlue!65)6\\
\none\ldots&*(LightGray)&*(LightGray)&*(LightGray)&*(LightGray)&*(LightGray)&*(LightGray)&*(LightGray)&*(LightGray)&*(LightGray)&*(LightGray)&*(LightGray)&*(LightGray)&*(LightGray)&*(LightGray)\hbox{-}1&*(LightGray)0&*(LightGray)1&*(LightGray)2&*(SkyBlue!65)3&*(SkyBlue!65)4\\
\none\ldots&*(LightGray)&*(LightGray)&*(LightGray)&*(LightGray)&*(LightGray)&*(LightGray)&*(LightGray)&*(LightGray)&*(LightGray)&*(LightGray)&*(LightGray)&*(LightGray)&*(LightGray)&*(LightGray)&*(LightGray)\hbox{-}1&*(LightGray)0&*(Melon!65)1&*(Melon!65)2\\
\none\ldots&*(LightGray)&*(LightGray)&*(LightGray)&*(LightGray)&*(LightGray)&*(LightGray)&*(LightGray)&*(LightGray)&*(LightGray)&*(LightGray)&*(LightGray)&*(LightGray)&*(LightGray)&*(LightGray)&*(LightGray)&*(LightGray)\hbox{-}1&*(SpringGreen!65)0&*(SpringGreen!65)1\\
\none\ldots&*(LightGray)&*(LightGray)&*(LightGray)&*(LightGray)&*(LightGray)&*(LightGray)&*(LightGray)&*(LightGray)&*(LightGray)&*(LightGray)&*(LightGray)&*(LightGray)&*(LightGray)&*(LightGray)&*(LightGray)&*(LightGray)&*(SpringGreen!65)\hbox{-}1&*(SpringGreen!65)0\\
\end{ytableau}
\end{center}
\caption{Realizing $\bla=((2^2),(2),(3,2))\in\Cali^{(0,1,4)}(3)$ inside the semi-infinite Young diagram associated to its border set $\{0,1,2,4,6\}$. }
\label{ppppsss}
\end{figure}

More generally, suppose that upon removing all empty components from a charged $\ell$-partition $\bla$ with some $\ell$-charge $\bs$ for some $\ell\geq3$, we get the $3$-partition $((2^2),(2),(3,2))$ with charge $(0,1,4)$ as pictured above. Then $\bla\in\Cali^\bs(\ell)$ if and only if $\bla=(\emptyset^{a_{-2}},(2^2),\emptyset^{a_0},(2),\emptyset^{a_1+a_2},(3,2),\emptyset^{a_4},\emptyset^{a_5})$  and $\bs=(-2^{a_{-2}},0^{1+a_0},1^{1+a_1},2^{a_2},4^{1+a_4},5^{a_5})$ for some $a_{-2},a_0,a_1,a_2,a_4,a_5\in\mathbb{Z}_{\geq 0}$, where $a_5$ and $a_{-2}$ cannot both be nonzero. Here we have used exponential notation. For example, taking $a_0=a_1=a_2=1,$ $a_5=2$, and $a_4=a_{-2}=0$ we get that $\bla=((2^2),\emptyset,(2),\emptyset,\emptyset,(3,2),\emptyset,\emptyset)\in\Cali^\bs(8)$ for $\bs=(0,0,1,1,2,4,5,5)$.
\end{example}

\smallskip


\subsection{Skew shapes} 
From now on, when we talk about the Young diagram of $\bla\in\Cali^\bs(\ell)$, we will mean the embedding of the Young diagram of $\bla$ in $\widetilde{Y}(\Bo^\bs(\bla))$ given by stacking the $\la^j$'s so that their right borders make up the right border of $\widetilde{Y}(\Bo^\bs(\bla))$. We now jettison the semi-infinite blank region to the left of the $\la^j$'s to obtain a diagram as in the leftmost depicted in \cref{ghghghghg}.

\begin{figure}[ht!]
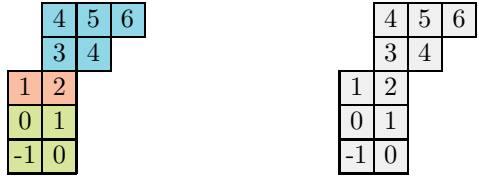

$$\ytableausetup{mathmode,boxsize=1.25em}\scalefont{0.9}
\begin{minipage}{3cm}\begin{ytableau}
\none &*(SkyBlue!65)4&*(SkyBlue!65)5&*(SkyBlue!65)6\\
\none &*(SkyBlue!65)3&*(SkyBlue!65)4\\
   *(Melon!65)1&*(Melon!65)2\\
  *(SpringGreen!65)0&*(SpringGreen!65)1\\
 *(SpringGreen!65)\hbox{-}1&*(SpringGreen!65)0\\
\end{ytableau}\end{minipage}
\qquad \qquad 
\begin{minipage}{2cm}
\begin{ytableau}
\none &                 *(LightGray)4&*(LightGray)5&*(LightGray)6\\
\none &                 *(LightGray)3&*(LightGray)4\\
            *(LightGray)1&*(LightGray)2\\
          *(LightGray)0&*(LightGray)1\\
         *(LightGray)\hbox{-}1&*(LightGray)0\\
\end{ytableau}
\end{minipage}\qquad
$$
\caption{The diagram for $\bla=((2^2),(2),(3,2))$ for $\bs = (0,1,4)$ and the skew-shape obtained by ``forgetting components".  }
\label{ghghghghg}
\end{figure}

\begin{convention}\label{conv:Young diagram}
\noindent We will write $\mathsf{Y}^\bs(\bla)$ for the Young diagram of $\bla\in\mathsf{Cali}^\bs(\ell)$ stacked as in the leftmost diagram in \cref{ghghghghg}.
\end{convention}

 Suppose  $\bla\in\mathsf{Cali}^\bs(\ell)$. 
If we ``forget components" in its Young diagram $\mathsf{Y}^\bs(\bla)$, we get a skew partition. In the picture of the Young diagram of the charged $3$-partition above, simply forget the different colors. In Example \ref{exl:Young diagram}, the skew partition associated to $\mathsf{Y}^\bs(\bla)$ has Young diagram given by the region shown  in the rightmost diagram in \cref{ghghghghg} (remembering the  contents of the boxes).  

Suppose that every component $\lambda^j$ of $\bla$ is nonempty. Then we may interpret Definition \ref{def:cali} as saying that when we express $\mathsf{Y}^\bs(\bla)$ as $\mu\setminus\nu$ for some partitions $\mu$ and $\nu$, then $\mu$ and $\nu$ both belong to $\Cali(1)$. By \cite{MR2266877}, $\mu\in\Cali(1)$ if and only if the irreducible representation $S_\mu$ of the finite Hecke algebra $H_q(S_n)$ for $q=\exp(2\pi i/e)$ is calibrated. By \cite{MR1988991}, the calibrated representations of the affine Hecke algebra $\widetilde{H}_q(S_n)$ for {\em generic} values of $q$ are labeled by skew Young diagrams $\mu\setminus \nu$. The condition for an $\ell$-partition to be in $\bla\in\Cali^\bs(\ell)$ thus arises as the intersection of two combinatorial conditions: on the one hand, the skew shape condition identifying calibrated representations of the affine type $A$ Hecke algebra for $q$ not a root of unity; and on the other hand, the condition identifying calibrated representations of the finite type $A$ Hecke algebra for $q=\exp(2\pi i/e)$, applied to the left and right borders of the skew shape. Now, if we further allow $\bla$ to contain empty components $\la^j=\emptyset $, then we just need to fine-tune this description by the conditions on the charges of these empty components given by Definition \ref{def:FLOTW} and Lemma \ref{lemma:cylindrical}. These extra conditions on where the empty components may occur and with what charge make sure that the resulting charge $\bs$ and $\ell$-partition $\bla$ remain cylindrical.

\begin{rmk}
We remark that a single skew diagram  can often be broken into 
many different charged multipartitions in an intuitive fashion.
  This is illustrated in \cref{allthecuts}.   
\end{rmk}

\begin{figure}[ht!]
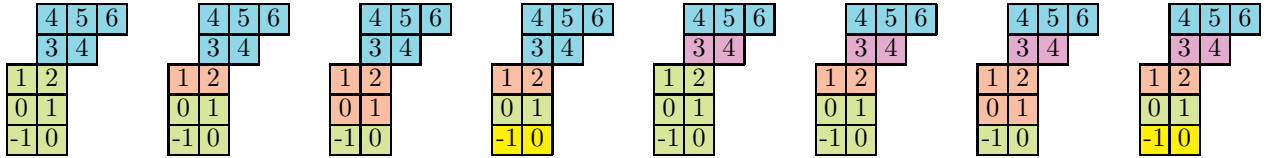

$$\ytableausetup{mathmode,boxsize=1.1em}\scalefont{0.9}
\begin{minipage}{1.8cm}
 \begin{ytableau}
\none &*(SkyBlue!65)4&*(SkyBlue!65)5&*(SkyBlue!65)6\\
\none &*(SkyBlue!65)3&*(SkyBlue!65)4\\
   *(SpringGreen!65)1&*(SpringGreen!65)2\\
  *(SpringGreen!65)0&*(SpringGreen!65)1\\
 *(SpringGreen!65)\hbox{-}1&*(SpringGreen!65)0\\
\end{ytableau}\end{minipage}
\quad 
 \begin{minipage}{1.8cm}\begin{ytableau}
\none &*(SkyBlue!65)4&*(SkyBlue!65)5&*(SkyBlue!65)6\\
\none &*(SkyBlue!65)3&*(SkyBlue!65)4\\
   *(Melon!65)1&*(Melon!65)2\\
  *(SpringGreen!65)0&*(SpringGreen!65)1\\
 *(SpringGreen!65)\hbox{-}1&*(SpringGreen!65)0\\
\end{ytableau}\end{minipage}
\quad
 \begin{minipage}{1.8cm}\begin{ytableau}
\none &*(SkyBlue!65)4&*(SkyBlue!65)5&*(SkyBlue!65)6\\
\none &*(SkyBlue!65)3&*(SkyBlue!65)4\\
   *(Melon!65)1&*(Melon!65)2\\
  *(Melon!65)0&*(Melon!65)1\\
 *(SpringGreen!65)\hbox{-}1&*(SpringGreen!65)0\\
\end{ytableau}\end{minipage}
\quad
\begin{minipage}{1.8cm}\begin{ytableau}
\none &*(SkyBlue!65)4&*(SkyBlue!65)5&*(SkyBlue!65)6\\
\none &*(SkyBlue!65)3&*(SkyBlue!65)4\\
  *(Melon!65)1&*(Melon!65)2\\
   *(SpringGreen!65)0&*(SpringGreen!65)1\\
 *(yellow)\hbox{-}1&*(yellow)0\\
\end{ytableau}\end{minipage}
\quad
\begin{minipage}{1.8cm}
 \begin{ytableau}
\none &*(SkyBlue!65)4&*(SkyBlue!65)5&*(SkyBlue!65)6\\
\none &*(Thistle!65)3&*(Thistle!65)4\\
   *(SpringGreen!65)1&*(SpringGreen!65)2\\
  *(SpringGreen!65)0&*(SpringGreen!65)1\\
 *(SpringGreen!65)\hbox{-}1&*(SpringGreen!65)0\\
\end{ytableau}\end{minipage}
\quad 
 \begin{minipage}{1.8cm}\begin{ytableau}
\none &*(SkyBlue!65)4&*(SkyBlue!65)5&*(SkyBlue!65)6\\
\none &*(Thistle!65)3&*(Thistle!65)4\\
   *(Melon!65)1&*(Melon!65)2\\
  *(SpringGreen!65)0&*(SpringGreen!65)1\\
 *(SpringGreen!65)\hbox{-}1&*(SpringGreen!65)0\\
\end{ytableau}\end{minipage}
\quad
 \begin{minipage}{1.8cm}\begin{ytableau}
\none &*(SkyBlue!65)4&*(SkyBlue!65)5&*(SkyBlue!65)6\\
\none &*(Thistle!65)3&*(Thistle!65)4\\
   *(Melon!65)1&*(Melon!65)2\\
  *(Melon!65)0&*(Melon!65)1\\
 *(SpringGreen!65)\hbox{-}1&*(SpringGreen!65)0\\
\end{ytableau}\end{minipage}
\quad
\begin{minipage}{1.8cm}\begin{ytableau}
\none &*(SkyBlue!65)4&*(SkyBlue!65)5&*(SkyBlue!65)6\\
\none &*(Thistle!65)3&*(Thistle!65)4\\
  *(Melon!65)1&*(Melon!65)2\\
   *(SpringGreen!65)0&*(SpringGreen!65)1\\
 *(yellow)\hbox{-}1&*(yellow)0\\
\end{ytableau}\end{minipage}
$$
 \caption{The 8 possible ways of breaking a fixed 
  skew shape into charged multipartitions with cylindric charge.
The first two  are
  $\bla=( (2^3),(3,2))$ for $\bs = (1,4)$ and 
  $\bla=( (2^2),(2), (3,2))$ for $\bs = (0,1,4)$.  The rest are left for the reader. 
  Notice that the multipartitions all have $2\leq \ell \leq 5$ components.   }
\label{allthecuts}
\end{figure}

To finish this section, we relate our constructions to the crystal operators $\tilde{e}_{i}$ of Section \ref{sec:crystal}.

\begin{lemma}\label{lemma:slecaliweak} Fix $e\geq 2$ and a cylindrical charge $\bs\in\mathbb{Z}^\ell$. The $\sle$-crystal operators $\tilde{e}_i$, $i\in\mathbb{Z}/e\mathbb{Z}$, preserve the set of $\ell$-partitions satisfying Definition \ref{def:cali}(1).
\end{lemma}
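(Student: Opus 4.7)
The plan is to prove the lemma by tracking the effect on the right border $\Bo^\bs(\bla)$ of removing the good removable $i$-box. Throughout we assume $\tilde{e}_i \bla \neq 0$ and we let $b = (r, \la^m_r, m)$ denote the box that $\tilde{e}_i$ removes.

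I would first record the following elementary consequence of conditions (a) and (b): the multiset $\Bo^\bs(\bla)$ is multiplicity-free (otherwise the reading word would have a repeat) and sits inside the window $[z, z+e-1]$, so it meets each residue class modulo $e$ in at most one content. Since every removable box of $\bla$ is the rightmost box of some row---so its content lies in $\Bo^\bs(\bla)$---there is at most one removable $i$-box of $\bla$, namely $b$, and $\co^\bs(b)$ equals the unique $c_i \in [z,z+e-1]$ of residue $i$.

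The technical heart of the proof is to extract, from the \emph{good}-box hypothesis, the following two constraints on the border:
\begin{itemize}
\item[(K1)] $c_i - 1 \notin \Bo^\bs(\bla)$;
\item[(K2)] if $c_i = z$, then $z + e - 1 \notin \Bo^\bs(\bla)$.
\end{itemize}
For (K1): suppose for contradiction that $c_i - 1 \in \Bo^\bs(\bla)$. Removability of $b$ gives $\la^m_{r+1} < \la^m_r$, which forces $c_i - 1 \notin \Bo^{s_m}(\la^m)$; combined with the strictly increasing reading word, this places $c_i - 1$ in $\Bo^{s_{m'}}(\la^{m'})$ for some $m' < m$. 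Extending that row produces an addable $i$-box at content $c_i$ in component $m'$, which is strictly more dominant than $b$ under $\rhd_\bs$. For (K2): if $z + e - 1 \in \Bo^\bs(\bla)$ then extending the corresponding row gives an addable $i$-box at content $c_i + e > c_i$, again strictly more dominant than $b$. Since $b$ is the unique removable $i$-box, a more-dominant addable box produces a $+$ to the right of the unique $-$ in the $i$-word, cancelling $b$ upon reduction and forcing $\tilde{e}_i \bla = 0$---a contradiction.

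With (K1) and (K2) in hand, verifying (a) and (b) for $\tilde{e}_i \bla$ becomes essentially bookkeeping. Removing $b$ either replaces the border element $c_i$ by $c_i - 1$ (when $\la^m_r > 1$) or deletes $c_i$ from the border (when $\la^m_r = 1$, which forces $r = h_m$); in the latter case (a) and (b) hold trivially. In the former case, (K1) keeps the new border multiplicity-free, and (K2) shows it still lies in a window of length $e$---namely $[z, z+e-1]$ if $c_i > z$ and $[z-1, z+e-2]$ if $c_i = z$. That the new reading word remains strictly increasing reduces, within $\la^m$, to the bound $\co^\bs((r+1, \la^m_{r+1}, m)) \leq c_i - 2$ coming from removability of $b$, and across the junctions with $\la^{m\pm 1}$ to (K1) itself. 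The only genuinely substantive step, and where the good-box hypothesis is used, is the derivation of (K1) and (K2).
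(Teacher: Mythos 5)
Your proof is correct and takes essentially the same approach as the paper: reduce the lemma to the two facts (K1)--(K2) about $\Bo^\bs(\bla)$, derive each by exhibiting an addable $i$-box more dominant than $b$ (which cancels the unique $-$ in the reduced $i$-word, contradicting $\tilde{e}_i\bla\neq 0$), and then do the bookkeeping on the border. The single step you leave implicit, that extending the row whose last box has content $c_i-1$ (resp.\ $z+e-1$) genuinely yields an addable box, deserves a sentence: the paper observes that this content must be the maximum of the relevant component's border and hence sits at the end of the top row, while in your framing it also follows from multiplicity-freeness, since a row directly above of equal length would put $c_i$ into $\Bo^\bs(\bla)$ a second time.
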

\begin{proof}
Let $\bla$ be an $\ell$-partition and $\bs\in\mathbb{Z}^\ell$ such that $\bla$ satisfies Definition \ref{def:cali}(1).
Then $\Bo^\bs(\bla)$ contains at most one element of residue $i$ for each $i\in\mathbb{Z}/e\mathbb{Z}$. The set $\{\co^\bs(b)\mid b\hbox{ is a removable box of }\bla\}$ is a subset of $\Bo^\bs(\bla)$. Thus $\bla$ has at most one removable $i$-box for each $i\in\mathbb{Z}/e\mathbb{Z}$. Suppose $i\in\mathbb{Z}/e\mathbb{Z}$ such that $\tilde{e}_i(\bla)\neq 0$. Then $\bla$ has exactly one removable $i$-box, call it $b$. We have $\co^\bs(b)\in\Bo^\bs(\bla)$. 

We claim that $\co^\bs(b)-1\notin\Bo^\bs(\bla)$. Observe that $\Bo^\bs(\bla)=\bigcup\limits_{j=1}^\ell\Bo^{s_j}(\la^j)$. Let us check that $\co^\bs(b)-1\notin\Bo^{s_j}(\la^j)$ for each $j=1,\ldots,\ell$. Let $j(b)\in\{1,\ldots,\ell\}$ be the integer such that $b\in\la^{j(b)}$, i.e. $j(b)$ is the component  of $\bla$ containing the box $b$. Since $b$ is a removable box of $\la^{j(b)}$, $\co^\bs(b)-1\notin\Bo^{s_{j(b)}}(\la^j)$. If $j(b)<j<\ell$, then $x\in\Bo^{s_j}(\la^j)$ implies $x>\co^\bs(b)$ by Definition \ref{def:cali}(1)(b), so $\co^\bs(b)-1\notin\Bo^{s_j}(\la^j)$ for all $j>j(b)$. Suppose there exists $j<j(b)$ such that $\co^{\bs}(b)-1\in\Bo^{s_j}(\la^j)$. Then $\co^\bs(b)-1$ must be the maximal element of $\Bo^{s_j}(\la^j)$ and $\co^\bs(b)$ the minimal element of $\Bo^{s_{j(b)}}(\la^{j(b)})$ ( and for any $j<k<j(b)$ we must have $\la^k=\emptyset$) in order that Definition \ref{def:cali}(1)(b) be satisfied. Thus $\co^\bs(b)-1$ is the charged content of the last box in the top row of $\la^j$. The top row of any partition always has an addable box. Therefore $\la^j$ has an addable box of charged content $\co^\bs(b)\cong i \mod e$. Since $b$ is the only removable $i$-box in $\bla$ and $j(b)>j$, the $i$-word of $\bla$ contains the subword $-+$. It follows that $b$ is not good removable, contradicting the assumption that $\tilde{e}_i(\bla)\neq 0$. 

Since $\co^\bs(b)-1\notin\Bo^\bs(\bla)$, it follows that the reading word of $\Bo^\bs(\tilde{e}_i(\bla))=\left(\Bo^\bs(\bla)\setminus \{\co^\bs(b)\}\right)\cup\{\co^\bs(b)-1\}$ is increasing. This verifies Definition \ref{def:cali}(1)(b) for $\tilde{e}_i(\bla)$. Definition \ref{def:cali}(1)(a) holds automatically for $\tilde{e}_i(\bla)$ unless $\co^\bs(b)+e-1\in\Bo^\bs(\bla)$. But if $\co^\bs(b)+e-1\in\Bo^\bs(\bla)$ then it follows from Definition \ref{def:cali}(1)(a),(b) that $\co^\bs(b)+e-1$ is the charged content of the last box in the top row of $\la^j$ for $j\geq j(b)$ such that $\la^k=\emptyset $ for all $j<k\leq \ell$. Then $\la^j$ has an addable box of content $\co^\bs(b)+e\cong i \mod e$, and so (as in the previous paragraph) $b$ is not a good removable $i$-box, contradicting the assumption that $\tilde{e}_i(\bla)\neq0$. We conclude that Definition \ref{def:cali}(1) holds for $\tilde{e}_i(\bla)$. 
\end{proof}

\section{Calibrated representations of cyclotomic Hecke algebras at roots of unity}\label{sec:multicali}


\subsection{Irreducible representations of cyclotomic Hecke algebras at roots of unity} Recall that the {\em cyclotomic Hecke algebra} (or {\em Ariki-Koike algebra}) is the following quotient of the affine Hecke algebra:
$$
\cyc_{q, Q_1, \dots, Q_{\ell}}(n) = \frac{\cH_{q}^{\aff}(n)}{(\prod_{i = 1}^{\ell}X_{1} - Q_{i})}.
$$
Fix $e\geq 2$, $\ell\geq 1$, and $\bs\in\mathbb{Z}^\ell$. Set $q=\exp(2\pi i/e)$ and $Q_i=q^{s_i}$. For such a choice of parameters (sometimes called ``integral parameters" in the literature), we will denote $\cyc_{q, Q_1, \dots, Q_{\ell}}(n)$ by $\cyc_{e,\bs}(n)$.

\begin{theorem}\label{thm:maxdepth} Fix $e\geq 2$ and $\bs\in\mathbb{Z}^\ell$. The irreducible representations of $\cyc_{e,\bs}(n) $ are labeled by the $\ell$-partitions of the form $\bla=\tilde{f}_{i_n}\tilde{f}_{i_{n-1}}\ldots \tilde{f}_{i_2}\tilde{f}_{i_1}\varnothing$ for some $i_1,\ldots i_n\in\mathbb{Z}/e\mathbb{Z}$.
\end{theorem}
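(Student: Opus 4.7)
The plan is to deduce the theorem from Ariki's categorification theorem combined with Kashiwara's general description of the crystal of an integrable highest weight module. First, I would recall that by Ariki's theorem (the proof of the LLT conjecture) the direct sum $\bigoplus_{n\geq 0} K_0(\cyc_{e,\bs}(n)\text{-mod})$, equipped with the operators coming from $i$-restriction and $i$-induction between consecutive cyclotomic Hecke algebras, carries the structure of an integrable $\sle$-module canonically isomorphic to the highest weight representation $V(\Lambda_{\bs})$ of highest weight $\Lambda_{\bs} := \sum_{j=1}^{\ell}\Lambda_{s_j \bmod e}$. Under this isomorphism the class of the unique simple module of $\cyc_{e,\bs}(0) = \C$ corresponds to the highest weight vector of $V(\Lambda_{\bs})$.

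Next, by the same theorem the classes $[D_{\bs}(\bla)]$ of the simple $\cyc_{e,\bs}(n)$-modules are precisely the specialization at $v = 1$ of the upper canonical (global) basis of $V(\Lambda_{\bs})$. This basis is, by definition, parametrized by the Kashiwara crystal $B(\Lambda_{\bs})$, so the simples are in bijection with the vertices of weight $\Lambda_{\bs} - \sum_i \alpha_i$ (where the $\alpha_i$ count contents in $\ZZ/e\ZZ$) summing to $n$ simple roots.

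The plan is then to identify this abstract crystal combinatorially. By the theorem of Jimbo--Misra--Miwa--Okado (level one) extended by Foda--Leclerc--Okado--Thibon--Welsh (referenced as \cite{FLOTW99} in \cref{def:crystalops}), the operators $\tilde{e}_i, \tilde{f}_i$ defined combinatorially on $\ell$-multipartitions realize the abstract Kashiwara operators on the connected component of $\varnothing$, and this component is precisely $B(\Lambda_{\bs})$. A fundamental result of Kashiwara on crystals of integrable highest weight modules asserts that $B(\Lambda_{\bs})$ is connected with unique highest weight vertex (here: $\varnothing$), whence every element is of the form $\tilde{f}_{i_n}\tilde{f}_{i_{n-1}}\cdots \tilde{f}_{i_1}\varnothing$ for some sequence $i_1,\ldots,i_n \in \ZZ/e\ZZ$. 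Combining these three ingredients proves the theorem.

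The main step requiring care is not a calculation but the bookkeeping: one must check that the concrete action of $i$-restriction/$i$-induction on simples induces exactly the combinatorial crystal of \cref{def:crystalops} (and not merely some isomorphic crystal), so that the labels $\bla$ on the Hecke side match the $\bla$ on the Fock space side. For integral parameters this compatibility is standard and can be cited from Ariki's work; the rest of the proof is then a pure translation of Kashiwara's crystal theorem into Hecke-algebraic language.
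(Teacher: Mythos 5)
The paper does not actually prove this theorem — it is stated without proof as a known consequence of Ariki's categorification theorem (the result often attributed to Ariki, Grojnowski, and Lascoux--Leclerc--Thibon), so there is no argument in the paper to compare against. Your proposal is the standard and correct deduction: Ariki's theorem identifies $\bigoplus_n K_0(\cyc_{e,\bs}(n)\text{-mod})$ with the integrable highest weight module $V(\Lambda_\bs)$, Kashiwara's theory gives that the (dual) canonical basis is parametrized by the crystal $B(\Lambda_\bs)$ whose every element is reached from the highest weight vertex by $\tilde f_i$'s, and the FLOTW/JMMO realization identifies $B(\Lambda_\bs)$ with the connected component of $\varnothing$ under the combinatorial operators of Definition \ref{def:crystalops}. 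You are also right that the one step needing care is matching the combinatorial crystal to the crystal structure coming from $i$-restriction of simples (the modular branching rules), which for integral parameters is Grojnowski's/Ariki's work; this is exactly what underlies the paper's citation of \cite{grojnowski1999affine} a few lines later for Theorem \ref{thm:nostuttering}. One small clarification worth making explicit: what the bijection itself requires is only that the simple modules are indexed by the \emph{vertices} of the crystal — whether one uses canonical or dual canonical basis is irrelevant to the indexing set, so you do not need to pin down which global basis corresponds to simples versus projectives.
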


\noindent  
In the case that $\bs$ is a cylindrical charge (see Definition \ref{def:cylindricalcharge}), Foda, Leclerc, Okado, Thibon, and Welsh gave a closed-form description of these $\ell$-partitions. Recall the definition of the set of FLOTW $\ell$-partitions (Definition \ref{def:FLOTW}).

\begin{theorem}\cite{FLOTW99}\label{thm:FLOTW}
Let $e\geq 2$ and let $\bs\in\mathbb{Z}^\ell$ be a cylindrical charge. Then the irreducible representations of $\cyc_{e,\bs}(n)$ are labeled by the FLOTW $\ell$-partitions of size $n$.
\end{theorem}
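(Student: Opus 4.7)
The plan is to combine \cref{thm:maxdepth} with the combinatorics of the $\widehat{\mathfrak{sl}}_e$-crystal (\cref{def:crystalops}) to identify the connected component of $\varnothing$ with the FLOTW multipartitions. By \cref{thm:maxdepth}, the simples of $\cyc_{e,\bs}(n)$ are in bijection with the vertices of depth $n$ in the connected component of $\varnothing$ in the $\widehat{\mathfrak{sl}}_e$-crystal, so it suffices to prove the purely combinatorial statement: for $\bs$ cylindrical, an $\ell$-partition $\bla$ lies in the connected component of $\varnothing$ if and only if $\bla$ is FLOTW in the sense of \cref{def:FLOTW}.

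First I would verify the base case: the empty multipartition $\varnothing$ is FLOTW (trivially, the cylindricity and non-covering conditions are vacuous when $\bs$ is cylindrical, which is condition (1) of \cref{def:FLOTW}). Next I would show the forward direction by induction on $|\bla|$: starting from $\varnothing$ and applying the crystal operator $\tilde f_i$ preserves FLOTW-ness. This requires checking that adding a good $i$-box to a FLOTW multipartition $\bla$ preserves both the cylindricity inequalities $\la^j_k\geq\la^{j+1}_{k+s_{j+1}-s_j}$ and $\la^\ell_k\geq\la^1_{k+e+s_1-s_\ell}$ and the residue-covering condition on rows of equal length. This is a local check at the good addable $i$-box, using that being ``good'' (i.e.\ corresponding to the rightmost uncancelled $+$ in the $i$-word) forces the shape of the box to interact nicely with the cylindricity diagonals and prevents the residue $i$ from being the unique missing residue among rows of the new length.

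For the converse direction, I would show that every non-empty FLOTW multipartition $\bla$ admits at least one good removable box $b$ of some residue $i$, with $\tilde e_i(\bla)$ again FLOTW. Together with the induction this gives the opposite implication. The key observation is that the leftmost ``corner'' of $\bla$ (appropriately chosen among all removable boxes using the order $\rhd_\bs$) is forced by cylindricity to be uncancellable in its $i$-word, hence good; and after removing it, cylindricity and the non-covering condition persist. Concretely, one orders removable boxes by $\rhd_\bs$ and takes the rightmost removable $i$-box whose $i$-residue class, restricted to the right border, does not cover $\ZZ/e\ZZ$; condition (3) of \cref{def:FLOTW} guarantees such a residue exists.

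The main obstacle is the second direction: proving that every non-empty FLOTW $\bla$ has a good removable box whose removal preserves FLOTW-ness. The subtlety is that removing a box can turn several formerly equal rows into rows of different lengths, potentially creating a new residue-covering violation, and can also break the cylindricity inequality between cyclically adjacent components. The careful combinatorial bookkeeping needed to rule this out, by choosing $b$ optimally using conditions (2) and (3) of \cref{def:FLOTW} in tandem, is where the real work lies; a similar argument already appears in \cite{FLOTW99}, whose construction I would follow directly.
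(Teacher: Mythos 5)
The paper gives no proof of this theorem: it is cited verbatim from \cite{FLOTW99} and used as a black box throughout (e.g.\ in the proof of \cref{thm:cali=cali} and \cref{lemma:slecali}). Your reduction is the right one --- \cref{thm:maxdepth} identifies the simples of $\cyc_{e,\bs}(n)$ with the depth-$n$ vertices of the connected component of $\varnothing$ in the $\widehat{\mathfrak{sl}}_e$-crystal, so the theorem becomes the purely combinatorial assertion that for a cylindrical charge this connected component is exactly the set of FLOTW $\ell$-partitions --- and your two-directional induction (that $\tilde f_i$ preserves FLOTW-ness, and that every non-empty FLOTW $\ell$-partition has a good removable box whose removal stays FLOTW) is the correct shape of that argument. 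You have also correctly isolated where the real work is, namely the existence of a suitable good removable box. Since you ultimately defer to \cite{FLOTW99} for that bookkeeping, your proposal is honest but does not add anything beyond the citation the paper already makes; the paper itself never fills in these details, so there is nothing in-house to compare against. One small caution on the sketch: your formulation of the converse (``take the rightmost removable $i$-box whose $i$-residue class, restricted to the right border, does not cover $\ZZ/e\ZZ$'') is loose --- condition (3) of \cref{def:FLOTW} is a constraint on rows of \emph{equal length}, not on the full right border, and the selection of the good removable box in FLOTW's argument is more delicate than this phrasing suggests.
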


The goal of this section is to give the analogous closed-form description of the $\ell$-partitions labeling the irreducible calibrated representations of $\cyc_{e,\bs}(n) $ by the set of $\ell$-partitions $\mathsf{Cali}^\bs(\ell)$ (see Definition \ref{def:cali}). First, we will need the following lemma, which uses Theorem \ref{thm:FLOTW}.

\begin{lemma}\label{lemma:slecali} The $\sle$-crystal operators $\tilde{e}_i$ that remove boxes, $i\in\mathbb{Z}/e\mathbb{Z}$, preserve the set $\Cali^\bs(\ell)$.
\end{lemma}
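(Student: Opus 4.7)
The plan is to leverage \cref{lemma:slecaliweak} (which already shows condition (1) of \cref{def:cali} is preserved by $\tilde{e}_i$) and reduce the remaining question to showing that cylindricity (condition (2)) is preserved. This is where Theorems \ref{thm:maxdepth} and \ref{thm:FLOTW} will do the work.

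First I would invoke the observation made immediately after \cref{def:cali}: any $\bla \in \Cali^\bs(\ell)$ is FLOTW. Indeed, Definition \ref{def:cali}(2) is exactly the cylindricity condition \ref{def:FLOTW}(2), the charge $\bs$ is cylindrical by hypothesis giving \ref{def:FLOTW}(1), and the strict increasingness of the reading word combined with $\Bo^\bs(\bla) \subset [z,z+e-1]$ forces the residues of the rightmost boxes to be distinct modulo $e$, so in particular \ref{def:FLOTW}(3) holds.

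Next, I would combine \cref{thm:maxdepth} with \cref{thm:FLOTW}: together they identify the set of FLOTW $\ell$-partitions with the connected component of $\varnothing$ in the $\sle$-crystal. In particular, the FLOTW $\ell$-partitions are stable under the crystal operators $\tilde{e}_i$ (they either produce another FLOTW $\ell$-partition or zero). Applied to $\bla \in \Cali^\bs(\ell)$, this shows $\tilde{e}_i(\bla)$ is FLOTW, hence cylindrical, giving condition (2) of \cref{def:cali} for $\tilde{e}_i(\bla)$.

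Finally, \cref{lemma:slecaliweak} provides condition (1) of \cref{def:cali} for $\tilde{e}_i(\bla)$, and combining the two conclusions yields $\tilde{e}_i(\bla) \in \Cali^\bs(\ell)$, completing the proof. The only subtlety is the verification of FLOTW condition (3) from \cref{def:cali}(1), which I expect to be routine from strictness of the reading word; everything else is a direct appeal to the already-established results.
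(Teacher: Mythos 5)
Your proposal is correct and follows the same route as the paper's proof: invoke Lemma \ref{lemma:slecaliweak} for condition (1), note $\Cali^\bs(\ell)$ sits inside the FLOTW set, and use Theorems \ref{thm:maxdepth} and \ref{thm:FLOTW} to see that $\tilde{e}_i$ preserves the FLOTW set and hence cylindricity. Your extra sentence justifying FLOTW(3) simply spells out what the paper declares immediate after Definition \ref{def:cali}.
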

\begin{proof}
Let $\bla\in\Cali^\bs(\ell)$. By Lemma \ref{lemma:slecaliweak}, it holds that $\tilde{e}_i(\bla)$ satisfies  Definition \ref{def:cali}(1) for all $i\in\mathbb{Z}/e\mathbb{Z}$ such that $\tilde{e}_i(\bla)\neq 0$. 
Any $\bla\in\Cali^\bs(\ell)$ is FLOTW. By Theorems \ref{thm:FLOTW} and \ref{thm:maxdepth}, then $\tilde{e}_i(\bla)$ is also FLOTW for all $i\in\mathbb{Z}/e\mathbb{Z}$ such that $\tilde{e}_i(\bla)\neq 0$. In particular, $\tilde{e}_i(\bla)$ is cylindrical, and Definition \ref{def:cali}(2) holds for $\tilde{e}_i(\bla)$. Therefore $\tilde{e}_i(\bla)\in\Cali^\bs(\ell)$.
\end{proof}



The analogue of Theorem \ref{thm:maxdepth} for the irreducible calibrated representations of $\cyc_{e,\bs}(n) $ identifies the $\ell$-partitions labeling them in terms of certain paths in the $\sle$-crystal.

\begin{theorem}\cite{grojnowski1999affine}\label{thm:nostuttering}
Let $e\geq 2$ and let $\bs\in\mathbb{Z}^\ell$. The irreducible calibrated representations of $\cyc_{e,\bs}(n) $ are labeled by the $\ell$-partitions of the form $\bla=\tilde{f}_{i_n}\tilde{f}_{i_{n-1}}\ldots\tilde{f}_{i_2}\tilde{f}_{i_1}\varnothing$ such that $i\neq i+1$ for all $i=i_1,\ldots, i_{n-1}$ in any such expression for $\bla$.

 \end{theorem}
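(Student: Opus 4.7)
The strategy is to combine Grojnowski's functorial realization of the $\sle$-crystal on $\bigoplus_n \cyc_{e,\bs}(n)\text{-mod}$ with Ram's Young seminormal form (Theorem~\ref{thm:ram}). Recall that $\tilde f_i$ is realized as the socle of the $i$-restricted induction functor $\Ind_{n-1}^{n}(-)$ composed with projection onto the generalized $q^i$-eigenspace of $X_n$. Iterating, any path $\bla = \tilde f_{i_n}\cdots \tilde f_{i_1}\varnothing$ produces a distinguished nonzero generalized weight vector $v \in D_\bs(\bla)$ on which $\JM$ acts with generalized eigenvalue $\ba = (q^{i_1},\dots, q^{i_n})$.

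I would first reduce the problem: $D_\bs(\bla)$ is calibrated if and only if this generalized $\ba$-weight space is an honest weight space, for then Theorem~\ref{thm:ram} forces every weight of $D_\bs(\bla)$ to lie in a single $\sim$-equivalence class of calibrated weights and $\JM$ acts semisimply on all of $D_\bs(\bla)$. The standard local analysis of the two-variable subalgebra generated by $T_k, X_k, X_{k+1}$ via the relations $T_kX_kT_k = qX_{k+1}$ and $(T_k - q)(T_k+1) = 0$ shows that $\JM$ acts semisimply on a generalized $(a_k, a_{k+1})$-eigenspace if and only if $a_k \ne a_{k+1}$. The necessity direction of the theorem is then immediate: if some expression has $i_k = i_{k+1}$, then $X_k, X_{k+1}$ act with a nontrivial Jordan block at $v$, so $D_\bs(\bla)$ cannot be calibrated.

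For sufficiency, given a non-stuttering path, the task is to verify Ram's calibrated-weight condition for $\ba$: for every $k<j$ with $a_k = a_j = q^i$, both $qa_k$ and $q^{-1}a_k$ must appear among $a_{k+1},\dots, a_{j-1}$. This is the combinatorial core, and it would follow from the reduced $i$-word rule of Definition~\ref{def:crystalops}: between two addable $i$-boxes added at steps $k$ and $j$, the cancellations of $(-+)$ pairs that must have occurred in the reduced $i$-words force intervening additions of boxes of residue $i\pm 1$; otherwise the rightmost $+$ in the reduced $i$-word at step $j$ would not correspond to a genuinely new addable $i$-node. Unwinding this observation shows that the residues $i_{k+1},\dots, i_{j-1}$ must contain both $i-1$ and $i+1$, giving exactly Ram's condition.

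The main obstacle I foresee is this last combinatorial sufficiency step, which requires careful bookkeeping of the addable/removable $i$-box configurations along the path; it is typically handled by tracking the reduced $i$-words through each application of $\tilde f_{i_k}$ and translating the residue sequence into a dominance-ordered list of boxes. A secondary subtlety is verifying that non-stuttering is a property of $\bla$ itself rather than of the chosen expression: this follows because any two residue sequences realizing the same $\bla$ are related by $\tilde f_i$-commutations for widely separated indices and by longer braid-type moves in the $\sle$-crystal, each of which either preserves or creates non-stuttering, so the collection of non-stuttering multipartitions is well-defined and matches the image of calibrated irreducibles under the bijection of Theorem~\ref{thm:maxdepth}.
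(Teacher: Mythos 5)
The paper does not prove Theorem~\ref{thm:nostuttering} itself (it is cited to Grojnowski), so I can only assess your proposal on its own merits. Your necessity direction is essentially sound: if $v$ is an honest weight vector with $a_k=a_{k+1}=a$, then $T_kv\notin\Bbbk v$ (if $T_kv=cv$ then $T_kX_kT_kv=qX_{k+1}v$ forces $X_{k+1}v=q^{-1}c^2av\neq av$), and a direct computation with $T_kX_kT_k=qX_{k+1}$ shows $X_k$ has a nontrivial Jordan block on $\langle v,T_kv\rangle$; the existence of the generalized weight vector $v$ along a path does come out of the modular branching rules, though you should say so.

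The sufficiency direction, and your ``secondary subtlety'' paragraph, are both refuted by a single example. Take $\ell=1$, $\bs=(0)$, $e=3$, $\bla=(4,1)$. One checks that $\bla=\tilde{f}_2\tilde{f}_0\tilde{f}_2\tilde{f}_1\tilde{f}_0\,\varnothing$ is a non-stuttering expression, while $\bla=\tilde{f}_0\tilde{f}_2\tilde{f}_2\tilde{f}_1\tilde{f}_0\,\varnothing$ is a stuttering one, and by Lemma~\ref{lemma:calibrated} $D_\bs(\bla)$ is \emph{not} calibrated (the right-border hook length is $5>e$). So non-stuttering is a property of an expression, not of $\bla$, contrary to your last paragraph; the claim that crystal moves ``preserve or create non-stuttering'' cannot be right, because it would also have to preserve it in reverse. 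Worse, the weight of the non-stuttering expression, $\ba=(1,q,q^2,1,q^2)$, fails Ram's calibrated-weight condition: $a_3=a_5=q^2$ but only $a_4=1=q\cdot a_3$ lies between them, while $q^{-1}a_3=q$ is absent. This falsifies your central combinatorial claim that non-stuttering forces the intervening residues to contain both $i+1$ and $i-1$. Any correct proof of sufficiency must genuinely use the hypothesis that \emph{every} expression for $\bla$ is non-stuttering (e.g.\ by translating it into $\varepsilon_i(\bmu)\le 1$ for all $i$ and all $\bmu$ appearing on a path to $\bla$, and relating that to multiplicity-free iterated restriction); an argument that inspects only one expression cannot succeed. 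Finally, your opening reduction is not justified as stated: Theorem~\ref{thm:ram} classifies irreducibles already known to be calibrated, and it does not by itself imply that an irreducible containing a single honest weight vector of calibrated weight $\ba$ must be isomorphic to $M_{[\ba]}$; that step needs an intertwiner argument that your sketch does not supply.
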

That is, we consider all possible ways to build up a FLOTW $\ell$-partition $\bla$ from the empty $\ell$-partition $\varnothing$ by adding one box at a time such that at each step, the box we add is the good addable box for its residue. The theorem says that if $\bla$ labels an irreducible calibrated representation, then in all such sequences building up $\bla$ one box at a time, we never add an $i$-box immediately followed by another $i$-box.

We now arrive at the main result of this section, which is the analog of Theorem \ref{thm:FLOTW} for calibrated representations. 
\begin{theorem}\label{thm:cali=cali}
Let $e\geq 2$ and let $\bs\in\mathbb{Z}^\ell$ be a cylindrical charge. Then the irreducible calibrated representations of $\cyc_{e,\bs}(n)$ are labeled by the $\ell$-partitions in $\Cali^\bs(\ell)$ of size $n$.
\end{theorem}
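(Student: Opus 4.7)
The plan is to induct on $n = |\bla|$, with trivial base case $n=0$. Throughout I use Theorem~\ref{thm:nostuttering}, which reformulates ``$\bla$ labels an irreducible calibrated $\cyc_{e,\bs}(n)$-module'' as ``every reduction $\bla = \tilde f_{i_n}\cdots \tilde f_{i_1}\varnothing$ in the $\sle$-crystal is non-stuttering'', i.e.\ $i_k \neq i_{k+1}$ for all $k$. Combined with Theorem~\ref{thm:FLOTW}, the irreducible calibrated modules are indexed by those FLOTW $\bla$ all of whose crystal reductions are non-stuttering.

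For the forward direction ($\bla \in \Cali^\bs(\ell) \Rightarrow$ calibrated), I invoke the key fact established inside the proof of Lemma~\ref{lemma:slecaliweak}: for $\bla \in \Cali^\bs(\ell)$, the right border $\Bo^\bs(\bla)$ contains at most one element of each residue modulo $e$. Fix any good removable $i$-box $b$ of $\bla$ with content $c$. The right border of $\tilde e_i(\bla)$ is obtained from $\Bo^\bs(\bla)$ by deleting $c$ and either inserting nothing (if $b$'s row emptied) or inserting $c-1$ (of residue $i-1$); in particular, $\tilde e_i(\bla)$ has no element of residue $i$ in its right border, hence no removable $i$-box at all. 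Since Lemma~\ref{lemma:slecali} gives $\tilde e_i(\bla) \in \Cali^\bs(\ell)$, the inductive hypothesis says every reduction of $\tilde e_i(\bla)$ is non-stuttering; concatenating with the initial removal of $b$ cannot introduce stuttering at the join, so every reduction of $\bla$ is non-stuttering.

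For the reverse direction, I argue the contrapositive: if $\bla$ is FLOTW but violates condition~(1)(a) or~(1)(b) of Definition~\ref{def:cali}, I construct a reduction of $\bla$ that stutters, contradicting calibration. Since FLOTW implies cylindrical (condition (2)), only (1)(a) and (1)(b) can fail. Their joint failure forces $\Bo^\bs(\bla)$ to contain two elements $a, a'$ of the same residue $i$, sitting at the right end of two rows (possibly in different components) visualized via the semi-infinite Young diagram realization of Section~\ref{subsec:multfreebord}. The strategy is to find a sequence of good removals that progressively strips away all boxes more dominant than the two rows ending at $a, a'$, until both become good removable $i$-boxes in succession, producing the required stuttering pair.

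The heart of the argument, and its main obstacle, is the reverse direction: converting the purely structural obstruction ``two equal residues in $\Bo^\bs(\bla)$'' into an explicit stuttering reduction. The difficulty is that good removals are determined by the global reduced $i$-word, so one cannot freely remove any box. I expect this to require a secondary induction on a ``distance'' between the two colliding rows, together with a careful analysis of how the $i$-word evolves after each good removal. An alternative route is to invoke Ram's Young seminormal form (Theorem~\ref{thm:ram}) and translate the combinatorial obstruction into the existence of a standard tableau on $\bla$ whose residue sequence has two coinciding adjacent entries, which would directly contradict calibration via the eigenvalues of $T_i$ on the corresponding basis vector.
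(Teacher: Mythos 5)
The forward direction of your proposal (showing $\bla\in\Cali^\bs(\ell)$ implies calibrated) is carried out essentially the same way as the paper's: Lemma~\ref{lemma:slecali} gives $\tilde e_i\bla\in\Cali^\bs(\ell)$, the inductive hypothesis handles the interior of the word, and the observation that $\Bo^\bs(\bla)$ has at most one element of residue $i$ forces $\tilde e_i\tilde e_i\bla = 0$, so the final letter cannot repeat. That part is fine.

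The reverse direction, however, is not a proof but a plan, and it is exactly where all the work lives. You correctly identify the obstruction (``two equal residues in $\Bo^\bs(\bla)$'') and correctly observe the difficulty (good removals are globally constrained by the reduced $i$-word, so one cannot simply strip off boxes at will), but you then stop at ``I expect this to require a secondary induction on a distance between the two colliding rows.'' The paper's proof of Theorem~\ref{thm:cali=cali} devotes several dense paragraphs to exactly this: writing $\bmu=\tilde f_i\bla$ with $\bla\in\Cali^\bs(\ell)$, reducing to the case where $\tilde f_i$ creates a new row, introducing an auxiliary integer $w$ marking the residue collision, and splitting into subcases (whether $w-1\in\Bo^\bs(\bmu)$, whether a horizontal or vertical strip must be peeled from the border of $\mathsf{Y}^\bs(\bmu)$, the separate treatment of the failure of (1)(a) versus (1)(b)) before a stuttering pair $\tilde e_i\tilde e_i$ becomes available. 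None of that construction is present in your write-up, so the proof has a genuine gap at its most substantial step.

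Your suggested alternative route via Ram's seminormal form also has a flaw as stated: ``a standard tableau on $\bla$ whose residue sequence has two adjacent coinciding entries'' does not directly contradict calibration, because the weights of the simple module $D_\bs(\bla)$ correspond only to those tableaux arising from crystal reductions, not to all of $\Std(\bla)$. Theorem~\ref{thm:nostuttering} is phrased in terms of stuttering crystal paths for precisely this reason. To make the alternative viable you would still have to show that the offending tableau arises as a sequence of good removals, which is the same difficulty you were trying to circumvent.
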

\begin{proof}
We will prove the statement by induction on $n$.
%
%
The base case is $n=1$. By Theorem \ref{thm:nostuttering}, if $|\bla|=1$ then $\bla$ labels an irreducible calibrated representation if and only if $\bla$ is FLOTW. By Definition \ref{def:FLOTW} and Lemma \ref{lemma:cylindrical}, if $|\bla|=1$ then $\bla\in\Cali^\bs(\ell)$ if and only if $\bla$ is FLOTW. 


Now suppose by induction that for all FLOTW $\ell$-partitions $\bla$ of $n$, it holds that $\bla$ labels an irreducible calibrated representation of $\cyc_{e,\bs}(n)$ if and only if $\bla\in\Cali^\bs(\ell)$.

First, we will show that any $\ell$-partition of size $n+1$ in $\Cali^\bs(\ell)$ labels an irreducible calibrated representation of $\cyc_{e,\bs}(n+1)$. 
Suppose that $\bmu\in\Cali^\bs(\ell)$ and $|\bmu|=n+1$. Then $\bmu$ is FLOTW, so by Theorems \ref{thm:maxdepth} and \ref{thm:FLOTW}, $\bmu=\tilde{f}_{i_{n+1}}\tilde{f}_{i_n}\ldots\tilde{f}_{i_2}\tilde{f}_{i_1}\varnothing$ for some $i_1,\ldots,i_{n+1}\in\mathbb{Z}/e\mathbb{Z}$. We must show that every such expression satisfies $i_k\neq i_{k+1}$ for each $k=1,\ldots, n$. Let $i\in\mathbb{Z}/e\mathbb{Z}$ such that $\tilde{e}_i(\bmu)\neq 0$. By Lemma \ref{lemma:slecali}, $\tilde{e}_i\bmu\in\Cali^\bs(\ell)$. By induction, every expression $\tilde{e}_i\bmu=\tilde{f}_{i_{n}}\ldots\tilde{f}_{i_2}\tilde{f}_{i_1}\varnothing$ satisfies $i_k\neq i_{k+1}$ for all $k=1,\ldots,n-1$. It thus suffices to check that $i_n\neq i$.
The charged contents of removable boxes of $\bmu$ belong to $\Bo^\bs(\bmu)$. By Definition \ref{def:cali}, $\Bo^\bs(\bmu)$ contains at most one element of residue $i$ mod $e$, so $\bmu$ has at most one removable $i$-box. It follows that $\tilde{e}_i\bmu$ has no removable $i$-box. Therefore $\tilde{e}_i\tilde{e}_i\bmu=0$, implying $i_n\neq i$. We conclude using Theorem \ref{thm:nostuttering} that $\bmu$ labels an irreducible calibrated representation of $\cyc_{e,\bs}(n+1)$.

The remainder of the proof is dedicated to showing that if $\bla$ is a FLOTW $\ell$-partition of size $n+1$ that labels an irreducible calibrated representation of $\cyc_{e,\bs}(n+1)$, then $\bla\in\Cali^\bs(\ell)$.
  Suppose that $\bmu$ is FLOTW, $|\bmu|=n+1$, and $\bmu$ labels an irreducible calibrated representation. By Theorem \ref{thm:nostuttering}, for every expression $\bmu=\tilde{f}_{i_{n+1}}\tilde{f}_{i_n}\ldots\tilde{f}_{i_2}\tilde{f}_{i_1}\varnothing$ it holds that $i_{k+1}\neq i_k$ for all $k=1,\ldots,n$. We need to show that  $\bmu$ satisfies Definition \ref{def:cali}(1). By induction, $\bmu=\tilde{f}_i\bla$ for some $\bla\in\Cali^\bs(\ell)$ and some $i\in\mathbb{Z}/e\mathbb{Z}$. Either $\tilde{f}_i$ adds a box to a non-zero row of $\bla$, or $\tilde{f}_i$ creates a new row.  In the case that $\tilde{f}_i$ adds a box to an already existing row, the result is forced by the induction hypothesis that $\bla\in\Cali^\bs(\ell)$ using arguments similar to those in the proof of Lemma \ref{lemma:slecaliweak}. This is straightforward to check. The work consists in dealing with the case that $\tilde{f}_i$ adds the good $i$-box in a new row of $\bla$. 
Thus we suppose from now on that $\tilde{f}_i$ adds a new row to $\bla$, i.e. that $|\Bo^\bs(\bmu)|=|\Bo^\bs(\bla)|+1$. We will show that if Definition \ref{def:cali}(1)(a) or Definition \ref{def:cali}(1)(b) fails for $\tilde{f}_i\bla=\bmu$, then there exists 
an expression $\tilde{f}_{i_{n+1}}\tilde{f}_{i_n}\ldots\tilde{f}_{i_2}\tilde{f}_{i_1}\varnothing=\bmu$ with $i_k=i=i_{k+1}$ for some $k\in\{1,\ldots,n\}$.

First, suppose Definition \ref{def:cali}(1)(a) fails, so suppose $\Bo^\bs(\bmu)\not\subset[z,z+e-1]$ for all $z\in\mathbb{Z}$. Let $b$ be the $i$-box added by $\tilde{f}_i$ to $\bla$, i.e. $\bmu=\tilde{f}_i\bla\cup\{b\}$, and let $j(b)\in\{1,\ldots,\ell\}$ be such that $b$ is a box in $\mu^{j(b)}$. First, we show that $\co^\bs(b)$ must be the smallest element of $\Bo^\bs(\bmu)$. Suppose that $\co^\bs(b)$ is neither the smallest nor the largest element of $\Bo^\bs(\bmu)$. Since $\Bo^\bs(\bla)\subset [z,z+e-1]$ for some $z\in\mathbb{Z}$, this implies that $\Bo^\bs(\bmu)=[z,z+e-1]$ for some $z\in\mathbb{Z}$, and thus $\co^\bs(b)-1\in\Bo^\bs(\bla)$. Then $\co^\bs(b)-1$ is the content of the box of largest content in $\mu^k$ for some $k<j(b)$. But then $\mu^k$ has an addable box of content $\co^\bs(b)$. It follows that the good addable box of $\bla$ is then in $\mu^k$, not in $\mu^{j(b)}$, contradicting the assumption.  Next, since $\bla\in\Cali^\bs(\ell)$ then by Lemma \ref{lemma:cylindrical} we have $s_{j(b)}-\co^\bs(b_{\mathrm{min}})< e$ where $b_{\mathrm{min}}$ is the box of smallest charged content in $\bla$. Thus if $\co^\bs(b)$ is the largest element of $\Bo^\bs(\bmu)$ then $\Bo^\bs(\bmu)\subseteq[z,z+e-1]$ and we repeat the argument above to get a contradiction. We conclude that $\co^\bs(b)$ the smallest element of $\Bo^\bs(\bmu)$, and moreover (again by the same argument), $\co^\bs(b)\neq \co^\bs(b_{\mathrm{max}})-e+1$.

It follows that there exists a unique $w\in\mathbb{Z}$ such that $w=\co^\bs(b)+qe$ for some positive integer $q$, $w-e<x<w+e$ for all $x\in\Bo^\bs(\bla)=\Bo^\bs(\bmu)\setminus\{\co^\bs(b)\}$, and there is some $x\in\Bo^\bs(\bla)$ with $x\geq w$. We observe that $\bla$ cannot have an addable box of content $w$ in one of its nonzero rows, for if it did then $\tilde{e}_i$ would have added this box rather than $b$. Thus if $w-1\in\Bo^\bs(\bmu)$, it must also hold that $w\in\Bo^\bs(\bmu)$.

\smallskip\noindent {\em Step 1.} {\em Case (i)}. Suppose $w-1\notin\Bo^\bs(\bmu)$. Consider $\Bo^\bs(\bmu)\cap\mathbb{Z}_{\geq w}$, and the corresponding boxes in the right border of $\bmu$ that are of charged content at least $w$. Let $r=|\Bo^\bs(\bmu)\cap\mathbb{Z}_{\geq w}|$. Since  the reading word of $\Bo^\bs(\bla)$ is increasing, these boxes belong to the top $r$ rows  of $\mathsf{Y}^\bs(\bla)$. We have $0<r\leq e-1$. If $\Bo^\bs(\bmu)\cap\mathbb{Z}_{\geq w}=[w,w+r-1]$,  proceed to Step 2. Otherwise, let $y$ be the minimal element of $\Bo^\bs(\bmu)\cap\mathbb{Z}_{\geq w}$. Apply $\tilde{e}:=\tilde{e}_{i+1}\tilde{e}_{i+2}\ldots\tilde{e}_{y-1}\tilde{e}_y$ to $\bmu$. This removes a horizontal strip of $y-w+1$ boxes from the $r$'th row of $\mathsf{Y}^\bs(\bmu)$, because $\bmu=\bla\cup\{b\}$ has no addable boxes of these residues with charged content $w$ or greater nor does it have any (other) removable boxes of these residues. Applying $\tilde{e}_i\tilde{e}_i$ to $\tilde{e}\bmu$ now removes $b$ and the removable $i$-box in row $r$ yielding $\bnu$ such that  $\bmu=\tilde{f}_y\tilde{f}_{y-1}\ldots\tilde{f}_{i+2}\tilde{f}_{i+1}\tilde{f}_i\tilde{f}_i\bnu$, a contradiction.
This step of the proof is illustrated in Figure \ref{pf:calistep1case1}.


\begin{figure}[h!]
\begin{subfigure}[h]{.8\textwidth}
\begin{ytableau}
\none&\none&\none  &\none&\none&\none  &\none  &\none&\none&\none&\none&\none&*(SkyBlue!65)&*(SkyBlue!65)  &*(SkyBlue!65)&*(SkyBlue!65)&*(SkyBlue!65)&*(SkyBlue!65)&*(SkyBlue!65)&*(SkyBlue!65)&*(SkyBlue!65)&*(SkyBlue!65)32\\
\none  &\none&\none&\none  &\none&\none&\none  &\none  &\none&\none&\none&\none&*(SkyBlue!65)&*(SkyBlue!65)&*(SkyBlue!65)  &*(SkyBlue!65)&*(SkyBlue!65)&*(SkyBlue!65)&*(SkyBlue!65)&*(SkyBlue!65)&*(SkyBlue!65)30\\
\none&\none  &\none&\none&\none  &\none&\none&\none  &*(Melon!65)  &*(Melon!65)&*(Melon!65)&*(Melon!65)&*(Melon!65)&*(Melon!65)&*(Melon!65)&*(Melon!65)  &*(Melon!65)&*(Melon!65)&*(Melon!65)&*(Melon!65)&*(Melon!65)29\\
\none&\none&\none  &\none&\none&\none  &\none&\none&*(Thistle!65)  &*(Thistle!65)  &*(Thistle!65)&*(Thistle!65)&*(Thistle!65)&*(Thistle!65)&*(Thistle!65)&*(Thistle!65)&*(Thistle!65)  &*(Thistle!65)&*(Thistle!65)&*(Thistle!65)&*(Thistle!65)28\\
\none&\none&\none&\none  &\none&\none&\none  &*(SpringGreen!65)&*(Sepia!60)15&*(SpringGreen!65)  &*(SpringGreen!65)  &*(SpringGreen!65)&*(SpringGreen!65)&*(SpringGreen!65)&*(SpringGreen!65)&*(SpringGreen!65)&*(SpringGreen!65)&*(SpringGreen!65)  &*(SpringGreen!65)25\\
\none&\none&\none&\none&\none  &\none&\none&*(SpringGreen!65)  &*(SpringGreen!65)&*(SpringGreen!65)&*(SpringGreen!65)  &*(SpringGreen!65)  &*(SpringGreen!65)&*(SpringGreen!65)&*(SpringGreen!65)&*(SpringGreen!65)&*(SpringGreen!65)&*(SpringGreen!65)&*(SpringGreen!65)24 \\
\none&\none&\none&\none&\none&\none  &\none &*(SpringGreen!65)&*(SpringGreen!65)  &*(SpringGreen!65)&*(SpringGreen!65)&*(SpringGreen!65)  &*(SpringGreen!65)  &*(SpringGreen!65)&*(SpringGreen!65)&*(SpringGreen!65)&*(SpringGreen!65)21\\
\end{ytableau}
\caption{Suppose $e=12$. Then $\bla=((12^2,10),(13),(13),(10,9))\in\Cali^{\bs}(4)$ for $\bs=(14,16,17,23)$. Suppose $\tilde{e}_3$ was applied to $\bla$ adding the darkened box of content $15$ to the bottom of $\la^2$ to yield $\bmu=\tilde{e}_3\bla=((12^2,10),(13,1),(13),(10,9))$. Definition \ref{def:cali}(1)(a) fails for $\bmu$ since $15$ and $32$ are in $\Bo^\bs(\bla)$ and $32-15>11=e-1$. We have $w=15+e=27$, and $w-1=26\notin\Bo^\bs(\bla)$, thus we apply Case (i). }
\end{subfigure}
\begin{subfigure}[h]{0.8\textwidth}
\begin{ytableau}
 \none&\none&\none  &\none&\none&\none  &\none  &\none&\none&\none&\none&\none&*(SkyBlue!65)&*(SkyBlue!65)  &*(SkyBlue!65)&*(SkyBlue!65)&*(SkyBlue!65)&*(SkyBlue!65)&*(SkyBlue!65)&*(SkyBlue!65)&*(SkyBlue!65)&*(SkyBlue!65)32\\
\none  &\none&\none&\none  &\none&\none&\none  &\none  &\none&\none&\none&\none&*(SkyBlue!65)&*(SkyBlue!65)&*(SkyBlue!65)  &*(SkyBlue!65)&*(SkyBlue!65)&*(SkyBlue!65)&*(SkyBlue!65)&*(SkyBlue!65)&*(SkyBlue!65)30\\
\none&\none  &\none&\none&\none  &\none&\none&\none  &*(Melon!65)  &*(Melon!65)&*(Melon!65)&*(Melon!65)&*(Melon!65)&*(Melon!65)&*(Melon!65)&*(Melon!65)  &*(Melon!65)&*(Melon!65)&*(Melon!65)&*(Melon!65)&*(Melon!65)29\\
\none&\none&\none  &\none&\none&\none  &\none&\none&*(Thistle!65)  &*(Thistle!65)  &*(Thistle!65)&*(Thistle!65)&*(Thistle!65)&*(Thistle!65)&*(Thistle!65)&*(Thistle!65)&*(Thistle!65)  &*(Thistle!65)&*(Thistle!65)&*(Thistle!65)27\\
\none&\none&\none&\none  &\none&\none&\none&*(SpringGreen!65)&*(Sepia!60)15&*(SpringGreen!65)  &*(SpringGreen!65)  &*(SpringGreen!65)&*(SpringGreen!65)&*(SpringGreen!65)&*(SpringGreen!65)&*(SpringGreen!65)&*(SpringGreen!65)&*(SpringGreen!65)  &*(SpringGreen!65)25\\
\none&\none&\none&\none&\none  &\none&\none&*(SpringGreen!65)  &*(SpringGreen!65)&*(SpringGreen!65)&*(SpringGreen!65)  &*(SpringGreen!65)  &*(SpringGreen!65)&*(SpringGreen!65)&*(SpringGreen!65)&*(SpringGreen!65)&*(SpringGreen!65)&*(SpringGreen!65)&*(SpringGreen!65)24 \\
\none&\none&\none&\none&\none&\none  &\none&*(SpringGreen!65)&*(SpringGreen!65)  &*(SpringGreen!65)&*(SpringGreen!65)&*(SpringGreen!65)  &*(SpringGreen!65)  &*(SpringGreen!65)&*(SpringGreen!65)&*(SpringGreen!65)&*(SpringGreen!65)21\\
\end{ytableau}
\caption{We have $r=4$. The horizontal strip of boxes in row $4$ with charged content greater than $w$ is a single box. Removing it, we obtain the diagram shown. This is given by the crystal operator $\tilde{e}_4$, since there is no addable $4$-box above or to the right in the diagram or below on the same diagonal.
We have $\bnu=((12^2,10),(12,1),(13),(10,9))=\tilde{e}_4\bmu$.}
\end{subfigure}
\begin{subfigure}[h]{0.8\textwidth}

\begin{ytableau}
\none&\none&\none  &\none&\none&\none  &\none  &\none&\none&\none&\none&\none&*(SkyBlue!65)&*(SkyBlue!65)  &*(SkyBlue!65)&*(SkyBlue!65)&*(SkyBlue!65)&*(SkyBlue!65)&*(SkyBlue!65)&*(SkyBlue!65)&*(SkyBlue!65)&*(SkyBlue!65)32\\
\none  &\none&\none&\none  &\none&\none&\none  &\none  &\none&\none&\none&\none&*(SkyBlue!65)&*(SkyBlue!65)&*(SkyBlue!65)  &*(SkyBlue!65)&*(SkyBlue!65)&*(SkyBlue!65)&*(SkyBlue!65)&*(SkyBlue!65)&*(SkyBlue!65)30\\
\none&\none  &\none&\none&\none  &\none&\none&\none  &*(Melon!65)  &*(Melon!65)&*(Melon!65)&*(Melon!65)&*(Melon!65)&*(Melon!65)&*(Melon!65)&*(Melon!65)  &*(Melon!65)&*(Melon!65)&*(Melon!65)&*(Melon!65)&*(Melon!65)29\\
\none&\none&\none  &\none&\none&\none  &\none&\none&*(Thistle!65)  &*(Thistle!65)  &*(Thistle!65)&*(Thistle!65)&*(Thistle!65)&*(Thistle!65)&*(Thistle!65)&*(Thistle!65)&*(Thistle!65)  &*(Thistle!65)&*(Thistle!65)26\\
\none&\none&\none&\none  &\none&\none&\none  &*(SpringGreen!65)&*(SpringGreen!65)&*(SpringGreen!65)  &*(SpringGreen!65)  &*(SpringGreen!65)&*(SpringGreen!65)&*(SpringGreen!65)&*(SpringGreen!65)&*(SpringGreen!65)&*(SpringGreen!65)&*(SpringGreen!65)  &*(SpringGreen!65)25\\
\none&\none&\none&\none&\none  &\none&\none&*(SpringGreen!65)  &*(SpringGreen!65)&*(SpringGreen!65)&*(SpringGreen!65)  &*(SpringGreen!65)  &*(SpringGreen!65)&*(SpringGreen!65)&*(SpringGreen!65)&*(SpringGreen!65)&*(SpringGreen!65)&*(SpringGreen!65)&*(SpringGreen!65)24 \\
\none&\none&\none&\none&\none&\none  &\none&*(SpringGreen!65)&*(SpringGreen!65)  &*(SpringGreen!65)&*(SpringGreen!65)&*(SpringGreen!65)  &*(SpringGreen!65)  &*(SpringGreen!65)&*(SpringGreen!65)&*(SpringGreen!65)&*(SpringGreen!65)21\\
\end{ytableau}

\caption{Now we may remove the two removable boxes of residue $3$ from $\bnu$ by applying $\tilde{e}_3$ twice in a row, yielding the diagram shown. Thus $\bmu=\tilde{f}_4\tilde{f}_3\tilde{f}_3((12^2,10),(11),(13),(10,9))$ and so $\bmu$ does not label a calibrated representation.}
\end{subfigure}
\caption{Illustration of the proof of Theorem \ref{thm:cali=cali}, induction step verifying that failure of Definition \ref{def:cali}(1)(a) implies failure to be calibrated. Step 1, Case (i).}\label{fig:1aCasei}
\label{pf:calistep1case1}
\end{figure}

\smallskip\noindent {\em Step 1.  Case (ii)}. Suppose $w-1\in\Bo^\bs(\bmu)$.  Let $w-e<x_t<\ldots<x_2<x_1<w$ be the elements of $\Bo^\bs(\bla)\cap\mathbb{Z}_{<w}$. If $x_1<w-1$, proceed to Step 2. Otherwise, let $h\in\{1,\ldots,t\}$ be minimal such that $x_h-1\notin\Bo^\bs(\bla)$. If $x_h-1+e\notin\Bo^\bs(\bla)$ then set $\bnu=\tilde{e}_{i-1}\ldots\tilde{e}_{i-h}\bmu$. In $\mathsf{Y}^\bs(\bla)$, this corresponds to removing a vertical strip consisting of the boxes lying in the same column and strictly below the box in row $r$ of content $w$. If $x_h-1+e\in\Bo^\bs(\bla)$ (which can only happen if $x_h$ is the minimal element of $\Bo^\bs(\bla)$ and $x_h-1+e$ is the maximal element of $\Bo^\bs(\bla)$), apply the procedure in Case (i) to obtain $\bnu_1$ from $\bmu$ such that $\Bo^\bs(\bnu_1)\cap\mathbb{Z}_{\geq w}=[w,w+r-1]$, and then set $\bnu=\tilde{e}_{i-1}\ldots\tilde{e}_{i-h}\bnu_1$.
Since $w-1\in\Bo^\bs(\bmu)$ forced $w\in\Bo^\bs(\bmu)$ as well, we obtain $\bnu$ with $w\in\Bo^\bs(\bnu)$ and $w-1\notin\Bo^\bs(\bnu)$.

\smallskip\noindent {\em Step 2}. Recall that $w\cong \co^\bs(b)\cong i \mod e$. The $\ell$-partition $\bnu$ has no addable $i$-box in any of its nonzero rows. It has two removable $i$-boxes: $b$, and the box of content $w$ in its border. Since $\tilde{f}_i$ added the box $\co^\bs(b)$ to $\bla$, $\bnu$ has no addable $i$-box bigger than $b$ in an empty row. The $i$-word for $\bnu$ is thus some number of plusses followed by $--$. It follows that $\tilde{e}_i\tilde{e}_i$ removes the boxes of charged contents $w$ and $\co^\bs(b)$ from $\bnu$. This shows that $\bmu=  \tilde{f}_i\tilde{f}_i(\tilde{e}_i\tilde{e}_i\bnu)$,  a contradiction with the assumption that $\bmu=\tilde{f}_i\bla$ labels a calibrated representation. Therefore Definition \ref{def:cali}(1)(a) must hold if $\tilde{f}_i\bla$ satisfies the condition of Theorem \ref{thm:nostuttering}.

Finally, we suppose that $\tilde{f}_i\bla\neq 0$ satisfies the condition of Theorem \ref{thm:nostuttering} but that Definition \ref{def:cali}(1)(b) fails for $\tilde{f}_i\bla$. The argument by contradiction is similar to the one checking Definition \ref{def:cali}(1)(a). Define $b$ and $j(b)$ be as above. 
By assumption the reading word of $\Bo^\bs(\bmu)$ is not increasing, but the reading word of $\Bo^\bs(\bla)$ is. By the previous step of the proof, we know that the elements of $\Bo^\bs(\bmu)$ all belong to an interval $[z,z+e-1]$ for some $z\in\mathbb{Z}$. Since we are in the case that adding $b$ creates a new row in component $j(b)$, and $\aatch(\bla)$ is $\bs$-admissible by Lemma \ref{admiss} we have $\co^\bs(b)<\co^\bs(b')$ for all $\co^\bs(b')\in\Bo^{s_k}(\mu^k)$, $k\geq j(b)$.
Thus, there must exist a box $b'\in\mu^m=\la^m$ for some $m<j(b)$ such that $b'$ is the last box in its row and $\co^\bs(b)\leq\co^\bs(b')$, and we take $m$ to be minimal such that this happens. Now we remove either a horizontal strip, a vertical strip, or a combination of such from $\la^m$ by crystal operators in order to arrive at a situation where $\tilde{e}_i$ can be applied twice in a row to get a non-zero $\ell$-partition, as in previous part of the proof. We arrive at that situation in the following ways depending on how the diagonal of charged content $\co^\bs(b)$ intersects the Young diagram of $\la^m$ in $\mathsf{Y}^\bs(\bla)$. 

First, if $\co^\bs(b)\leq\co^\bs(b')$ for all boxes $b'$ the last in their rows in $\la^m$ then we remove all boxes of content larger than $\co^\bs(b)$ from the bottom row of $\la^m$. We then have two removable boxes of residue $i$. These are successively good removable unless the largest element of $\Bo^\bs(\bla)$ has residue $i-1$. If the latter is the case then we remove the topmost vertical strip in the border of $\mathsf{Y}^\bs(\bla)$. Then we may remove the two boxes of residue $i$ by applying $\tilde{e}_i$ twice in a row.

Otherwise, $i$ is the residue of a box in the right border ribbon of $\la^m$. We consider whether it occurs in an arm, a leg, or a corner of this ribbon. We then remove, respectively, the arm to its right, the leg below it, or the arm and the leg below it and to its right. In case the bottom box of the leg below it is the minimal element $z$ of $\Bo^\bs(\bla)$ and both $z,z+e-1\in\Bo^\bs(\bla)$ then again we have to remove the topmost vertical strip in the border of $\mathsf{Y}^\bs(\bla)$ before we can remove that leg via crystal operators $\tilde{e}_j$. We then arrive again at the situation that we may apply $\tilde{e}_i$ twice in a row to remove two boxes of residue $i$. Therefore, the irreducible representation labeled by $\bmu$ is not calibrated.  
\end{proof}

%
%
%
%

%



\section{Alcove geometries  and path combinatorics} 
\label{newsec3}

We now set about providing a homological construction of the  calibrated representations via BGG resolutions.  
This means understanding these simple modules in terms of the  Specht theory  of the cyclotomic Hecke algebra.  
%
  The homological and representation theoretic structure of the cyclotomic Hecke algebra is governed by strong uni-triangularity properties ---
 thus we can understand a given  calibrated simple $D_\bs(\bla)$ 
  in terms of the Serre subcategory arising from the poset  $\Lambda
  =\{\bmu \mid  \bmu \trianglerighteq \bla   \}\subseteq \mathscr{P} _\ell(n)$. We will cast each     $\bmu\in \Lambda$ as a point in  an $h$-dimensional alcove geometry under the action of the affine symmetric group $  \widehat{\mathfrak{S}}_h$;
the calibrated simple  $D_\bs(\bla)$  will belong to the fundamental alcove  under this action. 
  We regard   $  \widehat{\mathfrak{S}}_h$ as a Coxeter group, we  let $\ell : \widehat{\mathfrak{S}}_h \to \mathbb N$  denote the corresponding length function and we let  $\leq $ denote the strong Bruhat order on $\widehat{\mathfrak{S}}_h$.


\subsection{The alcove geometry} \label{fundamental-alcove}
Set $\aatch= h _1+\dots+ h _\ell$.  
 For each   $1\leq m \leq  \ell$ and $ 1\leq i \leq h_m$ we let
  $\varepsilon_{i,m}:=\varepsilon_{( h _\ell+\dots+ h _{m+1}) + i}$    denote a
formal symbol, and define an   $\aatch  $-dimensional real vector space 
\[
{\mathbb E}_{\aatchpair   }
=\bigoplus_{
	\begin{subarray}c 
	1\leq m \leq \ell   \\ 
		1\leq i \leq   h _m    
	\end{subarray}
} \mathbb{R}\varepsilon_{i,m}
\]
and $\overline{\mathbb E}_{\aatchpair  }$ to be the quotient of this space by the one-dimensional subspace spanned by 
\[\sum_{
	\begin{subarray}c 
	1\leq m \leq \ell   \\ 
		1\leq i \leq   h _m    
	\end{subarray}
} \varepsilon_{i,m}.\]
We have an inner product $\langle \; , \; \rangle$ on ${\mathbb E}_{\aatchpair  }$ given by extending
linearly the relations 
\[
\langle \varepsilon_{i,p} , \varepsilon_{j,q} \rangle= 
\delta_{i,j}\delta_{p,q}
\]
for all   $1 \leq p,q \leq  \ell $, $1\leq i \leq h_p$ and 
$1\leq  j\leq h_q$, where
$\delta_{i,j}$ is the Kronecker delta.
We identify $\lambda \in  {\mathcal P}_{\aatchpair  }(n)$ with an element of the integer lattice inside $\mathbb{E}_{\aatchpair  }  $ via the map
$$\bla \longmapsto
 \sum_{\begin{subarray}c   1\leq {m}\leq \ell \\ 1\leq i\leq  h _m  \end{subarray}}
 \lambda^{m}_i \varepsilon_{i,m}.$$
 We let $\Phi$ denote the root system of type $A_{\aatch  -1}$ consisting of the roots 
$$\{\varepsilon_{i,p}-\varepsilon_{j,q}:  \ 0\leq p,q<\ell, \
1\leq i \leq  h _p ,1\leq j \leq  h _q, 
 \text{with}\ (i,p)\neq (j,q)\}$$
and $\Phi_0$ denote the root system of type $A_{ h _1-1}\times\cdots\times A_{ h _\ell-1}$ consisting of the roots 
$$\{\varepsilon_{i,m}-\varepsilon_{j,m}: 
  1\leq m\leq \ell, 1\leq i \neq j\leq  h _m \}.$$
We choose $\Delta$ (respectively $\Delta_0$) to be the set of simple roots inside $\Phi$ (respectively  $\Phi_0$) of the form $\varepsilon_t-\varepsilon_{t+1}$ for some $t$.
Given $r\in\ZZ$ and $\alpha\in\Phi$ we define $s_{\alpha,re}$ to be the reflection which acts on ${\mathbb E}_{\aatchpair  }$ by
$$s_{\alpha,re}x=x-(\langle x,\alpha\rangle -re)\alpha$$
The group generated by the $s_{\alpha,0}$ with $\alpha\in\Phi$ (respectively $\alpha\in\Phi_0$) is isomorphic to the symmetric group $\mathfrak{S}_{\enn   }$ (respectively to $\mathfrak{S}_{\aatchpair}:=\mathfrak{S}_{ h _1}\times\cdots\times\mathfrak{S}_{ h _\ell}$), while the group generated by the $s_{\alpha,re}$ with $\alpha \in\Phi$ and $r\in\ZZ$ is isomorphic to $\widehat{\mathfrak{S}}_{\enn   }$, the affine Weyl group of type $A_{\enn   -1}$. 
    We set $\alpha_0=\varepsilon_{\enn}-\varepsilon_1$ and $\Pi=\Delta\cup\{\alpha_0\}$.
The elements
 $S=
\{s_{\alpha,0}:\alpha\in\Delta\}\cup\{s_{\alpha_0,-e}\}
$ 
generate $\widehat{\mathfrak S}_{\enn}$. 

\begin{notn}
	We shall frequently find it convenient to refer to the generators in $S$ in terms of the elements of $\Pi$, and will abuse notation in two different ways. First, we will write $s_{\alpha}$ for $s_{\alpha,0}$ when $\alpha\in\Delta$ and $s_{\alpha_0}$ for $s_{\alpha_0,-e}$. This is unambiguous except in the case of the affine reflection $s_{\alpha_0,-e}$, where this notation has previously been used for the element $s_{\alpha,0}$. As the element $s_{\alpha_0,0}$ will not be referred to hereafter this should not cause confusion.
	Second, we will write $\alpha=\varepsilon_i-\varepsilon _{i+1}$ in all cases; if $i=\enn$ then all occurrences of $i+1$ should be interpreted modulo $\enn$ to refer to the index $1$.
\end{notn}

We shall consider a shifted action of the affine Weyl group $\widehat{\mathfrak{S}}_{\aatch  }$  on ${\mathbb E}_{h,l}$ 
by the element
$$ \rho:= (\rho_{\ell},  \ldots, \rho_{2},\rho_1) \in \ZZ^{\aatch  } \quad\text{where}\quad
\rho_m := (s_m , s_m-1, \dots , s_m -h_m+1) \in \ZZ^{h_m},$$    
\noindent that is, given an element $w\in \widehat{\mathfrak{S}}_{\aatch  } $,  we set 
$
w\cdot x=w(x+\rho)-\rho.
$
 This shifted action induces a well-defined action on $\overline{\mathbb E}_{\aatchpair  }$; we will define various geometric objects in ${\mathbb E}_{\aatchpair  }$ in terms of this action, and denote the corresponding objects in the quotient with a bar without further comment.
We let ${\mathbb E} ({\alpha, re})$ denote the affine hyperplane
consisting of the points  
$${\mathbb E} ({\alpha, re}) = 
\{ x\in{\mathbb E}_{\aatchpair  } \mid  s_{\alpha,re} \cdot x = x\} .$$
Note that our assumption that $e> h _1+\dots+ h _\ell$  implies that the origin does not lie on any hyperplane.   Given a
hyperplane ${\mathbb E} ( \alpha,re)$ we 
remove the hyperplane from ${\mathbb E}_{\aatchpair  }$ to obtain two
distinct subsets ${\mathbb 
	E}^{\great}(\alpha,re)$ and ${\mathbb E}^{\less}(\alpha,re)$
where the origin  lies in $  {\mathbb E}^{\less }(\alpha,re)$. The connected components of 
$$\overline{\mathbb E}_{\aatchpair  }  \setminus (\cup_{\alpha \in \Phi_0}\overline{\mathbb E}(\alpha,0))$$ are called chambers.
 The dominant chamber, denoted
$\overline{\mathbb E}_{\aatchpair   }^+ $, is defined to be 
$$\overline{\mathbb E}_{\aatchpair   }^+=\bigcap_{ \begin{subarray}c
	\alpha \in \Phi_0
	\end{subarray}
} \overline{\mathbb E}^{\less} (\alpha,0).$$
The connected components of $$\overline{\mathbb E}_{\aatchpair  }  \setminus (\cup_{\alpha \in \Phi,r\in \ZZ}\overline{\mathbb E}(\alpha,re))$$
are called alcoves, and any such alcove is a fundamental domain for the action of the group $  \widehat{\mathfrak{S}}_{\aatch  }$ on
the set $\Alc$ of all such alcoves. We define the {\em fundamental alcove}, 
which we denote by $\mathcal{F}_\aatchpair\subseteq \mathbb E_\aatchpair $,  to be the alcove containing the origin (which is inside the dominant chamber) and we set $$\mathcal{F}_\aatchpair(n)= \mathcal{F}_\aatchpair \cap \{\bla \in \mathbb E_\aatchpair \mid 
\textstyle \sum_{1\leq m \leq \ell }\sum_{i\geq 1} \la_i^{m}=n \}.$$
  We have a bijection from $\widehat{\mathfrak{S}}_{\aatch  }$ to $\Alc$ given by $w\longmapsto w\mathcal{F}_\aatchpair $. Under this identification $\Alc$ inherits a right action from the right action of $\widehat{\mathfrak{S}}_{\aatch  }$ on itself.
Consider the subgroup 
$$
\mathfrak{S}_{\aatchpair}:=
\mathfrak{S}_{ h _1}\times\cdots\times\mathfrak{S}_{ h _\ell} \leq \widehat{\mathfrak{S}}_{\aatch  }.
$$
The dominant chamber is a fundamental domain for the action of $\mathfrak{S}_{\aatchpair}$ on the set of chambers in $\overline{\mathbb E}_{\aatchpair  }$.  
We let $ \mathfrak{S}^{\aatchpair}$ denote the set of minimal length representatives for right cosets $\mathfrak{S}_{\aatchpair} \backslash \widehat{\mathfrak{S}}_{\aatch  }$.  So multiplication gives a bijection $\mathfrak{S}_{\aatchpair}\times \mathfrak{S}^{\aatchpair} \to  \widehat{\mathfrak{S}}_{\aatch  }$. 
 This induces a bijection between right cosets and the alcoves in our dominant chamber. 

 \color{black}
If the intersection of a hyperplane $\overline{\mathbb E}(\alpha,re)$ with the closure of an alcove $A$ is generically  of codimension one in $\overline{\mathbb E}_{\aatchpair  }$ then we call this intersection a {\em wall} of $A$. The fundamental alcove $\mathcal{F}_\aatchpair $ has walls corresponding to $\overline{\mathbb E}(\alpha,0)$ with $\alpha\in\Delta$ together with an affine wall $\overline{\mathbb E}(\alpha_0,-e)$. We will usually just write $\overline{\mathbb E}(\alpha)$ for the walls $\overline{\mathbb E}(\alpha,0)$ (when $\alpha\in\Delta$) and $\overline{\mathbb E}(\alpha,-e)$ (when $\alpha=\alpha_0$). We regard each of these walls as being labelled by a distinct colour (and assign the same colour to the corresponding element of $S$). Under the action of $\widehat{\mathfrak{S}}_{\aatch  }$ each wall of a given alcove $A$ is in the orbit of a unique wall of $\mathcal{F}_\aatchpair $, and thus inherits a colour from that wall. We will sometimes use the right action of $\widehat{\mathfrak{S}}_{\aatch  }$ on $\Alc$. Given an alcove $A$ and an element $s\in S$, the alcove $As$ is obtained by reflecting $A$ in the wall of $A$ with colour corresponding to the colour of $s$. With this observation it is now easy to see that if $w=s_{1}\ldots s_{t}$ where the $s_i$ are in $S$ then $w\mathcal{F}_\aatchpair $ is the alcove obtained from $\mathcal{F}_\aatchpair $ by successively reflecting through the walls corresponding to $s_1$ up to $s_t$. 
%

\subsection{Paths in the geometry}\color{black}
We now  
develop a path combinatorics inside our geometry.  
Given   a map  $p: 
\{1,\dots 
, n\}\to \{1,\dots  ,   \aatch      \}$ we define points $\SSTP(k)\in
{\mathbb E}_{\aatchpair  }$ by
\[
\SSTP(k)=\sum_{1\leq i \leq k}\varepsilon_{p(i)} 
\]
for $1\leq i \leq n$. 
We define the associated path    by $$\SSTP=\left(
\varnothing=\SSTP(0),\SSTP(1),\SSTP(2), \ldots, \SSTP(n) \right) $$ and we say that 
the path has shape $\pi= \SSTP(n) \in   {\mathbb E}_{\aatchpair  }$.    
We also denote this path by 
$$\SSTP=(\varepsilon_{p(1)},\ldots,\varepsilon_{p(n)}).$$
Given $\bla \in \mathbb{E}_{\aatchpair} $, we let $\Path(\lambda)$ denote the set of paths of length $n$ with shape $\lambda$. We define $\Path_{\aatchpair  }(\lambda)$ to be the subset of $\Path(\lambda)$ consisting of those paths lying entirely inside the dominant chamber; i.e. those $\SSTP$ such that $\SSTP(i)$ is dominant for all $0\leq i\leq n$. 
 We let $\Path_{\aatchpair  }(n) = \cup _{\la\in \mathscr{P}_{\underline{ h }}(n)}\Path_{\aatchpair  }(\la)$.  
 %

  \color{black}

Given paths $\SSTP=(\varepsilon_{p(1)},\ldots,\varepsilon_{p(n)})$ and $\SSTQ=(\varepsilon_{q(1)},\ldots,\varepsilon_{q(n)})$ we say that $\SSTP\sim\SSTQ$ if there exists an $\alpha 
 \in\Phi$ and $r\in\ZZ$ and $s\leq n$ such that
$$\SSTP(s)\in{\mathbb E}(\alpha,re)\qquad \text{ 
and  }
\qquad \varepsilon_{q(t)}=\left\{\begin{array}{ll}
\varepsilon_{p(t)}& \ \text{for}\ 1\leq t\leq s\\
s_{\alpha}\varepsilon_{p(t)}& \ \text{for}\ s+1\leq t\leq n.\end{array}\right.$$ 

In other words the paths $\SSTP$ and $\SSTQ$ agree up to some point $\SSTP(s)=\SSTQ(s)$ which lies on ${\mathbb E}(\alpha,re)$, after which each $\SSTQ(t)$ is obtained from $\SSTP(t)$ by reflection in ${\mathbb E}(\alpha,re)$. We extend $\sim$ by transitivity to give an equivalence relation on paths, and say that two paths in the same equivalence class are related by a series of {\em wall reflections of paths} and given $\SSTS \in \Path_{\aatchpair  }(n)$ we set $[\SSTS]  = \{ \SSTT \in \Path_{\aatchpair  }(n) \mid \SSTS\sim\SSTT\}$.  
Given a path $\SSTP$ we define $$\res(\SSTP)=(\res_\SSTP(1),\ldots,\res_\SSTP(n))$$ where $\res_\SSTP(i)$ denotes the residue of the box labelled by $i$ in the tableau corresponding to $\SSTP$.
We have that $\res(\SSTP)=\res(\SSTQ)$  is and only if 
$\SSTP \sim \SSTQ$.

\begin{defn}\label{Soergeldegreee}
Given a path $\sts=(\sts(0),\sts(1),\sts(2), \ldots, \sts(n))$  we set
$\deg_{\bs } (\sts(0))=0$ and define  
 \[
 \deg_{\bs } (\sts ) = \sum_{1\leq k \leq n} d (\sts(k),\sts(k-1)), 
 \]
 where $d(\sts(k),\sts(k-1))$ is defined as follows. 
For $\alpha\in\Phi$ we set $d_{\alpha}(\sts(k),\sts(k-1))$ to be
\begin{itemize}
\item $+1$ if $\sts(k-1) \in 
   {\mathbb E}(\alpha,re)$ and 
   $\sts(k) \in 
   {\mathbb E}^{\less}(\alpha,re)$;
   
\item $-1$ if $\sts(k-1) \in 
   {\mathbb E}^{\great}(\alpha,re)$ and 
   $\sts(k) \in 
   {\mathbb E}(\alpha,re)$;
\item $0$ otherwise.  
   \end{itemize}
We let 
$$ 
{\rm deg}(\SSTS)= \sum _{1\leq k \leq n }\sum_{\alpha \in \Phi}d_\alpha(\sts(k-1),\sts(k)).$$  
%
%
%
%
 \end{defn}

\begin{defn} Given two paths 
	$$\SSTP=(\varepsilon_{i_1},\varepsilon_{i_2},\dots, \varepsilon_{i_p}) \in \Path(\mu)
	\quad\text{and}\quad
	\SSTQ=(\varepsilon_{j_1},\varepsilon_{j_2},\dots, \varepsilon_{j_q}) 
	\in \Path(\nu)$$  we define the {\em  naive concatenated path} 
	$$\SSTP\boxtimes \SSTQ =
	(\varepsilon_{i_1},\varepsilon_{i_2},\dots, \varepsilon_{i_p}, \varepsilon_{j_1},\varepsilon_{j_2},\dots, \varepsilon_{j_q}) 
	\in \Path (\mu+\nu).$$ 
\end{defn}
 
For $\bla \in \mathscr{P}_\aatchpair (n)$, we identify $\Path_{\aatchpair  }(\la)$ with the set of standard $\la$-tableaux in the obvious manner (see \cite{cell4us2} for more details).  This identification preserves the grading.  
  This identification is best illustrated via an example:

  \begin{eg}
  The tableau $\SSTT_{1,\la}$  in Example \ref{ex:revertable} corresponds to the path 
   $$
\SSTT_{1,\la}= ( \varepsilon_1, \varepsilon_2, \varepsilon_3, \varepsilon_4, \varepsilon_5, \varepsilon_6,
  \varepsilon_1, \varepsilon_2, \varepsilon_3, \varepsilon_5,
   \varepsilon_1, \varepsilon_2, \varepsilon_1, \varepsilon_2,\varepsilon_1).
 $$
\end{eg}

\subsection{The Bott--Samelson truncation and Soergel diagrammatics}
  We now recall the construction an idempotent subalgebra 
  of $\algebra$ which is   isomorphic to 
 Elias--Williamson's diagrammatic Hecke categories.  
In order to do this,  we must  restrict our attention to paths labelled by (enhanced) words in the  
 affine Weyl group.

\begin{defn}\label{alphabet} We will associate alcove paths to certain words in the {\em  alphabet}
	$$
	S\cup\{1\}=\{ s_\alpha \mid \alpha \in \Pi \cup \{\emptyset\}\}
	$$
	where $s_\emptyset =1$.  That is, we will consider words in the generators of the affine Weyl group, but enriched with explicit occurrences of the identity 
	in these expressions.  We refer to the number of elements in such an expression (including the occurrences of the identity) as the {\em  enhanced length} of this expression. 
We say that an enriched word is reduced if, upon forgetting occurrences of the identity in the expression, the resulting  word   is reduced.  	
 
	 \end{defn}

Given a path $\SSTP$ between points in the principal linkage class, the end point lies in the interior of an alcove of the form $w\mathcal{F}_\aatchpair $ for some $w\in\widehat{\mathfrak S}_{\aatch  }$. If we write $w$ as a word in our alphabet, and then replace each element $s_{\alpha}$ by the corresponding non-affine reflection $s_{\alpha}$ in ${\mathfrak S}_{\aatch  }$ to form the element $\overline{w}\in{\mathfrak S}_{\aatch  }$ then the basis vectors $\varepsilon_i$ are permuted by the corresponding action of $\overline{w}$ to give $\varepsilon_{\overline{w}(i)}$, and there is an isomorphism from $\overline{\mathbb E}_{h,l}$ to itself which maps $\mathcal{F}_\aatchpair $ to $w\mathcal{F}_\aatchpair $ such that $0$ maps to $w\cdot 0$, coloured walls map to walls of the same colour, and each basis element $\varepsilon_i$ map to $\varepsilon_{\overline{w}(i)}$. Under this map we can transform a path $\SSTQ$ starting at the origin to a path starting at $w\cdot 0$ which passes through the same sequence of coloured walls as $\SSTQ$ does.

\begin{defn}
	Given two paths 
	 $\SSTP=(\varepsilon_{i_1},\varepsilon_{i_2},\dots, \varepsilon_{i_p}) \in \Path(\mu)$
and $	\SSTQ=(\varepsilon_{j_1},\varepsilon_{j_2},\dots, \varepsilon_{j_q}) 
	\in \Path(\nu)$   with the endpoint of $\SSTP$ lying in  some alcove $w\mathcal{F}_\aatchpair $
	we define the {\em  contextualised concatenated path} 
	$$\SSTP\otimes_w \SSTQ =
	(\varepsilon_{i_1},\varepsilon_{i_2},\dots, \varepsilon_{i_p})\boxtimes  
	(\varepsilon_{\overline{w}(j_1)},\varepsilon_{\overline{w}(j_2) },\dots, \varepsilon_{\overline{w}(j_q) }) 
	\in \Path (\mu+(w \cdot \nu)).$$ 
	If $w=s_{ \color{magenta}\alpha} $ we will simply write $\SSTP\otimes_{ \color{magenta}\alpha} \SSTQ$.    \end{defn}
%

We now define the building blocks from which all of our distinguished paths will be constructed.  We begin by defining certain integers that describe the position of the origin in our fundamental alcove.

\begin{defn} Given $   { \color{magenta}\alpha}  \in \Pi$ we define 
	${b_{ \color{magenta}\alpha} }$ to be the distance from the origin to the wall corresponding to 
	$ {{ \color{magenta}\alpha} }$, and let $b_\emptyset =1$.    
	Given our earlier conventions this corresponds to setting
	$$
	b_{\varepsilon_{h_\ell+\dots+h_m + i }-\varepsilon_{h_\ell+\dots+h_m+i+1 }} = 1
	$$
	for $m>1$ and $1\leq i <h_{m-1}$   and that 
	$$
	b_{\varepsilon_{h_\ell+\dots+h_m }-\varepsilon_{ h_\ell+\dots+h_m+1 }} 
	= s_{m+1}-s_m -h_m+1 
	\qquad
	b_{\varepsilon_{\aatch  }-\varepsilon_{1}} = e+ s_{1}-s_{\ell } -h_{\ell}+1 
	$$
	for $m>1$.  
	Given ${ \color{magenta}\alpha},\bet\in \Pi$  we set $b_{\al\bet}=b_{ \color{magenta}\alpha} + b_\bet$.   
\end{defn}

We let $\delta_k=((k^{h_1}),(k^{h_2}),\dots, (k^{h_\ell}))$ and we note that these multipartitions always lie in the principal linkage class.
We sometimes write $\delta_{{ \color{magenta}\alpha}}$ for the element $\delta_{{b_{ \color{magenta}\alpha} }}$.  
 We can now define our basic building blocks for paths.

\begin{defn} \label{base} Given ${\al}  =\varepsilon_i -\varepsilon_{i+1}\in \Pi$, we consider the multicomposition 
	$ 
	s_\al\cdot \delta_{\al} 
	$ 
	with all columns of length ${b_{ \color{magenta}\alpha} }$, with the exception of the $i$th and $(i+1)$st columns, which are of length  $0$ and $2{b_{ \color{magenta}\alpha} }$, respectively.  
	We set 
	$$
	\REMOVETHESE {i} {} =  (\varepsilon_{1 }, \dots, \varepsilon_{i-1 }
	, \widehat{\varepsilon_{ i  }},   \varepsilon_{ i+1  },\dots, \varepsilon_{\aatch   })
	\quad \text{and}\quad 
	\ADDTHIS {i} {} =(+\varepsilon_i)$$
	where $\widehat{.}$ denotes omission of a coordinate.
	Then our distinguished path corresponding to $s_\al$ is given by
	$$ \SSTP_\al=
 	\REMOVETHESE{i} {{b_{ \color{magenta}\alpha} }} \boxtimes \ADDTHIS {i+1}{{b_{ \color{magenta}\alpha} }} \in \Path(s_\al\cdot\delta _{\al}).
	$$
 The distinguished path corresponding to $\emptyset$ is labelled by 
	$ 
	\SSTP_{\emptyset}
	\in \Path(\delta) 
	$ 
	and is  
 fixed to be any choice of tableau $\SSTT_{m,\delta}$ for which $1\leq m \leq \ell$ is a step change.  
We set $\SSTP_\emp=(\SSTP_\emptyset)^{{b_{ \color{magenta}\alpha} }}$.   
\end{defn}

\begin{rmk}
If $\ell$  is a step change, then we can take 
 $\SSTP_{\emptyset} = 
	(\varepsilon_{1 },\varepsilon_{2 }, \dots,   \varepsilon_{\aatch   })
$ and indeed this is the path used in \cite{cell4us2} (where it is implicitly assumed that $\ell$  is a step change).  
We further remark that one can always reorder the charge $\bs\in \ZZ^\ell$ to obtain some 
 $\widehat{\bs}\in \ZZ^\ell$ for which 
$\ell$  is a step change (using the trivial algebra isomorphism $\cyc_{n}(\bs)\cong \cyc_{n}(\widehat{\bs})$).  
\end{rmk}

We  are now ready to define our distinguished paths for general words in our alphabet.   

\begin{defn}\label{thepathwewatn}
	We now define a {\em  distinguished path} $\SSTP_{\w}$ for each word $\w$ in our alphabet $S\cup\{1\}$ by induction on the enhanced length of $\w$. If $\w$ is $s_\emptyset$ or a simple reflection $s_\alpha$ we have already defined the distinguished path in Definition \ref{base}. Otherwise if $\w=s_\alpha\w'$ then we define  
	$$
	\SSTP_{\underline{w}}:= \SSTP_{{ \color{magenta}\alpha} }
	\;\otimes_{\al}  \; \SSTP_{\underline{w}'}	.$$ 
	If the enriched word $\w$ is  reduced,  then   the corresponding  
	 path $\SSTP_{\w}$ is said to be a  {\em reduced path}.
  \end{defn}

%
%


\begin{defn}
	{ \renewcommand{\SSTU}{{\sf P}}\renewcommand{\mu}{{\pi}} 
		Given ${\al} \in \Pi$ we set 
		$$
		\reflectpath =
		\REMOVETHESE {i} {b_\al} \boxtimes \ADDTHIS {i} {{b_{ \color{magenta}\alpha} }} =
		\REMOVETHESE {i} {b_\al} \otimes_{ \color{magenta}\alpha} \ADDTHIS {i+1} {{b_{ \color{magenta}\alpha} }} =
		(+\varepsilon_{1 }, \dots, +\varepsilon_{i-1 }
		, \widehat{+\varepsilon_{ i  }},   +\varepsilon_{ i+1  },\dots, +\varepsilon_{\aatch   })^{b_{ \color{magenta}\alpha} } 
		\boxtimes  (\varepsilon_i)^{{b_{ \color{magenta}\alpha} }}
 		$$
		the path obtained by reflecting the second part of $\SSTP_\al$ in the wall through which it passes.   
	} 
	
\end{defn}

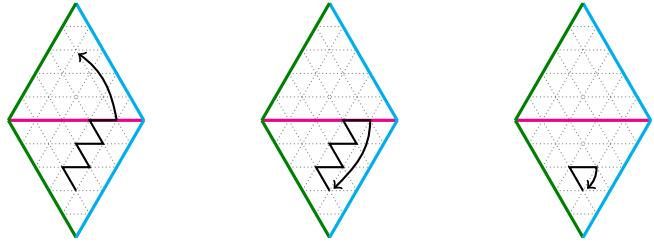
\begin{figure}[ht!]
 \vspace{-0.4cm}
$$
  \begin{minipage}{2.6cm}\begin{tikzpicture}[scale=0.6] 
    
    \path(0,0) coordinate (origin);
          \foreach \i in {0,1,2,3,4,5}
  {
    \path  (origin)++(60:0.6*\i cm)  coordinate (a\i);
    \path (origin)++(0:0.6*\i cm)  coordinate (b\i);
     \path (origin)++(-60:0.6*\i cm)  coordinate (c\i);
    }
  
      \path(3,0) coordinate (origin);
          \foreach \i in {0,1,2,3,4,5}
  {
    \path  (origin)++(120:0.6*\i cm)  coordinate (d\i);
    \path (origin)++(180:0.6*\i cm)  coordinate (e\i);
     \path (origin)++(-120:0.6*\i cm)  coordinate (f\i);
    }

  \foreach \i in {0,1,2,3,4,5}
  {
    \draw[gray, densely dotted] (a\i)--(b\i);
        \draw[gray, densely dotted] (c\i)--(b\i);
    \draw[gray, densely dotted] (d\i)--(e\i);
        \draw[gray, densely dotted] (f\i)--(e\i);
            \draw[gray, densely dotted] (a\i)--(d\i);
                \draw[gray, densely dotted] (c\i)--(f\i);
 
     }
  \draw[very thick, magenta] (0,0)--++(0:3) ;
    \draw[very thick, cyan] (3,0)--++(120:3) coordinate (hi);
        \draw[very thick, darkgreen] (hi)--++(-120:3) coordinate  (hi);

        \draw[very thick, darkgreen] (hi)--++(-60:3) coordinate  (hi);

    \draw[very thick, cyan] (hi)--++(60:3) coordinate (hi);

      \path(0,0)--++(-60:5*0.6)--++(120:2*0.6)--++(0:0.6) coordinate (hi);
     \path(0,0)--++(0:4*0.6)--++(120:3*0.6)           coordinate(hi2) ; 
     \path(0,0)--++(0:4*0.6)          coordinate(hi3) ;

          \path(hi)  --++(120:0.6)
           coordinate(step1) 
          --++(0:0.6)
                     coordinate(step2) 
      --++(120:0.6)
                 coordinate(step3) 
                 --++(0:0.6)           coordinate(step4) 
      --++(120:0.6) 
                 coordinate(step5)  --++(0:0.6) 
                 coordinate(step6)   ;    
     
                \path(0,0)--++(0:4*0.6)--++(120:2*0.6)           coordinate(hin) ; 

                     \path(0,0)--++(0:4*0.6)--++(120:2.85*0.6)           coordinate(hi4) ; 

        \draw[ thick,->]    (hi)  to    (step1) 
          to [out=0,in=-180]
                              (step2) 
 to   
            (step3) 
                     to [out=0,in=-180]       (step4) 
  to   
             (step5)       to [out=0,in=-180]   (step6) 
              (step6) 
              to [out=100,in=-35] (hi4)  ;

 \end{tikzpicture}\end{minipage}
\qquad
   \begin{minipage}{2.6cm}\begin{tikzpicture}[scale=0.6] 
    
    \path(0,0) coordinate (origin);
          \foreach \i in {0,1,2,3,4,5}
  {
    \path  (origin)++(60:0.6*\i cm)  coordinate (a\i);
    \path (origin)++(0:0.6*\i cm)  coordinate (b\i);
     \path (origin)++(-60:0.6*\i cm)  coordinate (c\i);
    }
  
      \path(3,0) coordinate (origin);
          \foreach \i in {0,1,2,3,4,5}
  {
    \path  (origin)++(120:0.6*\i cm)  coordinate (d\i);
    \path (origin)++(180:0.6*\i cm)  coordinate (e\i);
     \path (origin)++(-120:0.6*\i cm)  coordinate (f\i);
    }

  \foreach \i in {0,1,2,3,4,5}
  {
    \draw[gray, densely dotted] (a\i)--(b\i);
        \draw[gray, densely dotted] (c\i)--(b\i);
    \draw[gray, densely dotted] (d\i)--(e\i);
        \draw[gray, densely dotted] (f\i)--(e\i);
            \draw[gray, densely dotted] (a\i)--(d\i);
                \draw[gray, densely dotted] (c\i)--(f\i);
 
     }
  \draw[very thick, magenta] (0,0)--++(0:3) ;
    \draw[very thick, cyan] (3,0)--++(120:3) coordinate (hi);
        \draw[very thick, darkgreen] (hi)--++(-120:3) coordinate  (hi);

        \draw[very thick, darkgreen] (hi)--++(-60:3) coordinate  (hi);

    \draw[very thick, cyan] (hi)--++(60:3) coordinate (hi);

      \path(0,0)--++(-60:5*0.6)--++(120:2*0.6)--++(0:0.6) coordinate (hi);
     \path(0,0)--++(0:4*0.6)--++(120:3*0.6)           coordinate(hi2) ; 
     \path(0,0)--++(0:4*0.6)          coordinate(hi3) ;

          \path(hi)  --++(120:0.6)
           coordinate(step1) 
          --++(0:0.6)
                     coordinate(step2) 
      --++(120:0.6)
                 coordinate(step3) 
                 --++(0:0.6)           coordinate(step4) 
      --++(120:0.6) 
                 coordinate(step5)  --++(0:0.6) 
                 coordinate(step6)   ;

           \path(0,0)--++(0:4*0.6)--++(-120:3*0.6) --++(25:0.1)          coordinate(hin) ;

        \draw[ thick,->]    (hi)  to    (step1) 
          to [out=0,in=-180]
                              (step2) 
 to   
            (step3) 
                     to [out=0,in=-180]       (step4) 
  to   
             (step5)       to [out=0,in=-180]   (step6) to [out=-90,in=45] (hin)  ;

 \end{tikzpicture}\end{minipage} \qquad 
   \begin{minipage}{2.6cm}\begin{tikzpicture}[scale=0.6] 
    
    \path(0,0) coordinate (origin);
          \foreach \i in {0,1,2,3,4,5}
  {
    \path  (origin)++(60:0.6*\i cm)  coordinate (a\i);
    \path (origin)++(0:0.6*\i cm)  coordinate (b\i);
     \path (origin)++(-60:0.6*\i cm)  coordinate (c\i);
    }
  
      \path(3,0) coordinate (origin);
          \foreach \i in {0,1,2,3,4,5}
  {
    \path  (origin)++(120:0.6*\i cm)  coordinate (d\i);
    \path (origin)++(180:0.6*\i cm)  coordinate (e\i);
     \path (origin)++(-120:0.6*\i cm)  coordinate (f\i);
    }

  \foreach \i in {0,1,2,3,4,5}
  {
    \draw[gray, densely dotted] (a\i)--(b\i);
        \draw[gray, densely dotted] (c\i)--(b\i);
    \draw[gray, densely dotted] (d\i)--(e\i);
        \draw[gray, densely dotted] (f\i)--(e\i);
            \draw[gray, densely dotted] (a\i)--(d\i);
                \draw[gray, densely dotted] (c\i)--(f\i);
 
     }
  \draw[very thick, magenta] (0,0)--++(0:3) ;
    \draw[very thick, cyan] (3,0)--++(120:3) coordinate (hi);
        \draw[very thick, darkgreen] (hi)--++(-120:3) coordinate  (hi);

        \draw[very thick, darkgreen] (hi)--++(-60:3) coordinate  (hi);

    \draw[very thick, cyan] (hi)--++(60:3) coordinate (hi);

      \path(0,0)--++(-60:5*0.6)--++(120:2*0.6)--++(0:0.6) coordinate (hi);
     \path(0,0)--++(0:4*0.6)--++(120:3*0.6)           coordinate(hi2) ; 
     \path(0,0)--++(0:4*0.6)          coordinate(hi3) ;

          \path(hi)  --++(120:0.6)
           coordinate(step1) 
          --++(0:0.6)
                     coordinate(step2) 
      --++(120:0.6)
                 coordinate(step3) 
                 --++(0:0.6)           coordinate(step4) 
      --++(120:0.6) 
                 coordinate(step5)  --++(0:0.6) 
                 coordinate(step6)   ;

           \path(0,0)--++(0:4*0.6)--++(-120:3*0.6) --++(25:0.1)          coordinate(hin) ;

        \draw[ thick,->]    (hi)  to    (step1) 
          to [out=0,in=-180]
                              (step2) 
 to [out=-90,in=45] (hin)  ;

 \end{tikzpicture}\end{minipage}  
 $$
\caption{The first two diagrams are a path $\SSTP_\al$  walking through an ${\al}$-hyperplane, and a path $\reflectpath $  obtained by reflecting    $\SSTP_\al$ through this ${\al}$-hyperplane.
The final diagram is the path $\SSTP_\emptyset$.     } \label{figure1}
\end{figure}

 We let $\Std_{n,\bs} (\la)\subseteq \Std_\aatchpair (\la)$,  to be the set of all {\em alcove-tableaux} which can be obtained by contextualised concatenation of paths from the set 
$$\{ \SSTP_{ \color{magenta}\alpha} \mid { \color{magenta}\alpha} \in \Pi  \}\cup\{ \SSTP_\al^\flat  \mid { \color{magenta}\alpha} \in \Pi  \}\cup \{ \SSTP_\emptyset   \} .$$
We let $\mathscr{P}_{\underline{h}}(n,{\bs })=\{ \la\in    \mathscr{P}_{\underline{h}}(n) \mid \Std_{n,\bs} (\la)\neq \emptyset\}$.  
We let $\Std_{n,\bs}^+ (\la) \subseteq \Std_{n,\bs} (\la)$ denote the subset of {\em strict alcove-tableaux} for which all instances of $s_\emptyset$ occur as a prefix to the enhanced word.     
We set 
$$\Lambda_{\underline{h}}(n,{\bs }):=\{\w \mid \SSTP^\w \in \Std^+_{n,\bs} (\la) \text{ for some }\la \in \mathscr{P}_{\underline{h}}(n,{\bs })\}.$$

    \color{black}


\section{Categorification and  BGG resolutions}\label{sec:catBGG}

We now introduce the graded diagrammatic algebras which 
provide the necessary context for constructing our BGG resolutions.  

\subsection{The cyclotomic quiver Hecke algebras}

Given $\underline{i}=(i_1,\dots, i_n)\in (\ZZ/e\ZZ)^n$ and $s_r=(r,r+1)\in \mathfrak{S}_n$ we set 
 $s_r(\underline{i})= (i_1,\dots, i_{r-1}, i_{r+1}, i_r ,i_{r+2}, \dots ,i_n)$.  
 
\begin{defn}[\cite{MR2551762,MR2525917,ROUQ}]\label{defintino1}
Fix $\Bbbk$ an integral domain, $e > 2 $ and let ${\bs }\in \ZZ^\ell$.   
The {\em  cyclotomic quiver Hecke algebra} (or KLR algebra),  $\mathcal{H}_n(\bs ) $,   is defined to be the unital, associative $\Bbbk$-algebra with generators
$$ 
\{e_{\underline{i}}  \ | \ {\underline{i}}=(i_1,\dots ,i_n)\in   (\ZZ/e\ZZ)^n\}\cup\{y_1,\dots ,y_n\}\cup\{\psi_1,\dots ,\psi_{n-1}\},
$$
subject to the    relations 
\begin{align}
\tag{R1}\label{rel1}  e_{\underline{i}} e_{\underline{j}} =\delta_{{\underline{i}},{\underline{j}}} e_{\underline{i}} 
 \qquad \textstyle 
\sum_{{\underline{i}} \in   (\ZZ/e\ZZ )^n } e_{{\underline{i}}} =1_{\mathcal{H} _n}   
\qquad y_r		e_{{\underline{i}}}=e_{{\underline{i}}}y_r 
\qquad
\psi_r e_{{\underline{i}}} = e_{s_r({\underline{i}})} \psi_r 
\qquad
y_ry_s =y_sy_r
 \end{align}
for all $r,s,{\underline{i}},{\underline{j}}$ and 
\begin{align}
\tag{R2}\label{rel2}
\psi_ry	_s  = y_s\psi_r \  \text{ for } s\neq r,r+1&
\qquad  
&\psi_r\psi_s = \psi_s\psi_r \ \text{ for } |r-s|>1
 \\ \tag{R3}\label{rel3}
y_r \psi_r e_{\underline{i}}
  =
(\psi_r y_{r+1} \color{black}-\color{black} 
\delta_{i_r,i_{r+1}})e_{\underline{i}}  &
 \qquad 
&y_{r+1} \psi_r e_{\underline{i}}   =(\psi_r y_r \color{black}+\color{black} 
\delta_{i_r,i_{r+1}})e_{\underline{i}} 
\end{align}
\begin{align}
\tag{R4}\label{rel4}
\psi_r \psi_r  e_{\underline{i}} &=\begin{cases}
\mathrlap0\phantom{(\psi_{r+1}\psi_r\psi_{r+1} + 1)e_{\underline{i}} \qquad}& \text{if }i_r=i_{r+1},\\
e_{\underline{i}}  & \text{if }i_{r+1}\neq i_r, i_r\pm1,\\
(y_{r+1} - y_r) e_{\underline{i}}  & \text{if }i_{r+1}=i_r +1  ,\\
(y_r - y_{r+1}) e_{\underline{i}} & \text{if }i_{r+1}=i_r -1   
\end{cases}\\
\tag{R5}\label{rel5}
\psi_r \psi_{r+1} \psi_r e_{\underline{i}} &=\begin{cases}
(\psi_{r+1}\psi_r\psi_{r+1} - 1)e_{\underline{i}}\qquad& \text{if }i_r=i_{r+2}=i_{r+1}+1  ,\\
(\psi_{r+1}\psi_r\psi_{r+1} + 1)e_{\underline{i}}& \text{if }i_r=i_{r+2}=i_{r+1}-1    \\
 \psi_{r+1} \psi_r\psi_{r+1}e_{\underline{i}}   &\text{otherwise} 
\end{cases} 
\end{align}
 for all  permitted $r,s,i,j$ and finally, we set 
 \begin{align}
\label{rel1.12} y_1^{\sharp\{s_m | s_m= i_1 ,1\leq m \leq \ell 	\}} e_{\underline{i}} &=0 
\quad 
\text{ for    ${\underline{i}}\in (\ZZ/e\ZZ)^n$.}
\end{align}  
We let $\ast$ denote the anti-involution which fixes the generators.  
 
%
 \end{defn}

\begin{thm}[{\cite[Main Theorem]{MR2551762}}]\label{citemmmmmee}
 Let $\Bbbk$ be a field and let $ \xi\in \Bbbk$ be a primitive $e$th root of unity.   
 Set $q=\xi$ and 
 set $Q_m = \xi^{s_m}$ for $ 1\leq m \leq \ell$.  
 The algebras $\mathcal{H} _n(\bs )$ and  
 $\cyc_{q, Q_1, \dots, Q_{\ell}}(n)  $
 are isomorphic as $\Bbbk$-algebras.  
 \end{thm}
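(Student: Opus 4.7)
The plan is to follow the original strategy of Brundan--Kleshchev \cite{MR2551762}, constructing mutually inverse algebra homomorphisms between the two presentations. Since the algebras are finite-dimensional of the same dimension $n!\ell^n$ (for the cyclotomic Hecke side this is classical \cite{ak94}, for the KLR side it follows once the isomorphism is in place via the Ariki--Brundan--Kleshchev categorification), it suffices to exhibit a surjective homomorphism in one direction.

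First I would define a map $\Phi : \mathcal{H}_n(\bs) \to \cyc_{q,Q_1,\dots,Q_\ell}(n)$ on generators as follows. For each residue sequence $\underline{i} \in (\mathbb{Z}/e\mathbb{Z})^n$, let $e_{\underline{i}}$ map to the idempotent projecting onto the simultaneous generalized eigenspace where $X_r$ has generalized eigenvalue $\xi^{i_r}$; the existence of such a decomposition of $1$ follows from the commutativity of $X_1,\dots,X_n$ and the fact that each $X_r$ has finite order modulo the cyclotomic relation, so its eigenvalues are powers of $\xi$. Next, send $y_r e_{\underline{i}} \mapsto (1 - \xi^{-i_r} X_r) e_{\underline{i}}$, which is nilpotent on the corresponding eigenspace. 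Finally, define $\psi_r e_{\underline{i}}$ by a carefully chosen rational-in-$y$ modification of $T_r e_{\underline{i}}$: specifically $\psi_r e_{\underline{i}} = (T_r + P_r(\underline{i}))Q_r(\underline{i})^{-1} e_{\underline{i}}$ where $P_r, Q_r$ are power series in $y_r, y_{r+1}$ chosen so that the image satisfies the quadratic relation \eqref{rel4} (the precise formulas are cases depending on whether $i_r = i_{r+1}$, $i_{r+1} = i_r \pm 1$, or neither). That these power series give well-defined elements follows from nilpotency of the $y_r$.

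Next I would verify that $\Phi$ respects all the defining relations \eqref{rel1}--\eqref{rel5} and the cyclotomic relation \eqref{rel1.12}. Relation \eqref{rel1} reduces to orthogonality and completeness of the eigenspace idempotents. The commutation relations \eqref{rel2} for $y$'s follow from commutativity of the $X_r$; the mixed commutation of $\psi$ with distant $y$ follows since $T_r$ commutes with $X_s$ for $s \neq r, r+1$. Relation \eqref{rel3} is a direct computation using $T_r X_r T_r = q X_{r+1}$, and the case distinctions in \eqref{rel4} arise from computing $T_r^2 = (q-1)T_r + q$ after the rescaling and tracking the poles. Relation \eqref{rel5} is the most delicate: it follows from the braid relation $T_rT_{r+1}T_r = T_{r+1}T_rT_{r+1}$ together with carefully chosen denominators so that the corrections $\pm 1$ appear exactly in the cases $i_r = i_{r+2} = i_{r+1} \pm 1$. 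Finally, the cyclotomic relation $\prod_m(X_1 - Q_m) = 0$ translates into $y_1^{\sharp\{m : s_m \equiv i_1\}} e_{\underline{i}} = 0$ since $X_1 - Q_m$ acts invertibly on $e_{\underline{i}}$ unless $s_m \equiv i_1 \pmod e$, in which case it acts as $-\xi^{i_1} y_1$ up to a unit.

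For surjectivity I would recover the generators of $\cyc_{q,Q_1,\dots,Q_\ell}(n)$ from the image: $X_r = \sum_{\underline{i}} \xi^{i_r}(1-y_r) e_{\underline{i}}$ is in the image, and inverting the formula defining $\psi_r e_{\underline{i}}$ (using nilpotency of $y_r, y_{r+1}$ to invert the power series $Q_r(\underline{i})$) recovers $T_r e_{\underline{i}}$, hence $T_r = \sum_{\underline{i}} T_r e_{\underline{i}}$. Once surjectivity is established, equality of dimensions gives that $\Phi$ is an isomorphism. The main obstacle throughout is bookkeeping: choosing the correct rational functions $P_r(\underline{i}), Q_r(\underline{i})$ so that every relation holds simultaneously, particularly the braid relation \eqref{rel5} which requires a delicate analysis in each of the cases $i_r = i_{r+2}$ with $i_{r+1} = i_r \pm 1$ versus generic residues.
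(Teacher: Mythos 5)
The paper states this theorem as a citation to Brundan--Kleshchev and gives no in-paper proof, so the relevant comparison is with their original argument, and your sketch does reproduce its skeleton faithfully: decompose $1$ into generalized joint eigenspace idempotents for the commuting $X_r$, set $y_r e_{\underline{i}} = (1-\xi^{-i_r}X_r)e_{\underline{i}}$ so that each $y_r$ is nilpotent, define $\psi_r e_{\underline{i}}$ as a power-series correction of $T_r e_{\underline{i}}$ chosen case-by-case in $(i_r,i_{r+1})$, check the KLR relations, and then invert the construction to recover $T_r$ and $X_r$. That is the Brundan--Kleshchev route, and the remarks on how the quadratic and braid relations translate into \eqref{rel4} and \eqref{rel5} are in the right spirit even without the explicit formulas.

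Two points in the write-up do need correction. First, the appeal to ``equal dimensions so surjectivity suffices'' is circular as stated: the identity $\dim_{\Bbbk}\mathcal{H}_n(\bs) = n!\ell^n$ for the cyclotomic KLR algebra is not available prior to the isomorphism (it is one of its consequences), so you cannot use it as a shortcut. This is harmless only because you then construct the inverse map anyway, which is what Brundan--Kleshchev actually do; the dimension argument should simply be deleted. Second, the claim that ``each $X_r$ has finite order modulo the cyclotomic relation'' is not literally true: the $X_r$ have nontrivial unipotent parts, and only $X_1$ satisfies an evident polynomial relation from the cyclotomic quotient. The fact you actually need --- that the eigenvalues of every $X_r$ lie in $\{Q_m\xi^j\}$, hence are powers of $\xi$ under the choice $Q_m = \xi^{s_m}$ --- is a nontrivial input coming from the Jucys--Murphy/seminormal theory of Ariki--Koike algebras, and it is exactly what makes the residue-idempotent decomposition of $1$ indexed by $(\ZZ/e\ZZ)^n$ well defined. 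This should be invoked as a separate ingredient rather than glossed as a finite-order statement.
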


 \subsubsection{The quotient algebras of interest}
We set $e_\SSTT:= e_{{\rm res} (\SSTT)}\in \mathcal{H}_n(\bs )$.  
For $ \bla \in \mathscr{P}^\ell_n$, we set 
\begin{align}
\label{tableau not}
y_ \bla = \prod _{k=1}^n y_k^{ |{\mathcal A} _{\SSTT_{(m, \bla)}}(k)|}e_{\SSTT_{(m, \bla)}}
 \end{align}
 We remark that $y_ \bla = e_{\SSTT_{(m, \bla)}}$ for $ \bla \in 
  \mathscr{P}_{\aatchpair} (n)$.   
Given $\aatchpair \in \mathbb N^\ell$    we define  
   \begin{equation} \label{idempotent} {\sf y}_{\underline{ h }  }=
 \sum_{\begin{subarray}c
     1\leq m \leq \ell 
      \end{subarray}} y _{(\emptyset,\dots,\emptyset , (1^{h_m+1}) ,\emptyset, \dots,\emptyset)}\boxtimes 1_{\mathcal{H}_{n-h_m-1}(\bs )}.
\end{equation}  
Given $\SSTS, \SSTT \in \Std( \bla)$ and $\w$ any fixed  reduced word for $w^\SSTS_\SSTT$ we 
let $\psi^\SSTS_\SSTT:=e_\SSTS\psi_\w e_\SSTT$.  
 We have already seen that if $\bla$ is a calibrated $\ell$-partition, then 
$ h(\bla)$ is ${\bs }$-admissible.   

 \begin{defn}
Given ${\bs } \in \ZZ^\ell$, we let    $\underline{h}= (h_1,\dots,h_{\ell })  \in \mathbb N^\ell$
      be $\bs$-admissible.  
 We define $ \algebra:=  \mathcal{H}_n (\bs ) /\mathcal{H}_n (\bs ) {\sf y}_{h}\mathcal{H}_n (\bs )$. 
 \end{defn}

 \begin{thm} [\cite{cell4us}] \label{cellularstructure}
  For $\Bbbk$ an integral domain, the algebra
  $\algebra$ is free as a $\Bbbk$-module and has graded cellular basis 
$$
\{\psi_{\SSTS\SSTT} :=\psi^\SSTS_{\SSTP_ \bla}  
 \psi_\SSTT^{\SSTP_ \bla}\mid 
\SSTS, \SSTT \in \Path_{\aatchpair  }( \bla), \bla\in \mathscr{P}_{\underline{ h }}(n)\}
$$
with respect to the order $\rhd$ and the anti-involution $\ast$. 
That is, we have that 
  \begin{enumerate}[leftmargin=*]
    \item[$(1)$] Each    $\psi _{\SSTS\SSTT}$ is homogeneous
	of degree 
${\rm deg}
        (\psi _{\SSTS\SSTT})={\rm deg}(\SSTS)+{\rm deg}(\SSTT),$ for
        $ \bla \in\mathscr{P}_{\underline{ h }}(n)$ and 
      $\SSTS,\SSTT\in \Path_{\underline{h}}( \bla )$.
    \item[$(2)$] The set $\{\psi _{\SSTS\SSTT}\mid\SSTS,\SSTT\in \Path_{\underline{h}}( \bla ), \,
       \bla \in\mathscr{P}_{\underline{ h }}(n) \}$ is a  
      $\Bbbk$-basis of $\algebra$.
    \item[$(3)$]  If $\SSTS,\SSTT\in \Path_{\underline{h}}( \bla )$, for some
      $ \bla \in\mathscr{P}_{\underline{ h }}(n)$, and $a\in \algebra$ then 
    there exist scalars $r_{\SSTS\SSTU}(a)$, which do not depend on
    $\SSTT$, such that 
      \[a\psi _{\SSTS\SSTT}  =\sum_{\SSTU\in
      \Path_{\underline{h}}( \bla )}r_{\SSTS\SSTU}(a)\psi _{\SSTU\SSTT}\pmod 
      {\mathscr{H}     ^{\vartriangleright   \bla }},\]
      where $\mathscr{H}^{\vartriangleright   \bla } $ is the $ \Bbbk$-submodule of $\algebra$ spanned by
$\{\psi _{\SSTQ\SSTR}\mid\bmu \vartriangleright   \bla \text{ and }\SSTQ,\SSTR\in \Path_{\underline{h}}(\bmu )\}.$ 
    \item[$(4)$]  The $\Bbbk$-linear map $*:\algebra\to \algebra$ determined by
      $(\psi _{\SSTS\SSTT})^*=\psi _{\SSTT\SSTS}$, for all $ \bla \in\mathscr{P}_{\underline{ h }}(n)$ and
      all $\SSTS,\SSTT\in\Path_{\underline{h}}( \bla )$, is an anti-isomorphism of $\algebra$.
   \end{enumerate}   \end{thm}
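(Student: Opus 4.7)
The plan is to deduce this cellularity theorem from the well-known graded cellular basis theorem for the full cyclotomic KLR algebra $\mathcal{H}_n(\bs)$ due to Hu--Mathas, by passing to the quotient $\algebra = \mathcal{H}_n(\bs)/\mathcal{H}_n(\bs){\sf y}_{\underline{h}}\mathcal{H}_n(\bs)$ and identifying precisely which cellular basis elements survive. The Hu--Mathas basis provides a graded cellular basis $\{\psi_{\SSTS\SSTT} : \bla \in \mathscr{P}^\ell(n),\ \SSTS,\SSTT \in \Std(\bla)\}$ with respect to the dominance order $\rhd$, so the problem reduces to (i) identifying the two-sided ideal $\mathcal{I} := \mathcal{H}_n(\bs){\sf y}_{\underline{h}}\mathcal{H}_n(\bs)$ with the span of cells indexed by multipartitions not in $\mathscr{P}_{\underline{h}}(n)$, and (ii) verifying that the images of the surviving basis elements satisfy the four cellularity axioms. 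Note that for $\bla \in \mathscr{P}_{\underline{h}}(n)$ one has $\Path_{\underline{h}}(\bla) = \Std(\bla)$, since dominance of each prefix shape of a standard tableau is automatic under the ambient height bound $\underline{h}$.

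For step (i), the inclusion $\mathcal{I} \subseteq \mathrm{span}\{\psi_{\SSTS\SSTT} : h(\bla) \neq \underline{h}\}$ comes from the observation that each summand of ${\sf y}_{\underline{h}}$ is the cellular idempotent $y_{\bmu}$ where $\bmu = (\emptyset,\dots,(1^{h_m+1}),\dots,\emptyset) \boxtimes 1$ has a component of height $h_m+1$, exceeding the prescribed $h_m$. By the triangularity of the Hu--Mathas basis, such a $y_{\bmu}$ lies in the filtration piece spanned by cells labelled by multipartitions dominating $\bmu$, all of which fail the height condition. The reverse inclusion is established by induction on the dominance order: for $\bla$ with $h(\la^m)>h_m$ for some $m$, one constructs each $\psi_{\SSTS\SSTT}$ as a linear combination (modulo strictly more dominant terms already shown to lie in $\mathcal{I}$) of products of KLR generators with a suitable $y_{\bmu}$, leveraging the Garnir-type column-straightening relations of the cyclotomic KLR algebra.

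With step (i) settled, the four axioms for $\algebra$ follow essentially formally. Axiom (1) on degrees is inherited from the Hu--Mathas degree formula together with the grading-preserving bijection between standard tableaux and dominant alcove-paths noted in the preceding section. Axiom (2) is immediate from (i) and the Hu--Mathas basis theorem. Axiom (3) (the cellularity relation) holds because working modulo $\mathscr{H}^{\vartriangleright\bla}$ in the quotient corresponds, back in $\mathcal{H}_n(\bs)$, to working modulo the Hu--Mathas span of cells dominating $\bla$ together with $\mathcal{I}$, so the Hu--Mathas cellularity descends verbatim. Axiom (4) follows since $\ast$ fixes all generators of $\mathcal{H}_n(\bs)$ and ${\sf y}_{\underline{h}}^\ast = {\sf y}_{\underline{h}}$, making $\mathcal{I}$ a $\ast$-stable ideal; hence $\ast$ descends to $\algebra$ and acts as claimed on the basis.

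The principal technical obstacle is the \emph{reverse inclusion} in step (i): proving that every cell $\psi_{\SSTS\SSTT}$ with $h(\bla) \neq \underline{h}$ can be written as an element of $\mathcal{I}$. This requires a careful induction combining the recursive structure of the Hu--Mathas basis with explicit column-straightening arguments in the cyclotomic KLR algebra, and is the principal technical content of \cite{cell4us}. Once this is in place, the remainder of the argument is a routine verification that a graded cellular structure restricts to a graded cellular structure on any quotient by a $\ast$-stable cellular two-sided ideal, with the labelling poset cut down to the complement of the ideal's support.
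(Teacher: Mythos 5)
Your overall strategy — start from a graded cellular basis of the full cyclotomic KLR algebra and descend to the quotient by the $\ast$-stable cellular ideal generated by ${\sf y}_{\underline{h}}$ — is the right high-level plan, and your step (i) ideal-identification and the observation that $\Path_{\underline{h}}(\bla)=\Std(\bla)$ for $h(\bla)=\underline{h}$ are both correct. However, there is a genuine gap: the theorem does not assert the Hu--Mathas basis of the quotient. The elements $\psi_{\SSTS\SSTT}=\psi^\SSTS_{\SSTP_\bla}\psi_\SSTT^{\SSTP_\bla}$ are pivoted on the \emph{reverse column-reading tableau} $\SSTP_\bla=\SSTT_{(m,\bla)}$ (Definition~\ref{revertable}), whereas the Hu--Mathas basis is pivoted on the initial tableau $t^\bla$ and carries the extra factor $y_\bla$ built from addable-node multiplicities. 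Passing the Hu--Mathas basis to the quotient would yield \emph{a} graded cellular basis of $\algebra$, but not the basis in the statement; it would prove a different theorem.

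The change of pivot is a substantive point, not a formal one. It relies on the fact that $\SSTT_{(m,\bla)}$ has degree zero when $\aatchpair$ is $\bs$-admissible (the remark preceding Definition~\ref{def:crystalops}), so $y_\bla=e_{\SSTT_{(m,\bla)}}$ and the usual $y$-correction disappears — a combinatorial fact specific to this setup. More importantly, the existence of graded cellular bases of $\mathcal{H}_n(\bs)$ with a nonstandard pivot such as the column-reading tableau is not part of Hu--Mathas; it is precisely the content of the first author's ``many cellular bases'' paper \cite{manycell}, on which \cite{cell4us} builds. The choice of this particular pivot is what makes the resulting cellular structure compatible with the alcove geometry and the Bott--Samelson/Soergel diagrammatics used in Section~\ref{sec:catBGG}, and is therefore unavoidable for the purposes of the BGG resolutions later in the paper. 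To repair the argument, you should replace the Hu--Mathas input with the path-indexed cellular basis of \cite{manycell,cell4us}, and then your quotienting argument (steps (i)--(ii)) goes through essentially as you wrote it. As written, citing Hu--Mathas alone does not establish the theorem as stated.
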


\begin{defn}  Given   $ \bla \in\mathscr{P}_{\aatchpair}(n)$, the   {\em  Specht module} $S_\bs(\bla)$ is the graded left ${\algebra}$-module
  with basis
    $\{\psi _{\SSTS } \mid \SSTS\in \Path_{\underline{h}}( \bla  ) \}$.  
    The action of ${\algebra}$ on $S_\bs(\bla)$ is given by
    \[a \psi _{ \SSTS  }  =\textstyle \sum_{ \SSTU \in \Path_{\underline{h}}( \bla )}r_{\SSTS\SSTU}(a) \psi _{\SSTU},\]
    where the scalars $r_{\SSTS\SSTU}(a)$ are the scalars appearing in (3) of Theorem  \ref{cellularstructure}.  
 \end{defn}
 
 \begin{thm}
 For $\Bbbk$ a field, the algebra 
  $\algebra$  is quasi-hereditary with simple modules 
$$
D_\bs(\bla)= S_\bs(\bla) / {\rm rad}(S_\bs(\bla))
$$  
 for $ \bla \in \mathscr{P}_{\underline{h}}(n).  $
\end{thm}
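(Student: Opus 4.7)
The plan is to deduce the theorem from the graded cellular structure of \cref{cellularstructure} via the standard Graham--Lehrer framework. That theorem endows $\algebra$ with a graded cellular basis $\{\psi_{\SSTS\SSTT}\}$ indexed by pairs $\SSTS,\SSTT \in \Path_{\underline{h}}(\bla)$ with cell datum poset $(\mathscr{P}_{\underline{h}}(n),\trianglerighteq)$ and cellular involution $*$. It is then formal from the Graham--Lehrer machinery that each Specht module $S_\bs(\bla)$ carries a canonical symmetric bilinear form $\langle\cdot,\cdot\rangle_\bla$ characterized by
\[
\psi_{\SSTP_\bla \SSTS}\,\psi_{\SSTT\SSTP_\bla} \equiv \langle \psi_\SSTS,\psi_\SSTT\rangle_\bla\;\psi_{\SSTP_\bla\SSTP_\bla} \pmod{\mathscr{H}^{\vartriangleright\bla}},
\]
that ${\rm rad}(S_\bs(\bla)) = {\rm rad}\langle\cdot,\cdot\rangle_\bla$, that $D_\bs(\bla) := S_\bs(\bla)/{\rm rad}(S_\bs(\bla))$ is either zero or absolutely irreducible, and that the nonzero $D_\bs(\bla)$ exhaust the graded simples of $\algebra$ up to degree shift. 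The Graham--Lehrer criterion then identifies quasi-heredity of $\algebra$ with the condition $D_\bs(\bla) \neq 0$ for every $\bla \in \mathscr{P}_{\underline{h}}(n)$.

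The remaining task is therefore to exhibit, for each such $\bla$, a basis vector of $S_\bs(\bla)$ that survives in the head. The natural candidate is $\psi_{\SSTT_{(m,\bla)}}$, where $\SSTT_{(m,\bla)}$ is the reverse column reading tableau of \cref{revertable} associated to some step change $1 \leq m \leq \ell$. The remark following that definition gives $\deg_\bs(\SSTT_{(m,\bla)}) = 0$, and \eqref{tableau not} identifies the diagonal basis element $\psi_{\SSTT_{(m,\bla)}\SSTT_{(m,\bla)}}$ with the raw KLR idempotent $e_{\SSTT_{(m,\bla)}} = y_\bla$ in $\mathcal{H}_n(\bs)$. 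Provided no higher-dominance correction appears in the cellular reduction modulo $\mathscr{H}^{\vartriangleright\bla}$, this forces $\langle \psi_{\SSTT_{(m,\bla)}},\psi_{\SSTT_{(m,\bla)}}\rangle_\bla = 1$, so $\psi_{\SSTT_{(m,\bla)}} \notin {\rm rad}\langle\cdot,\cdot\rangle_\bla$ and thus $D_\bs(\bla) \neq 0$.

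The main obstacle is precisely the ``no higher correction'' claim: one must verify that the product $\psi^{\SSTT_{(m,\bla)}}_{\SSTP_\bla}\psi^{\SSTP_\bla}_{\SSTT_{(m,\bla)}}$, expanded via the KLR relations \eqref{rel1}--\eqref{rel5} and cut down by the cyclotomic/admissibility relation \eqref{rel1.12}, really reduces to $e_{\SSTT_{(m,\bla)}}$ modulo $\mathscr{H}^{\vartriangleright\bla}$ with unit leading coefficient and no spurious degree-zero ``bubble'' contributions. This is the technical heart of \cite{cell4us2}: the calculation is transported to the Bott--Samelson truncation of the Elias--Williamson diagrammatic Hecke category, where at degree zero the relevant Soergel-calculus diagrams are manifestly identity morphisms on the chosen idempotent. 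We would simply invoke that computation. Once quasi-heredity is secured, the presentation $D_\bs(\bla) = S_\bs(\bla)/{\rm rad}(S_\bs(\bla))$ for every $\bla \in \mathscr{P}_{\underline{h}}(n)$ follows at once from the Graham--Lehrer framework.
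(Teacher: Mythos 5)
Your overall strategy is exactly the right one: apply the Graham--Lehrer machinery to the graded cellular structure of \cref{cellularstructure}, reduce quasi-heredity to the non-vanishing of the cellular bilinear form $\langle\cdot,\cdot\rangle_\bla$ for every $\bla\in\mathscr{P}_{\underline{h}}(n)$, and exhibit an explicit witness. This is the argument the paper is implicitly invoking.

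One correction, though: the verification you flag as ``the technical heart'' --- ruling out higher-dominance or degree-zero bubble corrections --- is not actually needed, and invoking the Bott--Samelson/Soergel calculus of \cite{cell4us2} here is overkill. By definition $\psi^{\SSTS}_{\SSTS} = e_{\SSTS}$ (the word $w^{\SSTS}_{\SSTS}$ is the identity), so for the distinguished path $\SSTP_\bla$ one has $\psi_{\SSTP_\bla\SSTP_\bla} = e_{\SSTP_\bla}$ \emph{on the nose} as an element of $\algebra$, and this is a KLR idempotent, so $\psi_{\SSTP_\bla\SSTP_\bla}^2 = \psi_{\SSTP_\bla\SSTP_\bla}$ exactly. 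Theorem \ref{cellularstructure}(2) guarantees that $\psi_{\SSTP_\bla\SSTP_\bla}$ is a basis vector, hence nonzero and not in $\mathscr{H}^{\vartriangleright\bla}$; comparing coefficients of $\psi_{\SSTP_\bla\SSTP_\bla}$ in the cellular multiplication rule (3) then forces $\langle\psi_{\SSTP_\bla},\psi_{\SSTP_\bla}\rangle_\bla = 1$ with no residual terms to analyse. (The identification $y_\bla = e_{\SSTT_{(m,\bla)}}$ from \eqref{tableau not} is consistent with this, but the argument only needs $\SSTP_\bla$ to be a path/tableau; the reverse column reading tableau is a natural choice but any choice of $\SSTP_\bla$ gives an idempotent.) In short: your proof is right, but the last paragraph can be replaced by the two-line idempotent observation, with no appeal to diagrammatics.
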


\begin{rmk}
The  modules $S_\bs(\bla)$ are obtained (via specialisation) from the usual semisimple modules over $\mathbb{C}[q,Q_1,\dots, Q_\ell]$ with the same multipartition labels.  
 We remark that the integral form  (in the modular system by which we specialise)  is 
 constructed from the cylindric charge in \cite{manycell,cell4us} and can be seen as coming the quiver Cherednik algebra associated to $\bs \in \ZZ^\ell$.      
\end{rmk}

\begin{prop}
Let $\bla\in \mathscr{P}_\ell(n)$ with $h( \bla)= \aatchpair \in \mathbb{N}^\ell$.  
Then    $\bla\in \Cali^\bs(\ell)$
 if and only if    $\bla  \in \mathcal{F}_{\aatchpair}(n)$ the fundamental alcove of $\mathbb E_\aatchpair$.  

\end{prop}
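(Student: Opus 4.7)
The plan is to translate the defining inequalities of $\mathcal{F}_\aatchpair$ into explicit inequalities on the coordinates of $\bla+\rho$, and to match these term-by-term to the conditions defining $\Cali^\bs(\ell)$.

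First I would unpack the fundamental alcove: it is cut out by the hyperplanes $\mathbb{E}(\alpha,0)$ for $\alpha\in\Delta$ together with the affine hyperplane $\mathbb{E}(\alpha_0,-e)$. Using the shifted action, $\bla\in\mathcal{F}_\aatchpair$ is equivalent to $\langle \bla+\rho,\alpha\rangle>0$ for every $\alpha\in\Delta$ together with $\langle\bla+\rho,\alpha_0\rangle>-e$. Under the identification $\varepsilon_{i,m}=\varepsilon_{(h_\ell+\dots+h_{m+1})+i}$, the coordinate of $\bla+\rho$ at position $(i,m)$ equals $\lambda^m_i+s_m-i+1$; this is $\co^\bs$ of the last box in row $i$ of $\la^m$ plus one (or a virtual value if the row is empty). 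Going through simple roots splits into three cases: within a component one obtains $\lambda^m_i\geq\lambda^m_{i+1}$; between components $m+1$ and $m$ (recall that component $\ell$ occupies the first coordinates and component $1$ the last) one obtains $\lambda^{m+1}_{h_{m+1}}+s_{m+1}-h_{m+1}+1>\lambda^m_1+s_m$; and the affine wall gives $\lambda^\ell_1+s_\ell-1<\lambda^1_{h_1}+s_1-h_1+e$.

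Next I would match these to the conditions defining $\Cali^\bs(\ell)$. The within-component inequality is automatic since each $\la^m$ is a partition. The between-components inequality rewrites as $\max\Bo^{s_m}(\la^m)<\min\Bo^{s_{m+1}}(\la^{m+1})$ for $1\leq m<\ell$, which, combined with the fact that each $\Bo^{s_m}(\la^m)$ is automatically multiplicity-free and listed in increasing order, is exactly the condition that the reading word of $\Bo^\bs(\bla)$ is increasing, i.e. Definition~\ref{def:cali}(1)(b). The affine inequality rewrites as $\max\Bo^\bs(\bla)-\min\Bo^\bs(\bla)\leq e-1$, i.e.\ $\Bo^\bs(\bla)\subset[z,z+e-1]$ for some $z$, which is Definition~\ref{def:cali}(1)(a).

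Finally I would handle the cylindricity condition (Definition~\ref{def:cali}(2)). Observe that $\mathcal{F}_\aatchpair$ is well-defined as an open alcove precisely when the origin lies on no wall, which, as computed in Subsection~\ref{fundamental-alcove}, is equivalent to $\aatchpair$ being $\bs$-admissible. Conversely, by Corollary~\ref{admiss}, $\bla\in\Cali^\bs(\ell)$ forces $h(\bla)=\aatchpair$ to be $\bs$-admissible. Thus in both directions we may assume $\aatchpair$ is $\bs$-admissible, and Lemma~\ref{lemma:cylindrical} then shows that, under Definition~\ref{def:cali}(1), cylindricity of $\bla$ is automatic. Combining the three steps gives the claimed equivalence.

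The main obstacle is purely bookkeeping: keeping careful track of the reversed ordering of components in $\mathbb{E}_\aatchpair$ (so that ``from $m+1$ to $m$'' in the alcove direction corresponds to ``from $m$ to $m+1$'' in the reading word order) and verifying that the \emph{strict} inequalities defining the open alcove line up with the strict parts of the calibrated conditions (the reading word being strictly increasing, and the border sitting in a half-open interval of length $e$).
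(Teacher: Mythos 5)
Your proposal is correct and takes essentially the same approach as the paper: translating the defining inequalities of $\mathcal{F}_\aatchpair$ (simple-root walls and the affine wall, under the $\rho$-shift) into the border-set conditions of Definition~\ref{def:cali}, with $\bs$-admissibility handled via Corollary~\ref{admiss} and cylindricity via Lemma~\ref{lemma:cylindrical}. The paper's proof is far more compressed (stating the equivalences without carrying out the coordinate computations), so your version just fills in the bookkeeping.
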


\begin{proof}
Any $\bla\in \Cali^\bs(\ell)$ must satisfy  that $h(\bla)=\aatchpair$ is $\bs$-admissible (by Corollary \ref{admiss}) and so it is enough to restrict our attention to  
 $\bla\in {\mathbb E}_\aatchpair$ for some 
 $\bs$-admissible $\aatchpair\in \mathbb N^\ell$.  
 For  $\bla\in {\mathbb E}_\aatchpair$ 
the condition that the border strip is increasing is equivalent to the condition that $\bla\in   {\mathbb E}^{<}(\varepsilon_i-\varepsilon_{i+1},e)$ for $1\leq i <\ell$.  
Similarly, the condition that $\bla$ has   period at most $e$ is equivalent to the condition that $\bla \in   {\mathbb E}^{\less}(\varepsilon_1-\varepsilon_h,-e)$.  The result follows.  
 \end{proof}

   \subsection{{Diagrammatic Bott--Samelson} algebras}   \label{soergel}
These algebras were first defined   in \cite{MR3555156}.

\begin{defn}
Given  $\al \in \Pi$  we define the corresponding Soergel idempotent, 
${\sf 1}_{ {\al}}$,   to be a frame of width 1 unit, containing a single vertical strand   coloured with $\al  \in \Pi  $.   
We define ${\sf 1}_\emptyset$ to be an empty frame of width 1 unit.   
For $\w =s_{\alpha^{(1)}}\mydots s_{\alpha^{(p)}}$  with ${\alpha^{(i)}}\in S\cup\{1\}$ for $1\leq i \leq p$, we set 
  $${\sf 1}_\w= {\sf 1}_{\alpha^{(1)}}\otimes 
  {\sf 1}_{\alpha^{(2)}}\otimes \dots 
\otimes    {\sf 1}_{\alpha^{(p)}} $$
to be the diagram obtained by horizontal concatenation.  \end{defn}

\begin{defn} 
Let $\csigma,\ctau,\crho \in S$ with $m(\csigma,\ctau)=3$  and $m(\ctau,\crho)=2$.  
Given two words $\w $ and $\w'$ in the alphabet $S\cup\{1\}$ we define 
 a $(\w,\w')$-Soergel diagram  $D$ 
is defined to be any diagram obtained by horizontal and vertical concatenation (denoted $\otimes $ and $\circ$ respectively) of the 
following diagrams 
\begin{align}\label{pictures}
 \begin{minipage}{1cm}\begin{tikzpicture}[scale=1.2]
\draw[densely dotted,rounded corners](-0.5cm,-0.5cm)  rectangle (0.5cm,0.5cm);
\clip(0,0) circle (0.5cm);
 \end{tikzpicture}\end{minipage}
\qquad \; 
 \begin{minipage}{1cm}\begin{tikzpicture}[scale=1.2]
\draw[densely dotted,rounded corners](-0.5cm,-0.5cm)  rectangle (0.5cm,0.5cm);
\clip(0,0) circle (0.5cm);
\draw[line width=0.08cm, magenta](0,-1)--(0,+1);
\end{tikzpicture}\end{minipage}
\qquad \; 
\begin{minipage}{1cm}\begin{tikzpicture}[scale=1.2]
\draw[densely dotted,rounded corners](-0.5cm,-0.5cm)  rectangle (0.5cm,0.5cm);
 \clip(0,0) circle (0.5cm);
\draw[line width=0.08cm, magenta](0,-1)--(0,+0);
\fill[magenta] (0,0)  circle (5pt);
\end{tikzpicture}\end{minipage}
 \qquad \; 
\begin{minipage}{1cm}\begin{tikzpicture}[scale=1.2]
\draw[densely dotted,rounded corners](-0.5cm,-0.5cm)
  rectangle (0.5cm,0.5cm);
 \clip (-0.5cm,-0.5cm)
  rectangle (0.5cm,0.5cm);
 \draw[line width=0.08cm, magenta](0,0)to [out=-30, in =90] (10pt,-15pt);
 \draw[line width=0.08cm, magenta](0,0)to [out=-150, in =90] (-10pt,-15pt);
 \draw[line width=0.08cm, magenta](0,0)--++(90:1);
 \end{tikzpicture}\end{minipage}
\qquad \; 
\begin{minipage}{1.2cm}\begin{tikzpicture} [xscale=1.4,yscale=1.2]
\draw[densely dotted,rounded corners](-0.5cm,-0.5cm)
  rectangle (0.5cm,0.5cm);
  \clip(-0.5cm,-0.5cm)
  rectangle (0.5cm,0.5cm);
 \draw[line width=0.08cm, magenta](0,0)to [out=30, in =-90] (10pt,0.5cm);
 \draw[line width=0.08cm, magenta](0,0)to [out=150, in =-90] (-10pt,0.5cm);
 \draw[line width=0.08cm, magenta](0,0)--++(-90:1);
  \draw[line width=0.08cm, cyan](0,0)to [out=-30, in =90] (10pt,-0.5cm);
 \draw[line width=0.08cm, cyan](0,0)to [out=-150, in =90] (-10pt,-0.5cm);
 \draw[line width=0.08cm, cyan](0,0)--++(90:1);
 \end{tikzpicture}\end{minipage}\qquad \;  
\begin{minipage}{1.1cm}\begin{tikzpicture}[xscale=1.3,yscale=1.2]
\draw[densely dotted,rounded corners](-0.5cm,-0.5cm)  rectangle (0.5cm,0.5cm);
\clip(-0.5cm,-0.5cm)  rectangle (0.5cm,0.5cm);
 \draw[line width=0.08cm, darkgreen] (0,0) to [out=45, in =-90] (10pt,0.5cm);
 \draw[line width=0.08cm, darkgreen] (0,0) to [out=-135, in =90] (-10pt,-0.5cm);
    \draw[line width=0.08cm, cyan] (0,0) to [out=-45, in =90] (10pt,-0.5cm);
 \draw[line width=0.08cm, cyan] (0,0) to [out=135, in =-90] (-10pt,0.5cm);
    \end{tikzpicture}\end{minipage}
\end{align}
which we often denote these diagrams by
$$
{\sf 1}_\emptyset \quad 
{\sf 1}_\csigma \quad 
{\sf spot}^{{\color{magenta}\emptyset}}_\csigma \quad 
{\sf fork}_{\csigma\csigma}^{\csigma } \quad 
{\sf braid}^{\csigma\ctau\csigma}_{\ctau\csigma\ctau}\quad 
{\sf braid}^{{\ctau\color{darkgreen}\gamma}}_{ \color{darkgreen}\gamma\ctau}
$$ respectively along with 
their   flips through the horizontal axis and their isotypic deformations  such that the     north     and     south  edges  of the graph are given by the idempotents 
${\sf 1}_{\underline{w}}$ and  
${\sf 1}_{\underline{w}'}$ respectively.  
Here the vertical concatenation of a $(\w,\w')$-Soergel diagram on top of a 
$(\underline{v},\underline{v}')$-Soergel diagram is zero if $\underline{v}\neq \w'$.  
We define the degree of these generators (and their   flips) to be $0, 1, -1, 0$, and $0$ respectively.  We let $\ast$ denote the  map  which   flips a diagram   through its  horizontal axis. 
     \end{defn}

 Suppose that  $\w$ and $\w'$ are both words with the same underlying permutation and that they can be obtained from one another by a sequence of applications of the braid relations of $\widehat{\mathfrak{S}}_h$ (i.e. without applying the quadratic relation); 
 we let ${\sf braid}^{\w}_{\w'}$ (or  ${\sf braid}^{\SSTP_{\w}}_{\SSTP_{\w'}}$ if we wish to emphasise the corresponding paths) denote the product of the corresponding sequence  of the braid generator Soergel diagrams (the final two pictures in \ref{pictures}).

   \begin{defn}   \newcommand{\vvv}{{\underline{w} }} 
\renewcommand{\w}{{\underline{x}}}
\renewcommand{\x}{{\underline{y}}}
\renewcommand{\y}{{\underline{z}}}
Let  $\Bbbk$ be an integral domain.  We define the 
  {diagrammatic Bott--Samelson} endomorphism algebra, 
  $ \mathscr{S}_\aatchpair  ({n,{\bs }}),  $ 
  to be the   locally-unital  associative $\Bbbk$-algebra spanned by all $(\w,\w')$-Soergel diagrams 
   for $\w,\w' \in \Lambda_{\aatchpair}(n,{\bs })$   with multiplication given by vertical concatenation of diagrams modulo the following local relations and their duals.  We have the idempotent relations,
\begin{align*}
{\sf 1}_{\csigma} {\sf 1}_{\ctau}      =\delta_{\csigma,\ctau}{\sf 1}_{\csigma}    
\qquad   {\sf 1}_{\emptyset} {\sf 1}_{\csigma}       =0   
\qquad    {\sf 1}_{\emptyset}^2      ={\sf 1}_{\emptyset}   
\qquad    {\sf 1}_{\emptyset} {\sf spot}_{\csigma}^\emptyset {\sf 1}_{\csigma}      ={\sf spot}_{\csigma}^{\emptyset} 
\end{align*}
 \begin{align*}
  {\sf 1}_{\csigma} {\sf fork}_{\csigma\csigma}^{\csigma} {\sf 1}_{\csigma\csigma}      ={\sf fork}_{\csigma\csigma}^{\csigma}      
  \qquad
{\sf 1}_{\ctau\csigma\ctau}  {\sf braid}_{\csigma\ctau\csigma}^{\ctau\csigma\ctau} {\sf 1}_{\csigma\ctau\csigma}       = {\sf braid}_{\csigma\ctau\csigma}^{\ctau\csigma\ctau} 
\qquad
{\sf 1}_{\ctau \crho}  {\sf braid}_{\crho \ctau}^{\ctau\crho} {\sf 1}_{\ctau\crho}       =  {\sf braid}_{\crho \ctau}^{\ctau\crho}
\end{align*}
For each  $\csigma \in S $  we have  monochrome relations 
 $$
({\sf spot}_\csigma^\emptyset \otimes {\sf 1}_\csigma){\sf fork}^{\csigma\csigma}_{\csigma}
=
{\sf 1}_{\csigma}
 \qquad\quad  
  ({\sf 1}_\csigma\otimes {\sf fork}_{\csigma\csigma}^{ \csigma} )
({\sf fork}^{\csigma\csigma}_{\csigma}\otimes {\sf 1}_{\csigma})
=
{\sf fork}^{\csigma\csigma}_{\csigma}
{\sf fork}^{\csigma}_{ \csigma\csigma}
 $$
 $${\sf fork}_{\csigma\csigma}^{\csigma}
{\sf fork}^{\csigma\csigma}_{\csigma}=0 \qquad \quad ({\sf spot}_\csigma^\emptyset {\sf spot}^\csigma_\emptyset)\otimes {\sf 1}_{\csigma}
+
 {\sf 1}_{\csigma} \otimes ({\sf spot}_\csigma^\emptyset {\sf spot}^\csigma_\emptyset)
 =
2 ({\sf spot}^\csigma_\emptyset {\sf spot}_\csigma^\emptyset)
$$ For $m(\csigma,\ctau)=3$ and  $m( \ctau,\crho)=2$  we have the  two-colour barbell relations 
\begin{align*}
({\sf spot}_\ctau^\emptyset {\sf spot}^\ctau_\emptyset)\otimes {\sf 1}_{\csigma}
-
 {\sf 1}_{\csigma} \otimes ({\sf spot}_\ctau^\emptyset {\sf spot}^\ctau_\emptyset)
& =   
(1+\delta_{h,2})((  {\sf spot}^\csigma_{\emptyset} {\sf spot}_\csigma^\emptyset ) -
 {\sf 1}_{\csigma} \otimes ({\sf spot}_\csigma^\emptyset {\sf spot}^\csigma_\emptyset))
\\
({\sf spot}_\ctau^\emptyset {\sf spot}^\ctau_\emptyset)\otimes {\sf 1}_{\crho}
-
 {\sf 1}_{\crho} \otimes ({\sf spot}_\ctau^\emptyset {\sf spot}^\ctau_\emptyset)
& =   
0 \end{align*} 
and   the fork-braid relations 
$$
 {\sf braid}_{\csigma \ctau \csigma}^{\ctau\csigma \ctau} 
({\sf fork}^{\csigma}_{\csigma\csigma}\otimes{\sf 1}_{ \ctau\csigma}  	)
 ( {\sf 1}_\csigma \otimes{\sf braid}^{  \csigma \ctau  \csigma   }_
 {  \ctau \csigma\ctau} )
=
   ({\sf 1}_{ \ctau\csigma}   \otimes {\sf fork}_{\ctau\ctau}^{ \ctau})
( 
 {\sf braid}_{ \csigma \ctau  \csigma}^{  \ctau\csigma \ctau} 
\otimes {\sf 1}_\ctau) 
$$
 $$
 {\sf braid}_{\crho\ctau }^{\ctau\crho } 
({\sf fork}^{\crho}_{\crho\crho}\otimes{\sf 1}_{\ctau }  	)
 ( {\sf 1}_\crho \otimes{\sf braid}^{ \crho\ctau }_
 {  \ctau\crho} )
=
   ({\sf 1}_{ \ctau }    \otimes {\sf fork}_{\crho\crho}^{\crho})
( 
  {\sf braid}_{  \crho \ctau } ^{    \ctau \crho} 
\otimes {\sf 1}_\crho) 
$$ 
  the cyclicity  relation,  
\begin{align*}
({\sf 1}_{ \ctau\csigma\ctau} \otimes ({\sf spot}_{\csigma}^\emptyset {\sf fork}^{\csigma}_{\csigma\csigma}))({\sf 1}_\ctau \otimes
 {\sf braid}^{\csigma\ctau\csigma}_{ \ctau\csigma\ctau} 
\otimes {\sf 1}_\csigma) 
( (  {\sf fork}_{\ctau}^{\ctau\ctau} {\sf spot}^{\ctau}_\emptyset)\otimes 
{\sf 1}_{ \csigma \ctau\csigma} )
&=
  {\sf braid}^{\ctau \csigma  \ctau}_{\csigma \ctau \csigma}  
 \end{align*}
and the Jones--Wenzl relations 
\begin{align*} 
({\sf spot}_\ctau^{{{ \emptyset}} } \otimes {\sf 1} _{ \csigma\ctau} ) {\sf braid}_{\ctau\csigma\ctau}^{  \csigma\ctau\csigma}
&=
( {\sf fork}^\csigma_{\csigma\csigma}\otimes {\sf spot}^\ctau_{{{ \emptyset}}} )
({\sf 1}_\csigma \otimes {\sf spot}^
{{ \emptyset}}_\ctau \otimes {\sf 1}_\csigma)
\\
({\sf 1} _{\ctau } \otimes {\sf spot}_\crho^{ 
\emptyset}) {\sf braid}^{\ctau\crho}_{  \crho\ctau} 
&=
 ({\sf spot}^\emptyset_\crho \otimes {\sf 1}_{ \ctau   } )
 \intertext{ 
For   $m_{\csigma \crho}=
m_{  \crho\ctau}=m_{ \ctau \color{orange}\delta}=2$ and 
$m_{\csigma \ctau}=3$,  we have the braid-commutativity relations}
 ({\sf braid}_{\csigma\ctau\csigma}^{\ctau\csigma\ctau} \otimes {\sf 1}_
\crho)
{\sf braid}^{\csigma \ctau \csigma  \crho}_{\crho\csigma \ctau   \csigma} 
&=
{\sf braid}^{\ctau \csigma  \ctau \crho}_
{\crho\ctau \csigma  \ctau} 
({\sf 1}_
\crho\otimes {\sf braid}_{\csigma\ctau\csigma}^{\ctau\csigma\ctau}  )
\\
({\sf braid}_{\crho\ctau}^{\ctau\crho} \otimes {\sf 1}_
{\color{orange}\delta})
{\sf braid}^{       \crho \ctau {\color{orange}\delta}}_{{\color{orange}\delta}  \ctau   \crho} 
&=
{\sf braid}^{\ctau  \crho {\color{orange}\delta}}_
{{\color{orange}\delta}  \ctau \crho} 
({\sf 1}_
{\color{orange}\delta}
\otimes {\sf braid}^{ \ctau\crho}_{\crho\ctau}  ).
\end{align*}
    For 
     $m_{\al\bet}=3=m_{\al\gam}$ and $m_{\al\gam}=2$ we have   the Zamolodchikov relation
  \begin{align*}
  &{\sf braid}^{ \gam\al\gam \bet\al\gam}_{ \al\gam\al  \bet\al\gam    }    
   {\sf braid}^{ \al\gam  \al\bet\al \gam    }_{  \al\gam \bet\al\bet    \gam   }    
   {\sf braid}^{ \al\gam\bet \al \bet\gam  }_{ \al\bet\gam \al  \gam\bet   }  
   {\sf braid}^{   \al\bet	\gam\al\gam \bet   }_{ \al\bet \al\gam\al   \bet }   
 {\sf braid}^{  \al\bet	 \al  \gam\al \bet  } _{  \bet \al\bet  \gam\al \bet    }   
{\sf braid}^{\bet\al \bet\gam\al\bet     }_{ \bet\al \gam\bet \al\bet     } 
\\[5pt] = \;\;\;&  	{\sf braid}_{\gam\al\bet\gam \al\gam  }^{ \gam\al \gam\bet\al\gam  }  
  {\sf braid}^{\gam\al\bet \gam\al \gam }_{\gam\al\bet\al\gam\al} 
   {\sf braid}^{\gam\al\bet\al\gam\al}_{\gam\bet\al\bet\gam\al}
 {\sf braid}^{\gam\bet\al\bet\gam\al}_{\bet\gam\al\gam\bet\al}   
 {\sf braid} ^{\bet\gam\al\gam\bet\al} 
 _{\bet \al\gam\al\bet\al} 
  {\sf braid}^{\bet\al\gam\al\bet\al}_{\bet\al\gam\bet\al\bet} .
\end{align*}
For all diagrams ${\sf D}_1,{\sf D}_2,{\sf D}_3,{\sf D}_4$ and all enhanced words $\w,\x$,  we require the bifunctoriality relation
$$
 \big(({\sf D  }_1\circ	{\sf 1}_{ {\w}}   )\otimes  ({\sf D}_2 \circ {\sf 1}_{ {\x}}) \big)
\big(({\sf 1}_{\w} \circ {\sf D}_3)  \otimes ({\sf 1}_\x \circ{\sf D }_4)\big)
=
 ({\sf D}_1 \circ {\sf 1}_{{\w}} \circ {\sf D_3}) \otimes ({\sf D}_2 \circ {\sf 1}_{{\x}} \circ {\sf D}_4)
$$
and  the monoidal unit relation 
$$
{\sf 1}_{\emptyset} \otimes {\sf D}_1={\sf D}_1={\sf D}_1 \otimes {\sf 1}_{\emptyset}.
$$ Finally, we require the (non-local) cyclotomic relation
$$
{\sf spot}^\emptyset_\al {\sf spot}^\al_\emptyset \otimes {\sf 1}_\vvv =0 \qquad \text{for  all  $\vvv\in {\sf exp}(w)$, $w\in \widehat{\mathfrak{S}}_h,$  and all $s_\al \in  {\mathfrak{S}}_\aatchpair$.  }
$$

 \end{defn}

\subsubsection{Cellularity and quasi-hereditary structure}
  We can extend an alcove-tableau    $\SSTQ'\in\Std_{n,\bs}( \bla)$   to obtain a new alcove-tableau $\SSTQ$ in one of three possible ways 
$$
\SSTQ=\SSTQ'\otimes \SSTP_\al 	\qquad 	\SSTQ=\SSTQ'\otimes  \reflectpath   		\qquad \SSTQ=\SSTQ'\otimes \SSTP_\emptyset 
$$
for some  $\al \in \Pi$. 
  The first two cases each subdivide into a further two cases based on whether $\al$ is an upper or lower wall of the alcove containing $ \bla$.  

\begin{defn} \label{twins} 
Suppose that $ \bla $ belongs to an alcove which has a hyperplane labelled by $\al$ as an   upper alcove wall.  
Let 
   $\SSTQ'\in\Std _{n,\bs}( \bla)$. 
 If $\SSTQ= \SSTQ' \otimes \SSTP_\al$ then we set $\deg(\SSTQ)=\deg(\SSTQ')$  and  we define 
\begin{align*} 
 c_{\SSTQ}^\SSTP= 
 {\sf braid}_{\SSTP'\otimes \SSTP_\al}^{\SSTP}(
c_{\SSTQ'}^{\SSTP'} \;\otimes \; {\sf 1}_{  \al})  
\qquad
 \intertext{
If $\SSTQ= \SSTQ' \otimes  \reflectpath  $ then
we set  $\deg(\SSTQ)=\deg(\SSTQ')+1$ and we define 
} 
c_{\SSTQ}^\SSTP= 
 {\sf braid}_{\SSTP'\otimes \SSTP_\emp}^{\SSTP}(
 c_{\SSTQ'}^{\SSTP'} \;\otimes \; {\sf  spot}_{\al}^\emp) .
\end{align*}
Now suppose that $ \bla $ belongs to an alcove which has a hyperplane labelled by $\al$ as a lower alcove wall. 
Thus we can choose $ \SSTP_{\underline{v}} \otimes \SSTP_\al=\SSTP'\in\Std _{n,\bs}( \bla) $.  
For 
$\SSTQ= \SSTQ' \otimes \SSTP_\al$, 
we set  $\deg(\SSTQ)=\deg(\SSTQ') $   
 and   define 
  \begin{align*}
c_{\SSTQ}^\SSTP= &
{\sf braid}^{		
  \SSTP  }_{\SSTP_{\underline{v}  \emp\emp}}
\big( 
{\sf 1}_{ {\underline{v}}}\otimes 
({\sf  spot}_\al^\emp \circ{\sf  fork}_{\al\al}^{\al})\big) 
\big(
  c^{\SSTP'}
_{\SSTQ'} 
\otimes {\sf1}_{ \al}\big) 
\end{align*}
  and if $\SSTQ= \SSTQ' \otimes  \reflectpath  $ then 
  then
we set  $\deg(\SSTQ)=\deg(\SSTQ')-1$ and we define   
\begin{align*}
c_{\SSTQ}^\SSTP=& 
{\sf braid}^{		
  \SSTP  }_{  \SSTP_{\underline{v}\al \emp}}
\big( 
{\sf 1}_{ {\underline{v}}}\otimes {\sf  fork}_{\al\al}^{\al}\big) 
\big(
  c^{\SSTP'}
_{\SSTQ'} 
\otimes {\sf 1}_{ \al}\big)  .  
 \end{align*}
  \end{defn}

\begin{thm} 
[{\cite[Section 6.4]{MR3555156}}]  
   \label{LEW1} 
 For each $ \bla\in \mathscr{P}_{\aatchpair}{ ({n,{\bs }})}$, we fix an arbitrary 
reduced path   $
 \SSTP_{ \bla} \in \Std^+_{n,\bs}( \bla).
 $ 
 The algebra 
  $ \mathscr{S} _{\aatchpair}(n,\bs)$ is   quasi-hereditary  with  graded  integral cellular basis  
 $$
 \{ c^{\SSTS}_{ \SSTP_{ \bla}} 
  c^{ \SSTP_{ \bla}}_\SSTT
 \mid
  \SSTS,\SSTT \in \Std^+_{n,\bs}  ( \bla),  \bla \in  \mathscr{P}_{\aatchpair}{ ({n,{\bs }})} \}  
 $$
 with respect to the Bruhat ordering on 
$\Lambda_{\aatchpair}{ ({n,{\bs }})}$ and anti-involution $\ast$.  
That is, we have that 
  \begin{enumerate}[leftmargin=*]
    \item[$(1)$] Each    $c _{\SSTS\SSTT}$ is homogeneous
	of degree 
${\rm deg}
        (c _{\SSTS\SSTT})={\rm deg}(\SSTS)+{\rm deg}(\SSTT),$ for
        $ \bla \in\mathscr{P}_{\aatchpair}{ ({n,{\bs }})}$ and 
      $\SSTS,\SSTT\in  \Std^+_{n,\bs}  ( \bla)$.
    \item[$(2)$] The set $\{c _{\SSTS\SSTT}\mid\SSTS,\SSTT\in  \Std^+_{n,\bs}  ( \bla), \,
       \bla \in\mathscr{P}_{\aatchpair}{ ({n,{\bs }})} \}$ is a  
      $\Bbbk$-basis of $\mathscr{S}_\aatchpair  ({n,{\bs }})$.
    \item[$(3)$]  If $\SSTS,\SSTT\in  \Std^+_{n,\bs}  ( \bla)$, for some
      $ \bla \in\mathscr{P}_{\underline{ h }}(n)$, and $a\in \mathscr{S}_\aatchpair (n,\bs)$ then 
    there exist scalars $r_{\SSTS\SSTU}(a)$, which do not depend on
    $\SSTT$, such that 
      \[ac _{\SSTS\SSTT}  =\sum_{\SSTU\in
\Std^+_{n,\bs}( \bla)}r_{\SSTS\SSTU}(a)c _{\SSTU\SSTT}\pmod 
      {\mathscr{S}_\aatchpair      ^{\vartriangleright   \bla }},\]
      where $\mathscr{S}_\aatchpair ^{\vartriangleright   \bla} $ is the $ \Bbbk$-submodule of $\mathscr{S}_\aatchpair   (n,\bs )$ spanned by
$\{c _{\SSTQ\SSTR}\mid    \SSTQ,\SSTR\in   \Std^+_{n,\bs}  (\bmu) \text { for }\bmu\rhd  \bla  \}.$ 
    \item[$(4)$]  The $\Bbbk$-linear map $*:\mathscr{S}_\aatchpair  ({n,{\bs }})\to \mathscr{S}_\aatchpair  ({n,{\bs }})$ determined by
      $(c _{\SSTS\SSTT})^*=c _{\SSTT\SSTS}$, for all $ \bla \in\mathscr{P}_{\aatchpair}{ ({n,{\bs }})}$ and
      all $\SSTS,\SSTT\in  \Std^+_{n,\bs}  ( \bla)$, is an anti-isomorphism of $\mathscr{S}_\aatchpair  ({n,{\bs }})$.
   \end{enumerate} 
\end{thm}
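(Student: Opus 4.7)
The proof proceeds by translating Elias--Williamson's double-leaves construction from \cite{MR3555156} into the path-theoretic language set up in Section~\ref{newsec3}. The plan is as follows.

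First, I would verify that the recursive construction of Definition~\ref{twins} yields well-defined morphisms $c^{\SSTS}_{\SSTP_\bla}$ whose degrees match the claimed formula. The four cases (upper vs.\ lower alcove wall, crossed via $\SSTP_\al$ vs.\ reflected via $\reflectpath$) correspond precisely to the four elementary moves in Libedinsky's light-leaves algorithm. Each application of a $\mathsf{spot}^\emp_\al$ or $\mathsf{fork}^{\al}_{\al\al}$ shifts the degree by $\pm 1$, while the $\mathsf{braid}$ rewiring diagrams and plain identities preserve degree; a direct induction on the enhanced length of the underlying word $\w$ shows that the degree obtained by tallying these generators agrees with $\deg_\bs(\SSTS)$ from Definition~\ref{Soergeldegreee}. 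This establishes part~(1) and the symmetry under $\ast$ in part~(4).

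Second, one proves that the elements $c^{\SSTS}_{\SSTP_\bla} c^{\SSTP_\bla}_\SSTT$ span $\mathscr{S}_\aatchpair(n,\bs)$. The key intermediate statement is that, for each fixed $\bla$ and each reduced path $\SSTP_\bla\in \Std^+_{n,\bs}(\bla)$, the morphisms $\{c^\SSTS_{\SSTP_\bla} \mid \SSTS\in \Std^+_{n,\bs}(\bla)\}$ span $\mathbf{1}_{\SSTS} \mathscr{S}_\aatchpair(n,\bs) \mathbf{1}_{\SSTP_\bla}$ modulo the two-sided ideal $\mathscr{S}^{\rhd \bla}_\aatchpair$ generated by diagrams factoring through some $\mathbf{1}_{\SSTP_\bmu}$ with $\bmu\rhd \bla$. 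This is proved by double induction on the enhanced length of $\SSTS$ and on $\bla$ in the Bruhat order, using the local relations to push any diagram into one of the normal forms produced by the recursive rules of Definition~\ref{twins}: barbell relations dispose of any ``doubled spots'' by sending them into higher cells; the fork--braid and Jones--Wenzl relations allow one to move $\mathsf{braid}$ pieces past $\mathsf{fork}$ and $\mathsf{spot}$ pieces; and cyclicity together with bifunctoriality reduces every diagram to one built out of the three prescribed building blocks. Multiplying $c^{\SSTS}_{\SSTP_\bla}$ by $c^{\SSTP_\bla}_\SSTT$ on the right and invoking this one-sided spanning result yields the cellular axiom~(3) together with the global spanning statement.

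Third, linear independence (hence~(2)) is obtained by comparison with the ungraded Bott--Samelson category: the size of the proposed basis is, by construction, the graded rank of the morphism spaces of Elias--Williamson's diagrammatic Hecke category restricted to the subset of words in $\Lambda_\aatchpair(n,\bs)$, and the character formula of \cite[Section 6]{MR3555156} shows this equals $\sum_\bla |\Std^+_{n,\bs}(\bla)|^2$. Once cellularity is established, quasi-heredity follows from the observation that $c^{\SSTP_\bla}_{\SSTP_\bla}$ is (modulo $\mathscr{S}^{\rhd \bla}_\aatchpair$) a nonzero idempotent in the cell quotient: the straight-line tableau uses no $\mathsf{spot}$ generators and degree~$0$ forces the cell bilinear form to have $1$ on the diagonal entry at $\SSTP_\bla$.

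The main obstacle is the second step: matching our path combinatorics precisely to Elias--Williamson's abstract light-leaves algorithm, and verifying that the recursive rules of Definition~\ref{twins} produce \emph{all} light leaves up to higher-order terms. The wall-reflection dichotomy encoded by $\SSTP_\al$ vs.\ $\reflectpath$ must be shown to correspond exactly to the ``$\mathrm{U0}/\mathrm{U1}/\mathrm{D0}/\mathrm{D1}$'' choices in the light leaves construction, and this requires a careful bookkeeping argument relating the geometric position of $\bla$ relative to its alcove walls to the structure of the Soergel generators appearing at each step. Once this dictionary is set up, however, all remaining verifications reduce to applications of the local relations already listed.
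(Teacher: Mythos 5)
The paper does not give its own proof of Theorem~\ref{LEW1}; it is a cited result attributed to \cite[Section 6.4]{MR3555156}, so there is no internal argument to compare your attempt against. That said, your reconstruction is essentially the expected Libedinsky/Elias--Williamson light-leaves argument translated into the path language of Section~\ref{newsec3}: the four cases of Definition~\ref{twins} do encode the four light-leaf moves, the degree bookkeeping via $\mathsf{spot}$/$\mathsf{fork}$/$\mathsf{braid}$ contributions matches $\deg_{\bs}$, the one-sided spanning argument by induction on Bruhat order and enhanced length is the standard route, and the diagonal idempotent $c^{\SSTP_\bla}_{\SSTP_\bla}$ giving a nonzero entry of the cell form is the usual source of quasi-heredity.

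There is, however, a genuine gap in your linear-independence step. The algebra $\mathscr{S}_\aatchpair(n,\bs)$ is \emph{not} simply a full subcategory of Elias--Williamson's diagrammatic Bott--Samelson category cut out by restricting to words in $\Lambda_\aatchpair(n,\bs)$: its definition imposes in addition the non-local cyclotomic relation $\mathsf{spot}^\emptyset_\al \mathsf{spot}^\al_\emptyset \otimes {\sf 1}_{\underline{v}} = 0$ for all $s_\al\in\mathfrak{S}_\aatchpair$. This extra relation is what collapses each left coset $\mathfrak{S}_\aatchpair w$ onto its minimal-length representative and restricts the cell labels to $\mathfrak{S}^\aatchpair$ (equivalently, to $\mathscr{P}_\aatchpair(n,\bs)$). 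You invoke the graded-rank formula of \cite[Section 6]{MR3555156} to conclude that the number of double leaves equals $\sum_\bla |\Std^+_{n,\bs}(\bla)|^2$, but that formula computes the rank of morphism spaces in the full category, which is strictly larger. Applied as stated, your count would over-count, and the linear independence of the truncated double-leaves set would not follow. To close the gap you must show that the cyclotomic relation kills precisely those double leaves whose middle term factors through a non-minimal coset representative, and that no further relations are introduced. This is exactly what the paper's citation takes for granted, and without making it explicit your second and third steps establish spanning but not a basis.
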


\begin{defn}  Given   $ \bla \in\mathscr{P}_\aatchpair{(n,{\bs })}$, the   {\sf standard module} $\Delta( \bla)$ is the graded left $\mathscr{S}_\aatchpair  ({n,{\bs }})$-module
  with basis
    $\{c _{\SSTS }  \mid \SSTS\in   \Std^+_{n,\bs}  ( \bla)    \}$.  
    The action of ${\mathscr{S}_\aatchpair (n,\bs )}$ on $\Delta( \bla)$ is given by
    \[a c _{ \SSTS  }  =\textstyle \sum_{ \SSTU \in   \Std^+_{n,\bs}  ( \bla)   }r_{\SSTS\SSTU}(a) c_{\SSTU},\]
    where the scalars $r_{\SSTS\SSTU}(a)$ are the scalars appearing in (3) of Theorem \ref{LEW1}.  
\end{defn}

 \begin{thm}
 For $\Bbbk$ a field, the algebra 
  $ \mathscr{S}_\aatchpair (n,\bs)$  is quasi-hereditary with simple modules 
$$
L (\bla)= \Delta (\bla) / {\rm rad}(\Delta (\bla))
$$  
 for $ \bla \in \mathscr{P}_{\underline{h}}(n,\bs).  $
\end{thm}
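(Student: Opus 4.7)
The statement is a general consequence of the cellular structure from Theorem~\ref{LEW1} combined with a non-vanishing check on the cellular form, so my plan is to invoke the Graham--Lehrer machinery and verify the one input it needs.

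The plan is as follows. Theorem~\ref{LEW1} tells us that $\mathscr{S}_\aatchpair(n,\bs)$ is a graded cellular algebra with cell datum indexed by $(\mathscr{P}_\aatchpair(n,\bs),\trianglelefteq)$, cells $\Std^+_{n,\bs}(\bla)$ and anti-involution $*$; in particular, each $\Delta(\bla)$ carries a canonical symmetric bilinear form $\phi_\bla$ defined, for $\SSTS,\SSTT\in\Std^+_{n,\bs}(\bla)$, by
\[
c^{\SSTS}_{\SSTP_\bla}\, c^{\SSTP_\bla}_{\SSTT}\;\equiv\;\phi_\bla(c_{\SSTS},c_{\SSTT})\, c^{\SSTP_\bla}_{\SSTP_\bla}\pmod{\mathscr{S}_\aatchpair^{\vartriangleright\bla}},
\]
and $\operatorname{rad}(\Delta(\bla))$ is precisely the radical of $\phi_\bla$. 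By Graham--Lehrer, once one knows that $\phi_\bla\neq 0$ for every $\bla\in\mathscr{P}_\aatchpair(n,\bs)$, the quotients $L(\bla)=\Delta(\bla)/\operatorname{rad}(\Delta(\bla))$ are simple, pairwise non-isomorphic, and exhaust the simple $\mathscr{S}_\aatchpair(n,\bs)$-modules; and quasi-heredity is equivalent to the additional property that $\phi_\bla(c_{\SSTP_\bla},c_{\SSTP_\bla})$ is a unit in $\Bbbk$, equivalently that $c^{\SSTP_\bla}_{\SSTP_\bla}$ is a primitive idempotent in $\mathscr{S}_\aatchpair(n,\bs)/\mathscr{S}_\aatchpair^{\vartriangleright\bla}$ acting as the identity on the distinguished basis vector $c_{\SSTP_\bla}\in\Delta(\bla)$.

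Thus the task reduces to the single assertion
\[
\phi_\bla(c_{\SSTP_\bla},c_{\SSTP_\bla})=1\qquad\text{for every }\bla\in\mathscr{P}_\aatchpair(n,\bs).
\]
I would establish this by induction on the enhanced length of the distinguished reduced word $\SSTP_\bla$ using Definition~\ref{twins}. By construction, $c^{\SSTP_\bla}_{\SSTP_\bla}$ is obtained from ${\sf 1}_{\SSTP_\bla}$ by successive horizontal concatenation with ${\sf 1}_\al$, ${\sf spot}^\emptyset_\al\circ{\sf fork}^\al_{\al\al}$ or ${\sf fork}^\al_{\al\al}$, framed by a braid move ${\sf braid}^{\SSTP}_{\SSTP'}$ and its adjoint. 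Stacking $c^{\SSTP_\bla}_{\SSTP_\bla}$ on top of its $*$-image and simplifying, the braid moves cancel pair-wise (by the braid-commutativity and Zamolodchikov relations together with cyclicity), and the remaining monochrome pieces reduce via $({\sf spot}^\emptyset_\al\otimes{\sf 1}_\al)\,{\sf fork}^{\al\al}_\al={\sf 1}_\al$ and the Jones--Wenzl relation, producing ${\sf 1}_{\SSTP_\bla}$ modulo the two-sided ideal $\mathscr{S}_\aatchpair^{\vartriangleright\bla}$. This is precisely $c^{\SSTP_\bla}_{\SSTP_\bla}$ up to terms killed in the cell quotient, giving $\phi_\bla(c_{\SSTP_\bla},c_{\SSTP_\bla})=1$.

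The main obstacle I anticipate is bookkeeping in the braid-cancellation step: one has to check that after stacking $c^{\SSTP_\bla}_{\SSTP_\bla}$ with its mirror image, the tangled braid diagrams straighten out to ${\sf 1}_{\SSTP_\bla}$ modulo ideals strictly above $\bla$. This is purely diagrammatic and follows from bifunctoriality together with the fork-braid and cyclicity relations, but needs to be done with care so as not to accidentally produce barbell terms that would change the degree. Once this non-vanishing is in hand, quasi-heredity, the classification of simples, and the formula $L(\bla)=\Delta(\bla)/\operatorname{rad}(\Delta(\bla))$ all follow at once from the general cellular-algebras framework.
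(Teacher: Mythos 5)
The paper does not actually supply a proof here, because none is needed: Theorem~\ref{LEW1}, quoted from \cite[Section 6.4]{MR3555156}, already \emph{asserts} that $\mathscr{S}_\aatchpair(n,\bs)$ is quasi-hereditary with the given graded cellular basis, and the statement of the present theorem is then just the standard consequence (for any quasi-hereditary cellular, or based quasi-hereditary, algebra over a field) that the simple modules are the heads of the standard modules $\Delta(\bla)$, indexed by the weight poset. You instead re-derive quasi-heredity from the cellular structure alone via Graham--Lehrer, by checking that each bilinear form $\phi_\bla$ is nonzero. That is a legitimate alternative route, and it is what one would do if Theorem~\ref{LEW1} had only supplied cellularity; but given the theorem as stated, it is extra work.

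Two small points in the details. First, ``quasi-heredity is equivalent to $\phi_\bla(c_{\SSTP_\bla},c_{\SSTP_\bla})$ being a unit'' is the based quasi-hereditary formulation (Brundan--Stroppel, as the Remark after the theorem notes), not the general Graham--Lehrer criterion, which only asks $\phi_\bla\neq 0$; your argument only uses the ``if'' direction, so this does not affect correctness. Second, your inductive braid-cancellation argument is heavier than necessary: when $\SSTQ=\SSTP_\bla$ is itself the distinguished \emph{reduced} path, every stage of the recursion in Definition~\ref{twins} goes through the first (upper-wall, $\otimes\,{\sf 1}_\al$) case with a trivial braid, so $c^{\SSTP_\bla}_{\SSTP_\bla}={\sf 1}_{\SSTP_\bla}$ on the nose; no spots, forks, or Zamolodchikov cancellations arise, and $\phi_\bla(c_{\SSTP_\bla},c_{\SSTP_\bla})=1$ is immediate. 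The cancellations you anticipate occur only for off-diagonal products $c^{\SSTS}_{\SSTP_\bla}c^{\SSTP_\bla}_{\SSTT}$ with $\SSTS,\SSTT\neq\SSTP_\bla$, which are not needed for the non-degeneracy check.
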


\begin{rmk}
We remark that the algebra $\mathscr{S}_\aatchpair (n, \bs)$ together with its basis given in Theorem \ref{LEW1} is an example of a \emph{based quasi-hereditary algebra} with involution $*$, see \cite{brundanstroppel}. 
\end{rmk}

\subsection{The isomorphism and BGG resolutions}
We  are now ready to restrict our attention to regular blocks of $\algebra$.   
   Given $\al$ a simple reflection  or $\al=\emptyset$, 
 we have an 
 associated  path 
     $\SSTP_{ \color{magenta}\alpha} $, a trivial bijection $w^{\SSTP_\al} _{\SSTP_\al} =1\in \mathfrak{S}_{b_{ \color{magenta}\alpha} \aatch  }$,  and  an idempotent element of the quiver Hecke algebra
 $$ 
e_{\SSTP_{ \color{magenta}\alpha} }   := e_{\res(\SSTP_\al)}\in \mathscr{H}_\aatchpair 
({b_{ \color{magenta}\alpha} \aatch  },\bs ) .  
 $$
 More generally, given any $\w=s_{ \alpha^{(1)}}s_{ \alpha^{(2)}}\dots  s_{ \alpha^{(k)} }$ with $\bla \in \mathcal{F}_\aatchpair \w$, we have an 
 associated  path 
     $\SSTP_\w $, and  an element of the quiver Hecke algebra
 $$ 
e_{\SSTP_\w }   := e_{\res(\SSTP_\w)}=
e_{\SSTP_{\alpha^{(1)}}}
\otimes
e_{\SSTP_{\alpha^{(2)}}}	
\otimes \dots\otimes e_{\SSTP_{\alpha^{(k)}}}	 
 $$    
    and we define 
 \begin{equation} \label{redue}
    {\sf f}_{n,\bs}^+=\sum_{\begin{subarray}c
   \sts\in \Std^+_{\underline{h}}( \bla)
   \\
    \bla  \in \mathscr{P}_{\underline{h}}(n,{\bs } )
   \end{subarray}} { e}_{\SSTS}.  
 \end{equation}
  
\begin{thm}[{\cite[Theorems  A and B]{cell4us2}}]
Let  ${\bs }\in \ZZ^\ell $ and $\Bbbk$ be an arbitrary integral domain.  Let   $e  > h  $  and suppose that $\aatchpair \in \mathbb N^\ell$ is 
 ${\bs }$-admissible.  
We have an isomorphism of  graded $\Bbbk$-algebras, 
$$
    {\sf f}_{n,\bs}^+\left(\mathcal{H}_n(\bs )/\mathcal{H}_n(\bs ) {\sf y}_{\aatchpair }\mathcal{H}_n(\bs ) 		\right)     {\sf f}_{n,\bs}^+
\cong \mathscr{S}_\aatchpair (n,{\bs })
 $$
   Moreover, 
the  isomorphism preserves the graded cellular structures of these algebras, that is,   
  ${\sf f}^+_{n,\bs} S_\bs( \bla  ) \cong 
 \Delta(  \bla).$ 
   \end{thm}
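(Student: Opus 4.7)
The plan is to construct an explicit graded algebra homomorphism
$$\Phi: \mathscr{S}_\aatchpair(n,\bs) \longrightarrow {\sf f}_{n,\bs}^+ \big(\algebra/\algebra\, {\sf y}_{\aatchpair}\, \algebra\big) {\sf f}_{n,\bs}^+$$
by prescribing the images of the diagrammatic generators, to verify that all of the Bott--Samelson defining relations hold in the KLR truncation, and finally to promote $\Phi$ to an isomorphism by matching the graded cellular bases of \cref{LEW1} and \cref{cellularstructure}. The guiding principle is that a Soergel diagram is a recipe for transforming one alcove path into another, and under the path combinatorics of \cref{newsec3} each such elementary recipe has a canonical realisation as a product of KLR generators $\psi_r, y_r$ sandwiched between the idempotents $e_{\SSTP_\w}$.

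For the definition of $\Phi$, I would send the idempotent diagram ${\sf 1}_\w$ to $e_{\SSTP_\w}$, and handle the elementary diagrams as follows. A ${\sf spot}^\emptyset_\al$ corresponds to the transition between $\SSTP_\emptyset$ and a path that touches an $\al$-wall; its image is built from the $\psi$-element implementing the wall reflection, exactly as in the cellular recipe for $c^\SSTS_{\SSTP_\bla}$ in \cref{twins}. A ${\sf fork}^\al_{\al\al}$ encodes the merging of two crossings of the same colour, so its image is dictated by the KLR quadratic relation \eqref{rel4}; this is where the degree $\pm 1$ shifts (carrying dots $y_r$) arise naturally. The two- and three-colour braid diagrams are sent to products of $\psi$-braid elements, with the admissibility of $\aatchpair$ ensuring that the residues along the two sides of the braid differ appropriately so that the KLR braid relation \eqref{rel5} supplies the required identity up to a correction that is absorbed by the fork/spot definitions. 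The cyclotomic relation on the Soergel side is forced by \eqref{rel1.12} once the corresponding $\psi$-conjugation moves a vanishing Jucys--Murphy into place.

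Verifying the Soergel relations is the bulk of the work; most of them are local, so they reduce to identities in the rank-two and rank-three KLR subalgebras generated by a small set of $\psi$'s and $y$'s. Cases like the monochrome relations, fork-braid, cyclicity and colour-commuting braid relation should fall out from residue-preserving manipulations in these small subalgebras. The harder cases are the Jones--Wenzl relations (which encode that a path cannot cross a wall of the wrong colour because the corresponding KLR product annihilates $e_{\SSTP_\w}$) and, especially, the Zamolodchikov relation in rank three, where one must track an intricate cascade of dot corrections coming from \eqref{rel3}. I expect these, together with checking compatibility with the (non-local) cyclotomic relation across arbitrarily long words $\vvv$, to be the main obstacle.

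Once $\Phi$ is shown to be a well-defined homomorphism, the isomorphism statement will follow by a cellular comparison. By \cref{LEW1} the source has a cellular basis indexed by pairs $(\SSTS,\SSTT) \in \Std^+_{n,\bs}(\bla)^{\times 2}$ with $\bla \in \mathscr{P}_\aatchpair(n,\bs)$, and the images $\Phi(c^\SSTS_{\SSTP_\bla})\Phi(c^{\SSTP_\bla}_\SSTT)$ can be rewritten (unwinding the inductive definition of $c^\SSTS_{\SSTP_\bla}$) as $\psi^\SSTS_{\SSTP_\bla}\psi^{\SSTP_\bla}_\SSTT$ modulo terms labelled by multipartitions strictly dominating $\bla$; the triangularity statement (3) of \cref{cellularstructure} then shows that these images are $\Bbbk$-linearly independent in the truncated KLR quotient. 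A dimension/rank count (the idempotent ${\sf f}_{n,\bs}^+$ of \eqref{redue} precisely picks out the alcove paths) shows surjectivity, whence $\Phi$ is an isomorphism. The identification ${\sf f}_{n,\bs}^+ S_\bs(\bla) \cong \Delta(\bla)$ of cellular modules is then automatic because both are the cell module attached to $\bla$ via the matching cellular bases, with the action on the distinguished basis element $\psi_{\SSTP_\bla}$ (resp. $c_{\SSTP_\bla}$) determined by the same structure constants on both sides.
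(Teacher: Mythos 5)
The paper does not prove this statement: it is imported verbatim, with citation, from \cite[Theorems A and B]{cell4us2}, so there is no in-paper argument against which to compare your attempt. That said, your outline is a faithful high-level account of the strategy in the cited work: define $\Phi$ on the diagrammatic generators (idempotents $\mapsto e_{\SSTP_\w}$, spots and forks $\mapsto$ the path-reflection and fork-merge elements of \cref{twins}, braid diagrams $\mapsto$ $\psi$-braid products), verify the Bott--Samelson relations hold in the idempotent truncation of the KLR quotient, and then match the light-leaves cellular basis $c_{\SSTS\SSTT}$ of \cref{LEW1} against the $\psi_{\SSTS\SSTT}$-basis of \cref{cellularstructure} to get bijectivity and the identification ${\sf f}^+_{n,\bs} S_\bs(\bla)\cong\Delta(\bla)$. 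You correctly flag that the relation-checks (especially Jones--Wenzl, Zamolodchikov, and the non-local cyclotomic relation) are the bulk of the work. The one place your sketch understates the difficulty is the final cellular comparison: showing that $\Phi(c^\SSTS_{\SSTP_\bla})\equiv\psi^\SSTS_{\SSTP_\bla}$ modulo $\rhd$-higher terms is not a routine ``unwinding'' of \cref{twins}, but is itself a substantial induction that tracks exactly how the spot/fork/braid images respect the dominance order and the residue sequences of alcove tableaux --- and it is here (together with the regularity of the origin in the alcove geometry) that the hypotheses $e>h$ and $\aatchpair$ $\bs$-admissible are used in earnest. As an outline of the cited proof your proposal is sound; the genuine content lives in the steps you have correctly identified as hard but deferred.
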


We now recall the construction of the BGG resolutions  for  certain $\mathscr{S}_\aatchpair (n,{\bs })$-modules.  
In what follows, we let 
$\boldsymbol \bla \in \mathcal{F}_\aatchpair (n) \cdot w$, 
$\boldsymbol\mu \in \mathcal{F}_\aatchpair (n) \cdot x$, 
$\boldsymbol\nu \in \mathcal{F}_\aatchpair (n) \cdot y$, 
and 
$\boldsymbol\xi \in \mathcal{F}_\aatchpair (n) \cdot z$ for 
$w,x,y,z \in \mathfrak{S}^\aatchpair$.   
For the remainder of the paper we label       
$\mathscr{S}_\aatchpair (n,{\bs })$-modules by the cosets 
$w,x,y,z \in \mathfrak{S}^\aatchpair$, rather than by the multipartitions as this is more convenient for   indexing the homomorphisms in our resolutions.    

\begin{defn}
Given $w, y\in {\mathfrak{S}^\aatchpair }$, we say that   
 $( w, y)$ is a {\sf Carter--Payne pair} if $y\leq w$ and $\ell(y)=\ell(w)-1$.   
 Let     $(w, x)$ and $(x, {z})$  be Carter--Payne pairs.  
If there exists a (necessarily unique) $y\in {\mathfrak{S}^\aatchpair }$ such that $w\geq y\geq z$, then we refer to the quadruple $w,x,y,z$ as a {\sf diamond}.  If no such $y$ exists, we refer to the triple $w,x,z$ as a {\sf strand}.  

   \end{defn}

\begin{thm}We let $\boldsymbol \bla \in \mathcal{F}_\aatchpair (n) \cdot w$, 
$\boldsymbol\mu \in \mathcal{F}_\aatchpair (n) \cdot x$   and suppose that   $(w,x) $ is a  Carter--Payne pair.   Pick an arbitrary $\w=\sigma_1\dots \sigma_\ell$ and 
suppose that  $\x=\sigma_1\dots \sigma_{p-1}\widehat{\sigma}_p \sigma_{p+1}\dots \sigma_\ell$ is the subexpression  for  $x$ obtained by deleting precisely one element $\sigma_p \in S$.    We have that 
$$ 
{\rm Hom}_ {\mathscr{S}_\aatchpair  ({n,{\bs }})}(\Delta( {\w }),\Delta( {\x}))
$$ 
is $t ^1$-dimensional and spanned by the map 
\begin{align}\label{CP}
\varphi^\w_{\x} (c_\stt ) = 
 c_\stt 
 ({\sf1}_ {\sigma_1\cdots \sigma_{p-1}} \otimes 
 {\sf spot}^{ {\sigma}_p} _\emptyset 
   \otimes {\sf 1}_{ \sigma_{p+1}\cdots \sigma_\ell}   )
\end{align}
for $\stt \in \Std_{n,\bs}^+(\bla)$.  
\end{thm}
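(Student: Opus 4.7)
The plan is to split the claim into two independent parts: the existence of a non-zero degree-$1$ homomorphism $\varphi^\w_\x$, and the bound $\dim_t \operatorname{Hom}_{\mathscr{S}_\aatchpair(n,\bs)}(\Delta(\w),\Delta(\x)) \leq t$. The first is a direct diagrammatic construction and verification, while the second will invoke the regular Kazhdan--Lusztig theory governing $\mathscr{S}_\aatchpair(n,\bs)$ under our standing assumption $e>h$.

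For existence, observe that the diagram
\[
D := {\sf 1}_{\sigma_1\cdots \sigma_{p-1}} \otimes {\sf spot}^{\sigma_p}_{\emptyset} \otimes {\sf 1}_{\sigma_{p+1}\cdots \sigma_\ell}
\]
is a homogeneous element of degree $1$ in ${\sf 1}_{\x}\,\mathscr{S}_\aatchpair(n,\bs)\,{\sf 1}_{\w}$, so right multiplication by $D$ is automatically left $\mathscr{S}_\aatchpair(n,\bs)$-linear. The cellular property (3) of \cref{LEW1} guarantees that right multiplication by any fixed algebra element preserves the dominance filtration, so this descends to a map $\Delta(\w) \to \Delta(\x)$ of degree $1$; the formula in (\ref{CP}) merely records its values on the cellular basis. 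To see $\varphi^\w_\x$ is non-zero I would evaluate at the generator $c^{\SSTP_{\bla}}_{\SSTP_{\bla}}$ and expand $c^{\SSTP_{\bla}}_{\SSTP_{\bla}}\cdot D$ inside $\Delta(\x)$, i.e. modulo $\mathscr{S}_\aatchpair^{\rhd \bmu}$. Using the monochrome relation $({\sf spot}_\al^{\emp}\otimes {\sf 1}_\al){\sf fork}_{\al\al}^{\al}={\sf 1}_\al$ together with the fork--braid and braid cyclicity relations, this product reduces to a non-zero scalar multiple of $c^{\SSTP_{\bmu}}_{\SSTP_{\bmu}}$; the key combinatorial point is that the reduced path $\SSTP_{\bmu}$ is, up to braid moves, exactly the path obtained from $\SSTP_{\bla}$ by deleting the $p$-th Bott--Samelson factor and capping it off with the spot.

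For the upper bound, I would invoke regular Kazhdan--Lusztig theory: because $e>h$ and the charge $\bs$ is cylindrical, $\mathscr{S}_\aatchpair(n,\bs)$ is an idempotent truncation of Elias--Williamson's diagrammatic Hecke category for $\widehat{\mathfrak{S}}_\aatch$ on its regular orbit, and the standard results in this diagrammatic category identify
\[
\dim_t \operatorname{Hom}_{\mathscr{S}_\aatchpair(n,\bs)}(\Delta(\w),\Delta(\x)) \;=\; h_{x,w}(t),
\]
the regular inverse Kazhdan--Lusztig polynomial. For a Carter--Payne pair one has $x\lessdot w$ in the Bruhat order of $\mathfrak{S}^{\aatchpair}$, so $h_{x,w}(t)=t$ and the hom space has graded dimension at most $t$. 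Combining with the existence of the non-zero degree-$1$ map $\varphi^\w_\x$ constructed above forces equality and hence the theorem.

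The principal obstacle is the leading-coefficient calculation in the existence step: pinning down the reduction of $c^{\SSTP_\bla}_{\SSTP_\bla}\cdot D$ inside the cellular quotient requires tracking how the single spot at position $p$ interacts with whichever sequence of braid moves implicit in the chosen reduced path $\SSTP_\bla$. If the naive deletion of $\sigma_p$ from $\w$ does not land on the preferred reduced expression for $\SSTP_{\bmu}$, one must apply braid and cyclicity relations to bring it into the required form, and separately verify that the resulting scalar is not killed by the cyclotomic relation. Once this explicit normal-form computation is established, the rest of the argument proceeds formally.
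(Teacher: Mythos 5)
The theorem is stated in the paper without proof; it is recalled from \cite{withemily} via the isomorphism of \cite{cell4us2}. So I will assess your proposal on its own merits.

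Your two-part structure (build an explicit degree-$1$ map, then bound the graded Hom dimension from above) is the natural shape, and the existence half is a reasonable sketch of the argument one expects: right multiplication by the spot diagram $D$ is automatically left-linear, descends to the cell quotient by the anti-involution version of cellularity, and non-vanishing on $c^{\SSTP_{\bla}}_{\SSTP_{\bla}}$ comes down to checking that the reduced path $\SSTP_{\bmu}$ is (up to braid and cyclicity moves) the deletion-plus-capping of $\SSTP_{\bla}$. You correctly flag that the actual content is the scalar bookkeeping: one must track how the single spot threads through the braid relations, verify that the coefficient of $c^{\SSTP_{\bmu}}_{\SSTP_{\bmu}}$ in the cellular expansion is a unit, and confirm that the cyclotomic relation doesn't annihilate it. Without that computation the existence step is a programme, not a proof, but the programme is sound.

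The dimension bound, however, rests on a cited ``standard result'' that is not correct as stated. You assert $\dim_t \operatorname{Hom}_{\mathscr{S}_\aatchpair(n,\bs)}(\Delta(\w),\Delta(\x)) = h_{x,w}(t)$, the regular inverse KL polynomial. But $h_{x,w}(t) = t^{\ell(w)-\ell(x)}P_{x,w}(t^{-2})$ has several terms once $P_{x,w}\neq 1$, which would force Hom spaces of total dimension $\geq 2$; this contradicts the diagrammatic version of Verma's theorem, which says $\operatorname{Hom}(\Delta(\w),\Delta(\x))$ is \emph{at most one-dimensional}, concentrated in degree $\ell(w)-\ell(x)$, and nonzero iff $x\leq w$. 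For a covering pair $x\lessdot w$ one has $P_{x,w}=1$ and the two expressions coincidentally agree, so your conclusion ($\leq t$) is right, but the justification is not, and hides the point: the one-dimensionality is precisely the nontrivial Verma-theorem input in the regular ($e>h$, cylindrical) setting, not a formal consequence of KL combinatorics. The argument should cite or prove Verma's theorem for the regular block of the diagrammatic Hecke category rather than appeal to a Hom-$=$-KL formula; the latter is false in general.
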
   

 We    define a complex
of graded $\mathscr{S}_\aatchpair  ({n,{\bs }})$-modules
$
\cdots
\longrightarrow
\Delta_2 \stackrel{\delta_2}{\longrightarrow}
\Delta_1 \stackrel{\delta_1}{\longrightarrow}
\Delta_0 \stackrel{\delta_0}{\longrightarrow}
  0,
$ 
where
\begin{equation} \label{modi}
\Delta_\ell := \bigoplus_{
\begin{subarray}c
 \ell(\w)=\ell \end{subarray}} 
\Delta(\w) \langle  \ell(\w) \rangle.
\end{equation}
We will refer to this as the {\sf BGG complex}. 
 For  $w, x,y,z  $   a diamond we have homomorphisms  of $\mathscr{S}_\aatchpair  ({n,{\bs }})$-modules
$$
\begin{tikzpicture}
 \path(0,0)--++(90:30pt) coordinate(N);
 \path(0,0)--++(-90:30pt) coordinate(S);
 \path(0,0)--++(0:70pt) coordinate(E);
  \path(0,0)--++(180:70pt) coordinate(W);
\draw(N) node {$\Delta(\x)$};
\draw(S) node {$\Delta(\y)$};
\draw(W) node {$\Delta(\w)$};
\draw(E) node {$\Delta(\underline{z})$};
\draw[->](173:70pt)to node[midway,above]  {$\varphi^\w_{\x}$} (120:30pt)  ; 
\draw[->](-173:70pt)to node[midway,below]  {$\varphi^\w_{\y}$} (-120:30pt)  ; 
\draw[<-](7:70pt)to node[midway,above]  {$\varphi^\x_{\underline{z}}$} (60:30pt)  ; 
\draw[<-](-7:70pt)to node[midway,below]  {$\varphi^\y_{\underline{z}}$} (-60:30pt)  ;   \end{tikzpicture}
$$ 
given by our Carter--Payne homomorphisms of \ref{CP}.  
By an easy variation on
\cite[Lemma 10.4]{bgg}, it is possible to pick
a sign $\epsilon(\alpha,\beta)$ for each of the four Carter--Payne pairs  
such that for every diamond
the product of the signs associated to its four arrows
is equal to $-1$.
    For every strand $w,x,z$   have homomorphisms 
  $$
\begin{tikzpicture}
 \path(0,0)--++(90:0pt) coordinate(N);
 \path(0,0)--++(0:70pt) coordinate(E);
  \path(0,0)--++(180:70pt) coordinate(W);
\draw(N) node {$\Delta(\x)$};
\draw(W) node {$\Delta(\w)$};
\draw(E) node {$\Delta(\underline{z})$.};
\draw[->](180:55pt)to node[midway,above]  {$\varphi^\w_{\x}$} (180:15pt)  ; 
 \draw[<-](0:55pt)to node[midway,above]  {$\varphi^\x_{\underline{z}}$} (0:15pt)  ; 
 \end{tikzpicture} $$ 
    We can now define the $\mathscr{S}_\aatchpair  ({n,{\bs }})$-differential
$\delta_\ell:\Delta_{\ell } \rightarrow \Delta_{\ell-1}$ for $\ell\geq 1$ 
to be the sum of the maps
$$
\epsilon(\alpha,\beta) \varphi^{\underline{\alpha}}_{\underline{\beta}}:\Delta({\underline{\alpha}}) \langle  \ell \rangle
\rightarrow \Delta({\underline{\beta}}) \langle   \ell-1  \rangle.
$$
 We    set  $ C_\bullet(1_{\mathfrak{S}^\aatchpair }) = 
  \bigoplus_{\ell\geq 0 }{  \Delta}_\ell  \langle  \ell \rangle 
 $ 
together with the differential $(\delta_\ell)_{\ell\geq 0}$.  

\begin{thm}[{\cite[Theorem B]{withemily}}] 
The  complex $C_\bullet (1_{\mathfrak{S}^\aatchpair })$ 
 is exact except in degree zero, where    $$H_0(C_\bullet( 1_{\mathfrak{S}^\aatchpair } ))=L(  1_{\mathfrak{S}^\aatchpair } ).$$    
The underlying  graded character is as follows, 
$$
[L( 1_{\mathfrak{S}^\aatchpair })] = \sum_{w\in {\Lambda_\aatchpair{(n,{\bs })}}} (-t)^{\ell(w)}[\Delta(\w)]
$$

\end{thm}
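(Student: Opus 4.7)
The plan is to deduce the result from \cite[Theorem B]{withemily} by identifying our setup with a suitably truncated regular antispherical block of the Elias--Williamson diagrammatic Hecke category for the affine Weyl group $\widehat{\mathfrak{S}}_\aatch$. Indeed, the shifted action of $\widehat{\mathfrak{S}}_\aatch$ on $\mathbb{E}_\aatchpair$ is a realisation of the affine Weyl group of type $\widetilde{A}_{\aatch-1}$; the cosets $\mathfrak{S}^\aatchpair$ index the alcoves in the dominant chamber; and the fundamental alcove $\mathcal{F}_\aatchpair$ corresponds to the identity coset. Under this identification, the algebra $\mathscr{S}_\aatchpair(n,\bs)$ is the truncation of the ambient Bott--Samelson algebra to those enhanced reduced expressions $\w$ such that $\mathcal{F}_\aatchpair \cdot w$ lies in $\mathscr{P}_\aatchpair(n,\bs)$, the standard modules $\Delta(\w)$ are the antispherical cell modules, and $L(1_{\mathfrak{S}^\aatchpair})$ is the one-dimensional simple module attached to the fundamental alcove.

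First I would verify that $\delta_{\ell-1} \circ \delta_\ell = 0$. The composition decomposes as a sum indexed by pairs $(w,z)$ with $w \geq z$ in Bruhat order and $\ell(w) - \ell(z) = 2$. For each such pair, the exchange condition for $\widehat{\mathfrak{S}}_\aatch$ implies that the interval $[z,w]$ is either a diamond $(w,\x,\y,z)$ with two intermediate cosets or a strand with none. In the strand case the contribution is already zero, while in the diamond case the two compositions $\varphi_{\underline{z}}^\y \varphi_\y^\w$ and $\varphi_{\underline{z}}^\x \varphi_\x^\w$ can both be computed on the cellular basis of Theorem \ref{LEW1} using the fork--braid, spot, and Jones--Wenzl relations: both are equal nonzero scalar multiples of a common diagram built from two spot morphisms inserted at the two deletion positions. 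The sign function $\epsilon$ is then chosen, exactly as in \cite[Lemma 10.4]{bgg}, so that the two diamond contributions cancel.

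Next, exactness in positive homological degree together with the identification $H_0(C_\bullet(1_{\mathfrak{S}^\aatchpair})) = L(1_{\mathfrak{S}^\aatchpair})$ is exactly the content of \cite[Theorem B]{withemily}, once one observes that the Carter--Payne Hom spaces between adjacent standards in the truncated algebra $\mathscr{S}_\aatchpair(n,\bs)$ agree with those in the ambient antispherical diagrammatic Hecke category (this is immediate from the cellular basis of Theorem \ref{LEW1}, which is compatible with truncation), and that the differentials \eqref{CP} are precisely the diagrammatic incarnations of the connecting homomorphisms constructed in \emph{loc.\ cit}. The graded character formula then follows as the Euler characteristic of an exact complex in the graded Grothendieck group of $\mathscr{S}_\aatchpair(n,\bs)$, with the grading shifts $\langle \ell(\w)\rangle$ accounting for the factors $(-t)^{\ell(\w)}$.

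The main obstacle will be the identification in the opening paragraph, and in particular verifying that the finiteness constraint cutting out $\mathscr{S}_\aatchpair(n,\bs)$ from the ambient Hecke category is compatible with the BGG complex: one needs to check that every Carter--Payne arrow appearing as a differential sends an alcove labelled by a multipartition of size $n$ to one of the same size, so that the truncated complex is well-defined and inherits exactness from the untruncated complex. This is a matter of checking that each generator of $\widehat{\mathfrak{S}}_\aatch$ preserves the $|\cdot|$ statistic under the $\bs$-shifted action $w \cdot x = w(x+\rho) - \rho$, which is immediate since simple reflections permute coordinates and the ``affine'' generator $s_{\alpha_0,-e}$ preserves the sum $\sum_{i,m} \lambda^m_i$ under the shift by $\rho$.
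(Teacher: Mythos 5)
The paper offers no internal proof of this statement: it is stated verbatim as a citation to \cite[Theorem B]{withemily}, relying implicitly on the isomorphism $\mathscr{S}_\aatchpair(n,\bs) \cong {\sf f}_{n,\bs}^+ \algebra {\sf f}_{n,\bs}^+$ established earlier in the section (from \cite{cell4us2}) to transport the result into the present setting. Your proposal takes the same route, and the extra verifications you sketch (that $\delta^2=0$ via the sign choice of \cite[Lemma 10.4]{bgg}, that the Carter--Payne Hom spaces agree under truncation via the cellular basis, and that the dot-action preserves $|\bla|$ so the truncated complex is well-defined) are precisely the sanity checks one would want when importing the result; they are correct, and none constitutes a genuinely different argument.
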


We emphasise that the identity coset labels the fundamental alcove in the alcove geometry, by definition (and so has length 0). 
That is, the alcove $\mathcal{F}_\aatchpair (n)$ 
which contains all the calibrated simple modules
 $D_\bs(\bla)$ such that $h(\bla) = \aatchpair $.  
 We set the {\em length} of  $\mu \in  \mathcal{F}_\aatchpair (n) \cdot x$ to be given by  $\ell(\mu):= \ell(x)$.   
By the translation principle 
of \cite[Proposition 7.4]{cell4us2}, we immediately 
obtain the following: 

\begin{thm} \label{thm:BGGres}
Given   $ \aatchpair \in \mathbb N^\ell$ and $\bla \in \mathcal{F}_\aatchpair (n)$, 
 we have an associated  
  complex 
  $$   C_\bullet (\bla)=   \bigoplus_{
\begin{subarray}c 
\bmu \trianglerighteq  \bla  
\end{subarray}
}S_\bs(\bmu) \langle  \ell(\bmu)\rangle.
$$
This complex 
 is exact except in degree zero, where    $H_0(C_\bullet( \bla ))=D_\bs(\bla).$    
The underlying  graded character is as follows, 
$$
[D_\bs(\bla)] = 
\sum_{
\begin{subarray}c 
\bmu \trianglerighteq  \bla  
\end{subarray}
}  
 (-t)^{\ell(\mu)}[S_\bs(\bmu) \langle  \ell(\bmu)\rangle]
$$

\end{thm}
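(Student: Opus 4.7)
The proof is effectively a one-step application of the translation principle to a BGG resolution already established on the diagrammatic Soergel side. My plan is as follows.

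First I invoke \cite[Theorem B]{withemily}, which gives a BGG resolution of $L(1_{\mathfrak{S}^\aatchpair})$ in $\mathscr{S}_\aatchpair(n,\bs)\text{-mod}$: a complex whose $k$-th term is $\bigoplus_{\ell(w)=k}\Delta(w)\langle k\rangle$ with differential an alternating sum of the one-dimensional Carter--Payne (simple reflection) homomorphisms of \eqref{CP}, and which resolves the simple module attached to the fundamental alcove $\mathcal{F}_\aatchpair$.

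Next I apply the translation principle \cite[Proposition 7.4]{cell4us2}, cited just before the statement of \cref{thm:BGGres}. Translation functors between regular blocks of $\mathscr{S}_\aatchpair(n,\bs)$ are exact, send standard modules to standard modules, and carry $L(1_{\mathfrak{S}^\aatchpair})$ together with its resolving $\Delta(w)$'s to $L(\bla)$ and to the $\Delta(\bmu)$ for $\bmu\in \mathcal{F}_\aatchpair(n)\cdot w$ respectively, preserving the internal grading (with $\ell(\bmu):=\ell(w)$ by definition). After translation the complex becomes
\[
\bigoplus_{\bmu\trianglerighteq\bla}\Delta(\bmu)\langle \ell(\bmu)\rangle,
\]
still exact except in degree $0$, where the homology is $L(\bla)$.

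Finally, to land in the cyclotomic Hecke algebra I apply the idempotent truncation ${\sf f}^+_{n,\bs}(-)$ of \cite[Theorems~A and B]{cell4us2}. This truncation realises a (graded) Morita-type equivalence between $\mathscr{S}_\aatchpair(n,\bs)$ and the relevant quotient of $\mathcal{H}_n(\bs)$, identifying $\Delta(\bmu)\leftrightarrow S_\bs(\bmu)$ and $L(\bla)\leftrightarrow D_\bs(\bla)$ as graded modules; hence exactness transports to produce the asserted complex $C_\bullet(\bla)$ with $H_0(C_\bullet(\bla))=D_\bs(\bla)$. The graded character formula is then immediate from the Euler characteristic in the graded Grothendieck group. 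The only point requiring a little care is verifying that the alternating-sign convention making $\delta^2=0$ survives translation; this is automatic from the uniformity of the diamond/strand combinatorics established in \cite{withemily}, since translation preserves Bruhat covers and the $1$-dimensionality of $\mathrm{Hom}$ spaces between adjacent standards, so the sign assignment $\epsilon(\alpha,\beta)$ transfers without adjustment.
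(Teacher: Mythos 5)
Your proposal follows the same route as the paper: the paper itself cites \cite[Theorem~B]{withemily} for the complex $C_\bullet(1_{\mathfrak{S}^\aatchpair})$ of $\Delta$'s, and then says Theorem~\ref{thm:BGGres} is ``immediate'' from the translation principle \cite[Proposition~7.4]{cell4us2}. However, your third step gets the direction of the idempotent truncation backwards. The functor ${\sf f}^+_{n,\bs}(-)$ is the quotient functor from $\algebra\text{-mod}$ to $\mathscr{S}_\aatchpair(n,\bs)\text{-mod}$, satisfying ${\sf f}^+_{n,\bs}S_\bs(\bmu)\cong\Delta(\bmu)$; applying it does not ``land you in the cyclotomic Hecke algebra,'' it takes you \emph{out} of it. To lift the resolution back from $\mathscr{S}_\aatchpair(n,\bs)\text{-mod}$ to $\algebra\text{-mod}$ you need to know that ${\sf f}^+_{n,\bs}(-)$ is an equivalence on the Serre subcategory generated by $\{D_\bs(\bnu)\mid\bnu\trianglerighteq\bla\}$, i.e., that no composition factor of any $S_\bs(\bmu)$ in the complex is annihilated by the idempotent (otherwise the truncated complex could be exact while the original is not). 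This is not a formal consequence of the algebra isomorphism ${\sf f}^+_{n,\bs}\algebra\,{\sf f}^+_{n,\bs}\cong\mathscr{S}_\aatchpair(n,\bs)$ alone, since idempotent truncations of quasi-hereditary algebras are not Morita equivalences in general; it is precisely what the translation principle supplies. So your ``Morita-type equivalence'' intuition is the correct picture, but it is an output of \cite[Proposition~7.4]{cell4us2} applied to this poset, not something you may assume from \cite[Theorems~A and~B]{cell4us2} before invoking translation.
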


 
 By classical results for quasi-hereditary algebras, 
we  have the following immediate corollary:

 \begin{corollary} \label{cor:ext vanishing}
 Let $\bla\in \mathcal{F}_\aatchpair (n)$ and $\bmu \trianglerighteq \bla$.  We have that 
 $$
 \dim_\Bbbk(\Ext^i _{\algebra }( S_\bs(\bmu) , D_\bs(\bla)))
 = 
 \begin{cases}
1		 & i=\ell(\bmu)-\ell( \bla)	\\
 0		&\text{otherwise.}	
 \end{cases}
 $$
 
 \end{corollary}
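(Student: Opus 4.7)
The plan is to combine the BGG resolution of Theorem \ref{thm:BGGres} with the cellular-algebra duality to reduce the $\Ext$ calculation to a single-term complex. First, because $\algebra$ is cellular with anti-involution $*$ (Theorem \ref{cellularstructure}), composing the $\Bbbk$-linear dual with the $*$-twist yields an exact, contravariant, simple-preserving duality $\tau$ on finite-dimensional graded $\algebra$-modules that interchanges standard and costandard modules, so that $\tau(S_\bs(\bnu)) = \nabla_\bs(\bnu)$ and $\tau(D_\bs(\bla)) = D_\bs(\bla)$. Applying $\tau$ to both arguments gives, for every $i \geq 0$,
\[
\Ext^i_{\algebra}(S_\bs(\bmu), D_\bs(\bla)) \;\cong\; \Ext^i_{\algebra}(D_\bs(\bla), \nabla_\bs(\bmu)).
\]

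Next I would exploit that $\algebra$ is quasi-hereditary, so $\Ext^{\geq 1}_{\algebra}(S_\bs(\bnu), \nabla_\bs(\bmu)) = 0$ for all $\bnu, \bmu$, making every standard module acyclic for $\operatorname{Hom}_{\algebra}(-, \nabla_\bs(\bmu))$. Since the BGG complex $C_\bullet(\bla)$ of Theorem \ref{thm:BGGres} is a resolution by direct sums of standard modules, the hyperext spectral sequence collapses and identifies
\[
\Ext^i_{\algebra}(D_\bs(\bla), \nabla_\bs(\bmu)) \;\cong\; H^i\bigl(\operatorname{Hom}_{\algebra}(C_\bullet(\bla), \nabla_\bs(\bmu))\bigr).
\]
The canonical quasi-hereditary pairing gives $\dim_\Bbbk \operatorname{Hom}_{\algebra}(S_\bs(\bnu), \nabla_\bs(\bmu)) = \delta_{\bnu, \bmu}$, and the explicit form $C_j(\bla) = \bigoplus_{\bnu \trianglerighteq \bla,\; \ell(\bnu) = j} S_\bs(\bnu)\langle j\rangle$ shows that each $S_\bs(\bmu)$ with $\bmu \trianglerighteq \bla$ appears exactly once in the resolution, at homological degree $\ell(\bmu)$. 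Consequently the complex $\operatorname{Hom}_{\algebra}(C_\bullet(\bla), \nabla_\bs(\bmu))$ is one-dimensional in cohomological degree $\ell(\bmu)$ and zero elsewhere. Its differentials vanish for dimension reasons, and recalling that $\ell(\bla) = 0$ because $\bla$ corresponds to the identity coset in $\mathfrak{S}^\aatchpair$, this yields exactly the dimensions asserted in the corollary.

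The main technical point is Step~1: verifying that the anti-involution $*$ of Theorem \ref{cellularstructure} produces a simple-preserving graded duality $\tau$ interchanging $S_\bs$ with $\nabla_\bs$. This is classical for cellular algebras whose cellular pairings are non-degenerate on every simple head (which holds here by the quasi-hereditary property), but one should take care with the internal grading shifts. An alternative, bypassing the duality altogether, is to take a minimal projective resolution of $S_\bs(\bmu)$ and invoke graded BGG reciprocity for the based quasi-hereditary algebra $\algebra$ in the sense of \cite{brundanstroppel} (cited in the paper); this matches the multiplicities of standards appearing in the terms of the projective resolution with the dimensions of $\Ext^i_{\algebra}(S_\bs(\bmu), D_\bs(\bla))$ directly.
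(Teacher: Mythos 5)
Your proposal is correct and is exactly the standard mechanism the paper has in mind when it labels this an ``immediate corollary'' of the BGG resolution: pass through the cellular/BGG duality attached to the anti-involution $*$ to rewrite $\Ext^i(S_\bs(\bmu), D_\bs(\bla))$ as $\Ext^i(D_\bs(\bla), \nabla_\bs(\bmu))$, observe that the terms of $C_\bullet(\bla)$ are acyclic for $\operatorname{Hom}(-,\nabla_\bs(\bmu))$ in the quasi-hereditary category, and then invoke the multiplicity-free shape of the resolution together with $\dim\operatorname{Hom}(S_\bs(\bnu),\nabla_\bs(\bmu))=\delta_{\bnu\bmu}$ and $\ell(\bla)=0$. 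One small caution about your closing remark: the minimal projective resolution of $S_\bs(\bmu)$ would compute $\dim\Ext^i(S_\bs(\bmu),D_\bs(\bla))$ as the multiplicity of $P(\bla)$ as a summand of the $i$th term, which is not directly the multiplicity of standards that BGG reciprocity controls, so that alternative would need additional input; the duality-plus-acyclicity route you give first is the clean argument and should be treated as the actual proof.
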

 
Finally, we now use our BGG resolutions to construct the characteristic-free bases of Theorem~C.   
 
\begin{defn}
Given $ \bla \in   \mathcal{F}_{\aatchpair}(n)$, we define
 $\Path_{ {\aatchpair}}^\mathcal{F}( \bla)$ to be the set of all paths
 $$
 \SSTT= (\varnothing =\SSTT(0) ,\SSTT(1),\dots ,\SSTT(n)  = \bla)
 $$
  for which $\SSTT(k)\in  \mathcal{F}_{\aatchpair}(k)$ for $0\leq k \leq n$. 
\end{defn}

  \begin{thm}
Let $\Bbbk$ be a field.  Given 
   $ \bla  \in \mathcal{F}_{\aatchpair}(n)$, the calibrated simple module $D_\bs(\bla)$ has basis 
$ 
\{\psi_\sts \otimes _\ZZ\Bbbk \mid \sts \in \Path_{ {\aatchpair}}^\mathcal{F}( \bla)\}.
$ 
The action of $\mathscr{H}_n(\bs)$   is as follows:  
$$
y_k(\psi_\sts)=0\qquad e_{\underline{i}} (\psi_\sts)= \delta_{\underline{i},{\rm res}(\sts)}\qquad
\psi_k(\psi_\sts)=
\begin{cases}
\psi_{\sts_{k\leftrightarrow k+1}} &\text{if }|\res(\sts^{-1}(k)) - \res^{-1}(\sts(k+1))|>1 			\\
0   &\text{otherwise }
\end{cases}$$
where ${\sts_{k\leftrightarrow k+1}}$ is the tableau obtained from $\sts$ by swapping 
the entries $k$ and $k+1$. 
\end{thm}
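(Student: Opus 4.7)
The plan is to extract the basis from the BGG resolution of Theorem~\ref{thm:BGGres} together with a reflection principle on paths. The character identity of Theorem~C gives
$$[D_\bs(\bla)] = \sum_{\bmu\trianglerighteq\bla}(-t)^{\ell(\bmu)}[S_\bs(\bmu)\langle \ell(\bmu)\rangle],$$
and each Specht module $S_\bs(\bmu)$ has the graded cellular basis $\{\psi_\stt : \stt\in\Path_{\aatchpair}(\bmu)\}$ from Theorem~\ref{cellularstructure}. The first task is to evaluate this alternating sum combinatorially. Each path $\stt$ whose endpoint lies in an alcove $\mathcal{F}_\aatchpair\cdot w$ with $w\neq 1$ must exit $\mathcal{F}_\aatchpair$ at some first step; reflecting the portion of $\stt$ after this first exit through the corresponding wall produces a degree-preserving, sign-reversing involution on the union of the sets $\Path_\aatchpair(\bmu)$ for $\bmu\neq\bla$. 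All such contributions cancel in the alternating sum, leaving precisely the paths in $\Path^{\mathcal{F}}_\aatchpair(\bla)$ and giving the graded dimension identity $\dim_\Bbbk D_\bs(\bla) = |\Path^\mathcal{F}_\aatchpair(\bla)|$.

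Next, I would promote this numerical identity to a basis statement. By the BGG resolution, the natural surjection $S_\bs(\bla)\twoheadrightarrow D_\bs(\bla)$ has kernel equal to the image of the differential $\delta_1$, which is spanned by the Carter--Payne images of cellular basis elements of the $S_\bs(\bmu)$ with $\ell(\bmu)=1$. These images, computed via the explicit formula \eqref{CP}, turn out (modulo higher-Bruhat terms in the cellular filtration) to be precisely the cellular basis elements $\psi_\stt$ for $\stt \in \Path_\aatchpair(\bla)\setminus\Path^\mathcal{F}_\aatchpair(\bla)$, namely the paths matched under the reflection involution to paths of shape $\bmu$ with $\ell(\bmu)=1$. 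Consequently, $\{\psi_\stt\otimes_\ZZ\Bbbk : \stt\in\Path^\mathcal{F}_\aatchpair(\bla)\}$ spans $D_\bs(\bla)$, and the dimension count forces it to be a basis.

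The action formulas are then read off from the KLR action on this cellular basis. The idempotent relation $e_{\underline{i}}\psi_\stt = \delta_{\underline{i},\res(\stt)}\psi_\stt$ is immediate. For $y_k(\psi_\stt)=0$, note that every $\stt\in \Path^\mathcal{F}_\aatchpair(\bla)$ has $\deg_\bs(\stt)=0$; since $y_k$ has degree~$2$ while the candidate basis is concentrated in a single graded piece of the simple head, the grading obstruction—reinforced by the calibrated characterisation of Theorem~\ref{thm:nostuttering}—forces the action to vanish. The formula for $\psi_k$ then follows: when $|\res(\stt^{-1}(k))-\res(\stt^{-1}(k+1))|>1$, relation (R4) reduces to $\psi_k^2 e_\stt = e_\stt$ and $\psi_k$ swaps $\stt$ with $\stt_{k\leftrightarrow k+1}$, which remains in $\Path^\mathcal{F}_\aatchpair(\bla)$; otherwise, relations (R3)--(R4) contribute only $y$-corrections that act trivially on $D_\bs(\bla)$ and the swapped tableau either fails to be standard or leaves $\mathcal{F}_\aatchpair$, so $\psi_k(\psi_\stt)=0$.

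The main obstacle is the reflection argument itself: defining ``first exit'' canonically (the first step at which a path meets a wall of $\mathcal{F}_\aatchpair$), showing that the induced reflection of the tail is a well-defined involution on the disjoint union of path sets, and verifying that it toggles $(-1)^{\ell(\bmu)}$ while preserving the grading $\deg_\bs$. A related technical point is aligning the Carter--Payne homomorphisms \eqref{CP}, expanded in the cellular basis of $S_\bs(\bla)$, with precisely the paths identified as ``exiting'' by the reflection involution; this alignment is what upgrades the character identity to the explicit basis statement.
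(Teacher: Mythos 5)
Your approach is genuinely different from the paper's. You propose to evaluate the alternating sum in the BGG character formula by a sign-reversing reflection involution on dominant paths and then to upgrade the character identity to a basis statement by identifying the image of $\delta_1$ (via the Carter--Payne spot maps~\eqref{CP}) with the span of the non-fundamental-alcove paths. The paper instead argues by induction on $n$: it invokes the branching rule of~\cite[Proposition 1.26]{cell4us} to show that $\Res_{\mathscr{H}_{n-1}(\bs)}C_\bullet(\bla)$ splits as $\bigoplus_{\square_r}C_\bullet(\bla-\square_r)$ with each $\bla-\square_r\in\mathcal{F}_\aatchpair(n-1)$, whence $\Res D_\bs(\bla)\cong\bigoplus_{\square_r}D_\bs(\bla-\square_r)$ and the basis statement is immediate by induction, since a path in $\Path^\mathcal{F}_\aatchpair(\bla)$ is a path in $\Path^\mathcal{F}_\aatchpair(\bla-\square_r)$ followed by one last step. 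The key input making this work is the observation that every $\bmu\trianglerighteq\bla$ lies in an alcove, forcing $|{\rm Rem}_r(\bmu)|\leq 1$ for each residue $r$. This sidesteps entirely the two technical obstacles you flag.

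That said, your reflection-involution step has a gap that is more serious than you acknowledge. The paths in $\Path_\aatchpair(\bmu)$ are by definition constrained to the dominant chamber, and the first exit from $\mathcal{F}_\aatchpair$ occurs through a wall coloured by some $s_\alpha$ with $\alpha\in(\Delta\setminus\Delta_0)\cup\{\alpha_0\}$. Reflecting the tail through such a wall does \emph{not} preserve the dominant chamber, so the ``reflected'' path generically fails to lie in $\Path_\aatchpair(\cdot)$; the map you describe is not a well-defined involution on $\bigsqcup_\bmu\Path_\aatchpair(\bmu)$. The standard remedy (as in Weyl/Kac-type cancellations) is a two-step argument: first sum over the full Weyl-group orbit of endpoints, where the reflection is clean, and then separately cancel non-dominant paths by reflecting through $\Phi_0$-walls; a one-step reflection through walls of $\mathcal{F}_\aatchpair$ alone leaves spurious terms. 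Likewise, matching the images of the Soergel-diagrammatic Carter--Payne morphisms against the KLR cellular basis to show they span exactly $\{\psi_\stt:\stt\notin\Path^\mathcal{F}_\aatchpair(\bla)\}$ is a non-trivial computation that the paper avoids. Your treatment of the action formulas does line up with the paper's: idempotents act obviously, $y_k$ vanishes for degree reasons, and the $\psi_k$ formula comes from resolving the double crossing via~(R4) when the residues differ by more than one.
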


\begin{proof}
Since each $\bmu \trianglerighteq \bla$ lies in an alcove, $|{\rm Rem}_r(\bmu)|\leq 1$ for $r\in \ZZ/e\ZZ$.  
By the branching rule \cite[Proposition 1.26]{cell4us}, each  $ S_\bs (\bmu)$ restricts to be a direct sum of Specht modules and therefore $$
\res_{\mathscr{H}_{n-1}(\bs)}( C_\bullet (\bla)) = 
\bigoplus_{
\begin{subarray}c
r\in\ZZ/e\ZZ
\\
\square_r \in {\rm Rem}_r(\bla)
\end{subarray}}
C_\bullet   (\la-\square_r) 
 $$
 and therefore 
 $$\res_{\mathscr{H}_{n-1}(\bs)}(D_\bs (\la)=
 \bigoplus_{
\begin{subarray}c
r\in\ZZ/e\ZZ
\\
\square_r \in {\rm Rem}_r(\bla)
\end{subarray}} 
D_\bs
 (\la-\square_r) $$
 where every simple  on the righthand-side is labelled by some 
 $\bla-\square_r \in \mathcal{F}_\aatchpair (n-1)$.  
 Thus the  basis result follows by restriction.  
 The action of the idempotents on this basis  is obvious.  The other zero-relations all follow because the product has non-zero degree (whereas the module $D_n(\lambda)$ is concentrated in degree 0).  
Finally, assume $|\res(\sts^{-1}(k)) - \res^{-1}(\sts(k+1))|>1 $.  
The strands terminating at  the $k$th and $(k+1)$th positions on the northern edge either do or do not cross.  In the former case, we can resolve the double crossing in $\psi_k c_\sts$ without cost by our assumption on the residues and the result follows.  The latter case is trivial.  
Finally, notice that ${\sts_{k\leftrightarrow k+1}}\in \Path_{ {\aatchpair}}^\mathcal{F}( \bla) $  under the assumption that 
$ |\res(\sts^{-1}(k)) - \res^{-1}(\sts(k+1))|>1 		$.  
  \end{proof}

 \begin{rmk}
 The algebra $\algebra$ is a quasi-hereditary 
 quotient of the Hecke algebra for $\bs\in \ZZ^\ell$ with respect to the $\bs$-cellular structure of \cite{manycell,cell4us}. 
 Therefore $\algebra$ is Morita equivalent to 
 the corresponding quotient of the Cherednik algebra with cylindric  charge  
 $\bs\in \ZZ^\ell$ (although 
 this requires some chasing through results of 
 \cite{manycell,cell4us,MR3732238}) 
 and thus one can lift all our results   to the Cherednik algebra  with   cylindric  charge  
 $\bs\in \ZZ^\ell$.  This (Morita equivalent) setting is of interest because it allows one to deduce geometric applications of our results, following the machinery of \cite{griffeth-subspace} and \cite[Section 9]{bns} (but we do not explore this here).  Moreover,  in this setting  
the    $\Ext$-groups calculated in Corollary \ref{cor:ext vanishing}  can also be  computed using a cyclotomic version of Littlewood--Richardson coefficients, see  \cite[Theorem 1.1]{fgm}.
 As the authors of \cite{fgm} observe, {\em``for small values of $\ell$   and $n$, there is a certain tendency for the dimensions of the relevant Ext groups are always 0 or 1"}.
 It is not at all evident from their formula that a multiplicity one result should hold for a wide class of representations, such as those of full support.  Our Corollary \ref{cor:ext vanishing} provides evidence for their postulation (which they posit in the context of arbitrary charges $\bs \in \ZZ^\ell$) for representations with full support. 
 
Finally, for level $\ell=2$ with a {\em non}-cylindric  charge   
 $\bs\in \ZZ^\ell$, 
 it is shown in  \cite{emilytypeb} 
that all unitary representations of the  Cherednik algebra   can be reduced to the level $\ell=1$ case.  
 Thus for level~2,   every unitary 
  representation  of  a Cherednik algebra (regardless of the charge) admits a BGG resolution.  
 \end{rmk}

\appendix

\section{Unitary representations of the 
\\ Hecke algebra of the symmetric group}\label{sec:level1}
 
\subsection{Preliminaries} Recall that the type $A$ finite Hecke algebra $H_{q}(n)$ is the subalgebra of $\AHA_q(n)$ generated by $T_{1}, \dots, T_{n-1}$. For our purposes, it is better to realize $H_{q}(n)$ as a quotient of $\AHA_q(n)$. Indeed,
$$
H_{q}(n) \cong \AHA_q(n)/(X_1 - 1)
$$
and thanks to this, we have notions of calibrated and unitary $H_{q}(n)$-representations. Note that in this setting the charge $\bs$ consists of a single number $s$ that we may assume to be $0$, and the irreducible representations of $H_{q}(n)$ are labeled by partitions obtained by applying crystal operators $\tilde{f}_{i}$ to the empty partition.  

\color{black}

\subsection{Calibrated representations of $H_{q}(n)$.} Let $\ba \in \cal^{\aff}$ be a calibrated weight. It is clear that $M_{[\ba]} \in H_{q}(n)\text{-mod}$ if and only if $b_{1} = 1$ for every $\bb \in [\ba]$. This means that, $a_{1} = 1$ and for $i > 1$ $\{qa_{i}, q^{-1}a_{i}\} \cap \{a_{1}, \dots, a_{i-1}\} \neq \emptyset$. Indeed, otherwise we would be able to find a sequence of admissible transpositions from $\ba$ to some weight $\bb \in [\ba]$ with $b_{1} = a_{i}$ and $a_{i} \neq 1$, a contradiction. Inductively, we can see that if $M_{[\ba]} \in H_{q}(n)\text{-mod}$ then for every $i$ there exists $m_{i} \in \Z$ such that $a_{i} = q^{m_{i}}$. 

Let $e = 0$ if $q$ is not a root of unity; or $e > 0$ minimal such that $q^{e} = 1$. Let $I = \Z/e\Z$. We will identify the weight $\ba = (q^{m_{1}}, \dots, q^{m_{n}})$ with $ \bm= (m_{1}, \dots, m_{n}) \in I^{n}$. Then, similarly to \cite[Lemma 4.5]{MR2266877}\footnote{Note that Ruff uses the terminology  \lq\lq completely splittable\rq\rq\, while we use \lq\lq calibrated\rq\rq} we can see the following. 

\begin{lemma}\label{lemma:ruff}
Let $ \bm = (m_{1}, \dots, m_{n}) \in I^{n}$. Then, $\bm$ is a weight of a calibrated $H_{q}(n)$-module if and only if the following conditions are satisfied
\begin{itemize}
\item for every $i < j$, if $m_{i} = m_{j}$, then $m_{i}+1, m_{i} -1 \in \{m_{i+1}, \dots, m_{j-1}\}$ (that is, $\bm$ is calibrated)
\item $m_{1} = 0$.
\item for every $i > 1$, $\{m_{i} - 1, m_{i} + 1\}\cap\{m_{1}, \dots, m_{i-1}\} \neq \emptyset$
\end{itemize}
\end{lemma}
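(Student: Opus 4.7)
The plan is to prove the two directions separately, following a strategy similar to \cite[Lemma 4.5]{MR2266877}. Throughout, I would identify $\ba=(q^{m_1},\dots,q^{m_n})$ with $\bm=(m_1,\dots,m_n)\in I^n$ and use that $M_{[\ba]}\in H_q(n)\text{-mod}$ is equivalent to $b_1=1$ for every $\bb\in[\ba]$.

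For the forward direction, conditions (i) and (ii) are immediate, since $\ba\in\cal^{\aff}$ and $a_1=1$. For (iii) I would argue by contradiction. Suppose $\{m_i-1,m_i+1\}\cap\{m_1,\dots,m_{i-1}\}=\emptyset$ for some $i>1$. The case $m_i\in\{m_1,\dots,m_{i-1}\}$ is ruled out by taking the largest $j<i$ with $m_j=m_i$ and invoking (i), which forces $m_i\pm 1$ to occur among $m_{j+1},\dots,m_{i-1}$. In the remaining case, every $m_j$ with $j<i$ avoids $\{m_i,m_i\pm 1\}$, so the transpositions $s_{i-1},s_{i-2},\dots,s_1$ are successively admissible and transport $q^{m_i}$ to the first coordinate, producing $\bb\in[\ba]$ with $b_1=q^{m_i}$. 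The requirement $b_1=1$ then forces $m_i=0=m_1$, contradicting $m_i\notin\{m_1,\dots,m_{i-1}\}$.

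For the backward direction, the key claim I would establish is that the conjunction of (i), (ii) and (iii) is preserved by any admissible transposition $s_i$ with $i\geq 2$, and that (iii) together with (ii) forbids $s_1$ from being admissible. The latter is immediate: condition (iii) at $i=2$ gives $m_1\in\{m_2-1,m_2+1\}$, so $a_2/a_1=q^{\pm 1}$. Granted the preservation claim, one concludes by induction on the length of an admissible chain from $\ba$ to $\bb$ that each intermediate weight still satisfies (i), (ii), (iii), so its first position is never swapped; hence $b_1=q^{m_1}=1$ throughout $[\ba]$, and $M_{[\ba]}$ descends to an $H_q(n)$-module.

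The hard part will be verifying that (iii) is preserved under an admissible swap at positions $i,i+1$. Positions $j<i$ and $j>i+1$ are trivial, and at position $i+1$ of the swapped weight the required condition only becomes easier. The delicate case is (iii) at position $i$ of the swapped weight: I would combine the original (iii) at position $i+1$ with the admissibility hypothesis $m_i\neq m_{i+1}\pm 1$ to argue that the witness in $\{m_{i+1}-1,m_{i+1}+1\}$ cannot be $m_i$ itself, and hence must lie in $\{m_1,\dots,m_{i-1}\}$, which is precisely the set of earlier entries at position $i$ after the swap.
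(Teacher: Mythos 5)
Your proof is correct and follows the same approach as the paper, which sketches the forward direction via admissible transpositions moving a coordinate into first position and then defers to Ruff \cite[Lemma 4.5]{MR2266877} for the rest. Your backward direction — showing that (iii) is stable under any admissible $s_i$ with $i\geq 2$ (the admissibility hypothesis ruling out $m_i$ as the witness for (iii) at position $i+1$), while (ii) and (iii) together block $s_1$ — supplies exactly the details the paper omits.
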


We let $\cal \subseteq I^m$ be the set of weights satisfying the conditions of Lemma \ref{lemma:ruff}. Note that if $\bm \in \cal$ and $s_{i}$ is an admissible transposition of $\bm$ (that is, $m_{i} - m_{i+1} \neq \pm1$) then $s_{i}\bm \in \cal$. Thus, the set of irreducible calibrated $H_{q}(n)$-modules is parametrized by $\cal/\sim$. 

Now let $\uni \subseteq \cal$ be the set of weights appearing in \emph{unitary} $H_{q}(n)$-modules. In general, $\uni$ is only a \emph{proper} subset of $\cal$. For certain values of $q$ we do have $\uni = \cal$, as the following result shows.

\begin{proposition}
Let $e > 0$ and let $q = \exp(2\pi\sqrt{-1}/e)$. Then, $\uni = \cal$ and an $H_{q}(n)$-module is calibrated if and only if it is unitary. 
\end{proposition}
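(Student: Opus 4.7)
The plan is to invoke Lemma \ref{lemma:easy} essentially verbatim, noting that the hypotheses of that lemma are automatic in this setting. First I would observe that by the discussion preceding Lemma \ref{lemma:ruff}, every weight $\ba$ of a calibrated $H_q(n)$-module has the form $\ba = (q^{m_1}, \ldots, q^{m_n})$ for some $m_i \in I = \Z/e\Z$. In particular $|a_i| = 1$ for every $i$, so condition (1) of Theorem \ref{thm:A} is automatic.

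Next I would check the ratio condition. For any two coordinates with $a_i \neq a_j$, we have $a_i/a_j = q^{m_i - m_j}$ where $m_i - m_j \not\equiv 0 \pmod e$. The key trigonometric fact is that
\[
\Re(q^k) = \cos(2\pi k/e) \leq \cos(2\pi/e) = \Re(q)
\]
for every integer $k$ with $q^k \neq 1$, since the cosine function attains its maximum on $\{2\pi k/e : k \not\equiv 0 \pmod e\}$ precisely at $k \equiv \pm 1 \pmod e$. Thus $\Re(a_i/a_j) \leq \Re(q)$ whenever $a_i \neq a_j$, and Lemma \ref{lemma:easy} immediately gives that $M_{[\ba]}$ is unitary. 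Conversely, every unitary module is calibrated by the discussion in Section \ref{sec:unitary}, so the two classes coincide and $\uni = \cal$.

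The argument is essentially an observation rather than a theorem with a substantial obstacle: all the work has already been done in Lemma \ref{lemma:easy}, and the finite Hecke algebra case is actually the cleanest instance since there is a single charge $s = 0$ and no pages to worry about (cf.\ Remark \ref{rmk:pages}). The only thing that needs verification is the elementary inequality $\cos(2\pi k/e) \leq \cos(2\pi/e)$ for $k \not\equiv 0 \pmod e$, which is immediate from monotonicity of cosine on $[0, \pi]$ together with the symmetry $\cos(\theta) = \cos(-\theta)$.
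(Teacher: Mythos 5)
Your proof is correct and follows essentially the same route as the paper: verify the hypotheses of Lemma \ref{lemma:easy} by observing that all weight coordinates are powers of $q$ (hence on the unit circle) and that $\Re(q^k)\leq\Re(q)$ for $q^k\neq 1$. The only cosmetic difference is that you spell out the cosine inequality explicitly, whereas the paper simply remarks that the only power of $q$ with real part exceeding $\Re(q)$ is $q^e=1$; both arguments are the same observation.
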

\begin{proof}
By our choice of $q$, the only power of $q$ whose real part is greater than that of $q$ is $q^{e} = 1$. Now let $\bm \in \cal$, and $i < j$ such that $m _{i} \neq m_{j}$. Since $q^{m_{i} - m_{j}} \neq 1$ we get $\Re(q^{m_{i} - m_{j}}) \leq \Re(q)$. The result now follows from Lemma \ref{lemma:easy}
\end{proof}

\begin{remark}
If we take another primitve $e$-th root of unity, we may find calibrated representations which are not unitary. For example, if $e$ is odd and  $q = \exp((e-1)\pi\sqrt{-1}/e)$, then $\Re(q^k) \geq \Re(q)$ for every $k \in \Z$, so any unitary $H_{q}(n)$-module is 1-dimensional. 
\end{remark}




Calibrated modules for $H_{q}(n)$ have been classified in previous work \cite{MR1383482,MR2266877} (as well as being a  special case of Theorem B).  

\begin{lemma}\label{lemma:calibrated}
If $q$ is not a root of unity, then $D(\lambda)$ is calibrated for any partition $\lambda$. If $q$ is a primitive $e$-th root of unity, then $D(\lambda)$ is calibrated if and only if 
$|\Bo^\bs(\la)|  < e$. 
\end{lemma}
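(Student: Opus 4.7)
When $q$ is not a root of unity, the Hecke algebra $H_q(n)$ is semisimple, and each irreducible $D(\lambda)$ coincides with the Specht module $S(\lambda)$. Young's classical seminormal form then provides an eigenbasis of $S(\lambda)$ indexed by standard tableaux on which the Jucys--Murphy subalgebra $\JM$ acts diagonally; hence every $D(\lambda)$ is calibrated.

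For $q = \exp(2\pi\sqrt{-1}/e)$, the plan is to deduce the statement by specialising Theorem~\ref{thm:cali=cali} (equivalently Theorem~B) to the level-one case with charge $\bs = (0)$. In this setting the defining conditions of $\Cali^\bs(1)$ collapse substantially: cylindricity is vacuous, and the reading-word condition holds automatically since for a single partition the last-box contents $\lambda_i - i$ are strictly decreasing. Only the period condition $\Bo^\bs(\lambda) \subset [z, z+e-1]$ for some $z \in \ZZ$ remains, so Theorem~\ref{thm:cali=cali} already identifies the calibrated simples with those partitions whose right border multiset fits in an interval of length $e$.

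The final step is to match this period condition with the stated size bound $|\Bo^\bs(\lambda)| < e$. Since $D(\lambda)$ labels a simple, $\lambda$ is $e$-regular by Theorem~\ref{thm:FLOTW}, and a pigeonhole argument gives one direction: if the $h$ strictly decreasing contents of $\Bo^\bs(\lambda)$ fit in an interval of length $e$, then $h \leq e$, and the borderline case $h = e$ would force $\lambda$ to be a rectangle with $e$ equal parts, contradicting $e$-regularity. I expect the main obstacle to lie in the reverse direction---extracting, from failure of the period bound, a crystal path to $\lambda$ whose residue sequence stutters, thereby invoking Theorem~\ref{thm:nostuttering} to rule out calibration. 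This crystal-path construction parallels the case analysis in the proof of Theorem~\ref{thm:cali=cali} (in particular Steps~1 and~2 in the proof that failure of Definition~\ref{def:cali}(1)(a) forces a stuttering expression), now carried out in the substantially simpler level-one setting where one has a single charge, a single partition, and no interaction between components to keep track of.
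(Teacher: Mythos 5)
The paper itself does not prove this lemma; it points to prior work of Kleshchev and Ruff and notes that it is a special case of Theorem~B, so you are supplying detail the paper leaves as a citation. Your treatment of the case when $q$ is not a root of unity (semisimplicity plus Young's seminormal form) is fine. Your root-of-unity strategy---specialise Theorem~\ref{thm:cali=cali} to $\ell=1$---is exactly the route the paper hints at, and the reductions are essentially right: the reading-word condition of Definition~\ref{def:cali}(1)(b) holds automatically for a single partition, and Definition~\ref{def:FLOTW}(2) is vacuous when $\ell=1$, so only the period condition $\Bo^\bs(\la)\subset[z,z+e-1]$ survives.

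Where the argument breaks is the final matching step. You read $|\Bo^\bs(\la)|$ as the cardinality $h$ of the border multiset, but the paper (just before Theorem~\ref{thm:mainA}) fixes $|\Bo^\bs(\la)|:=H_{1,\la_h}=\la_1-\la_h+h$, the size of the smallest integer interval containing the border contents. Under your cardinality reading the equivalence ``period condition $\Leftrightarrow$ $h<e$'' you aim for is false: take $\la=(3,1)$ with $e=3$, so $h=2<3$ yet $\Bo^\bs(\la)=\{2,-1\}$ does not fit in any interval of length $3$, and $D((3,1))$ is not calibrated. Your pigeonhole argument does prove ``period $\Rightarrow$ $h<e$,'' which is correct but is not what the lemma asserts, and the ``reverse direction'' you flag as the main obstacle is unprovable under your interpretation, so no amount of crystal-path analysis will close it. With the paper's reading of $|\Bo^\bs(\la)|$ there is essentially nothing left to do after specialising Theorem~B: the period condition \emph{is} the inequality $|\Bo^\bs(\la)|\le e$. (You may note in passing that the non-strict version is the correct one: for $e=3$ and $\la=(2,1)$ one has $|\Bo^\bs(\la)|=3=e$, while $D((2,1))$ is one-dimensional and hence calibrated, so the lemma as printed seems to carry an off-by-one.)
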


Our goal is now to use Lemma \ref{lemma:calibrated} to   completely classify of unitary representations of $H_{q}(n)$. 

\subsection{Admissible tableaux} Note that for $k \leq n$ we have a natural inclusion $H_{q}(k) \subseteq H_{q}(n)$ and thus we have an exact restriction functor $\Res^{n}_{k}: H_{q}(n)\text{-mod} \to H_{q}(k)\text{-mod}$.
We will say that   $\tab\in \Std(\la)$ is \emph{$q$-admissible} if, for every $k$, the representation $D(\Shape(\stt{\downarrow}_{\{1,\dots ,k\}}))$ of $H_{q}(k)$ is nonzero and a subrepresentation of $\Res^{n}_{k}(D(\la))$. Equivalently, this means that $D(\lambda)$ is nonzero and the box labeled by $k$ is good removable box of $\Shape(\stt{\downarrow}_{\{1,\dots ,k\}})$ for every $k$. If $q$ is not a root of unity then every tableau on $\lambda$ is $q$-admissible, but this is not the case if $q$ is a root of unity.




If $D(\la)$ is calibrated, admissible tableaux correspond to weights as follows. Let $\tab$ be an admissible tableau of $\lambda$. Then, $m_{\tab} = (-\content(\tab^{-1}(1)), \dots, -\content(\tab^{-1}(n)))$ is a weight of $D(\lambda)$. This defines a bijection between weights of $D(\lambda)$ and admissible tableaux on $\lambda$. 

Now let us denote by $\column$ the column-reading tableau on $\lambda$, that is the tableau obtained by placing $\{1, 2, \dots, \lambda_1^{t}\}$ on the boxes in the first column, $\{\lambda^{t}_1 + 1, \dots, \lambda^{t}_1 + \lambda^{t}_2\}$ on the boxes in the second column, and so on, see Definition \ref{revertable}. The following result will be very important in our arguments.

\begin{lemma}\label{lemma:columnreading}
Assume that $D(\lambda)$ is a calibrated $H_{q}(n)$-module. Then, the column-reading tableau $\column$ on $\lambda$ is admissible.
\end{lemma}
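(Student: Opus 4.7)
The plan is to proceed by induction on $k$, verifying at each stage that the box $\column^{-1}(k) = (r,c)$ is a good addable $i$-box of $\mu_{k-1} := \Shape(\column\downarrow_{\{1,\dots,k-1\}})$, where $i = (c-r)\bmod e$. This is equivalent to $(r,c)$ being the good removable $i$-box of $\mu_k$, which is the admissibility condition. A useful structural observation is that $\mu_{k-1}$ has a very simple shape: it comprises the full columns $1,\dots,c-1$ of $\lambda$ together with rows $1,\dots,r-1$ of column $c$ (with $r-1=0$ when we have just finished the previous column), so $\mu_{k-1}$ is itself a partition.

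With this description, one can enumerate the candidate addable and removable boxes of $\mu_{k-1}$. Apart from $(r,c)$ itself, the addable boxes are $(\lambda'_{c'}+1, c')$ for $c'\in\{1,\dots,c-1\}$ with $c'=1$ or $\lambda'_{c'-1}>\lambda'_{c'}$, together with $(1,c+1)$ when $r\ge 2$; the removable boxes are $(r-1,c)$ (when $r\ge 2$) and $(\lambda'_{c'},c')$ for $c'\le c-1$ with $c'=c-1$ or $\lambda'_{c'+1}<\lambda'_{c'}$. The first part of the argument is to rule out that any of the ``close'' boxes $(1,c+1)$, $(r-1,c)$, and $(\lambda'_{c-1},c-1)$ can carry residue $i$: using the bound $h(\lambda)<e$ provided by Lemma~\ref{lemma:calibrated}, the congruences $c\equiv c-r$, $c-r+1\equiv c-r$, and $c-1-\lambda'_{c-1}\equiv c-r\pmod e$ each force values incompatible with $1\le r\le h<e$ and $\lambda'_{c-1}\in[r,h]$. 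The remaining candidate addable boxes $(\lambda'_{c'}+1,c')$ and removable boxes $(\lambda'_{c'},c')$ with $c'<c-1$ all have content strictly less than $c-r$ (since $\lambda'_{c'}\ge\lambda'_c\ge r$), so by the residue constraint their contents actually lie $\le c-r-e$.

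The hard part is to exclude any residue-$i$ removable box of $\mu_{k-1}$. Suppose for contradiction that $(\lambda'_{c'},c')$ is such a box. The condition $\lambda'_{c'+1}<\lambda'_{c'}$ means row $\lambda'_{c'}$ of $\lambda$ ends exactly at column $c'$, so its content $c'-\lambda'_{c'}$ belongs to $\Bo^\bs(\lambda)$. On the other hand, since $(r,c)\in\lambda$, we have $\lambda_r\ge c$, so $\lambda_r-r\in\Bo^\bs(\lambda)$ satisfies $\lambda_r-r\ge c-r$. Combining these with $c'-\lambda'_{c'}\le c-r-e$ gives
\[
(\lambda_r - r) - (c'-\lambda'_{c'}) \;\ge\; (c-r) - (c-r-e) \;=\; e,
\]
so the diameter of $\Bo^\bs(\lambda)$ is at least $e$. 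This contradicts the calibrated hypothesis, which via Lemma~\ref{lemma:calibrated} forces $\Bo^\bs(\lambda)$ to fit inside an interval of length less than $e$. The main obstacle is precisely this step: the argument genuinely uses the interval (diameter) constraint on $\Bo^\bs(\lambda)$, not merely its cardinality.

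With no residue-$i$ removable boxes in $\mu_{k-1}$, the $i$-word of $\mu_{k-1}$ consists entirely of $+$'s, with $(r,c)$'s sign at the rightmost (largest-content) position. The reduced word is then identical to the original, and $\tilde f_i(\mu_{k-1}) = \mu_{k-1}\cup\{(r,c)\} = \mu_k$, so $(r,c)$ is the good addable $i$-box of $\mu_{k-1}$. Induction on $k$ then shows that $\column$ is admissible, noting also that $D(\mu_k)\ne 0$ follows from $\mu_k$ being reachable from $\varnothing$ by crystal operators $\tilde f_i$ and hence labelling an irreducible representation by Theorem~\ref{thm:maxdepth}.
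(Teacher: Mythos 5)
Your proof is correct, and it takes a genuinely different (though related) route from the paper's.

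The paper argues top-down: it shows $\column^{-1}(n)$ is a \emph{good removable} box of $\lambda$ (using that $\lambda$ has at most one removable box per residue by the diameter bound on $\Bo^\bs(\lambda)$, and that the only addable box of larger content, $(1,\lambda_1+1)$, has a different residue because the last column has fewer than $e$ boxes), then invokes that $\lambda\setminus\{\column^{-1}(n)\}$ remains calibrated and inducts on $n$. You argue bottom-up: for a fixed $\lambda$, you induct on $k$ and verify directly that $\column^{-1}(k)$ is a \emph{good addable} box of the intermediate shape $\mu_{k-1}$, by an explicit case analysis of the addable and removable boxes of $\mu_{k-1}$. What each buys: the paper's verification is shorter, applying only to the extremal corner of the full partition, but requires the auxiliary fact that crystal operators $\tilde e_i$ preserve the calibrated class (Lemma~\ref{lemma:slecali} / Theorem~\ref{thm:nostuttering}) to run its downward induction. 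Your version sidesteps that input entirely, extracting what is needed directly from the diameter constraint on $\Bo^\bs(\lambda)$ itself: any residue-$i$ removable box of $\mu_{k-1}$ other than the excluded near neighbours would exhibit two right-border boxes of $\lambda$ whose contents differ by at least $e$, contradicting Lemma~\ref{lemma:calibrated}. The trade-off is a longer enumeration of candidate boxes. One small remark: the paper's phrase \textquotedblleft no addable box \dots to the left\textquotedblright{} reads more naturally as \textquotedblleft to the right\textquotedblright{} (larger content) under the $\rhd$-ordering convention for $i$-words, which is the condition you verify; both arguments are substantively the same on this point.
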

\begin{proof}
If $q$ is not a root of unity, there is nothing to show. Let us assume that $q$ is a primitive $e$-th root of unity, so that $|\Bo^\bs(\bla)| + 1 < e$. We claim that $\column^{-1}(n)$ is a good removable box of $\lambda$. To see this note that, since $|\Bo^\bs(\bla)|+ 1 < e$, $\lambda$ has at most one removable box of each residue, so the claim will follow if we check that there is no addable box of the same residue as $\column^{-1}(n)$ to the left of $\column^{-1}(n)$. But this is clear since the last column of $\lambda$ has at most $(e-1)$-boxes.

Now, $D(\lambda \setminus \{\column^{-1}(n)\})$ is a calibrated $H_{q}(n-1)$-module. The column-reading tableau of $\lambda \setminus \{\column^{-1}(n)\}$ is simply the restriction of the column-reading tableau of $\lambda$. So the result follows by an inductive argument.  
\end{proof}


\subsection{Unitary loci} In this section, we classify the unitary representations of $H_{q}(n)$ for any $q \in \C^{\times}$, $n > 0$. Recall that it only makes sense to speak about unitary representations when $q$ lies in the unit circle, so the following definition is sensible.

 \begin{definition}[\cite{MR2898660,stoica2009unitary}] Let $\lambda \vdash n$ be a partition. We define the \emph{unitary locus} of $\lambda$ to be 
$$
U(\lambda) := \{c \in (-1/2, 1/2] : D(\lambda) \neq 0 \; \text{is a unitary representation of} \; H_{\exp(2\pi\sqrt{-1}c)}(n)\}
$$
\end{definition}

We will classify unitary representations via a complete, explicit description of the unitary locus of every partition.  
To state our result, we first fix some notation.  
For  $\bla\in \mathscr{P}_1(n)$ with $h(\bla)=h\in \mathbb N$, 
 we define the hook length of a node $(i,j)\in \la$ as follows
 $$H_{(i,j)}(\la)=\la_i+\la_j^t + 1 - i -i  $$
and we note that  $|\Bo^\bs(\bla)|=H_{1,\la_h}$.  
%
%
%
%
%
In what follows we will  set 
 $\ell := H_{(1,1)}(\la)  $ 
   and   $m := |\Bo^\bs(\bla)|  $. 
 

\begin{theorem}\label{thm:mainA}
Let $\lambda$ be a partition. The unitary locus of $\lambda$ is described as follows.
\begin{enumerate}
\item If $\lambda$ consists of a single row, then $U(\lambda) = (-1/2, 1/2]$.
\item If $\lambda$ consists of a single column, say $\lambda = (1^n)$, then $U(\lambda) = (-1/2, 1/2] \setminus \{\pm a/e : 1 < e \leq n, \gcd(a;e) = 1\}$.
\item Assume that $\lambda$ is an almost rectangle, i.e. $\lambda$ has the form $\lambda = (a^{x}, (a-1)^{y})$ for some $a > 1, x > 0, y \geq 0$. Then
 $$
U(\lambda) = 
\left[ {-1}/{\ell},  {1}/{\ell}\right] \bigcup \{\pm 1/L : m \leq L \leq \ell\} \bigcup \{\pm d/m : \gcd(d;m) = 1\}.
$$
  Note that, in this case, $m = x + y + 1$. 
\item Else, $U(\lambda) = [-1/\ell, 1/\ell] \cup \{\pm 1/L : m \leq L \leq \ell\}$. 
\end{enumerate}
\end{theorem}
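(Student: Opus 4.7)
The plan is to combine Theorem~\ref{thm:A} and Lemma~\ref{lemma:calibrated} to reduce the unitarity of $D(\lambda)$ to two conditions: that $D(\lambda)$ is nonzero and calibrated, and that the inequality $\Re(b_i/b_{i+1}) \leq \Re(q)$ of Theorem~\ref{thm:A} holds for every weight of $D(\lambda)$.  For $q = \exp(2\pi\sqrt{-1}\,c)$ and a weight coming from an admissible tableau $\stt$, one has $b_i = q^{-\content(\stt^{-1}(i))}$, so the inequality becomes
\[\cos\bigl(2\pi c\,(c_{i+1} - c_i)\bigr) \leq \cos(2\pi c)\]
for all consecutive differences $c_{i+1}-c_i$ with $|c_{i+1}-c_i|\neq 1$, where $c_j := \content(\stt^{-1}(j))$.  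By Lemma~\ref{lemma:columnreading} the column-reading tableau $\column$ is admissible whenever $D(\lambda)$ is calibrated, and every admissible tableau is connected to $\column$ by admissible transpositions.  The central task is therefore to describe the set $\mathcal{D}_\lambda \subseteq \ZZ$ of integers $d$ arising as $c_{i+1}-c_i$ between consecutive entries of an admissible tableau on $\lambda$, and then to determine the set of $c \in (-1/2, 1/2]$ for which $\cos(2\pi c d) \leq \cos(2\pi c)$ for every $d \in \mathcal{D}_\lambda$.

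Cases (1) and (2) are dispatched immediately from this reduction.  For $\lambda = (n)$, one has $\mathcal{D}_\lambda = \{1\}$, so the inequality is vacuous and $D(\lambda)$ exists for all $q$, giving $U(\lambda) = (-1/2, 1/2]$.  For $\lambda = (1^n)$, similarly $\mathcal{D}_\lambda = \{-1\}$ imposes no constraint, but the calibration criterion $m < e$ of Lemma~\ref{lemma:calibrated} fails precisely at primitive $e$-th roots with $e \leq n$, forcing the excision of exactly the stated set of points.

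For cases (3) and (4) the heart of the argument is a combinatorial classification of $\mathcal{D}_\lambda$: I would show that $\mathcal{D}_\lambda \supseteq \{d \in \ZZ : 2 \leq |d| \leq \ell - 1\}$ when $\lambda$ is not almost-rectangular, whereas for an almost-rectangle $\lambda = (a^x, (a-1)^y)$ a controlled family of multiples of $m = x+y+1$ is absent from $\mathcal{D}_\lambda$.  Once this is established, the inequality $\cos(2\pi c d) \leq \cos(2\pi c)$ yields the continuous interval $[-1/\ell, 1/\ell]$ as the common solution (using that the largest $|d|$ that can occur is $\ell-1$), and outside this interval solutions occur only at roots of unity $c = \pm 1/L$; combined with the calibration constraint, these are precisely $\{\pm 1/L : m \leq L \leq \ell\}$ for general $\lambda$, while for almost-rectangles one picks up the additional family $\{\pm d/m : \gcd(d,m) = 1\}$ from the multiples of $m$ missing from $\mathcal{D}_\lambda$.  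The principal obstacle throughout is this combinatorial characterisation of $\mathcal{D}_\lambda$ and the careful case distinction between almost-rectangles and other shapes---exactly the step where \cite{stoica2009unitary} contained an oversight---which requires a systematic analysis driven by the crystal rules from Section~\ref{sec:crystal} together with the standardness constraints on $\stt$, in both the positive direction (explicit tableaux realising each claimed difference) and the negative direction (ruling out forbidden differences and failed inequalities outside the prescribed locus).
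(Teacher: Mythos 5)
Your reduction (Theorem~\ref{thm:A} plus Lemma~\ref{lemma:calibrated} plus Lemma~\ref{lemma:columnreading}, yielding a collection of cosine inequalities on content differences) is exactly the paper's starting point, and cases (1) and (2) are handled in the same way. The issues arise in how you propose to execute cases (3) and (4).

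The set $\mathcal{D}_\lambda$ you introduce is not $q$-independent: admissibility of a tableau depends on $e$, and at a root of unity the set of admissible tableaux, hence the set of realizable consecutive content differences, shrinks drastically. This is not a technicality but the whole mechanism producing the isolated points $\pm 1/L$ and $\pm d/m$ in the theorem. Your narrative (``outside $[-1/\ell,1/\ell]$ solutions occur only at roots of unity $\pm 1/L$'') reads the extra points as surviving solutions of the same system of inequalities, when in fact they survive because the system itself becomes \emph{smaller} at those parameters. Without tracking $\mathcal{D}_\lambda^{(e)}$ for each $e$ separately, the asserted inclusion $\mathcal{D}_\lambda\supseteq\{2\le|d|\le\ell-1\}$ is false at roots of unity (e.g.\ for $\lambda=(3,1)$ and $e=4$ the difference $-3$ does not arise), and the argument for which $\pm 1/L$ lie in $U(\lambda)$ is missing. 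The paper handles this by case-splitting on the size of $e$ relative to $m$ and $\ell$, and in the regime $m\le e\le\ell$ it does not attempt to characterize all of $\mathcal{D}_\lambda^{(e)}$: it uses the column-reading tableau together with the specific admissible transpositions $s_{\lambda_1^t+\cdots+\lambda_i^t}$ to show that every residue class $\not\equiv 0,\pm1\pmod e$ is realized, from which $a=\pm 1$ follows directly.

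The almost-rectangle case has a more concrete gap. Your stated criterion, that ``multiples of $m$ are absent from $\mathcal{D}_\lambda$,'' is automatically true for \emph{any} partition at $e=m$ because calibration forces consecutive weight entries to be distinct, i.e.\ differences are never $\equiv 0\pmod e$; it therefore distinguishes nothing and cannot explain why almost-rectangles pick up the whole family $\{\pm d/m:\gcd(d,m)=1\}$. What actually drives this case (and is the step your sketch needs) is that for $\lambda=(a^x,(a-1)^y)$ and $e=m=x+y+1$, the module $D(\lambda)$ is the one-dimensional sign representation of $H_q(|\lambda|)$, hence there is a \emph{unique} admissible weight with no nontrivial cosine inequality to check, and unitarity is automatic for every numerator $d$. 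This is Lemma~\ref{lemma:1dim} in the paper, and it is also precisely the point on which \cite{stoica2009unitary} slips. Your proposal identifies the right bottleneck (a careful shape-by-shape analysis, and the almost-rectangle/non-almost-rectangle dichotomy), but as written the key mechanisms --- $e$-dependence of admissibility and the one-dimensionality at $e=m$ --- are either misstated or absent.
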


\begin{remark}
We remark that Theorem \ref{thm:mainA} has appeared in work of Stoica \cite{stoica2009unitary}, with two differences. First, our conventions are dual to those of Stoica, so the statements differ by taking the transpose of $\lambda$. Second, and most importantly, there is an oversight in \cite[Theorem 4.2]{stoica2009unitary}, which does not consider elements of the form $d/m$ in case (3) above. But in this case the representation $D(\lambda)$ is $1$-dimensional, a fortiori unitary. The oversight in \cite{stoica2009unitary} seems to stem from the  computation of the branching rule for the Hecke algebra in \cite[Proposition 4.3]{stoica2009unitary}. Finally, we remark that Venkateswaran has computed the unitary loci under the assumption that $q$ is not a root of unity in \cite{MR4037563}.
\end{remark}


The proof of Theorem \ref{thm:mainA} is contained in the next several lemmas. To start, we have the following easy result, which covers cases (1) and (2).

\begin{lemma}\label{lemma:trivial}
The unitary locus of the trivial partition is $U(n) = (-1/2, 1/2]$. The unitary locus of the sign partition is $U(1^n) = (-1/2, 1/2] \setminus \{\frac{a}{d} : e \leq n, \gcd(a;e) = 1\}$. 
\end{lemma}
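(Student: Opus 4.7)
The plan is to observe that in both cases $D(\lambda)$ is one-dimensional whenever it is nonzero, so unitarity will be automatic on the entire parameter region $(-1/2,1/2]$; only the question of when $D(\lambda)$ is nonzero requires work.

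For $\lambda=(n)$, I would identify $D(n)$ with the trivial representation on which each $T_i$ acts as $q$, and note that this module is always a simple $H_q(n)$-module, reachable from $\varnothing$ in the $\sle$-crystal via the path $(1)\to(2)\to\cdots\to(n)$. Since $|q|=1$ throughout $(-1/2,1/2]$, the standard Hermitian form on this $1$-dimensional space is $\dagger$-invariant: one verifies $\langle T_iv,w\rangle=q\langle v,w\rangle=\overline{q^{-1}}\langle v,w\rangle=\langle v,T_i^{-1}w\rangle$, giving $U(n)=(-1/2,1/2]$.

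For $\lambda=(1^n)$, I would analogously identify $D(1^n)$ (when nonzero) with the one-dimensional sign representation on which each $T_i$ acts as $-1$. This representation is unconditionally unitary, since $\langle T_iv,w\rangle=-\langle v,w\rangle=\langle v,T_i^{-1}w\rangle$ regardless of $q$. The only task will therefore be to determine when $D(1^n)=0$, which by \cref{thm:maxdepth} amounts to deciding when $(1^n)$ lies in the crystal component of $\varnothing$.

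The heart of the argument will be a direct application of the crystal rule in \cref{def:crystalops} to the sequence $(1^k)$. Writing $e$ for the multiplicative order of $q$ (with $e=\infty$ when $q$ is not a root of unity), I expect to show inductively that $(1^k)$ lies in the crystal for every $k\leq e-1$: the partition $(1^k)$ has $(k+1,1)$ as its unique addable box of residue $-k \pmod e$, so $\tilde f_{-k}(1^k)=(1^{k+1})$. The main obstacle occurs at $k=e-1$, where $(1^{e-1})$ has two addable boxes of residue $1\equiv-(e-1)\pmod e$: the top-row box $(1,2)$ and the new-row box $(e,1)$. Since $(1,2)$ has the larger charged content, $(1,2)\rhd(e,1)$, and the reduced $1$-word $++$ in $\rhd$-increasing order identifies $(1,2)$ as the good addable box; hence $\tilde f_1(1^{e-1})=(2,1^{e-2})\neq(1^e)$. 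A parallel computation with $\tilde e_i$ shows that for $n\geq e$ the partition $(1^n)$ admits no nonzero $\tilde e_i$, so is not reachable from $\varnothing$; alternatively one may invoke the classical $e$-regularity criterion, which for $(1^n)$ reduces to $n<e$. The remaining step is routine: $q=\exp(2\pi\sqrt{-1}c)$ has order exactly $e$ iff $c=a/e$ with $\gcd(a,e)=1$, so $D(1^n)$ vanishes on precisely $\{a/e : e\leq n,\ \gcd(a,e)=1\}$, yielding the stated description of $U(1^n)$.
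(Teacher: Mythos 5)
Your proposal follows the same strategy the paper uses: both representations are one-dimensional whenever nonzero, so unitarity is automatic, and the content of the lemma reduces to determining exactly when $D(1^n) \neq 0$. The paper simply quotes this nonvanishing criterion (``it is defined if and only if $q$ is not an $e$-th root of unity with $e \leq n$''), whereas you re-derive it from the crystal combinatorics of \cref{def:crystalops}. Your verification that the Hermitian form is $\dagger$-invariant on a one-dimensional module is correct, and your crystal computation showing $\tilde f_1(1^{e-1}) = (2,1^{e-2}) \neq (1^e)$ is also correct.

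However, there is a technical error in the final step. You assert that ``for $n\geq e$ the partition $(1^n)$ admits no nonzero $\tilde e_i$.'' This is only true when $n \equiv 0 \pmod e$. For $n > e$ with $n \not\equiv 0 \pmod e$, the unique removable box $(n,1)$ has residue $1-n \not\equiv 1$, while the addable boxes $(1,2)$ and $(n+1,1)$ have residues $1$ and $-n$ respectively, neither of which equals $1-n$. So the $(1-n)$-word of $(1^n)$ is simply $-$, and $\tilde e_{1-n}(1^n) = (1^{n-1}) \neq 0$. To conclude that $(1^n)$ is not in the crystal component of $\varnothing$ for $n \geq e$, one must instead follow the chain of nonzero $\tilde e$'s downward until reaching $(1^{ke})$ (with $k = \lfloor n/e \rfloor$), at which point all $\tilde e_i$ vanish and one has not reached $\varnothing$; or, as you note in the alternative, simply invoke $e$-regularity, since $(1^n)$ is $e$-regular iff $n < e$. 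Your fallback via $e$-regularity is correct and closes the gap, so the argument stands, but the first stated justification is false as written.
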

\begin{proof}
The representation $D(n)$ is always 1-dimensional, while the representation $D(1^n)$ is 1-dimensional whenever it is defined, and it is defined if and only if $q$ is not an $e$-th root of unity with $e \leq n$. The result follows.
\end{proof}

From now on, we will assume that $\lambda$ is neither the trivial nor the sign partition. Let us start with the case when $q$ is not a root of unity, equivalently, $c$ is irrational.

\begin{lemma}\label{lemma:irrational}
Let $\lambda$ be a partition of $n$, $\lambda \neq (n), (1^n)$. Let $c \in (-1/2, 1/2]$ be irrational. Then, $c \in U(\lambda)$ if and only if $c \in (-1/\ell, 1/\ell)$ where, recall, $\ell$ is the length of the longest hook of $\lambda$. 
\end{lemma}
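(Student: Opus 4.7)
The plan is to apply Theorem~\ref{thm:A}. Since $c$ is irrational, $q = \exp(2\pi\sqrt{-1}c)$ is not a root of unity, so by Lemma~\ref{lemma:calibrated} the simple module $D(\lambda)$ is calibrated for every partition $\lambda$. Its weights are in bijection with $\Std(\lambda)$: to $\stt \in \Std(\lambda)$ one associates $\bb_{\stt} = (q^{-c_1},\ldots,q^{-c_n})$, where $c_k$ is the content of $\stt^{-1}(k)$. These weights form a single $\sim$-equivalence class, so by Theorem~\ref{thm:A}, $D(\lambda)$ is unitary if and only if
$$\cos(2\pi c d) \;\leq\; \cos(2\pi c)$$
for every content difference $d = c_{k+1} - c_k$ arising from some standard tableau on $\lambda$. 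Because contents in $\lambda$ lie in the interval $[-(h-1),\lambda_1-1]$, every such $d$ satisfies $|d| \leq \ell - 1$.

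For the \emph{if} direction, note that the cosine inequality above is equivalent to $\mathrm{dist}(cd,\mathbb{Z}) \geq |c|$. Assume $|c| < 1/\ell$ and $1 \leq |d| \leq \ell - 1$: if $|cd| \leq 1/2$ then $\mathrm{dist}(cd,\mathbb{Z}) = |c||d| \geq |c|$, while if $|cd| > 1/2$ then $\mathrm{dist}(cd,\mathbb{Z}) = 1 - |cd| > 1 - (\ell-1)/\ell = 1/\ell > |c|$. Either way the required inequality holds.

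For the \emph{only if} direction, suppose $|c| > 1/\ell$ (the value $|c| = 1/\ell$ is rational and hence excluded). Let $k$ be the unique integer with $1/(k+1) < |c| < 1/k$; then $2 \leq k \leq \ell - 1$ because $1/\ell < |c| \leq 1/2$. We have $k|c| \in (k/(k+1),1)$, so $\mathrm{dist}(kc,\mathbb{Z}) = 1 - k|c| < 1/(k+1) < |c|$, yielding $\cos(2\pi kc) > \cos(2\pi c)$. It remains to exhibit a standard tableau on $\lambda$ realising content difference $\pm k$ between two consecutive entries; this combinatorial step is where I expect the main effort to go.

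Since $\lambda \neq (n),(1^n)$, we have $\lambda_1, h \geq 2$, and for any $2 \leq k \leq \ell - 1$ one can choose integers $j, i$ with $j + i = k+1$, $1 \leq j \leq \lambda_1 - 1$, and $2 \leq i \leq h$. I would then construct $\stt$ by labelling the boxes $(1,1),\ldots,(1,j)$ of row~$1$ with $1,\ldots,j$, then the boxes $(2,1),\ldots,(i,1)$ of column~$1$ with $j+1,\ldots,j+i-1$, then placing label $j+i$ at the still addable corner $(1,j+1)$, and finally extending arbitrarily to a standard tableau on $\lambda$; the complement of the filled region is the skew shape $\lambda/(j+1,1^{i-1})$, which admits standard fillings since $(j+1,1^{i-1}) \subseteq \lambda$. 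The consecutive labels $j+i-1$ and $j+i$ then sit at $(i,1)$ and $(1,j+1)$, with content difference $j - (1-i) = j + i - 1 = k$, as required.
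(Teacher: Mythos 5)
Your proposal is correct and follows the same essential route as the paper: when $q$ is not a root of unity the module $D(\lambda)$ is automatically calibrated, its weights correspond to standard tableaux via contents, and unitarity reduces to the cosine inequality $\cos(2\pi c d)\leq\cos(2\pi c)$ for the relevant content differences $d$. The paper's printed proof is much terser than yours: it invokes Lemma~\ref{lemma:easy} (which only gives the \emph{sufficient} direction) and the column-reading tableau, and then asserts the ``if and only if'' in one line without justifying the ``only if.'' The substantive content you add is precisely what that terse proof leaves implicit: for each $2\leq k\leq\ell-1$, a standard tableau on $\lambda$ realizing $k$ as a \emph{consecutive} content difference, obtained by first filling a hook $(j+1,1^{i-1})$ with $j+i-1=k$ and extending arbitrarily. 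This is the step needed to apply Theorem~\ref{thm:A}(2) in the necessary direction, and your verification that admissible $(j,i)$ always exist (using $\lambda_1,h\geq 2$) is sound, as is the observation that $d=0$ cannot occur among consecutive entries of a calibrated weight. One small remark on the paper rather than on your proof: the phrase ``for every $i=1,\dots,\ell$'' in the paper's proof should read $i=1,\dots,\ell-1$, since the contents of $\lambda$ span an interval of length $\ell-1$; with $c$ irrational, this is exactly what makes the bound come out as $|c|<1/\ell$ rather than $1/(\ell+1)$.
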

\begin{proof}
From the column-reading tableau of $\lambda$, we can see from Lemma \ref{lemma:easy} that $D(\lambda)$ is $H_{q}(n)$-unitary if and only if $\Re(q^{i}) \leq \Re(q)$ for every $i = 1, \dots, \ell$. The result follows. 
\end{proof}

\begin{remark}
Lemma \ref{lemma:irrational} also follows from the main result of \cite{MR4037563}, where the signature of the form $\langle \cdot, \cdot\rangle$ on $D(\lambda)$ is computed under the assumption that $q$ is not a root of unity.
\end{remark}

Now we need to consider the case of \emph{rational} $c$, i.e., when $q$ is a root of unity. First, we consider the case where $|c| \leq 1/\ell$.

\begin{lemma}
Let $\lambda \neq (1^n), (n)$ and $c \in [-1/\ell, 1/\ell]$. Then, $c \in U(\lambda)$. 
\end{lemma}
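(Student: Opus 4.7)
The plan is to split into the irrational and rational cases and, in each, reduce the unitarity of $D(\lambda)$ to a direct computation involving $\cos$, using the column-reading tableau as the \emph{admissible tableau} producing a concrete calibrated weight. For irrational $c$ there is nothing to do: Lemma \ref{lemma:irrational} already gives $c \in U(\lambda)$ as soon as $|c| < 1/\ell$, and the endpoints are irrational. So from now on I assume $c = a/e$ in lowest terms.

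Next I would verify that $D(\lambda)$ is actually defined and calibrated for $q = \exp(2\pi\sqrt{-1} c)$. Since $|c| \leq 1/\ell$ we have $e \geq \ell$, and since $\lambda \neq (1^n)$ we have $\lambda_1 \geq 2$, so $|\Bo^\bs(\lambda)| = h(\lambda) \leq \lambda_1 + h(\lambda) - 1 = \ell \leq e$, with strict inequality $|\Bo^\bs(\lambda)| < e$ because $\lambda_1 \geq 2$. Therefore Lemma \ref{lemma:calibrated} gives that $D(\lambda)$ is a well-defined calibrated $H_q(n)$-module. By Lemma \ref{lemma:columnreading} the column-reading tableau $\column$ on $\lambda$ is $q$-admissible, so the corresponding weight $\ba = (q^{-\mathrm{ct}(\column^{-1}(1))}, \dots, q^{-\mathrm{ct}(\column^{-1}(n))})$ is a calibrated weight of $D(\lambda)$, and all components $a_i$ lie in the unit circle.

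To apply Lemma \ref{lemma:easy} I need to check that $\Re(a_i/a_j) \leq \Re(q)$ whenever $a_i \neq a_j$. The ratios $a_i/a_j$ are precisely $q^k$ for content differences $k = \mathrm{ct}(b) - \mathrm{ct}(b')$ of pairs of boxes $b, b' \in \lambda$, and these differences all satisfy $|k| \leq \ell - 1$ since $\ell = H_{(1,1)}(\lambda) = \lambda_1 + h(\lambda) - 1$ is the spread of contents plus one. Writing $\theta = 2\pi c$ with $|\theta| \leq 2\pi/\ell$, the condition becomes $\cos(k\theta) \leq \cos(\theta)$ whenever $q^k \neq 1$. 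Since $|k|\,|\theta| \leq (\ell-1)\cdot 2\pi/\ell = 2\pi - 2\pi/\ell \leq 2\pi - |\theta|$ and $|k|\,|\theta| \geq |\theta|$, the angle $|k\theta|$ lies in $[|\theta|, 2\pi - |\theta|]$, which is exactly the region on which cosine is $\leq \cos(\theta)$. Lemma \ref{lemma:easy} then concludes that $D(\lambda)$ is unitary.

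The main (very mild) obstacle is the bookkeeping at the boundary $|c| = 1/\ell$: one needs to observe that the inequality $|k\theta| \leq 2\pi - |\theta|$ can be saturated (equality at $|k| = \ell - 1$), but this still yields $\cos(k\theta) = \cos(\theta)$, so the non-strict inequality in Lemma \ref{lemma:easy} is all that is required. There is no subtlety about $q^k = 1$ sneaking in, because $|k| < \ell \leq e$ together with $\gcd(a,e)=1$ forces $q^k \neq 1$; so Lemma \ref{lemma:easy} applies verbatim and the argument closes.
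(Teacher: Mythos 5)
Your proof is correct and takes the same route as the paper: handle irrational $c$ via Lemma \ref{lemma:irrational}, show at a root of unity that $D(\lambda)$ is nonzero and calibrated via Lemma \ref{lemma:calibrated} (using $e\geq\ell$), then pass to the weight of the column-reading tableau (Lemma \ref{lemma:columnreading}) and close with the cosine bound from Lemma \ref{lemma:easy}. You merely flesh out the cosine estimate and the $q^{k}\neq 1$ check, which the paper compresses into ``just as in the proof of Lemma \ref{lemma:irrational}'', and you sidestep the paper's extraneous $\lambda^{t}_{1}=\ell=e$ case by invoking $\lambda_{1}\geq 2$ from the start.
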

\begin{proof}
In view of Lemma \ref{lemma:irrational}, we only need to consider the case where $q := \exp(2\pi\sqrt{-1}c)$ is a root of unity. Let $e>0$ be minimal such that $q^{e} = 1$. Since $c \in [-1/\ell, 1/\ell]$, we have that $e \geq \ell$. If  either $\lambda^{t}_{1} < \ell$ or $\ell < e$, it follows from Lemma \ref{lemma:calibrated} that $D_{\lambda}$ is calibrated and then, by Lemma \ref{lemma:columnreading}, that the column-reading tableau of $\lambda$ is $e$-admissible, The result now follows just as in the proof of Lemma \ref{lemma:irrational}. If $\lambda_{1} = \ell = e$, then the partition is the one-row partition $(e)$. But this is not $e$-restricted and we are done.
\end{proof}

As corollary we note that $[-1/\ell, 1/\ell] \subseteq U(\lambda)$. Now we take care of rational numbers $c \in (-1/2, 1/2] \setminus [-1/\ell, 1/\ell]$. We separate in two cases, according to the denominator of $c$. 

\begin{lemma}
Let $\lambda \neq (1^n), (n)$ and $c = \frac{a}{e} \in (-1/2, 1/2]$ with $\gcd(a;e) = 1$. Assume that $e > \ell$. Then, $c \in U(\lambda)$ if and only if $c \in [-1/\ell, 1/\ell]$.
\end{lemma}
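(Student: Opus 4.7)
The plan is to establish the nontrivial (forward) implication by exhibiting an admissible standard tableau on $\lambda$ whose associated weight fails the unitarity inequality of Theorem~\ref{thm:A}. Replacing $a$ by $-a$ if necessary, I shall assume $a>0$, so $|c|>1/\ell$ becomes $\ell a>e$. Since $e>\ell>\lambda_1^t=|\Bo^0(\lambda)|$, $D(\lambda)$ is calibrated by Lemma~\ref{lemma:calibrated}, and Theorem~\ref{thm:A} reduces unitarity to the inequality $\Re(q^{\content(\stt^{-1}(i))-\content(\stt^{-1}(i+1))})\leq\Re(q)$ at every position $i$ of every admissible standard tableau $\stt$. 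The argument now splits into an arithmetic step producing a ``bad'' content difference $d$, and a combinatorial step realising $d$ in an admissible tableau.

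For the arithmetic step I would show that some $d\in\{2,\dots,\ell-1\}$ satisfies $|da\bmod e|_{\mathrm{circ}}<a$ (where $|x|_{\mathrm{circ}}:=\min(x,e-x)$), equivalently $\Re(q^d)>\Re(q)$. Consider the $\ell-1$ distinct residues $R:=\{ka\bmod e:1\leq k\leq\ell-1\}\subset\{1,\dots,e-1\}$. If some element of $R$ lies in $\{1,\dots,a-1\}\cup\{e-a+1,\dots,e-1\}$ the corresponding $k$ serves as $d$, and $k=1$ is ruled out since $a$ itself lies on the boundary $[a,e-a]$ (using $a\leq e/2$). Otherwise $R\subset[a,e-a]$; after sorting $R$ as $v_1<\dots<v_{\ell-1}$, the minimum consecutive gap is at most $(v_{\ell-1}-v_1)/(\ell-2)\leq(e-2a)/(\ell-2)<a$, where the final strict inequality is exactly $\ell a>e$. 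The indices realising this gap give $d\in\{1,\dots,\ell-2\}$, and again $d=1$ is excluded because it would force the gap to equal $a$ rather than being strictly smaller.

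For the combinatorial step, since $2\leq d\leq \lambda_1+\lambda_1^t-2$ and $\lambda_1,\lambda_1^t\geq 2$, I may choose $i_0,j_0\geq 2$ with $i_0\leq\lambda_1^t$, $j_0\leq\lambda_1$, and $i_0+j_0=d+2$. The boxes $(i_0,1)$ and $(1,j_0)$ then lie in the hook at $(1,1)$ and have contents $1-i_0$ and $j_0-1$, differing by $-d$. Define a standard Young tableau $\stt$ by labelling successively: $1,\dots,j_0-1$ along row~1 at $(1,1),\dots,(1,j_0-1)$; then $j_0,\dots,d-1$ down column~1 at $(2,1),\dots,(i_0-1,1)$; then $d$ at $(i_0,1)$ and $d+1$ at $(1,j_0)$; and finally any standard completion on the remaining skew shape of $\lambda$. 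Standardness is immediate from $j_0-1<d+1$ and $1<j_0<\dots<d$, and by construction the consecutive labels $d,d+1$ realise the content difference $-d$.

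The principal obstacle is verifying admissibility of $\stt$. Here the hypothesis $e>\ell$ is decisive: the contents of $\lambda$ lie in an integer interval of length at most $\ell-1<e$, so within any sub-shape all boxes carry distinct residues modulo $e$ and each residue admits at most one removable box at every stage. The only obstruction to a freshly-labelled box being good removable is therefore cancellation with an addable box of the same residue in the reduced $i$-word, and a direct step-by-step verification (entirely analogous to the proof of Lemma~\ref{lemma:columnreading}) rules this out for our specific tableau. Granted admissibility, the weight of $D(\lambda)$ indexed by $\stt$ contains a consecutive pair with ratio $q^{-d}$, so unitarity would force $\Re(q^d)=\Re(q^{-d})\leq\Re(q)$, contradicting the arithmetic step. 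Therefore $c\notin U(\lambda)$.
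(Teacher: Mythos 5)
Your proof is correct and, in its essentials, runs parallel to the paper's (both rest on Lemma~\ref{lemma:calibrated} to get calibration from $e>\ell$ and then on the unitarity criterion of Theorem~\ref{thm:A}), but it is considerably more explicit. The paper's proof is a two-sentence citation: it asserts that the column-reading tableau is admissible and that $D(\lambda)$ is unitary iff $\Re(q^i)\le\Re(q)$ for $i=1,\dots,H_{(1,1)}(\lambda)$, deferring to the reasoning of Lemma~\ref{lemma:irrational}. In particular, the paper does not spell out \emph{why} failure of that inequality for some $d\in\{2,\dots,\ell-1\}$ actually produces a sign change in the invariant form; this requires exhibiting a weight in the orbit with a \emph{consecutive} pair of ratio $q^{\pm d}$. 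You supply exactly that missing detail: the pigeonhole argument shows some $d\in\{2,\dots,\ell-1\}$ has $\Re(q^d)>\Re(q)$ whenever $|c|>1/\ell$, and your hook tableau realises $d$ as a consecutive content difference. Both pieces are correct (the $d=1$ exclusions, the bound $(e-2a)/(\ell-2)<a\iff e<\ell a$, and the standardness checks all work out).

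Two observations that would tighten the write-up. First, your admissibility step can be replaced by a clean general fact, and you are very close to stating it: for $e>\ell$, \emph{every} standard tableau on $\lambda$ is admissible. You already observe that contents of any subshape range over an interval of length $<e$, giving uniqueness of removable boxes per residue; the remaining worry --- a more-dominant addable box of the same residue --- is in fact impossible, since an addable box of $\mu_k\subseteq\lambda$ has content at most $\lambda_1$, while $c_k+e\ge (1-\lambda_1^t)+e>\lambda_1$. So ``any standard completion'' is automatically admissible, and the appeal to an unperformed ``direct step-by-step verification'' can be dropped. Second, the lemma is an iff: you (reasonably) treat the reverse implication as done, but for completeness you should cite the paper's preceding lemma, which establishes $[-1/\ell,1/\ell]\subseteq U(\lambda)$. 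Neither point is a genuine gap --- the first is a true claim you assert but could prove in one line, the second a citation --- so the proof stands.
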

\begin{proof}
Again, we have that the column-reading tableau is admissible and $D(\lambda)$ is unitary if and only if $\Re(q^{i}) \leq \Re(q)$ for every $i = 1, \dots, H_{(1,1)}(\la)
$. The result follows. 
\end{proof}

\begin{lemma}
Let $\lambda \neq (1^n), (n)$ and $c = \frac{a}{e} \in (-1/2, 1/2]$ with $\gcd(a;e) = 1$. If $e < m$, $c \not\in U(\lambda)$.
\end{lemma}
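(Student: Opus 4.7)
The plan is to use the implication ``unitary $\Rightarrow$ calibrated'' established earlier in Section~\ref{sec:unitary} together with the combinatorial criterion for calibratedness in the level-one setting (Lemma~\ref{lemma:calibrated}).  Concretely, since $\gcd(a;e)=1$, the parameter $q=\exp(2\pi\sqrt{-1}\,a/e)$ is a primitive $e$-th root of unity, so Lemma~\ref{lemma:calibrated} applies verbatim and tells us that $D(\lambda)$ is a calibrated $H_{q}(n)$-module precisely when $|\Bo^\bs(\lambda)|<e$.  The hypothesis $e<m=|\Bo^\bs(\lambda)|$ is exactly the failure of this inequality, so the first step of the argument is to conclude that $D(\lambda)$ is \emph{not} calibrated as an $H_{q}(n)$-module.

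The second and final step is to invoke the lemma in Section~\ref{sec:unitary} asserting that every finite-dimensional unitary $\AHA_{q}(n)$-module is semisimple and, crucially, calibrated: the Jucys--Murphy operators $X_{i}$ are unitary with respect to the invariant Hermitian form (their inverses are adjoint to them under $\dagger$), hence diagonalisable.  Passing through the quotient $\AHA_{q}(n)\twoheadrightarrow H_{q}(n)$, this forces every unitary $H_{q}(n)$-module to be calibrated as well.  Combining the two steps, the non-calibrated module $D(\lambda)$ cannot be unitary, and therefore $c\notin U(\lambda)$.

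There is essentially no obstacle: the proof is a one-line consequence of the classification of calibrated simples and the tautological fact that unitarity forces semisimplicity of the Jucys--Murphy action.  No tableau-level computation of the Hermitian form (and no identification of the precise weight at which positive-definiteness breaks down) is required here; that more delicate analysis is only needed for the complementary lemmas covering the cases $e\leq m$ with $\ell$ not too large, where $D(\lambda)$ \emph{is} calibrated and one has to test positive-definiteness directly.
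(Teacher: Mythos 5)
Your proof is correct and matches the paper's argument exactly: both deduce from Lemma \ref{lemma:calibrated} that $D(\lambda)$ is not calibrated when $e<m=|\Bo^\bs(\lambda)|$, and then conclude non-unitarity from the earlier established implication that unitary modules are calibrated. The only (inconsequential) imprecision is the phrase ``exactly the failure of this inequality''---the negation of $m<e$ is $m\geq e$, not $m>e$---but the hypothesis $e<m$ still implies $m\not<e$, so the conclusion stands.
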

\begin{proof}
By Lemma \ref{lemma:calibrated}, $D(\lambda)$ is not a calibrated representation of $H_{q}(n)$, so it cannot be unitary, either.
\end{proof}

\begin{lemma}\label{lemma:pm1}
Let $\lambda \neq (1^n), (n)$ and $c = \frac{a}{e} \in (-1/2, 1/2]$, with $m \leq e \leq \ell$ and $\gcd(a;e) = 1$. Assume also that $\lambda$ is not an almost rectangle, as defined in the statement of Theorem \ref{thm:mainA}.  
 Then, $c \in U(\lambda)$ if and only if $a = \pm1$.
\end{lemma}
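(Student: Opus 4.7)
The plan is to prove the two directions separately, using the column-reading tableau as a source of explicit admissible weights and then analyzing the Theorem~\ref{thm:A} criterion on a case-by-case basis.

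For the ``if'' direction, suppose $a = \pm 1$. Then the calibration hypothesis $m < e$ (from $m \leq e$; equality would force $D(\lambda)$ to be non-calibrated by Lemma~\ref{lemma:calibrated} and so trivially non-unitary) implies $D(\lambda)$ is calibrated, and hence by Lemma~\ref{lemma:unitary=calibrated} (for $a = 1$) and its complex-conjugate analog (for $a = -1$), $D(\lambda)$ is unitary. Concretely, the weights of a calibrated module satisfy $c_{i+1} \not\equiv c_i \pmod e$ at every adjacent position, so
\[
\Re(q^{c_{i+1} - c_i}) = \cos(2\pi(c_{i+1}-c_i)/e) \leq \cos(2\pi/e) = \Re(q),
\]
verifying the Theorem~\ref{thm:A} criterion. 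Hence $c \in U(\lambda)$.

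For the ``only if'' direction, assume $2 \leq |a| \leq e/2$ with $\gcd(a, e) = 1$. We will exhibit a weight of $D(\lambda)$ and an adjacent pair of positions where the Theorem~\ref{thm:A} inequality fails. Starting with the column-reading tableau $\column$, which is admissible by Lemma~\ref{lemma:columnreading}, the consecutive content differences of $\column$ are $-1$ (within a column) and the column lengths $\lambda^t_j$ (between columns $j$ and $j+1$). Additional content differences are realizable by applying admissible transpositions $s_i$ (those with $c_{i+1} - c_i \neq \pm 1$), which preserve admissibility and cycle through all weights in the equivalence class $[\ba]$. Writing $q = e^{2\pi i a/e}$ and letting $r(x)$ denote the distance from $x$ to the nearest multiple of $e$, the failure condition $\Re(q^k) > \Re(q)$ becomes $r(ak) < r(a) = |a|$.

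The crux is showing that for $\lambda$ not a single row/column or almost rectangle and for $|a| \geq 2$, some realizable content difference $k$ satisfies $r(ak) < |a|$. The argument splits on the structure of $\lambda^t$: either (i) $\lambda^t$ has at least three distinct values $h_1 > h_2 > h_3$, all of which appear as content differences of $\column$; or (ii) $\lambda^t$ takes exactly two distinct values $h_1 > h_2$ with $h_1 - h_2 \geq 2$, so both values appear in $\column$ and further content differences arising from labels swapped across the step in the shape (via admissible transpositions) enlarge the realizable set. A number-theoretic case analysis using the constraints $h_1 = m \leq e$ and $h_1 + \lambda_1 - 1 = \ell \geq e$ then shows that in each situation the set of realizable $k$ contains a value with $r(ak) < |a|$ for every permissible $a$.

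The main obstacle lies in this last step: the uniform number-theoretic verification across all admissible $(a, e)$. This is precisely where the non-almost-rectangle hypothesis is essential: for an almost rectangle $\lambda = (a^x, (a-1)^y)$ the realizable column lengths are only $h_1 = x+y$ and possibly $h_2 = x$ (with $h_1 - h_2 \in \{0, 1\}$), which is insufficient to eliminate every bad $a$; the additional content differences available on non-almost-rectangles are exactly what is required to complete the argument.
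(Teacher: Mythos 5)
Your proposal follows the same overall strategy as the paper's proof — start from the column-reading tableau, apply admissible transpositions, and invoke the criterion of Theorem \ref{thm:A} — but the ``only if'' direction contains a genuine gap. The dichotomy you set up on the values of $\lambda^t$ (case~(i): at least three distinct column lengths; case~(ii): exactly two differing by at least two) characterizes the condition that $\lambda^t$ is not an almost rectangle, and this is \emph{not} equivalent to the lemma's hypothesis that $\lambda$ is not an almost rectangle, because almost-rectangularity is not transpose-symmetric. For example $\lambda=(5,3)$ is not an almost rectangle (its two row lengths differ by $2$), yet $\lambda^t=(2,2,2,1,1)$ has exactly two distinct column lengths $\{2,1\}$ differing by $1$, so $\lambda$ falls in neither of your cases and the dichotomy simply does not cover it. Compounding this, the equation $h_1 = m$ that you use as a constraint is incorrect: $h_1=\lambda^t_1=h$ is the number of rows, while $m = H_{(1,\lambda_h)} = \lambda_1 + h - \lambda_h$, and these agree only for rectangles. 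And the ``number-theoretic case analysis'' that is declared to close each case is where the entire content of the argument lies, yet it is described rather than carried out.

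The paper's route at this crucial step is different in a way worth noting: instead of locating, for each fixed $a$ with $|a|\geq 2$, a single bad content difference, it makes the stronger combinatorial claim that the column-reading tableau together with admissible transpositions across column boundaries realizes \emph{every} residue $i\in\{2,\dots,e-2\}$ as an adjacent content difference of some weight of $D(\lambda)$, so that unitarity becomes ``$\Re(q^i)\leq\Re(q)$ for all $i=2,\dots,e-2$'', which holds iff $a=\pm 1$ with no case analysis at all. Finally, a smaller issue in your ``if'' direction: treating $m=e$ by declaring $D(\lambda)$ ``trivially non-unitary'' contradicts the very statement being proved, which permits $e=m$ and asserts $\pm 1/m\in U(\lambda)$; the intended reading of Lemma~\ref{lemma:calibrated} is that calibration holds for $m\leq e$, which is what the paper's proof uses in showing the column-reading tableau remains admissible there.
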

\begin{proof}
Let $q = \exp(2\pi\sqrt{-1}c)$. Note that $m = \lambda^{t}_{1} + \#\{i : 0 < \lambda^{t}_{i} < \lambda_{1}\}$. By our assumptions on $\lambda$, the cardinality of the set $\{i : 0 < \lambda_{i} < \lambda_{1}\}$ is at least 2. So we have $e \geq m > \lambda_{1} + 1$. In particular, $\lambda$ is $e$-restricted and, moreover, the column-reading tableaux is admissible. Also note that, for every $i$, $q^{-\lambda^{t}_{i} + i}q^{-i} = q^{-\lambda^{t}_{i}} \neq q^{\pm1}$. In other words, the permutation $s_{\lambda^{t}_{1} + \dots + \lambda^{t}_{i}}$ is admissible for the weight given by the column-reading tableau. Using admissible permutations now, it is straightforward to see that $\lambda$ is unitary if and only if $\Re(q^{i}) \leq \Re(q)$ for every $i = 2, \dots, e-2$, and the result follows. 
\end{proof}

Finally, we deal with the case of an almost rectangle.

\begin{lemma}\label{lemma:1dim}
Let $\lambda = (a^x, (a-1)^y)$ for some $a > 0, x > 0, y > 0$, and let $c = \frac{d}{e} \in (-1/2, 1/2]$ with $m \leq e  \leq \ell$. Then, $c \in U(\lambda)$ if and only if $e = m$ or $d = \pm 1$.
\end{lemma}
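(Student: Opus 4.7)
My plan is to establish the biconditional via three sub-arguments. The common setup is: the hypothesis $m \leq e$ implies $|\Bo^\bs(\lambda)| = x + y < m \leq e$, so by Lemma \ref{lemma:calibrated} the representation $D(\lambda)$ is calibrated; by Lemma \ref{lemma:columnreading} the column-reading tableau $\column$ is admissible, yielding a weight $\ba$ of $D(\lambda)$ whose entries are integer powers of $q = \exp(2\pi\sqrt{-1}d/e)$.

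For the case $d = \pm 1$, Lemma \ref{lemma:easy} applies directly: the $e$-th roots of unity nearest $1$ (other than $1$ itself) are exactly $q^{\pm 1}$, so $\Re(q^k) \leq \Re(q)$ for every integer $k \not\equiv 0 \pmod e$. Since all entries of every $\bb \in [\ba]$ are powers of $q$, this gives unitarity.

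For the case $e = m$, the claim is that $\dim D(\lambda) = 1$, which makes unitarity automatic. The right-border contents of $\lambda = (a^x,(a-1)^y)$ are the set $\{a-x-y-1, \dots, a-x-2\}\cup\{a-x,\dots,a-1\}$, filling an interval of length $m = x+y+1$ except for a single gap at $a-x-1$. When $e = m$, the residues modulo $e$ of the border contents cover $\mathbb{Z}/e\mathbb{Z}$ except for one residue. A careful crystal analysis (starting from $\lambda$ and working downward via the operators $\tilde e_i$ of Definition \ref{def:crystalops}, invoking Lemma \ref{lemma:slecali} to stay within $\Cali^\bs(1)$) shows that at each step exactly one good removable box exists; hence there is a unique admissible tableau on $\lambda$ and $\dim D(\lambda) = 1$. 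Small cases such as $D(2,1)$ at $e = 3$ and $D(3,2)$ at $e = 3$ confirm this.

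For the remaining case $m < e \leq \ell$ with $|d| \geq 2$, I establish non-unitarity via Theorem \ref{thm:A}. Since $\gcd(d;e) = 1$ and $|d|/e \leq 1/2$, there exists a unique $k \in \{2,\dots,e-2\}$ with $dk \equiv \pm 1 \pmod e$; then $q^k$ is an $e$-th root of unity strictly closer to $1$ than $q$, so $\Re(q^k) > \Re(q)$. It suffices to exhibit $\bb \in [\ba]$ and an index $i$ with $b_i/b_{i+1} = q^k$. In the regime $e > a+1$, the argument of Lemma \ref{lemma:pm1} applies verbatim: admissible intercolumn transpositions on $\column$ realize every content-difference ratio $q^j$, $2 \leq j \leq e-2$, as an adjacent ratio in some orbit element. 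In the complementary regime $m < e \leq a+1$, the bound $k \leq e - 2 \leq a - 1$ means the required content difference is realized between two columns of length $x+y$ within $\column$; the corresponding intercolumn transposition remains admissible by the same calculation as in Lemma \ref{lemma:pm1}, since the $y > 0$ hypothesis ensures $\lambda^t_i \neq \lambda^t_{i+1}$ for the relevant $i$.

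The main obstacle is the dimension-one claim in the $e = m$ case. A complete justification requires either the crystal-theoretic induction sketched above (exploiting the rigidity of the almost-rectangular shape together with the single gap in the residue pattern) or an appeal to an LLT-type computation for the canonical basis of the level-one Fock space. The secondary delicate point is the subcase $e \leq a+1$ of the non-unitary direction, but this reduces to a minor adaptation of Lemma \ref{lemma:pm1}'s admissible-swap argument to the narrower column range available in an almost rectangle.
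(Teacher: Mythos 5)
Your case decomposition matches the paper's own proof, and the genuine gap you flag is the right one: you never establish $\dim D(\lambda)=1$ when $e=m$. The crystal induction you gesture at is harder than necessary; the clean closure, which is what the paper uses by calling $D(\lambda)$ ``the sign representation,'' is via the level-one Mullineux involution. Writing $n=k(e-1)+r$ with $0\leq r<e-1$, one has $\mathsf{m}((n))=((k+1)^r,k^{e-1-r})$, which for $e=x+y+1$ is exactly $\lambda=(a^x,(a-1)^y)$ with $k=a-1$, $r=x$ (and the hypothesis $x,y>0$ forces $0<r<e-1$). Hence $D(\lambda)\cong D((n))\otimes\mathsf{sgn}$ is the one-dimensional sign module and unitarity is automatic. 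Supply this identification rather than an unfinished induction over the $\sle$-crystal or an ``LLT-type computation.''

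A secondary issue concerns your opening line ``$|\Bo^\bs(\lambda)|=x+y<m\leq e$.'' You are reading $|\Bo^\bs(\lambda)|$ as the cardinality of the border multiset, but the appendix fixes $m:=|\Bo^\bs(\lambda)|=H_{(1,\lambda_h)}(\lambda)$, so $|\Bo^\bs(\lambda)|$ denotes the \emph{span} of the border contents, which for the almost rectangle is $x+y+1=m$, not $x+y$. With this reading, Lemma \ref{lemma:calibrated} as literally stated (with a strict inequality) would fail to give calibration at $e=m$; the correct form of the Kleshchev--Ruff completely-splittable criterion is $\lambda_1-\lambda_h+h\leq e$ (non-strict), which is precisely the hypothesis $m\leq e$ here, and at $e=m$ calibration is anyway trivial since the module is one-dimensional. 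Your misreading produces a true conclusion by an incoherent route. Finally, your treatment of the sub-case $m<e\leq a+1$ is the right kind of repair to Lemma \ref{lemma:pm1}, whose proof uses $e>\lambda_1+1$ and so does not apply verbatim to a wide almost rectangle; the paper elides this point by saying ``proceed just as in Lemma \ref{lemma:pm1}.''
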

\begin{proof}
If $e > m = x+y+1$, then we can proceed just as in Lemma \ref{lemma:pm1} to conclude that $c \in U(\lambda)$ if and only if $a = \pm 1$. If $e = m$, note that $D(\lambda)$ is the $1$-dimensional sign representation of $H_{q}(|\lambda|)$, so $c \in U(\lambda)$, regardless of the value of the numerator $a$.
\end{proof}

Theorem \ref{thm:mainA} now follows from Lemmas \ref{lemma:trivial}--\ref{lemma:1dim}. Together with results from \cite{MR2534594, griffeth} this implies the following result, that has been proven using different techniques by Shelley-Abrahamson in \cite{MR4054879}, in the more general case of real reflection groups.

\begin{corollary}
Let $c \in \R$ and let $L_{c}(\lambda)$ be an unitary representation of the rational Cherednik algebra $\RCA_{c}(n)$. Then, $\KZ(L_{c}(\lambda))$ is either zero or unitary. 
\end{corollary}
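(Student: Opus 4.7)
The plan is to reduce the corollary to a pointwise comparison between two explicitly known sets: the unitary locus $U_{\RCA}(\lambda)$ of the rational Cherednik algebra $\RCA_{c}(n)$ (classified in \cite{MR2534594, griffeth}) and the Hecke-algebra unitary locus $U(\lambda)$ computed in Theorem~\ref{thm:mainA} above. Recall that the $\KZ$ functor $\KZ : \oh_{c}(n) \to H_{q}(n)\text{-mod}$ (with $q = \exp(2\pi\sqrt{-1}c)$) is exact, sends standard objects $\Delta_{c}(\lambda)$ to Specht modules $S(\lambda)$, and sends a simple $L_{c}(\lambda)$ to the simple $H_{q}(n)$-module $D(\lambda)$ when $\lambda$ is $e$-regular, and to $0$ otherwise. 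Thus the statement to prove is: \emph{if $c \in U_{\RCA}(\lambda)$ and $\KZ(L_{c}(\lambda)) \neq 0$, then $c \in U(\lambda)$.}

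First I would recall the explicit description of $U_{\RCA}(\lambda)$ from \cite{MR2534594, griffeth}: for $\lambda$ a partition of $n$ with largest hook $\ell$ and right-border size $m$, the Cherednik unitary locus is a union of a central interval of the form $[-1/\ell, 1/\ell]$ together with isolated rational parameters of the form $\pm 1/L$ for certain $L$ in the range $[m,\ell]$, plus finitely many additional rational points when $\lambda$ is of a special shape (sign representation, one-row, almost rectangle). The structure of this set almost exactly matches items (1)--(4) in the statement of Theorem~\ref{thm:mainA}, the only discrepancies being the points $c = a/e$ at which $D(\lambda) = 0$, i.e., precisely the values removed by $e$-regularity of $\lambda$.

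Next I would carry out the comparison case by case, matching the shape dichotomy of Theorem~\ref{thm:mainA}: (i) if $\lambda = (n)$, then $D(\lambda)$ is either zero or the trivial module, which is always unitary; (ii) if $\lambda = (1^{n})$, then $\KZ(L_{c}(\lambda))$ vanishes at precisely those roots of unity removed from $U((1^{n}))$ in Lemma~\ref{lemma:trivial}; (iii) if $\lambda$ is an almost rectangle, the isolated points $\pm d/m$ in $U_{\RCA}(\lambda)$ that do not lie in the central interval correspond exactly to the $e = m$ case of Lemma~\ref{lemma:1dim}, where $D(\lambda)$ is the one-dimensional sign module over $H_{q}(n)$, hence automatically unitary; (iv) in the remaining generic case, the Cherednik isolated points are of the form $\pm 1/L$, which by Lemma~\ref{lemma:pm1} lie in $U(\lambda)$ (or cause $D(\lambda) = 0$ when $L = m$ and $\lambda$ has fewer than two distinct part sizes less than $\lambda_{1}$).

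I expect the main obstacle to be purely bookkeeping: verifying, in the almost-rectangle case, that every Cherednik unitary parameter outside the central interval either produces a one-dimensional Hecke quotient (and hence is trivially unitary) or matches a denominator $L$ for which Theorem~\ref{thm:mainA}(3) already includes it in $U(\lambda)$. Once this case is handled, the other three cases follow by direct inspection, and the proof is complete. An alternative streamlined argument would observe that $\KZ$ carries the contravariant Cherednik form on $L_{c}(\lambda)$ to a non-degenerate contravariant form on $D(\lambda)$ whenever the image is non-zero, so positive-definiteness of the Cherednik form forces positive-definiteness (up to a global sign) of the Hecke form, bypassing the case analysis; but making this form-preservation precise seems to require exactly the signature computations of \cite{MR4054879}, so the combinatorial route via Theorem~\ref{thm:mainA} is the cleaner path in our setting.
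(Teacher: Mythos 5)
Your proposal takes the same route the paper indicates: the paper derives this corollary as an immediate consequence of Theorem~\ref{thm:mainA} together with the Etingof--Stoica and Griffeth classifications of unitary Cherednik-algebra modules, exactly the combinatorial comparison you carry out case by case (the paper itself records only the one-line deduction, so your spelled-out matching of loci supplies the bookkeeping the paper leaves implicit). Your closing paragraph also correctly identifies the form-transport argument as the alternative technique of Shelley-Abrahamson that the paper cites in the sentence introducing the corollary.
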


\begin{remark}\label{rmk:quasi vs unitary}
We remark that if $D$ is a unitary representation of $H_{q}(n)$ it is, in general, \emph{not} possible to find a parameter $c \in \R$ with $q = \exp(2\pi\sqrt{-1}c)$ and a unitary representation $L$ of the rational Cherednik algebra $\RCA_{c}(n)$ such that $\KZ(L) = D$. Indeed, if $q = \exp(2a\pi\sqrt{-1}/n)$ with $\gcd(a;n) = 1$ and $a > 1$, it is not in general possible to find a unitary representation of the Cherednik algebra whose image under $\KZ$ is the $1$-dimensional trivial representation of $H_{q}(n)$, see \cite{griffeth}. 
(We remark that our conventions are transposed from those of \cite{griffeth}.) 


More generally, let $W$ be a complex reflection group and $c: S \to \C$ a parameter for the rational Cherednik algebra, where $S \subseteq W$ is the set of reflections. We assume that the parameter $c$ is real, in the sense that $c(s^{-1}) = \conj{c(s)}$ for every $s \in S$. Under this condition on $c$, there is a notion of unitary representations of $\RCA_c(W)$, as well as of $H_{q}(W)$ where $q$ is related to $c$ via a precise exponential formula, see \cite{GGOR}. Let $L$ be an irreducible representation of $\RCA_c(W)$ with full support. Let us say that $L$ is \emph{quasi-unitary} if $\KZ(L)$ is unitary. This coincides with the notion of quasi-unitarity presented in \cite[Definition 3.5.3]{MR4054879} in the case when $W$ is a real reflection group, and it is expected to coincide always. It is easy to see that the socle of the polynomial representation is always quasi-unitary. We expect, however, that for most parameters it is \emph{not} unitary. See \cite{MR2959035, griffeth, griffeth-subspace} for related work.

Finally, this shows that the problems of determining unitary representations of the Hecke algebra and unitary representations with full support of the rational Cherednik algebra, while closely related, are not equivalent, and moreover none implies the other: the set of unitary representations of the Hecke algebra is larger than that of fully-supported unitary representations of the Cherednik algebra and there is not, to the best of our knowledge, an explicit criterion to find which unitary representations of the Hecke algebra indeed give unitary representations of the Cherednik algebra. 
\end{remark}
 

\begin{thebibliography}{AvLTVJ20}

\bibitem[AK94]{ak94}
S.~Ariki and K.~Koike, \emph{\href{http://dx.doi.org/10.1006/aima.1994.1057}{A
  {Hecke} algebra of $(\mathbb{Z}/r\mathbb{Z})\wr\mathfrak{S}_n$ and
  construction of its irreducible representations}}, Adv.\ Math. \textbf{106}
  (1994), 216--243.

\bibitem[Ari02]{MR1911030}
S. Ariki, \emph{Representations of quantum algebras and combinatorics of
  {Y}oung tableaux}, University Lecture Series, vol.~26, American Mathematical
  Society, Providence, RI, 2002, Translated from the 2000 Japanese edition and
  revised by the author. 

\bibitem[ALTV20]{aldvreal}
J. Adams, M. van Leeuwen, P. Trapa, and D. Vogan~Jr,
  \emph{Unitary representations of real reductive groups}, Ast{\'e}risque
  (2020), no.~417, V--+.

 


  
\bibitem[BC15]{BC15b}
D. Barbasch and D. Ciubotaru,   
{\em  Ladder representations of ${\rm GL}(n,\mathbb{Q}_p)$,} Representations of reductive groups, 117--137, 
Progr. Math., 312, Birkhauser/Springer, Cham, 2015

\bibitem[BC20]{barbasch2015star}
D. Barbasch and D. Ciubotaru, \emph{Star operations for affine {H}ecke
  algebras,} Representation Theory, Automorphic Forms \& Complex Geometry (A Tribute to Wilfried Schmid). Int. Press Boston, 2020, 107--138
  
  \bibitem[BZGS14]{conjecture}
C.~Berkesch~Zamaere, S.~Griffeth, and S.~V. Sam, \emph{Jack polynomials as
  fractional quantum {H}all states and the {B}etti numbers of the
  {$(k+1)$}-equals ideal}, Comm. Math. Phys. \textbf{330} (2014).
 
 \bibitem[BGG75]{bgg}
I.~N. Bernstein, I.~M. Gelfand, and S.~I. Gelfand, \emph{Differential operators
  on the base affine space and a study of {${\mathfrak{ g}}$}-modules},  Lie groups and their representations (Proc. Summer School, Bolyai J\'anos Math. Soc., Budapest, 1971), pp. 21--64. Halsted, New York, 1975. 
 

\bibitem[Bow17]{manycell}
C.~Bowman, \emph{The many graded cellular bases of {H}ecke algebras},
  \href{https://arxiv.org/abs/1702.06579}{arXiv:1702.06579}.

 


\bibitem[BCH20]{cell4us2}
C.  Bowman, A. Cox, and A. Hazi, \emph{Path isomorphisms between quiver
  {H}ecke and diagrammatic {B}ott-{S}amelson endomorphism algebras},  arXiv:2005.02825 (2020).


\bibitem[BCHM20]{cell4us}
C.  Bowman, A. Cox,   A. Hazi, and D. Michailidis, \emph{Path combinatorics and light leaves for quiver Hecke algebras},  arXiv:2009.01705 (2020).


\bibitem[BHN20]{withemily}
C.  Bowman, A. Hazi, and E.  Norton, \emph{The modular {W}eyl-{K}ac
  character formula},   arXiv:2004.13082.  


  
\bibitem[BNS18]{bns}
C~Bowman, E~Norton, and J~Simental, \emph{Characteristic-free bases and bgg
  resolutions of unitary simple modules for quiver {H}ecke and {C}herednik
  algebras},  arXiv:1803.08736 (2018).

\bibitem[BS21]{brundanstroppel}
J. Brundan and C. Stroppel, \emph{Semi-infinite highest weight
  categories}. Memoirs of the AMS, to appear. 

\bibitem[BK09]{MR2551762}
J.~Brundan and A.~Kleshchev, \emph{Blocks of cyclotomic {H}ecke algebras and
  {K}hovanov-{L}auda algebras}, Invent. Math. \textbf{178} (2009).

\bibitem[BMR98]{MR1637497}
M. Brou\'{e}, G.  Malle, and R.  Rouquier, \emph{Complex
  reflection groups, braid groups, {H}ecke algebras}, J. Reine Angew. Math.
  \textbf{500} (1998), 127--190. 
 
 
 \bibitem[CG10]{MR2838836}
N. Chriss and V. Ginzburg, \emph{Representation theory and complex
  geometry}, Modern Birkh\"{a}user Classics, Birkh\"{a}user Boston, Ltd.,
  Boston, MA, 2010, Reprint of the 1997 edition. 



\bibitem[EH04]{MR2037715}
T. Enright and M.~Hunziker, \emph{Resolutions and {H}ilbert series of
  determinantal varieties and unitary highest weight modules}, J. Algebra
  \textbf{273} (2004).


\bibitem[ES09]{MR2534594}
P.~Etingof and E.~Stoica, \emph{Unitary representations of rational {C}herednik
  algebras}, Represent. Theory \textbf{13} (2009), 349--370, With an appendix
  by S.  Griffeth.

\bibitem[EW16]{MR3555156}
B. Elias and G. Williamson, \emph{Soergel calculus}, Representation
  Theory of the American Mathematical Society \textbf{20} (2016), no.~12,
  295--374.

\bibitem[FGM21]{fgm}
S. Fishel, S.  Griffeth, and E. Manosalva, \emph{Unitary
  representations of the {C}herednik algebra: ${V}^{*}$-homology},
  Mathematische Zeitschrift (2021), 1--41.

\bibitem[FLO+99]{FLOTW99}
O. Foda, B. Leclerc, M. Okado, J.-Y. Thibon, and T. Welsh,
  \emph{Branching functions of
  {$A^{(1)}_{n-1}$} and {Jantzen--Seitz} problem for {Ariki--Koike} algebras},
  Adv.\ Math. \textbf{141} (1999), 322--365.

\bibitem[FS12]{MR2959035}
M.~Feigin and C.~Shramov, \emph{On unitary submodules in the polynomial
  representations of rational {C}herednik algebras}, Int. Math. Res. Not. IMRN
  (2012).

\bibitem[GTW17]{affinebraid}
A. Gadbled, A. Thiel, and E. Wagner, \emph{Categorical
  action of the extended braid group of affine type A}, Communications in
  Contemporary Mathematics \textbf{19} (2017), no.~03, 1650024.

\bibitem[GGOR03]{GGOR}
V.~Ginzburg, N.~Guay, E.~Opdam, and R.~Rouquier,
  \emph{\href{http://dx.doi.org/10.1007/s00222-003-0313-8}{On the category
  {$\mathcal{O}$} for rational {Cherednik} algebras}}, Invent.\ Math.
  \textbf{154} (2003), 617--651.


\bibitem[Gri18]{griffeth}
S.  Griffeth, \emph{Unitary representations of cyclotomic rational
  {C}herednik algebras}, J. Algebra \textbf{512} (2018), 310--356. 

\bibitem[Gri21]{griffeth-subspace}
\bysame, \emph{{Subspace arrangements and {C}herednik algebras}}, to appear in  Int. Math. Res. Not. IMRN
 (2021).  


\bibitem[Gro99]{grojnowski1999affine}
I. Grojnowski, \emph{Affine $\mathfrak{sl}_p$ controls the representation
  theory of the symmetric group and related {H}ecke algebras}, arXiv preprint
  math/9907129 (1999).

\bibitem[HMLSZ14]{formalaffine}
A. Hoffnung, J. Malag{\'o}n-L{\'o}pez, A. Savage, and K. 
  Zainoulline, \emph{Formal {H}ecke algebras and algebraic oriented cohomology
  theories}, Selecta Mathematica \textbf{20} (2014), no.~4, 1213--1245.

\bibitem[HW18]{huangwong}
J. Huang and K. Wong, \emph{A {C}asselman--{O}sborne theorem
  for rational {C}herednik algebras}, Transformation Groups \textbf{23} (2018),
  no.~1, 75--99.


\bibitem[Jac04]{J04} N. Jacon, \emph{On the parametrization of the simple modules for {A}riki--{K}oike algebras at roots of unity}. J. Math. Kyoto Univ. 44 (2004), no. 4, 729--767.

\bibitem[KK79]{BGG2}
V.~G. Kac and D.~A. Kazhdan, \emph{Structure of representations with highest
  weight of infinite-dimensional {L}ie algebras}, Adv. in Math. \textbf{34}
  (1979), no.~1, 97--108. 


\bibitem[KL09]{MR2525917}
M.~Khovanov and A.~Lauda, \emph{A diagrammatic approach to categorification of
  quantum groups. {I}}, Represent. Theory \textbf{13} (2009), 309--347.

\bibitem[Kle96]{MR1383482}
A.~Kleshchev, \emph{Completely splittable representations of symmetric groups},
  J. Algebra \textbf{181} (1996).

\bibitem[Los17]{losev-cat}
I. Losev, \emph{Rational cherednik algebras and categorification},
  Categorification and Higher Representation Theory, Contemp. Math \textbf{683}
  (2017), 1--40.
  
\bibitem[Mac92]{Mac92}  
G.   Mackey,  
{\em Harmonic analysis and unitary group representations: the development from 1927 to 1950}, L\'{e}mergence de l'analyse harmonique abstraite (1930--1950) (Paris, 1991), 13--42, 
Cahiers S. Hist. Math. S\'{e}r. 2, 2, Univ. Paris VI, Paris, 1992.

\bibitem[Nor20]{emilytypeb}
E.  Norton, \emph{Unitary representations of type {B} rational {C}herednik
  algebras and crystal combinatorics},   arXiv:2008.05464 (2020).

\bibitem[Ram03a]{MR1988991}
A.~Ram, \emph{Skew shape representations are irreducible}, Combinatorial and
  geometric representation theory ({S}eoul, 2001), Contemp. Math., vol. 325,
  Amer. Math. Soc., Providence, RI, 2003, pp.~161--189. 

\bibitem[Ram03b]{MR1976700}
A.  Ram, \emph{Affine {H}ecke algebras and generalized standard {Y}oung
  tableaux}, Journal of Algebra \textbf{260} (2003), no.~1, 367--415.

\bibitem[Rou08]{ROUQ}
R.~Rouquier, \emph{$2$-{Kac}--{Moody} algebras},
  \href{http://arxiv.org/abs/0812.5023}{arXiv:0812.5023}, 2008, preprint.

\bibitem[Ruf06]{MR2266877}
O.~Ruff, \emph{Completely splittable representations of symmetric groups and
  affine {H}ecke algebras}, J. Algebra \textbf{305} (2006).

\bibitem[SA20]{MR4054879}
S. Shelley-Abrahamson, \emph{The {D}unkl weight function for rational
  {C}herednik algebras}, Selecta Math. (N.S.) \textbf{26} (2020), no.~1, Paper
  No. 8, 63. 


\bibitem[Sto09]{stoica2009unitary}
E. Stoica, \emph{Unitary representations of {H}ecke algebras of complex
  reflection groups}, 2009.

\bibitem[Sto10]{MR2898660}
E. Stoica, \emph{Unitary {R}epresentations of {R}ational {C}herednik
  {A}lgebras and {H}ecke {A}lgebras}, ProQuest LLC, Ann Arbor, MI, 2010, Thesis
  (Ph.D.)--Massachusetts Institute of Technology. 

\bibitem[Ven20]{MR4037563}
V. Venkateswaran, \emph{Signatures of representations of {H}ecke algebras
  and rational {C}herednik algebras}, J. Algebra \textbf{547} (2020), 22--69.

\bibitem[Vog00]{MR1845669}
D.~A. Vogan, Jr., \emph{Unitary representations of reductive {L}ie groups},
  Atti Accad. Naz. Lincei Cl. Sci. Fis. Mat. Natur. Rend. Lincei (9) Mat. Appl.
  (2000), Mathematics towards the third millennium (Rome, 1999).


\bibitem[Web17]{MR3732238}
B.~Webster, \emph{Rouquier's conjecture and diagrammatic algebra}, Forum Math.
  Sigma \textbf{5} (2017), e27, 71.

\bibitem[Wig39]{Wig39}
E. Wigner, 
{\em On unitary representations of the inhomogeneous {L}orentz group}, 
Ann. of Math. (2) 40 (1939), no. 1, 149--204. 

\end{thebibliography}
\begin{acknowledgements}
The first author is grateful for funding from EPSRC  grant EP/V00090X/1.
This project has received funding from the European Research Council (ERC) under the European Union's Horizon 2020 research and innovation programme (grant agreement No 677147).
\end{acknowledgements}

\end{document}